\newtheorem*{thm*}{Theorem}
\newtheorem{thm}{Theorem}[section]
\newtheorem{lem}[thm]{Lemma}
\newtheorem{prop}[thm]{Proposition}
\newtheorem{ques}[thm]{Question}
\newtheorem{cor}[thm]{Corollary}
\theoremstyle{definition}
\newtheorem{defn}[thm]{Definition}
\theoremstyle{remark}
\newtheorem{rem}[thm]{Remark}
\numberwithin{equation}{section}
\def \N {\mathbb N}
\def \Z {\mathbb Z}
\def \F {\mathcal F}
\def \G {\mathcal{G}}
\def \Q {{\bf Q}}
\def \RP {{\bf RP}}
\def \id {{\rm id}}
\def \e {\epsilon}
\def \d {\delta}
\newcommand{\robE}{%
\draw [thick] (-0.5,-0.5) rectangle (0.5,0.5); \draw [ thick] (0,-0.5) -- (0,0.5); \draw [thick, ->] (-0.5,0) -- (0.5,0); }
\newcommand{\robB}{%
\draw [thick] (-0.5,-0.5) rectangle (0.5,0.5); \draw [thick] (0,-0.5) -- (0,0.5); \draw [thick, ->] (-0.5,0) -- (0.5,0); \draw [red] (0.25,-0.5) -- (0.25,0.5); } 
\newcommand{\robD}{%
\draw [thick] (-0.5,-0.5) rectangle (0.5,0.5); \draw [ thick] (0,-0.5) -- (0,0.5); \draw [thick, ->] (-0.5,0) -- (0.5,0); \draw [red, ->] (-0.5,-0.25) -- (0.5,-0.25); } 
\newcommand{\robC}{%
\draw [ thick] (-0.5,-0.5) rectangle (0.5,0.5); \draw [ thick] (0,-0.5) -- (0,0.5); \draw [ thick, ->] (-0.5,0) -- (0.5,0); \draw [ red] (0.25,-0.5) -- (0.25,0.5); \draw [ red, ->] (-0.5,-0.25) -- (0.5,-0.25); }
\newcommand{\robA}{%
\draw [ thick] (-0.5,-0.5) rectangle (0.5,0.5); \draw [ thick,<->] (0,-0.5) -- (0,0.5); \draw [ thick,<->] (-0.5,0) -- (0.5,0); \draw [ red, <->] (-0.25,0.5) -- (-0.25,-0.25) -- (0.5, -0.25); }
\newcommand{\TT}{%
\fill[white!50] rectangle (-0.5,-0.5) rectangle (0.5,0.5); }
\newcommand{\Blanco}{\draw[fill=white] (-0.5,-0.5) rectangle (0.5,0.5); }
\newcommand{\Negro}{\draw[fill=black] (-0.5,-0.5) rectangle (0.5,0.5); }
\begin{document}

\title{Dynamical cubes and a criteria for systems having product extensions}
\author{Sebasti\'an Donoso and Wenbo Sun}

 \address{Centro de Modelamiento Matem\'atico and Departamento de Ingenier\'{\i}a
Matem\'atica, Universidad de Chile, Av. Blanco Encalada 2120,
Santiago, Chile \newline  Universit\'e Paris-Est, Laboratoire d'analyse et de math\'ematiques
appliqu\'ees, 5 bd Descartes, 77454 Marne la Vall\'ee
Cedex 2, France} \email{sdonoso@dim.uchile.cl, sebastian.donoso@univ-paris-est.fr }

\address{Department of Mathematics, Northwestern University, 2033
Sheridan Road Evanston, IL 60208-2730, USA}
 \email{swenbo@math.northwestern.edu}
 
\thanks{The first author was supported by CONICYT doctoral fellowship and University of Chile BECI grant.  The
second author was partially supported by NSF grant 1200971.}

\maketitle

\begin{abstract}
For minimal $\mathbb{Z}^{2}$-topological dynamical systems, we introduce a cube structure and a variation of the regionally proximal relation for $\Z^2$ actions, which allow us to characterize product systems and their factors. We also introduce the concept of topological magic systems, which is the topological counterpart of measure theoretic magic systems introduced by Host in his study of multiple averages for commuting transformations. Roughly speaking, magic systems have a less intricate dynamic and we show that every minimal $\Z^2$ dynamical system has a magic extension. We give various applications of these structures, including the construction of some special factors in topological dynamics of $\Z^2$ actions, and a computation of the automorphism group of the minimal Robinson tiling.
\end{abstract}

\section{Introduction}

We start by reviewing the motivation for characterizing cube structures for systems with a single transformation, which was first developed for ergodic measure preserving systems. To show the convergence of some multiple ergodic averages, Host and Kra \cite{HK05} introduced for each $d\in \N$ a factor $\mathcal{Z}_d$ which characterizes the behaviour of those averages. They proved that this factor can be endowed with a structure of a nilmanifold: it is measurably isomorphic to an inverse limit of ergodic rotations on nilmanifolds. To build such a structure, they introduced cube structures over the set of measurable functions of X to itself and they studied their properties. Later, Host, Kra and Maass \cite{HKM} introduced these cube structures into topological dynamics. For $(X,T)$ a minimal dynamical system and for $d\in \N$, they introduced the space of cubes $\Q^{[d+1]}$ which characterizes being topologically isomorphic to an inverse limit of minimal rotations on nilmanifolds. They also defined the $d$-step regionally proximal relation, denoted by $\RP^{[d]}$ which allows one to build the maximal nilfactor. They showed that $\RP^{[d]}$ is an equivalence relation in the distal setting. Recently, Shao and Ye \cite{SY} proved that $\RP^{[d]}$ is an equivalence relation in any minimal system and the quotient by this relation is the maximal nilfactor of order $d$. This theory is important in studying the structure of $\Z$-topological dynamical systems and recent applications of it can be found in \cite{D}, \cite{HSY}, \cite{HSY2}.

Back to ergodic theory, a natural generalization of the averages considered by Host and Kra \cite{HK05} are averages arise from a measurable preserving system of commuting transformations $(X,\mathcal{B},\mu,T_1,\ldots,T_d)$. The convergence of these averages was first proved by Tao \cite{Tao} with further insight given by Towsner \cite{Tow}, Austin \cite{Austin} and Host \cite{H}. We focus our attention on Host's proof. In order to prove the convergence of the averages, Host built an extension of $X$ (\emph{magic} in his terminology) with suitable properties. In this extension he found a characteristic factor that looks like the Cartesian product of single transformations. Again, to build these objects, cubes structures are introduced, analogous to the ones in \cite{HK05}.

\subsection{Criteria for systems having a product extension}
A {\it system with commuting transformations} $(X,S,T)$ is a compact metric space $X$ endowed with two commuting homeomorphisms $S$ and $T$. A {\it product system} is a system of commuting transformations of  the form $(Y\times W, \sigma\times \id, \id\times \tau)$, where $\sigma$ and $\tau$ are homeomorphisms of $Y$ and $W$ respectively (we also say that $(Y\times W, \sigma\times \id, \id\times \tau)$ is the {\it product} of $(Y,\sigma)$ and $(W,\tau)$). These are the simplest systems of commuting transformations one can imagine.

We are interested in understanding how ``far" a system with commuting transformations is from being a product system, and more generally, from being a factor of a product system. To address this question we need to develop a new theory of cube structures for this kind of actions which is motivated by Host's work in ergodic theory and that results in a fundamental tool.

\bigskip

 Let $(X,S,T)$ be a system with commuting transformations $S$ and $T$. The {\it space of cubes} $\Q_{S,T}(X)$ of $(X,S,T)$ is the closure in $X^4$ of the points $(x, S^nx,T^mx,S^nT^mx)$, where $x\in X$ and $n,m\in \Z$.

Our main result is that this structure allows us to characterize systems with a product extension:

\begin{thm} \label{R-ProductExtension}
 Let $(X,S,T)$ be a minimal system with commuting transformations $S$ and $T$. The following are equivalent:
   \begin{enumerate}
    \item $(X,S,T)$ is a factor of a product system;
    \item If ${\bf x}$ and ${\bf y} \in \Q_{S,T}(X)$ have three coordinates in common, then ${\bf x}={\bf y}$;
     \item If $(x,y,a,a)\in \Q_{S,T}(X)$ for some $a\in X$, then $x=y$; 
     \item If $(x,b,y,b)\in \Q_{S,T}(X)$ for some $b\in X$, then $x=y$; 
     \item If $(x,y,a,a) \in \Q_{S,T}(X)$ and $(x,b,y,b)\in \Q_{S,T}(X)$ for some $a,b\in X$, then $x=y$.
   \end{enumerate}
\end{thm}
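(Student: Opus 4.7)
The plan is to establish the equivalence via the implications $(1)\Rightarrow(2)$, the trivial implications $(2)\Rightarrow(3),(4)$ and $(3),(4)\Rightarrow(5)$, and finally $(5)\Rightarrow(1)$. The implications among $(2)$--$(5)$ are clear from the statements: $(2)$ directly gives $(3),(4),(5)$ since three-coordinate uniqueness is the strongest of the conditions, and $(3),(4)\Rightarrow(5)$ because the hypothesis of $(5)$ includes that of each of them. So only two substantive directions need proof.

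For $(1)\Rightarrow(2)$, I would begin from the identity $\Q_{S,T}(X)=\pi^{\otimes 4}\bigl(\Q_{\sigma\times\id,\id\times\tau}(Y\times W)\bigr)$, where $\pi\colon Y\times W\to X$ is a factor map witnessing $(1)$. Cubes in the product system have the rigid form $((y,v),(y',v),(y,v'),(y',v'))$, so three coordinates literally determine the fourth upstairs. The subtlety is that two cubes in $X$ agreeing on three coordinates need not admit lifts that agree on three coordinates. My plan is to fix a single lift of the three shared coordinates, use the product structure to pin down the fourth coordinate upstairs, and combine minimality of $X$ with the equivariance of $\pi$ to conclude that every lift projects to the same fourth coordinate in $X$.

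The main direction is $(5)\Rightarrow(1)$, in the spirit of Host's magic extensions. The plan is to pass first to an extension of $X$ (the topological analogue of a magic system) whose cube structure is tractable, and then to exhibit a product system factoring onto this extension, and hence onto $X$. Concretely, let $X_T$ be the maximal factor of $(X,S,T)$ on which $T$ acts trivially, and $X_S$ the maximal factor on which $S$ acts trivially; then $X_T\times X_S$ is a product system carrying a natural $\Z^2$-equivariant map from $X$. Under $(5)$, the goal is to show that, possibly after replacing $X$ by a magic extension, this map exhibits $X$ (or the extension) as a factor of a product system.

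The main obstacle is to translate the cube-coincidence hypothesis $(5)$ into the structural rigidity needed to produce such a factor map. Specifically, to show that two points of $X$ which project to the same pair in $X_T\times X_S$ must coincide, one needs to simultaneously produce two cube configurations of the shapes appearing in $(5)$, namely $(x,y,a,a)$ and $(x,b,y,b)$ in $\Q_{S,T}(X)$. I expect this to require a careful combination of minimality, the gluing properties of the cube space, and the symmetries of $\Q_{S,T}(X)$ under the diagonal $\Z^2$-action; this is the step most likely to absorb the bulk of the technical work.
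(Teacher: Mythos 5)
Your overall architecture matches the paper's: $(1)\Rightarrow(2)$ by lifting cubes to the product extension and invoking minimality, formal implications among $(2)$--$(5)$, and $(5)\Rightarrow(1)$ via a magic extension whose two ``marginal'' coordinates form a product system. However, the hard direction is left as a plan, and two genuine gaps remain.

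A small one first: $(2)\Rightarrow(3)$ is not purely formal, because $(2)$ compares \emph{two} cubes while $(3)$ starts from one. From $(x,y,a,a)\in\Q_{S,T}(X)$ you must manufacture a companion cube; the paper notes that $(x,a)\in\Q_T(X)$, hence $(x,x,a,a)\in\Q_{S,T}(X)$ by Proposition \ref{sym}, and only then does $(2)$ applied to the pair $(x,y,a,a)$, $(x,x,a,a)$ give $x=y$. Saying that three-coordinate uniqueness is ``the strongest of the conditions'' skips this step.

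The substantial gap is in $(5)\Rightarrow(1)$. As literally written, your map $X\to X_T\times X_S$ points the wrong way: it exhibits the product as a factor of $X$, so proving that two points with the same image coincide would show $X$ \emph{is} a product system, which $(5)$ does not imply (the Morse system satisfies $(5)$ but is not itself a product). What is needed is a product system \emph{above} $X$. The paper achieves this by (i) constructing a minimal magic extension $Y\subseteq \bold{K}^{x_0}_{S,T}$ — itself nontrivial, requiring a base point with $\Q_S[x_0]=\mathcal{O}_S(x_0)$ and $\Q_T[x_0]=\mathcal{O}_T(x_0)$ (Proposition \ref{MagicExtension}); (ii) showing that $(5)$ on $X$ forces $\Q_{\widehat{S}}(Y)\cap\Q_{\widehat{T}}(Y)=\Delta_Y$ and hence, by Lemma \ref{Q_ST-R_S}, $\mathcal{R}_{\widehat{S}}(Y)=\Delta_Y$, reducing to case $(3)$ upstairs; and (iii) proving that triviality of $\mathcal{R}_{S}$ makes the projection of $\bold{K}^{x_0}_{S,T}$ onto its first two coordinates injective. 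Step (iii) is the real content: from $(a,b,c),(a,b,d)\in\bold{K}^{x_0}_{S,T}$ one only gets $\widehat{S}$-proximality directly, and upgrading proximality to membership in $\Q_{\widehat{S}}$ requires knowing that $\mathcal{R}_S(X)=\Delta_X$ forces every $S$-orbit closure to be minimal, which the paper proves by lifting minimal idempotents in the enveloping semigroup (Lemmas \ref{LiftingIdempotent} and \ref{MinimalFace}). None of these ingredients appears in your proposal; flagging this as ``the step most likely to absorb the bulk of the technical work'' identifies the difficulty but does not resolve it.
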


\bigskip

The cube structure $\Q_{S,T}(X)$ also provides us a framework for studying the structure of a system with commuting transformations. We introduce the {\it $(S,T)$-regionally proximal relation} $\mathcal{R}_{S,T}(X)$ of $(X,S,T)$, defined as
$$\mathcal{R}_{S,T}(X)\coloneqq \{(x,y)\colon (x,y,a,a), (x,b,y,b) \in \Q_{S,T}(X) \text{ for some } a,b \in X\}.$$

We remark that in the case $S=T$, this definition coincides with $\Q^{[2]}$ and $\RP^{[1]}$ defined in \cite{HKM}. When $S\neq T$, the relation $\mathcal{R}_{S,T}(X)$ is included in the regionally proximal relation for $\Z^2$ actions \cite{Aus} but can be different. So $\mathcal{R}_{S,T}(X)$ is a variation of $\RP^{[1]}$ for $\Z^2$ actions.

In a distal system with commuting transformations, it turns out that we can further describe properties of $\mathcal{R}_{S,T}(X)$.  We prove that $\mathcal{R}_{S,T}(X)$ is an equivalence relation and the quotient of $X$ by this relation defines the {\it maximal factor with a product extension} (see Section 5 for definitions).

We also study the topological counterpart of the ``magic extension" in Host's work \cite{H}. We define the magic extension in the topological setting and show that in this setting, every minimal system with commuting transformations admits a minimal magic extension (Proposition \ref{MagicExtension}). Combining this with the properties of the cube $\Q_{S,T}(X)$ and the relation $\mathcal{R}_{S,T}(X)$, we are able to prove Theorem \ref{R-ProductExtension}.

We provide several applications, both in a theoretical framework and to real systems. Using the cube structure, we study some representative tiling systems. For example, we show that the $\mathcal{R}_{S,T}$ relation on the two dimensional Morse tiling system is trivial. Therefore, it follows from Theorem \ref{R-ProductExtension} that it has a product extension. Another example we study is the minimal Robinson tiling system. Since automorphisms preserve the $\mathcal{R}_{S,T}$ relation, we can study the automorphism group of a tiling system by computing its $\mathcal{R}_{S,T}(X)$ relation. We show that the automorphism group of the minimal Robinson tiling system consists of only the $\Z^{2}$-shifts.

Another application of the cube structure is to study the properties of a system having a product system as an extension, which include (see Section 6 for definitions):

\begin{enumerate}
    \item Enveloping semigroup: we show that $(X,S,T)$ has a product extension if and only if the enveloping semigroup of $X$ is automorphic.
    \item Disjoint orthogonal complement: we show that if $(X,S,T)$ is an $S$-$T$ almost periodic system with a product extension, then $(X,S,T)$ is disjoint from $(Y,S,T)$ if and only if both $(Y,S)$ and $(Y,T)$ are weakly mixing system.
     \item Set of return times: we show that in the distal setting, $(x,y)\in\mathcal{R}_{S,T}(X)$ if and only if the set of return time of $x$ to any neighborhood of $y$ is an $\mathcal{B}_{S,T}^{*}$ set.
     \item Topological complexity: we define the topological complexity of a system and show that in the distal setting, $(X,S,T)$ has a product extension if and only if it has bounded topological complexity.
\end{enumerate}

\subsection{Organization of the paper}

In Section 3, we formally define the cube structure, the $(S,T)$-regionally proximal relation and the magic extension in the setting of systems with commuting transformations. We prove that every minimal system with commuting transformations has a minimal magic extension, and then we use this to give a criteria for systems having a product extension (Theorem \ref{R-ProductExtension}).

In Section 4, we compute the $\mathcal{R}_{S,T}(X)$ relation for some tiling systems and provide some applications.

In Section 5, we study further properties of the $\mathcal{R}_{S,T}(X)$ relation in the distal case.
In Section 6, we study various properties of systems with product extensions, which includes the study of its
enveloping semigroup, disjoint orthogonal complement, set of return times, and topological complexity.

\section{Notation}

A {\it topological dynamical system} is a pair $(X,G_{0})$, where $X$ is a compact metric space and $G_{0}$ is a group of homeomorphisms of the space $X$ into itself. We also refer to $X$ as a $G_{0}$-dynamical system. We always use $d(\cdot,\cdot)$ to denote the metric in $X$ and we let $\Delta_X\coloneqq \{(x,x)\colon x\in X\}$ denote the diagonal of $X\times X$.

If $T\colon X\to X$ is a homeomorphism of the space $X$, we use $(X,T)$ to denote the topological dynamical system $(X,\{T^n\colon n\in \Z\})$.

  If $S\colon X \to X$ and $T\colon X\to X$ are two commuting homeomorphisms of $X$,  we write $(X,S,T)$ to denote the topological dynamical system $(X,\{S^nT^m\colon n,m\in \Z \})$. The transformations $S$ and $T$ span a $\mathbb{Z}^2$-action, but we stress that we consider this action with a given pair of generators. Throughout this paper, we always use $G\cong\mathbb{Z}^2$ to denote the group generated by $S$ and $T$.

A {\it factor map} between the dynamical systems $(Y,G_{0})$ and $(X,G_{0})$ is an onto, continuous map $\pi\colon Y\to X$ such that $\pi \circ g=g\circ \pi$ for every $g\in G_{0}$. We say that $(Y,G_{0})$ is an {\it extension} of $(X,G_{0})$ or that $(X,G_{0})$ is a {\it factor} of $(Y,G_{0})$. When $\pi$ is bijective, we say that $\pi$ is an isomorphism and that $(Y,G_0)$ and $(X,G_0)$ are isomorphic. By a factor map between two systems $(Y,S,T)$ and $(X,S',T')$ of commuting transformations, we mean that $\pi\circ S =S'\circ\pi$ and $\pi\circ T=T'\circ\pi$.

We say that $(X,G_{0})$ is {\it transitive} if there exists a point in $X$ whose orbit is dense. Equivalently, $(X,G_0)$ is transitive if for any two non-empty open sets $U,V\subseteq X$ there exists $g\in G_0$ such that $U\cap g^{-1} V\neq \emptyset$.

A system $(X,G_{0})$ is {\it weakly mixing} if the Cartesian product $X\times X$ is transitive under the action of the diagonal of $G_{0}$. Equivalently, $(X,G_0)$ is weakly mixing if for any four non-empty open sets $A,B,C,D\subseteq X$ there exists $g\in G_0$ such that $A\cap g^{-1}B\neq \emptyset$ and $C\cap g^{-1}D\neq \emptyset$.

Let $(X,G_{0})$ be a topological dynamical system. Denote the orbit closure of a point $x\in X$ by $\mathcal{O}_{G_{0}}(x)\coloneqq\overline{\{gx\colon g\in G_{0}\}}$. If $G_{0}$ is generated by one element $R$, we write $\mathcal{O}_R(x)\coloneqq \mathcal{O}_{G_0}(x)$ for convenience.
We say that $(X,G_{0})$ is {\it minimal} if the orbit of any point is dense in $X$. A system $(X,G_{0})$ is {\it pointwise almost periodic} if for any $x\in X$, the dynamical system $(\mathcal{O}_{G_{0}}(x),G_{0})$ is minimal.


\section{Cube structures and general properties}

\subsection{Cube structures and the $(S,T)$-regionally proximal relation}

\begin{defn}
   For a system $(X,S,T)$ with commuting transformations $S$ and $T$, let $\mathcal{F}_{S,T}$ denote the subgroup of $G^{4}$ generated by $\id\times S\times \id \times S$ and $\id\times\id\times T \times T$ (recall that $G$ is the group spanned by $S$ and $T$). Write $G^{\Delta}\coloneqq\{g\times g\times g\times g\in G^{4}\colon g\in G\}$. Let $\mathcal{G}_{S,T}$ denote the subgroup of $G^{4}$ generated by $\mathcal{F}_{S,T}$ and $G^{\Delta}$.
\end{defn}

The main structure studied in this paper is a notion of cubes for a system with commuting transformations:

\begin{defn}
Let $(X,S,T)$ be a system with commuting transformations $S$ and $T$. We define
\begin{equation}\nonumber
  \begin{split}
  &\bold{Q}_{S,T}(X)=\overline{\{(x,S^nx,T^mx,S^nT^mx)\colon x\in X, n,m\in \Z\}};
  \\&\bold{Q}_{S}(X)=\pi_{0}\times\pi_{1}(\bold{Q}_{S,T}(X))=\overline{\{(x,S^{n}x)\in X\colon x\in X, n\in\mathbb{Z}\}};
  \\&\bold{Q}_{T}(X)=\pi_{0}\times\pi_{2}(\bold{Q}_{S,T}(X))=\overline{\{(x,T^{n}x)\in X\colon x\in X, n\in\mathbb{Z}\}};
    \\&\bold{K}^{x_{0}}_{S,T}=\overline{\{(S^{n}x_{0},T^{m}x_{0},S^{n}T^{m}x_{0})\in X^3\colon n,m\in\mathbb{Z}\}} \text{ for all }x_0 \in X,
  \end{split}
\end{equation}
where $\pi_{i}\colon X^{4}\to X$ is the projection to the $i$-th coordinate in $X^4$ for $i=0,1,2,3$.

\end{defn}

We start with some basic properties of $\Q_{S,T}(X)$. The following proposition follows immediately from the definitions:

\begin{prop}\label{sym} Let $(X,S,T)$ be a minimal system with commuting transformations $S$ and $T$. Then,
  \begin{enumerate}
   \item $(x,x,x,x) \in \Q_{S,T}(X)$ for every $x\in X$;
   \item $\Q_{S,T}(X)$ is invariant under $\mathcal{G}_{S,T}$;
   \item If $(x_0,x_1,x_2,x_3)\in \Q_{S,T}(X)$, then $(x_2,x_3,x_0,x_1),(x_1,x_0,x_3,x_2)\in \Q_{S,T}(X)$ and \newline  $(x_0,x_2,x_1,x_3)\in \Q_{T,S}(X)$;
   \item If $(x_0,x_1,x_2,x_3)\in \Q_{S,T}(X)$, then $(x_0,x_1),(x_2,x_3)\in \Q_{S}(X)$ and $(x_0,x_2),(x_1,x_3)\in \Q_{T}(X)$;
   \item If $(x_0,x_1)\in \Q_{S}(X)$, then $(x_0,x_1,x_0,x_1)\in \Q_{S,T}(X)$; If $(x_0,x_1)\in \Q_{T}(X)$, then $(x_0,x_0,x_1,x_1)\in \Q_{S,T}(X)$;
   \item $(x,y)\in \Q_{R}(X)\Leftrightarrow (y,x)\in \Q_{R}(X)$ for all $x,y\in X$, where $R$ is either $S$ or is $T$.
  \end{enumerate}
\end{prop}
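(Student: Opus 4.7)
The proposition collects six elementary properties of $\Q_{S,T}(X)$, and my plan is to verify each directly from the definition, leaning on the fact that $\Q_{S,T}(X)$ is by construction a closed set containing the orbit $\{(x, S^n x, T^m x, S^n T^m x) : x \in X,\ n,m \in \Z\}$. None of the six items looks like the real obstacle; the whole proposition should fall out of bookkeeping with the generating tuples.

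For (1), I would just take $n = m = 0$ in the generating set. For (2), I would check invariance on the generating tuples and then pass to the closure. Applying $\id \times S \times \id \times S$ to $(x, S^n x, T^m x, S^n T^m x)$ sends it to $(x, S^{n+1} x, T^m x, S^{n+1} T^m x)$, which is a tuple of the same form; similarly for $\id \times \id \times T \times T$; and for $(S^a T^b)^\Delta \in G^\Delta$ I would set $x' = S^a T^b x$ and observe that the image equals $(x', S^n x', T^m x', S^n T^m x')$. Since $\mathcal{G}_{S,T}$ is generated by these elements, invariance on the dense generating set plus continuity of each group element gives invariance on $\Q_{S,T}(X)$.

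For (3), I would reindex: writing a generating tuple as $(x, S^n x, T^m x, S^n T^m x)$, putting $y = T^m x$ shows $(T^m x, S^n T^m x, x, S^n x) = (y, S^n y, T^{-m} y, S^n T^{-m} y)$, which is again of the required form, and analogously for the swap of $(x_0,x_1)$ with $(x_2,x_3)$; rearranging the middle coordinates exhibits the tuple as $(x, T^m x, S^n x, T^m S^n x)$, which lies in the generating set for $\Q_{T,S}(X)$. Taking limits handles the closure. Item (4) is immediate since $\pi_0 \times \pi_1$ of the generating tuple is $(x, S^n x)$ and $\pi_0 \times \pi_2$ is $(x, T^m x)$, and each projection is continuous.

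For (5), given $(x_0, x_1) = \lim_k (z_k, S^{n_k} z_k) \in \Q_S(X)$, I would note that $(z_k, S^{n_k} z_k, T^0 z_k, S^{n_k} T^0 z_k)$ lies in the generating set for $\Q_{S,T}(X)$ and converges to $(x_0, x_1, x_0, x_1)$; the analogous argument with the roles of $S$ and $T$ exchanged handles the $\Q_T(X)$ case (using (3) if desired). Finally, for (6), if $(x, y) = \lim_k (z_k, R^{n_k} z_k)$, then setting $w_k = R^{n_k} z_k$ gives $(w_k, R^{-n_k} w_k) \to (y, x)$, which exhibits $(y, x)$ as a limit point of the generating set. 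The only thing to be careful about throughout is that invariance and symmetry statements are first verified on the explicit generating tuples and then transferred to the closure by continuity; once that template is in place, each of the six items is a one-line reindexing.
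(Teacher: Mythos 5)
Your proof is correct and matches the paper's intent exactly: the paper gives no proof, stating only that the proposition "follows immediately from the definitions," and your direct verification on the generating tuples followed by passage to the closure is precisely that argument spelled out. (The only cosmetic slip is in item (3), where the second case you label "the swap of $(x_0,x_1)$ with $(x_2,x_3)$" is actually the swap within each pair, i.e.\ $(x_1,x_0,x_3,x_2)$, but the reindexing you indicate handles it correctly.)
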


It is easy to see that $(\Q_{S,T}(X), \mathcal{G}_{S,T})$ is a topological dynamical system. Moreover, we have:

\begin{prop} \label{Q_STMinimal}
 Let $(X,S,T)$ be a minimal system with commuting transformations $S$ and $T$. Then $(\Q_{S,T}(X), \mathcal{G}_{S,T})$ is a minimal system. Particularly, taking $R$ to be either $S$ or $T$, $\Q_R(X)$ is minimal under the action generated by $\id\times R$ and $g\times g$ for $g \in G$.
\end{prop}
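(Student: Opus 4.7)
My plan is to reduce the minimality of $(\Q_{S,T}(X),\mathcal{G}_{S,T})$ to showing that every minimal $\mathcal{G}_{S,T}$-invariant closed subset $Y\subseteq\Q_{S,T}(X)$ must intersect the diagonal $\Delta^{(4)}=\{(x,x,x,x):x\in X\}$. The reduction is straightforward: for any fixed $x_0\in X$, one has $\overline{\mathcal{G}_{S,T}\cdot(x_0,x_0,x_0,x_0)}=\Q_{S,T}(X)$, because the orbit of a diagonal point is exactly $\{(gx_0,gS^nx_0,gT^mx_0,gS^nT^mx_0):g\in G,\,n,m\in\Z\}$ and by minimality of $(X,G)$ the points $gx_0$ are dense in $X$, making the orbit dense in the defining set of cube points. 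Hence the orbit closure of any single diagonal point is all of $\Q_{S,T}(X)$, and so once one diagonal point is located inside a minimal subsystem $Y$, invariance forces $Y=\Q_{S,T}(X)$.

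To find a diagonal point inside $Y$, I would fix $p=(a_0,a_1,a_2,a_3)\in Y$ and use the definition of $\Q_{S,T}(X)$ to write $p=\lim_k(x_k,S^{n_k}x_k,T^{m_k}x_k,S^{n_k}T^{m_k}x_k)$. Applying the face-group element $h_k=(\id,S^{-n_k},T^{-m_k},S^{-n_k}T^{-m_k})\in\mathcal{F}_{S,T}\subseteq\mathcal{G}_{S,T}$ sends each approximating cube point to a diagonal point $(x_k,x_k,x_k,x_k)$, which converges to $(a_0,a_0,a_0,a_0)\in\Delta^{(4)}$, and sends $p$ to $h_kp\in Y$. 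After passing to a subsequence, $h_kp$ converges to some $q=(a_0,b_1,b_2,b_3)\in Y$. The goal is to identify $q$ with the diagonal limit, i.e., to show $b_1=b_2=b_3=a_0$. I would extract a further subsequence along which $S^{-n_k}$ and $T^{-m_k}$ converge pointwise in the enveloping semigroup $E(X,G)$ to elements $u$ and $v$ (which commute, since $S$ and $T$ do), giving $b_1=u(a_1),\,b_2=v(a_2),\,b_3=uv(a_3)$.

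The hard step is then to establish $u(a_1)=v(a_2)=uv(a_3)=a_0$. This is the main obstacle: since $(X,G)$ is only assumed minimal and not necessarily distal, pointwise limits in $E(X,G)$ do not in general respect sequential convergence in $X$, so these equalities are not automatic. To push through, one should exploit the fact that $(a_0,a_1)\in\Q_S(X)$ and $(a_0,a_2)\in\Q_T(X)$ (cf.~Proposition~\ref{sym}), witnessed precisely by the sequences $x_k$ and $S^{n_k}x_k$ (resp.\ $T^{m_k}x_k$), which exhibits these as proximal pairs; the Auslander--Ellis theorem, or equivalently a minimal idempotent in $E(X,G)$ that fixes $a_0$, can then be used to collapse these pairs and produce the required equalities. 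Once one diagonal point $(a_0,a_0,a_0,a_0)\in Y$ is obtained, the reduction completes the proof. Finally, the ``particularly'' clause follows immediately by pushing forward via the projection $(x_0,x_1,x_2,x_3)\mapsto(x_0,x_1)$ (for $R=S$) or $(x_0,x_2)$ (for $R=T$); this is a continuous surjection equivariant with respect to the quotient homomorphism $\mathcal{G}_{S,T}\to\langle\id\times R,\,G^{\Delta}\rangle$, so minimality of the domain transfers to minimality of the image.
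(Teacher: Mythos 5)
Your reduction is fine: the $\mathcal{G}_{S,T}$-orbit closure of any diagonal point $(x_0,x_0,x_0,x_0)$ is indeed all of $\Q_{S,T}(X)$ (continuity of $x\mapsto(x,S^nx,T^mx,S^nT^mx)$ plus density of $Gx_0$), so it suffices to locate a diagonal point in every minimal subsystem $Y$. The problem is the step you yourself flag as the main obstacle: you never actually close it, and the fix you sketch rests on a false premise. You claim that $(a_0,a_1)\in\Q_S(X)$, ``witnessed'' by $x_k\to a_0$ and $S^{n_k}x_k\to a_1$, exhibits $(a_0,a_1)$ as a proximal pair. It does not: $\Q_S(X)$ is merely the closure of $\{(x,S^nx)\}$ and can be all of $X\times X$ even when the proximal relation is trivial (an irrational rotation, where $\Q_S=\T\times\T$ but the system is distal, is already a counterexample). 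So the Auslander--Ellis/minimal-idempotent collapsing you invoke has nothing to act on, and the equalities $u(a_1)=v(a_2)=uv(a_3)=a_0$ remain unproved. Without them you only know that $h_kp$ subconverges to some $(a_0,b_1,b_2,b_3)\in Y$, which is no progress.

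The paper avoids this difficulty entirely by arguing in the enveloping semigroup of the cube space rather than chasing limits of points: it takes a minimal idempotent $u$ of $E(\Q_{S,T}(X),G^{\Delta})$ and shows, via the criterion ``$vu=v\Rightarrow uv=u$'' applied coordinatewise through the projections $\pi_i^{\ast}$, that $u$ is still a minimal idempotent of the larger semigroup $E(\Q_{S,T}(X),\mathcal{G}_{S,T})$. Since $u$ can be chosen to fix $(x,x,x,x)$, every diagonal point is a minimal point for $\mathcal{G}_{S,T}$, and your own reduction then finishes the proof. If you want to salvage your route, you would need an argument of essentially this strength (e.g.\ lifting a minimal idempotent fixing $a_0$ to one acting on all four coordinates compatibly); the proximality shortcut is not available. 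Your final paragraph on the ``particularly'' clause is correct.
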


 \begin{proof} We use results on the enveloping semigroups, and defer the definitions and basic properties to Appendix A.

 The proof is similar to the one given in page 46 of \cite{Glasner} for some similar diagonal actions. Let $E(\Q_{S,T}(X),\mathcal{G}_{S,T})$ be the enveloping semigroup of $(\Q_{S,T}(X),\mathcal{G}_{S,T})$. For $i=0,1,2,3$, let $\pi_i\colon\Q_{S,T}(X)\to X$ be the projection onto the $i$-th coordinate and let $\pi_i^{\ast}\colon E(\Q_{S,T}(X),\mathcal{G}_{S,T})\to E(X,G)$ be the induced factor map.

 Let $u\in E(\Q_{S,T}(X),G^{\Delta})$ denote a minimal idempotent. We show that $u$ is also a minimal idempotent in $E(\Q_{S,T}(X),\mathcal{G}_{S,T})$. By Theorem \ref{Enveloping1}, it suffices to show that if $v\in  E(\Q_{S,T}(X),\mathcal{G}_{S,T})$ with $vu=v$, then $uv=u$. Projecting onto the corresponding coordinates, we deduce that $\pi^{\ast}_i(vu)=\pi_{i}^{\ast}(v)\pi_i^{\ast}(u)=\pi_i^{\ast}(v)$ for $i=0,1,2,3$. It is clear that the projection of a minimal idempotent to $E(\Q_{S,T}(X),G^{\Delta})$ is a minimal idempotent in $E(X,G)$. Since $\pi_{i}^{\ast}(v)\pi_{i}^{\ast}(u)=\pi_{i}^{\ast}(v)$, by Theorem \ref{Enveloping1} we deduce that $\pi_{i}^{\ast}(u)\pi_{i}^{\ast}(v)=\pi_{i}^{\ast}(u)$ for $i=0,1,2,3$. Since the projections onto the coordinates determine
an element of $E(\Q_{S,T}(X),\mathcal{G}_{S,T})$, we have that $uv=u$. Thus we conclude that $u$ is a minimal idempotent in $E(\Q_{S,T}(X),\mathcal{G}_{S,T})$.

 For any $x\in X$, $(x,x,x,x)$ is a minimal point under $G^{\Delta}$. So there exists a minimal idempotent $u\in E(\Q_{S,T}(X),G^{\Delta})$ such that $u(x,x,x,x)=(x,x,x,x)$. Since $u$ is also a minimal idempotent in $E(\Q_{S,T}(X),\mathcal{G}_{S,T})$, the point $(x,x,x,x)$ is minimal under $\mathcal{G}_{S,T}$. Since the orbit closure of $(x,x,x,x)$ under $\mathcal{G}_{S,T}$ is $\Q_{S,T}(X)$, we have that $(\Q_{S,T}(X),\mathcal{G}_{S,T})$ is a minimal system.

 The fact that $\Q_R(X)$ is minimal follows immediately by taking projections.
\end{proof}

We remark that $\bold{K}^{x_{0}}_{S,T}$ is invariant under $\widehat{S}\coloneqq S\times \id \times S$ and $\widehat{T}\coloneqq \id \times T \times T$. We let $\mathcal{F}_{S,T}^{x_0}$ denote the action spanned by $\widehat{S}$ and $\widehat{T}$. We note that $(\bold{K}^{x_{0}}_{S,T},\mathcal{F}_{S,T}^{x_0})$ is not necessarily minimal, even if $X$ is minimal (the minimality of $\bold{K}^{x_{0}}_{S,T}$ implies the minimality of $\mathcal{O}_S(x_0)$ under $S$ and the minimality of $\mathcal{O}_T(x_0)$ under $T$, which does not always hold). See the examples in Section 4.

The following lemma follows from the definitions:
\begin{lem}
 Let $\pi\colon Y\to X$ be a factor map between two minimal systems $(Y,S,T)$ and $(X,S,T)$ with commuting transformations $S$ and $T$. Then $\pi\times\pi\times\pi\times \pi(\Q_{S,T}(Y))=\Q_{S,T}(X)$. Therefore, $\pi\times \pi(\Q_{S}(Y))=\Q_S(X)$ and $\pi\times \pi(\Q_{T}(Y))=\Q_T(X)$.
\end{lem}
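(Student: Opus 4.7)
The plan is to prove the two inclusions $\pi^{4}(\Q_{S,T}(Y))\subseteq \Q_{S,T}(X)$ and $\Q_{S,T}(X)\subseteq \pi^{4}(\Q_{S,T}(Y))$ separately, where I write $\pi^4\coloneqq \pi\times\pi\times\pi\times \pi$, and then deduce the two ``therefore" equalities by post-composing with the appropriate coordinate projections.

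For the forward inclusion, I would start from a generating point $(y, S^{n}y, T^{m}y, S^{n}T^{m}y)$ of $\Q_{S,T}(Y)$. Since $\pi$ intertwines $S$ and $T$ on $Y$ with $S$ and $T$ on $X$, its image under $\pi^{4}$ is $(\pi(y), S^{n}\pi(y), T^{m}\pi(y), S^{n}T^{m}\pi(y))$, which is a generating point of $\Q_{S,T}(X)$. Because $\pi^{4}$ is continuous and $\Q_{S,T}(X)$ is closed, taking closures gives $\pi^{4}(\Q_{S,T}(Y))\subseteq \Q_{S,T}(X)$.

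For the reverse inclusion, the key observation is that $\Q_{S,T}(Y)$ is closed in the compact space $Y^{4}$, hence compact, so $\pi^{4}(\Q_{S,T}(Y))$ is a compact, and therefore closed, subset of $X^{4}$. It now suffices to show that it contains the dense generating set of $\Q_{S,T}(X)$. Given any $x\in X$ and $n,m\in \Z$, since $\pi$ is onto, we may pick $y\in \pi^{-1}(x)$. Then $(y,S^{n}y,T^{m}y,S^{n}T^{m}y)\in \Q_{S,T}(Y)$ maps under $\pi^{4}$ to $(x,S^{n}x,T^{m}x,S^{n}T^{m}x)$. So $\pi^{4}(\Q_{S,T}(Y))$ is a closed set containing every generating point of $\Q_{S,T}(X)$, hence contains $\Q_{S,T}(X)$.

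For the second claim, observe that for the projection $\pi_{0}\times \pi_{1}\colon X^{4}\to X^{2}$ onto the first two coordinates one has the commutation $(\pi\times \pi)\circ (\pi_{0}\times \pi_{1})=(\pi_{0}\times \pi_{1})\circ \pi^{4}$. Applying $\pi_{0}\times \pi_{1}$ to the equality $\pi^{4}(\Q_{S,T}(Y))=\Q_{S,T}(X)$ and using the definitions $\pi_{0}\times \pi_{1}(\Q_{S,T}(Y))=\Q_{S}(Y)$ and $\pi_{0}\times \pi_{1}(\Q_{S,T}(X))=\Q_{S}(X)$ yields $(\pi\times \pi)(\Q_{S}(Y))=\Q_{S}(X)$. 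The same argument with $\pi_{0}\times \pi_{2}$ yields $(\pi\times \pi)(\Q_{T}(Y))=\Q_{T}(X)$. The proof is essentially a compactness--continuity argument and does not contain a genuine obstacle; the only point requiring a moment's care is noting that we must take closed images to conclude, which is legitimate because $\Q_{S,T}(Y)$ is compact.
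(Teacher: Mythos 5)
Your proof is correct and is precisely the standard compactness--continuity argument that the paper leaves implicit (it states the lemma ``follows from the definitions'' and gives no proof). Both inclusions and the projection step are handled properly, and you rightly use only the surjectivity of $\pi$ rather than minimality.
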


Associated to the cube structure, we define a relation in $X$ as was done in ~\cite{HKM} with cubes associated to a $\mathbb{Z}$-system.
This is the main relation we study in this paper:

\begin{defn} Let $(X,S,T)$ be a minimal system with commuting transformations $S$ and $T$. We define
\begin{equation}\nonumber
  \begin{split}
  &\mathcal{R}_{S}(X)=\{(x,y)\in X\times X\colon (x,y,a,a)\in\bold{Q}_{S,T}(X) \text{ for some } a\in X\};
  \\&\mathcal{R}_{T}(X)=\{(x,y)\in X\times X\colon (x,b,y,b)\in\bold{Q}_{S,T}(X) \text{ for some } b\in X\};
  \\&\mathcal{R}_{S,T}(X)=\mathcal{R}_{S}(X)\cap\mathcal{R}_{T}(X).
  \end{split}
\end{equation}
\end{defn}

It then follows from (3) of Proposition \ref{sym} that $\mathcal{R}_{S}(X),\mathcal{R}_{T}(X),\mathcal{R}_{S,T}(X)$ are symmetric relations, i.e. $(x,y)\in A$ if and only if $(y,x)\in A$ for all $x,y\in X$, where $A$ is $\mathcal{R}_{S}(X),\mathcal{R}_{T}(X)$ or $\mathcal{R}_{S,T}(X)$.
It is worth noting that in the case $S=T$, $\mathcal{R}_{S,T}(X)$ is the regionally proximal relation $\bold{RP}^{[1]}$ defined in ~\cite{HKM}.

Using these definitions, our main Theorem \ref{R-ProductExtension} can be rephrased as (we postpone the proof to Section 3.4):

\begin{thm*} 
 Let $(X,S,T)$ be a minimal system with commuting transformations $S$ and $T$. The following are equivalent:
   \begin{enumerate}
    \item $(X,S,T)$ is a factor of a product system;
    \item If ${\bf x}$ and ${\bf y} \in \Q_{S,T}(X)$ have three coordinates in common, then ${\bf x}={\bf y}$;
     \item $\mathcal{R}_{S}(X)=\Delta_{X}$;
     \item $\mathcal{R}_{T}(X)=\Delta_{X}$;
     \item $\mathcal{R}_{S,T}(X)=\Delta_{X}$.
   \end{enumerate}
\end{thm*}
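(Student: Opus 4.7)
The strategy is to close the cycle $(1)\Rightarrow(2)\Rightarrow\{(3),(4)\}\Rightarrow(5)\Rightarrow(1)$; the deep step is $(5)\Rightarrow(1)$, which uses the magic extension from Proposition~\ref{MagicExtension}, while the other implications follow from direct manipulation of the cube structure. For $(2)\Rightarrow(3)$ I would use a doubling trick: if $(x,y,a,a)\in\Q_{S,T}(X)$, Proposition~\ref{sym}(4) gives $(y,a)\in\Q_T(X)$ and Proposition~\ref{sym}(5) then yields $(y,y,a,a)\in\Q_{S,T}(X)$; the cubes $(x,y,a,a)$ and $(y,y,a,a)$ share the last three coordinates, so (2) forces $x=y$. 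The implication $(2)\Rightarrow(4)$ is symmetric, and $(3)\Rightarrow(5),(4)\Rightarrow(5)$ are immediate from $\mathcal{R}_{S,T}=\mathcal{R}_S\cap\mathcal{R}_T$.

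For $(1)\Rightarrow(2)$, let $\pi\colon(Z\times W,\sigma\times\id,\id\times\tau)\to(X,S,T)$ be a factor map from a product, and replace $Z\times W$ by a minimal subsystem $Y$ with $\pi(Y)=X$. A direct inspection of the basic cubes in $Y$ shows that every element of $\Q_{S,T}(Y)$ has the parallelogram form $((z_0,w_0),(z_1,w_0),(z_0,w_1),(z_1,w_1))$, so the fourth coordinate is uniquely determined by the other three. Given two cubes $\mathbf{x},\mathbf{y}\in\Q_{S,T}(X)$ agreeing on three coordinates, I would lift both to $\Q_{S,T}(Y)$ via $\pi^{\times 4}(\Q_{S,T}(Y))=\Q_{S,T}(X)$, then use the $\mathcal{G}_{S,T}$-minimality of $\Q_{S,T}(Y)$ from Proposition~\ref{Q_STMinimal} together with a limit argument to replace one lift by a $\mathcal{G}_{S,T}$-iterate whose three corresponding coordinates exactly match those of the other lift; the parallelogram rigidity then forces the fourth coordinates to agree upstairs, hence downstairs.

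The heart of the proof is $(5)\Rightarrow(1)$, for which I would pass to a minimal magic extension $\tilde\pi\colon(\tilde X,S,T)\to(X,S,T)$ via Proposition~\ref{MagicExtension} and identify $\tilde X$ with a product system. Let $\mathcal{T}$ be the smallest closed $G$-invariant equivalence relation on $\tilde X$ containing $\{(\tilde x,T\tilde x)\colon\tilde x\in\tilde X\}$, and set $\tilde X_S=\tilde X/\mathcal{T}$; by construction $T$ acts trivially on $\tilde X_S$. Symmetrically define $\tilde X_T=\tilde X/\mathcal{S}$, on which $S$ acts trivially. Then $\tilde X_S\times\tilde X_T$ equipped with $S\times\id$ and $\id\times T$ is a product system, and $\Phi(\tilde x):=([\tilde x]_{\mathcal{T}},[\tilde x]_{\mathcal{S}})$ is a continuous $G$-equivariant map $\tilde X\to\tilde X_S\times\tilde X_T$. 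If $\Phi$ is a bijection then $\tilde X$ is isomorphic to the product, and composing with $\tilde\pi$ exhibits $X$ as a factor of a product.

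The main obstacle is showing that $\Phi$ is bijective. Injectivity I would derive from the magic property, which should force $\mathcal{T}\cap\mathcal{S}\subseteq\mathcal{R}_{S,T}(\tilde X)$, combined with the fact that $\tilde\pi$ pushes $\mathcal{R}_{S,T}(\tilde X)$ into $\mathcal{R}_{S,T}(X)=\Delta_X$; then $\Phi(\tilde x)=\Phi(\tilde y)$ yields $\tilde\pi(\tilde x)=\tilde\pi(\tilde y)$, and a further appeal to the magic property on fibres of $\tilde\pi$ concludes $\tilde x=\tilde y$. Surjectivity is a joint lifting statement—given $[a]\in\tilde X_S$ and $[b]\in\tilde X_T$, produce $\tilde x\in\tilde X$ with $[\tilde x]_{\mathcal{T}}=[a]$ and $[\tilde x]_{\mathcal{S}}=[b]$—which I expect to follow from minimality of $\tilde X$ together with the joint lifting features built into the magic extension. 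Verifying these two properties of the magic extension is precisely what motivates its definition, and is the delicate technical point of the proof.
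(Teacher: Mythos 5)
Your overall cycle and the easy implications are sound: the doubling trick for $(2)\Rightarrow(3)$ is exactly the paper's argument (the paper doubles $x$ rather than $y$, producing $(x,x,a,a)$, but this is immaterial), and the reductions $(3)\Rightarrow(5)$, $(4)\Rightarrow(5)$ are trivial as you say. Two of your steps, however, have genuine gaps. In $(1)\Rightarrow(2)$, the claim that minimality of $\Q_{S,T}(Y)$ lets you replace one lift by a $\mathcal{G}_{S,T}$-iterate whose three corresponding coordinates \emph{exactly} match those of the other lift is not achievable: the two lifts live in possibly different fibres of $\pi$, and moving one by the group action changes its image under $\pi$, so you can never force literal agreement upstairs. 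The paper's argument is subtler: it applies to the second lift the \emph{same} sequences $\sigma^{n_i},\tau^{m_i}$ that generate the first cube, obtains a new cube with the same $\pi$-image $(x_0,x_1,x_2,x_3)$, and then uses minimality of the one-dimensional factors $(Y,\sigma)$ and $(W,\tau)$ to propagate the single equality $\pi(y_1',w_0')=\pi(y_1'',w_0')$ to $\pi(y_1',w)=\pi(y_1'',w)$ for all $w$ (and symmetrically in the other variable), which is what forces $x_3=x_3'$.

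The more serious gap is in $(5)\Rightarrow(1)$, where your construction genuinely departs from the paper's. Your injectivity step requires $\mathcal{T}\cap\mathcal{S}\subseteq\mathcal{R}_{\widehat S}(\tilde X)\cap\mathcal{R}_{\widehat T}(\tilde X)$, and the only handle the magic property gives is the identity $\mathcal{R}_{\widehat S}\cap\mathcal{R}_{\widehat T}=\Q_{\widehat S}\cap\Q_{\widehat T}$. But $\mathcal{T}$ is the \emph{generated} closed invariant equivalence relation, and $\Q_{\widehat T}$ is in general not transitive outside the distal setting (the paper proves the gluing Lemma \ref{glue} only for distal systems and cites \cite{TuYe} for its failure otherwise), so $\mathcal{T}$ may strictly contain $\Q_{\widehat T}$ and $\mathcal{T}\cap\mathcal{S}$ is not controlled by $\Q_{\widehat S}\cap\Q_{\widehat T}$. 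The paper avoids quotients altogether: using Lemma \ref{R_Magic} and hypothesis $(5)$ it shows directly that $\Q_{\widehat S}(Y)\cap\Q_{\widehat T}(Y)$ equals the diagonal on the magic extension $Y\subseteq\bold{K}^{x_0}_{S,T}$, invokes Lemma \ref{Q_ST-R_S} to conclude $\mathcal{R}_{\widehat S}(Y)=\Delta_Y$, and then runs $(3)\Rightarrow(1)$ on $Y$; there the product extension is realized \emph{explicitly} as the projection of $\bold{K}^{x_0}_{S,T}$ onto its first two coordinates, i.e.\ $\mathcal{O}_S(x_0)\times\mathcal{O}_T(x_0)$, with injectivity of that projection proved from $\mathcal{R}_S(X)=\Delta_X$ via the minimality statement of Lemma \ref{MinimalFace}. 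If you want to keep your quotient construction, you would need to prove independently that $\mathcal{T}$ and $\mathcal{S}$ do not exceed the coordinate fibrations of $\bold{K}^{x_0}_{S,T}$, which is essentially equivalent to the paper's argument; as written, the injectivity of $\Phi$ is an unproved assertion at exactly the point where the theorem's content lies.
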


\begin{rem}
  In the case where $(X,S,T)=(Y\times W,\sigma\times \id,\id\times \tau)$ is exactly a product system, we have that $$\Q_{S,T}(X)=\left \{(y_1,w_1),(y_2,w_1),(y_1,w_2),(y_2,w_2)\colon ~ y_1,y_2 \in Y, ~ w_1,w_2 \in W \right \}.$$

  In this case, $\mathcal{R}_{S,T}(X)=\Delta_{X}$ holds for trivial reasons. Suppose that $((y_{1},w_{1}),(y_{2},w_{2}))\in\mathcal{R}_{S,T}(X)$ for some $(y_{1},w_{1})$, $(y_{2},w_{2})\in X$. Since $((y_{1},w_{1}),(y_{2},w_{2}))\in\mathcal{R}_{S}(X)$, there exists $a\in X$ such that $((y_{1},w_{1}),(y_{2},w_{2}),a,a)\in\Q_{S}(X)$. Therefore $w_2=w_1$ and $(y_1,w_2)=a=(y_2,w_2)$, which implies that $y_1=y_2$. Thus $\mathcal{R}_{S,T}(X)=\Delta_{X}$.
\end{rem}

\subsection{Magic systems}
We construct an extension of a system with commuting transformations which behaves like a product system
for use in the sequel.
Following the terminology introduced in \cite{H} in the ergodic setting, we introduce the notion of a magic system in the topological setting:

\begin{defn}
   A minimal system $(X,S,T)$ with commuting transformations $S$ and $T$ is called a {\it magic system} if $\mathcal{R}_{S}(X)\cap \mathcal{R}_{T}(X)=\Q_S(X)\cap\Q_{T}(X)$.
\end{defn}

We remark that the inclusion in one direction always holds:

\begin{lem}
  Let $(X,S,T)$ be a system with commuting transformations $S$ and $T$. Then $\mathcal{R}_S(X)\cap \mathcal{R}_T(X) \subseteq \Q_S(X)\cap \Q_T(X)$.
\end{lem}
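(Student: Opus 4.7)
The plan is to unpack the definitions and apply part (4) of Proposition \ref{sym} directly. Take any $(x,y) \in \mathcal{R}_S(X) \cap \mathcal{R}_T(X)$. By definition of $\mathcal{R}_S(X)$, there is some $a \in X$ with $(x,y,a,a) \in \Q_{S,T}(X)$; by definition of $\mathcal{R}_T(X)$, there is some $b \in X$ with $(x,b,y,b) \in \Q_{S,T}(X)$.

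Next I would invoke part (4) of Proposition \ref{sym}, which says that if $(x_0,x_1,x_2,x_3) \in \Q_{S,T}(X)$, then $(x_0,x_1),(x_2,x_3) \in \Q_S(X)$ and $(x_0,x_2),(x_1,x_3) \in \Q_T(X)$. Applying this to $(x,y,a,a)$ yields $(x,y) \in \Q_S(X)$, and applying it to $(x,b,y,b)$ yields $(x,y) \in \Q_T(X)$. Hence $(x,y) \in \Q_S(X) \cap \Q_T(X)$, which is exactly the desired inclusion.

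There is essentially no obstacle here: the statement follows by matching the coordinates prescribed by the definitions of $\mathcal{R}_S$ and $\mathcal{R}_T$ against the side-projection property of $\Q_{S,T}$ already recorded in Proposition \ref{sym}. The content of the lemma is really just the observation that one of the two inclusions in the definition of a magic system is automatic; the interesting direction (that $\Q_S(X) \cap \Q_T(X) \subseteq \mathcal{R}_S(X) \cap \mathcal{R}_T(X)$) is what distinguishes magic systems and will require additional work elsewhere.
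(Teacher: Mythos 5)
Your proof is correct and follows the same route as the paper: unpack the definitions of $\mathcal{R}_S(X)$ and $\mathcal{R}_T(X)$ to get the two cube points $(x,y,a,a)$ and $(x,b,y,b)$, then project onto the appropriate pairs of coordinates (Proposition \ref{sym}(4)) to conclude $(x,y)\in\Q_S(X)\cap\Q_T(X)$. No issues.
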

\begin{proof}
  Suppose $(x,y)\in\mathcal{R}_S(X)\cap \mathcal{R}_T(X)$. Then in particular $(x,y)\in\mathcal{R}_S(X)$. So there exists $a\in X$ such that $(x,y,a,a)\in \Q_{S,T}(X)$. Taking the projections onto the first two coordinates, we have that $(x,y)\in \Q_{S}(X)$. Similarly, $(x,y)\in \Q_{T}(X)$, and so $\mathcal{R}_S(X)\cap \mathcal{R}_T(X) \subseteq \Q_S(X)\cap \Q_T(X)$.
\end{proof}

 In general, not every system with commuting transformations is magic. In fact, $\mathcal{R}_S(X)\cap \mathcal{R}_T(X)$ and $\Q_S(X)\cap \Q_T(X)$ may be very different. For example, let $(\mathbb{T}=\mathbb{R}/\mathbb{Z},T)$ be a rotation on the circle given by $Tx=x+\alpha \mod 1$ for all $x\in\mathbb{T}$, where $\alpha$ is an irrational number. Then $\Q_T(\mathbb{T})\cap \Q_T(\mathbb{T})=\mathbb{T}\times \mathbb{T}$. But $\mathcal{R}_T(\mathbb{T})\cap \mathcal{R}_T(\mathbb{T})=\{(x,x)\in \mathbb{T}^{2}\colon x\in \mathbb{T}\}$ (here we take $S=T$).
  However, we can always regard a minimal system with commuting transformations as a factor of a magic system:

\begin{prop}[Magic extension] \label{MagicExtension}
 Let $(X,S,T)$ be a minimal system with commuting transformations $S$ and $T$. Then $(X,S,T)$ admits a minimal magic extension, meaning it has an extension which is a minimal magic system.
\end{prop}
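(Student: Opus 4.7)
The plan is to construct the magic extension as a minimal subsystem of the cube space itself. Fix any $x_{0}\in X$, and by Zorn's lemma pick a minimal $\mathcal{F}^{x_{0}}_{S,T}$-invariant subset $\widetilde{X}\subseteq \bold{K}^{x_{0}}_{S,T}$. The commuting generators $\widehat{S}=S\times\id\times S$ and $\widehat{T}=\id\times T\times T$ of $\mathcal{F}^{x_{0}}_{S,T}$ restrict to commuting homeomorphisms $\widetilde{S},\widetilde{T}$ of $\widetilde{X}$, making $(\widetilde{X},\widetilde{S},\widetilde{T})$ a minimal system with commuting transformations.

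To check that $\widetilde{X}$ is an extension of $X$, let $\pi\colon \widetilde{X}\to X$ be the projection onto the third coordinate. Since $\widehat{S}$ acts as $S$ on this coordinate and $\widehat{T}$ as $T$, the map $\pi$ intertwines the two pairs of commuting transformations. Its image is a nonempty closed $G$-invariant subset of $X$, which equals $X$ by minimality; hence $\pi$ is a factor map.

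It remains to verify the magic property. Let $(\bold{z},\bold{w})\in \Q_{\widetilde{S}}(\widetilde{X})\cap \Q_{\widetilde{T}}(\widetilde{X})$. Since $\widetilde{S}$ fixes the second coordinate and $\widetilde{T}$ the first, $\bold{z}$ and $\bold{w}$ agree in their first two coordinates; write $\bold{z}=(a,b,c_{1})$ and $\bold{w}=(a,b,c_{2})$. We must produce witnesses $\bold{A},\bold{B}\in \widetilde{X}$ with $(\bold{z},\bold{w},\bold{A},\bold{A}),(\bold{z},\bold{B},\bold{w},\bold{B})\in \Q_{\widetilde{S},\widetilde{T}}(\widetilde{X})$. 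Unwinding the definition of the cube space, one seeks sequences $\bold{u}^{(k)}\in \widetilde{X}$ and $n_{k},m_{k}\in\mathbb{Z}$ with $\bold{u}^{(k)}\to \bold{z}$, $\widetilde{S}^{n_{k}}\bold{u}^{(k)}\to \bold{w}$, and both $\widetilde{T}^{m_{k}}\bold{u}^{(k)}$ and $\widetilde{S}^{n_{k}}\widetilde{T}^{m_{k}}\bold{u}^{(k)}$ converging to the same point $\bold{A}$. The constraint $\widetilde{X}\subseteq \bold{K}^{x_{0}}_{S,T}$ (so each element of $\widetilde{X}$ is coupled via the cube relation with the fixed base point $x_{0}$) provides the coupling between the three coordinates needed to select such sequences, with the symmetric argument yielding $\bold{B}$.

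The main obstacle is this last step: one must choose a single pair of sequences $(n_{k},m_{k})$ so that the second- and fourth-coordinate limits coincide at one point $\bold{A}$, exploiting simultaneously the recurrence of $a$ under $S$, the witness for $(c_{1},c_{2})\in \Q_{S}(X)$, and the cube constraint in $\Q_{S,T}(X)$. If the direct construction does not immediately yield a magic system, one iterates transfinitely, replacing $X$ by $\widetilde{X}$ and repeating. The cardinality of all minimal extensions of $X$ is bounded by that of the enveloping semigroup of $X$, so a Zorn-type argument on the poset of minimal extensions stabilizes at a minimal magic extension of $(X,S,T)$.
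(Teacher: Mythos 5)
Your setup (a minimal subsystem of $\bold{K}^{x_0}_{S,T}$, with the projection onto the last coordinate as the factor map, and the observation that $\Q_{\widehat S}$- and $\Q_{\widehat T}$-related points of the extension must agree in their first two coordinates) matches the paper's. But the step you flag as ``the main obstacle'' is precisely where the proof lives, and your proposal does not close it. The paper does not take an arbitrary $x_0$: it invokes Glasner's theorem on topological ergodic decompositions (Section 4 of \cite{Glasner2}) to choose $x_0$ so that $\Q_S[x_0]=\mathcal{O}_S(x_0)$ and $\Q_T[x_0]=\mathcal{O}_T(x_0)$, i.e.\ the $\Q_S$- and $\Q_T$-cells of $x_0$ are exactly its $S$- and $T$-orbit closures. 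This is what makes the witness constructible: writing the base point of the minimal subsystem as $\vec z=(z_1,z_2,x_0)$, one only knows a priori that $(z_1,x_0)\in\Q_T(X)$, and the special choice of $x_0$ upgrades this to $z_1\in\overline{\mathcal{O}_T(x_0)}$, so that $z_1$ can be approximated by an actual iterate $T^{m_0}x_0$. That approximation is what lets one apply a genuine power $\widehat T^{\,m_0-m}$ to a point of $Y$ and land near a point of the form $(x_1,a,x_1)$, producing the required cube $(\vec x,\vec y,\vec u,\vec u)$. With an arbitrary $x_0$, ``the cube constraint provides the coupling'' is not an argument, and the construction genuinely fails.

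The fallback you offer (iterate transfinitely and apply Zorn to a poset of minimal extensions) does not repair this. You specify no order, no reason a maximal or ``stabilized'' element of such a poset would satisfy $\mathcal{R}_{\widehat S}\cap\mathcal{R}_{\widehat T}=\Q_{\widehat S}\cap\Q_{\widehat T}$, and no argument that the magic property passes to inverse limits of the iteration; the cardinality bound you cite plays no logical role. The fix is to replace ``fix any $x_0$'' by the Glasner-type choice of a (generic, $G_\delta$-many) base point and then run the explicit $\epsilon$--$\delta$ construction of the witness as above.
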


\begin{proof}
By Section 4 of \cite{Glasner2}, we can find a point $x_{0}\in X$ such that $\Q_S[x_0]\coloneqq \{x\in X\colon (x_0,x)\in \Q_S(X)\}$ and $\Q_T[x_0]\coloneqq \{x\in X\colon (x_0,x)\in \Q_T(X)\}$ coincide with $\mathcal{O}_S(x_0)$ and $\mathcal{O}_T(x_0)$ respectively (moreover, the set of such points is a $G_{\delta}$ set).

  Let $Y$ be a minimal subsystem of the system $(\bold{K}^{x_0}_{S,T},\widehat{S},\widehat{T})$, where $\widehat{S}=S\times \id\times S$, $\widehat{T}=\id\times T\times T$. Since the projection onto the last coordinate defines a factor map from $(Y,\widehat{S},\widehat{T})$ to $(X,S,T)$, there exists a minimal point of $Y$ of the form $\vec{z}=(z_{1},z_{2},x_0)$. Hence, $Y$ is the orbit closure of $(z_1,z_2,x_0)$ under $\widehat{S}$ and $\widehat{T}$. We claim that $(Y,\widehat{S},\widehat{T})$ is a magic extension of $(X,S,T)$.

  It suffices to show that for any $\vec{x}=(x_{1},x_{2},x_{3}),\vec{y}=(y_{1},y_{2},y_{3})\in Y$,
  $(\vec{x},\vec{y})\in \Q_{\widehat{S}}(Y)\cap\Q_{\widehat{T}}(Y)$ implies that
  $(\vec{x},\vec{y})\in \mathcal{R}_{\widehat{S}}(Y)\cap\mathcal{R}_{\widehat{T}}(Y)$.
  Since $(\vec{x},\vec{y})\in \Q_{\widehat{S}}(Y)$ and the second coordinate of $Y$ is invariant under $\widehat{S}$, we get that $x_{2}=y_{2}$. Similarly, $(\vec{x},\vec{y})\in \Q_{\widehat{T}}(Y)$ implies that $x_{1}=y_{1}$.

  We recall that $d(\cdot,\cdot)$ is a metric in $X$ defining its topology. Let $\e>0$.
  Since $(\vec{x},\vec{y}) \in \Q_{\widehat{S}}(Y)$, there exists $\vec{x'}=(x'_{1},x'_{2},x'_{3})\in Y$ and $ n_{0}\in\mathbb{Z}$ such that
  $d(x_{i},x'_{i}) <\e$ for $i=1,2,3$ and that $d(S^{n_{0}}x'_{1},x_{1})<\e$, $d(S^{n_{0}}x'_{3},y_{3})<\e$. Let $0<\delta<\epsilon$ be such that if $x,y\in X$ and $d(x,y)<\delta$, then $d(S^{n_{0}}x,S^{n_{0}}y)<\epsilon$.

  Since $\vec{x'}\in Y$, there exist $n,m\in\mathbb{Z}$ such that $d(x'_{1},S^{n}z_{1})$, $d(x'_{2},T^{m}z_{2})$, $d(x'_{3},S^{n}T^{m}x_0)<\d$. Then $d(S^{n_{0}}x'_{1},S^{n_{0}+n}z_{1})$, $d(S^{n_{0}}x'_{3},S^{n_{0}+n}T^{m}x_0)<\e$.

  Let $0<\d'<\d$ be such that if $x,y\in X$ and $d(x,y)<\d'$, then $d(S^{n}x,S^{n}y)<\d$. Since $\vec{z}\in \bold{K}^{x_{0}}_{S,T}$, we have that $(z_{1},x_0)\in\Q_{T}[x_0]$. By assumption, there exists $m_{0}\in\mathbb{Z}$ such that $d(T^{m_{0}}x_0,z_{1})<\d'$. Then $d(S^{n}T^{m_{0}}x_0,S^{n}z_{1})<\d$ and $d(S^{n+n_{0}}T^{m_{0}}x_0,S^{n+n_{0}}z_{1})<\e$.

  Denote $\vec{z'}=(S^{n}z_{1},T^{m}z_{2},S^{n}T^{m}x_0)\in Y$. Then the distance between
  \begin{equation}\nonumber
  \begin{split}
  (\vec{z'},\widehat{S}^{n_{0}}\vec{z'},\widehat{T}^{m_{0}-m}\vec{z'},\widehat{S}^{n_{0}}\widehat{T}^{m_{0}-m}\vec{z'})
  \end{split}
  \end{equation}
  and the corresponding coordinates of
   $w=(\vec{x},\vec{y},\vec{u},\vec{u})$ is smaller than $C\epsilon$ for some uniform constant $C>0$, where $\vec{u}=(x_{1},a,x_{1})$ for some $a\in X$ (the existence of $a$ follows by passing to a subsequence). We conclude that $(\vec{x},\vec{y})\in \mathcal{R}_{\widehat{S}}(Y)$. Similarly $(\vec{x},\vec{y})\in \mathcal{R}_{\widehat{T}}(Y)$.
\end{proof}

Moreover, if $(X,S,T)$ is a system with commuting transformations $S$ and $T$ and $(Y,\widehat{S},\widehat{T})$ is the magic extension described in Proposition \ref{MagicExtension}, we have:
\begin{cor}
If $((x_1,x_2,x_3),(x_1,x_2,y_3))\in\Q_{\widehat{S}}(Y)$, then $((x_1,x_2,x_3),(x_1,x_2,y_3))\in \mathcal{R}_{\widehat{S}}(Y)$.
\end{cor}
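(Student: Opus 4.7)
The plan is to observe that the proof of Proposition \ref{MagicExtension} actually establishes a slightly stronger statement than is used there: the construction of a witness $\vec{u}$ for the relation $\mathcal{R}_{\widehat{S}}(Y)$ only requires (a) membership in $\Q_{\widehat{S}}(Y)$ and (b) that the two points have matching first coordinates. In Proposition \ref{MagicExtension}, equality of the first coordinates was deduced from the additional hypothesis $(\vec{x},\vec{y})\in\Q_{\widehat{T}}(Y)$; in our corollary it is given directly since the two points are written as $(x_1,x_2,x_3)$ and $(x_1,x_2,y_3)$. So the proof is simply a rerun of that argument.

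First I would record the easy observation that since the second coordinate of $Y$ is invariant under $\widehat{S}=S\times\id\times S$ and $(\vec{x},\vec{y})\in\Q_{\widehat{S}}(Y)$, the two second coordinates must agree, matching what is written. Next, fix $\varepsilon>0$. Using $(\vec{x},\vec{y})\in\Q_{\widehat{S}}(Y)$ pick $\vec{x'}=(x_1',x_2',x_3')\in Y$ and $n_0\in\Z$ with $\vec{x'}$ within $\varepsilon$ of $\vec{x}$ and $\widehat{S}^{n_0}\vec{x'}=(S^{n_0}x_1',x_2',S^{n_0}x_3')$ within $\varepsilon$ of $(x_1,x_2,y_3)$. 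Since $Y=\overline{\mathcal{O}_{\widehat{S},\widehat{T}}(z_1,z_2,x_0)}$, choose $n,m\in\Z$ with $(S^nz_1,T^mz_2,S^nT^mx_0)$ sufficiently close to $\vec{x'}$ that applying $S^{n_0}$ preserves $\varepsilon$-closeness on the first and third coordinates.

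Now invoke the key property of $x_0$ used in Proposition \ref{MagicExtension}: $(z_1,x_0)\in\Q_T[x_0]=\mathcal{O}_T(x_0)$, so there is $m_0\in\Z$ with $T^{m_0}x_0$ as close to $z_1$ as we wish (in particular, close enough that after applying $S^n$ and then $S^{n_0}$ the resulting estimates are all bounded by a uniform multiple of $\varepsilon$). Set $\vec{z'}=(S^nz_1,T^mz_2,S^nT^mx_0)\in Y$ and consider the $(\widehat{S},\widehat{T})$-cube
\[
\bigl(\vec{z'},\;\widehat{S}^{n_0}\vec{z'},\;\widehat{T}^{m_0-m}\vec{z'},\;\widehat{S}^{n_0}\widehat{T}^{m_0-m}\vec{z'}\bigr)\in\Q_{\widehat{S},\widehat{T}}(Y).
\]
By the choices above its first two entries are within $O(\varepsilon)$ of $\vec{x}$ and $(x_1,x_2,y_3)$ respectively, while the third and fourth entries have their first and third coordinates within $O(\varepsilon)$ of $x_1$ (the first coordinates of both given points), their middle coordinates being $T^{m_0}z_2$.

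Finally, let $\varepsilon\to 0$, extract a subsequence along which the common middle coordinate $T^{m_0}z_2$ converges to some $a\in X$, and pass to the limit. The third and fourth coordinates converge to a common point $\vec{u}=(x_1,a,x_1)$, which lies in $Y$ since $Y$ is closed, and we obtain
\[
\bigl((x_1,x_2,x_3),\;(x_1,x_2,y_3),\;\vec{u},\;\vec{u}\bigr)\in\Q_{\widehat{S},\widehat{T}}(Y),
\]
which is exactly the statement that $((x_1,x_2,x_3),(x_1,x_2,y_3))\in\mathcal{R}_{\widehat{S}}(Y)$. There is no genuine obstacle here; the only technical point, already handled in the proof of Proposition \ref{MagicExtension}, is the nested choice of moduli $\delta,\delta'$ needed because the modulus of continuity of $S^n$ depends on $n$.
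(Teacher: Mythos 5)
Your proposal is correct and is exactly the paper's intended argument: the paper states this corollary without a separate proof because, as you observe, the construction in Proposition \ref{MagicExtension} only uses membership in $\Q_{\widehat{S}}(Y)$ together with equality of the first coordinates (which there was extracted from the extra hypothesis $(\vec{x},\vec{y})\in\Q_{\widehat{T}}(Y)$, and here is given directly). Your rerun of the argument, including the witness $\vec{u}=(x_1,a,x_1)$ obtained as the common limit of the third and fourth cube entries, matches the paper's construction step for step.
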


The following lemma is proved implicitly in Proposition \ref{MagicExtension}. We state it here for use in the sequel:

\begin{lem} \label{R_Magic}
 Let $(X,S,T)$ be a minimal system with commuting transformations $S$ and $T$. Let $(Y,\widehat{S},\widehat{T})$ be the magic extension given by Proposition \ref{MagicExtension} and let $\vec{x}=(x_1,x_2,x_3)$, $\vec{y}=(y_1,y_2,y_3)$ be points in $Y$. For $R$ being either $S$ or $T$, if $(\vec{x},\vec{y})\in \mathcal{R}_{\widehat{R}}(Y)$ then $x_1=y_1$, $x_2=y_2$ and $(x_3,y_3)\in \mathcal{R}_R(X)$.
\end{lem}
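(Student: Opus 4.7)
The plan is to unpack the definition of $\mathcal{R}_{\widehat{R}}(Y)$ and then exploit two structural features of the magic extension $Y\subseteq \bold{K}^{x_0}_{S,T}$: the first coordinate of any point of $Y$ is fixed by $\widehat{T}=\id\times T\times T$, and the second coordinate is fixed by $\widehat{S}=S\times\id\times S$. Combined with the observation that the projection $\pi_3\colon Y\to X$ onto the third coordinate is a factor map from $(Y,\widehat{S},\widehat{T})$ onto $(X,S,T)$, this will let me read off the three conclusions by projecting the cube witness onto suitable pairs of coordinates.

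I will treat the case $R=S$ first. By the definition of $\mathcal{R}_{\widehat{S}}(Y)$, pick $\vec{a}=(a_1,a_2,a_3)\in Y$ with $(\vec{x},\vec{y},\vec{a},\vec{a})\in \Q_{\widehat{S},\widehat{T}}(Y)$. Projecting onto the first and second entries of the $4$-tuple gives $(\vec{x},\vec{y})\in \Q_{\widehat{S}}(Y)$; since $\Q_{\widehat{S}}(Y)$ is the closure of pairs $(\vec{z},\widehat{S}^n\vec{z})$ and $\widehat{S}$ acts trivially on the middle coordinate, this forces $x_2=y_2$. Projecting onto the first--third and second--fourth entries gives $(\vec{x},\vec{a}),(\vec{y},\vec{a})\in \Q_{\widehat{T}}(Y)$; since $\widehat{T}$ acts trivially on the first coordinate, this forces $x_1=a_1$ and $y_1=a_1$, whence $x_1=y_1$.

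For the remaining conclusion, I apply the factor-map lemma from Section 3.1 to $\pi_3\colon (Y,\widehat{S},\widehat{T})\to(X,S,T)$: pushing $(\vec{x},\vec{y},\vec{a},\vec{a})$ through $\pi_3\times\pi_3\times\pi_3\times\pi_3$ yields $(x_3,y_3,a_3,a_3)\in\Q_{S,T}(X)$, which by the definition of $\mathcal{R}_S$ gives $(x_3,y_3)\in \mathcal{R}_S(X)$.

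The case $R=T$ runs in parallel, starting from a cube witness $(\vec{x},\vec{b},\vec{y},\vec{b})\in \Q_{\widehat{S},\widehat{T}}(Y)$: the first--third projection yields $(\vec{x},\vec{y})\in\Q_{\widehat{T}}(Y)$, hence $x_1=y_1$; the first--second and third--fourth projections give $(\vec{x},\vec{b}),(\vec{y},\vec{b})\in \Q_{\widehat{S}}(Y)$, hence $x_2=b_2$ and $y_2=b_2$, so $x_2=y_2$; and the same $\pi_3\times\pi_3\times\pi_3\times\pi_3$ push-forward produces $(x_3,b_3,y_3,b_3)\in \Q_{S,T}(X)$, so $(x_3,y_3)\in \mathcal{R}_T(X)$. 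There is no genuine obstacle here—the only thing to track carefully is which coordinate of $Y$ is invariant under which hatted transformation—and indeed the authors' remark that the lemma is ``proved implicitly in Proposition \ref{MagicExtension}'' confirms that the substance is already contained in the previous proof.
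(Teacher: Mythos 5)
Your proof is correct and follows essentially the same route as the paper, which establishes this lemma implicitly inside the proof of Proposition \ref{MagicExtension} by the very observations you use: $\widehat S$ fixes the second coordinate and $\widehat T$ fixes the first, so the relevant projections of the cube witness force $x_1=y_1$ and $x_2=y_2$, and pushing the witness through the factor map $\pi_3$ lands it in $\Q_{S,T}(X)$ with the required pattern. Your handling of the first-coordinate equality via $x_1=a_1=y_1$ (rather than assuming $(\vec x,\vec y)\in\Q_{\widehat T}(Y)$ directly) is the right way to adapt the paper's argument to the hypothesis $(\vec x,\vec y)\in\mathcal{R}_{\widehat R}(Y)$.
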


\subsection{Partially distal systems}
We first recall the definition of proximal pairs and distality:
\begin{defn}
  Let $(X,G_{0})$ be a topological dynamical system, where $G_{0}$ is an arbitrary group action. We say that two points $x,y\in X$ are {\it proximal} if there exists a sequence $(g_i)_{i\in\mathbb{N}}$ in $G_{0}$ such that $\lim_{i\to\infty}d(g_ix,g_iy)=0$.
  
A topological dynamical system is {\it distal} if the proximal relation coincides with the diagonal relation $\Delta_X=\{(x,x)\in X\times X\colon x \in X\}$. Equivalently, $(X,G_{0})$ is distal if $x\neq y$ implies that $$\inf_{g \in G_{0}} d(gx,gy)>0.$$
\end{defn}

We introduce a definition of partial distality, which can be viewed as a generalization of distality, and is the main ingredient in the proof of Theorem \ref{R-ProductExtension}.

Let $(X,S,T)$ be a minimal system with commuting transformations $S$ and $T$. For $R$ being either $S$ or $T$, let $P_R(X)$ be the set of proximal pairs under $R$.

\begin{defn}Let $(X,S,T)$ be a minimal system with commuting transformations $S$ and $T$.  We say that $(X,S,T)$ is {\it partially distal} if $\Q_S(X)\cap P_T(X)=\Q_T(X)\cap P_S(X)=\Delta_X$.
\end{defn}
We remark that when $S=T$, partial distality coincides with distality.
If $\Q_S(X)$ is an equivalence relation on $X$, then the system $(X,S,T)$ being partially distal implies that the quotient map $X\to X/\Q_S(X)$ is a distal extension between the systems $(X,T)$ and $(X/\Q_S(X),T)$.

The following lemma allows us to lift a minimal idempotent in $E(X,G)$ to a minimal idempotent in $E(X^4,\mathcal{F}_{S,T})$. Recall that taking $R$ to be either $S$ or $T$, if $u\in E(X,R)$ is an idempotent, then $(x,ux)\in P_R(X)$ for all $x\in X$ (Theorem \ref{Enveloping2}).

\begin{lem} \label{LiftingIdempotent}
 Let $(X,S,T)$ be a minimal system with commuting transformations $S$ and $T$, and let $u\in E(X,G)$ be a minimal idempotent. Then there exists a minimal idempotent $\widehat{u}\in E(X^4,\mathcal{F}_{S,T})$ of the form $\widehat{u}=(e,u_S,u_T,u)$, where $u_S \in E(X,S)$ and $u_T \in E(X,T)$ are minimal idempotents. Moreover, if $(X,S,T)$ is partially distal, we have that $u_Su=u_Tu=u$.
\end{lem}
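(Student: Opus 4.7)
My plan is to lift the minimal idempotent $u$ through the three natural projection maps from $E(X^{4},\mathcal{F}_{S,T})$: namely $\pi_{S},\pi_{T},\pi_{G}$ obtained by taking the action on the second, third and fourth coordinates respectively, with values in $E(X,S)$, $E(X,T)$ and $E(X,G)$. Each is a continuous, surjective semigroup homomorphism of compact right-topological semigroups, and because every element of $\mathcal{F}_{S,T}$ fixes the first coordinate, every $\widehat{v}\in E(X^{4},\mathcal{F}_{S,T})$ is automatically of the shape $(e,\pi_{S}(\widehat{v}),\pi_{T}(\widehat{v}),\pi_{G}(\widehat{v}))$.

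Given the minimal idempotent $u$, the preimage $A:=\pi_{G}^{-1}(u)\subseteq E(X^{4},\mathcal{F}_{S,T})$ is closed and, because $u^{2}=u$, is closed under multiplication; hence it is itself a compact right-topological semigroup and contains a minimal idempotent $\widehat{u}$. Write $\widehat{u}=(e,u_{S},u_{T},u)$. The key point is that $\widehat{u}$ is then already a minimal idempotent of the full semigroup $E(X^{4},\mathcal{F}_{S,T})$. To verify this I will use the characterization of Theorem \ref{Enveloping1}: given an idempotent $\widehat{v}$ with $\widehat{v}\widehat{u}=\widehat{v}$, applying $\pi_{G}$ yields $\pi_{G}(\widehat{v})u=\pi_{G}(\widehat{v})$, so minimality of $u$ forces $u\pi_{G}(\widehat{v})=u$, i.e.\ $\widehat{u}\widehat{v}\in A$. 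A short direct computation using $\widehat{v}^{2}=\widehat{v}$ shows that $\widehat{u}\widehat{v}$ is in fact an idempotent of $A$ with $(\widehat{u}\widehat{v})\widehat{u}=\widehat{u}\widehat{v}$; applying Theorem \ref{Enveloping1} inside $A$ then forces $\widehat{u}(\widehat{u}\widehat{v})=\widehat{u}$, i.e.\ $\widehat{u}\widehat{v}=\widehat{u}$. For $u_{S}$ and $u_{T}$ to be minimal in their respective enveloping semigroups, I invoke the standard fact that a surjective continuous homomorphism $\pi$ of compact right-topological semigroups sends minimal left ideals to minimal left ideals (if $L$ is minimal in $E_{1}$ and $a\in L$, then $L=E_{1}a$, so $\pi(L)=E_{2}\pi(a)$, which is then minimal), and hence sends minimal idempotents to minimal idempotents.

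For the moreover statement, assume partial distality. Since $(x,x,x,x)\in\Q_{S,T}(X)$ for every $x$ and $\Q_{S,T}(X)$ is $\mathcal{F}_{S,T}$-invariant (Proposition \ref{sym}), applying $\widehat{u}$ gives $(x,u_{S}x,u_{T}x,ux)\in\Q_{S,T}(X)$. Substituting $ux$ for $x$ and using $u^{2}=u$ yields $(ux,u_{S}ux,u_{T}ux,ux)\in\Q_{S,T}(X)$. Reading off coordinates via Proposition \ref{sym} (4) gives $(ux,u_{S}ux)\in\Q_{T}(X)$ and $(ux,u_{T}ux)\in\Q_{S}(X)$. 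On the other hand, since $u_{S}$ and $u_{T}$ are idempotents in $E(X,S)$ and $E(X,T)$, Theorem \ref{Enveloping2} gives $(ux,u_{S}ux)\in P_{S}(X)$ and $(ux,u_{T}ux)\in P_{T}(X)$. Partial distality then forces $u_{S}ux=ux$ and $u_{T}ux=ux$ for every $x\in X$, i.e.\ $u_{S}u=u_{T}u=u$.

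The main obstacle I anticipate is the transfer of minimality from $A$ to the ambient semigroup in the second paragraph, since minimal idempotents of a closed subsemigroup need not be minimal in the ambient semigroup; it is precisely the minimality of $u$ in $E(X,G)$, combined with the idempotent manipulation $\widehat{u}\widehat{v}\in A$ and $(\widehat{u}\widehat{v})\widehat{u}=\widehat{u}\widehat{v}$, that makes this upgrade go through.
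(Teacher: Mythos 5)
Your proof is correct, and for the statement at hand it follows the paper's strategy: lift $u$ to a minimal idempotent of $E(X^4,\mathcal{F}_{S,T})$ over the last coordinate, observe that the first coordinate is fixed and the other projections of a minimal idempotent are minimal idempotents, and then derive the ``moreover'' part exactly as the paper does, from $(ux,u_Sux,u_Tux,ux)\in\Q_{S,T}(X)$, Proposition \ref{sym}(4), Theorem \ref{Enveloping2}(2) and partial distality. The one place where you genuinely diverge is the lifting step itself: the paper simply invokes Proposition \ref{liftingIdempotent}, whose appendix proof first passes to a minimal left ideal $J$ in the orbit closure of an arbitrary lift and then extracts an idempotent from the fibre $\phi^{-1}(u)\cap J$ (automatically minimal because it sits in $J$); you instead take a minimal idempotent of the closed fibre subsemigroup $A=\pi_G^{-1}(u)$ and upgrade its minimality to the ambient semigroup via the $vu=v\Rightarrow uv=u$ characterization, in the spirit of the paper's proof of Proposition \ref{Q_STMinimal}. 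That upgrade is sound: your element $\widehat{u}\widehat{v}$ is indeed an idempotent of $A$ satisfying $(\widehat{u}\widehat{v})\widehat{u}=\widehat{u}\widehat{v}$, and the characterization of minimal idempotents remains valid when one quantifies only over idempotents $\widehat{v}$ (if $L\subseteq E\widehat{u}$ is a minimal left ideal and $v\in L$ is an idempotent, then $v\widehat{u}=v$, so your hypothesis gives $\widehat{u}v=\widehat{u}\in L$), and it applies verbatim to the compact right-topological semigroup $A$ even though $A$ is not literally an enveloping semigroup. These are mild extensions of the literal wording of Theorem \ref{Enveloping1}(3), at the same level of rigor the paper itself uses; your approach buys a self-contained argument at the cost of redoing standard semigroup algebra that the appendix packages once.
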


\begin{proof}
For $i=0,1,2,3,$ let $\pi_i$ be the projection from $X^4$ onto the $i$-th coordinate and let $\pi_i^{\ast}$ be the induced factor map in the enveloping semigroups. Hence $\pi_2^{\ast}\colon E(X^4,\mathcal{F}_{S,T})\to E(X,S)$, $\pi_3^{\ast}\colon E(X^4,\mathcal{F}_{S,T})\to E(X,T)$, and $\pi_4^{\ast}\colon E(X^4,\mathcal{F}_{S,T})\to E(X,G)$ are factor maps. By Proposition \ref{liftingIdempotent}, we can find a minimal idempotent $\widehat{u}\in E(X^4,\mathcal{F}_{S,T})$ such that $\pi_4^{\ast}(\widehat{u})=u$. Since the projection of a minimal idempotent is a minimal idempotent,  $\widehat{u}$ can be written in the form $\widehat{u}=(e,u_S,u_T,u),$ where $u_S\in E(X,S)$ and $u_T\in E(X,T)$ are minimal idempotents.

Now suppose that $(X,S,T)$ is partially distal. Let $u\in E(X,G)$ and $\widehat{u}=(e,u_S,u_T,u)\in (X^4,\mathcal{F}_{S,T})$ be minimal idempotents in the corresponding enveloping semigroups. Note that $(ux,u_Sux,u_Tux,uux)=(ux,u_Sux,u_Tux,ux)\in \Q_{S,T}(X)$ for all $x\in X$. So we have that $(ux,u_Sux)\in P_S(X)\cap \Q_T(X)$ and $(ux,u_Tux)\in P_T(X)\cap\Q_S(X)$. Thus $u_Sux=u_Tux=ux$ for all $x\in X$ since $X$ is partially distal. This finishes the proof.
\end{proof}

 \begin{cor} \label{Kminimal}
 Let $(X,S,T)$ be a partially distal system with commuting transformations $S$ and $T$. Then for every $x\in X$, the system $(\bold{K}^x_{S,T},\widehat{S}=S\times \id\times S,\widehat{T}=\id\times T\times T)$ with commuting transformations $\widehat{S}$ and $\widehat{T}$ is a minimal system. Moreover, $(\bold{K}^x_{S,T},\widehat{S},\widehat{T})$ is a magic extension of $(X,S,T)$.
\end{cor}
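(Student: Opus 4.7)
The plan is to combine the enveloping-semigroup machinery of Lemma~\ref{LiftingIdempotent} with the construction from Proposition~\ref{MagicExtension}, applied now to the full space $\bold{K}^x_{S,T}$ rather than to a minimal subsystem of it.

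Fix $x \in X$. Since $(X,G)$ is minimal, $x$ is a minimal point, so there is a minimal idempotent $u \in E(X,G)$ with $ux=x$. Applying Lemma~\ref{LiftingIdempotent}, I lift $u$ to a minimal idempotent $\widehat{u} = (e,u_S,u_T,u) \in E(X^4,\mathcal{F}_{S,T})$, and partial distality yields $u_Su = u_Tu = u$. Evaluating at $x$ gives $u_Sx = (u_Su)x = ux = x$, and similarly $u_Tx = x$, so $\widehat{u}(x,x,x,x) = (x,x,x,x)$. Hence $(x,x,x,x)$ is a minimal point of $(X^4,\mathcal{F}_{S,T})$, and its orbit closure is $\{x\}\times\bold{K}^x_{S,T}$ (the first coordinate is fixed by $\mathcal{F}_{S,T}$, and the last three trace out $\bold{K}^x_{S,T}$ by definition). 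This proves that $(\bold{K}^x_{S,T},\widehat{S},\widehat{T})$ is minimal.

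For the second assertion, the third-coordinate projection $\pi_3\colon \bold{K}^x_{S,T}\to X$ intertwines $(\widehat{S},\widehat{T})$ with $(S,T)$ and is surjective by minimality of $X$, so it is a factor map. To show $Y \coloneqq \bold{K}^x_{S,T}$ is magic I would re-run the approximation argument of Proposition~\ref{MagicExtension} using as base minimal point $\vec{z}=(x,x,x)\in Y$, which is minimal since all of $Y$ is. The only step in that proof that required the "good point" hypothesis $\Q_T[x_0]=\O_T(x_0)$ was the choice of $m_0$ with $T^{m_0}x_0$ close to $z_1$; here $z_1 = x$, so $m_0 = 0$ does the job, and every other estimate goes through verbatim. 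Together with the symmetric argument for $\widehat{T}$, this yields $\Q_{\widehat{S}}(Y)\cap\Q_{\widehat{T}}(Y) \subseteq \mathcal{R}_{\widehat{S}}(Y)\cap\mathcal{R}_{\widehat{T}}(Y)$, which combined with the always-valid reverse inclusion gives the magic property.

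The main obstacle is the first step: it is precisely partial distality (via $u_Su = u_Tu = u$) that forces the lifted idempotent to fix the diagonal point $(x,x,x,x)$. Without this assumption the orbit closure of the diagonal could be a proper subsystem of $\{x\}\times\bold{K}^x_{S,T}$, and $\bold{K}^x_{S,T}$ could fail to be minimal -- the phenomenon already noted in the remark after Proposition~\ref{Q_STMinimal}. Once minimality is secured the magic property is essentially automatic, because the presence of the diagonal minimal point $(x,x,x)$ inside $Y$ trivializes the one nontrivial hypothesis in the proof of Proposition~\ref{MagicExtension}.
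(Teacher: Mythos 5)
Your proof is correct and follows essentially the same route as the paper's: lift a minimal idempotent fixing $x$ via Lemma~\ref{LiftingIdempotent}, use partial distality to get $u_Sx=u_Tx=x$ so that $(x,x,x)$ is a minimal point whose $\mathcal{F}_{S,T}^{x}$-orbit closure is all of $\bold{K}^x_{S,T}$, and then re-run the magic-extension argument. The paper merely says the magic part is ``similar to Proposition~\ref{MagicExtension}''; your observation that the diagonal base point $(x,x,x)$ trivializes the good-point hypothesis (take $m_0=0$) is exactly the detail that makes that claim work.
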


\begin{proof}
 Since $(X,S,T)$ is a minimal system, there exists a minimal idempotent $u\in E(X,G)$ such that $ux=x$. By Lemma \ref{LiftingIdempotent}, there exists a minimal idempotent $\widehat{u}\in E(X^4,\mathcal{F}_{S,T})$ such that $\widehat{u}(x,x,x)=(x,x,x)$, which implies that $(x,x,x)$ is a minimal point of $\bold{K}^x_{S,T}$.
 The proof that $(\bold{K}^x_{S,T},\widehat{S},\widehat{T})$ is a magic extension is similar to Proposition \ref{MagicExtension}.

\end{proof}

\begin{cor}
 Let $(X,S,T)$ be a partially distal system. Then $(X,S)$ and $(X,T)$ are pointwise almost periodic.
\end{cor}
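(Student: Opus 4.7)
The plan is to deduce this corollary directly from Lemma \ref{LiftingIdempotent}, using the standard characterization of almost periodic points via minimal idempotents in the enveloping semigroup: a point $x \in X$ generates a minimal subsystem under a group action $G_{0}$ if and only if there exists a minimal idempotent $u \in E(X,G_{0})$ with $ux = x$.

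Fix $x \in X$. Since $(X,G)$ is minimal (by the hypothesis that $(X,S,T)$ is minimal, which is built into partial distality), $x$ is an almost periodic point for the $G$-action, so we can pick a minimal idempotent $u \in E(X,G)$ with $ux = x$. By Lemma \ref{LiftingIdempotent}, applied in the partially distal setting, $u$ lifts to a minimal idempotent $\widehat{u} = (e,u_S,u_T,u) \in E(X^4, \mathcal{F}_{S,T})$ such that $u_S, u_T$ are minimal idempotents in $E(X,S)$ and $E(X,T)$ respectively, and moreover $u_S u = u_T u = u$.

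Applying the identity $u_S u = u$ to the point $x$ gives $u_S x = u_S(ux) = (u_S u)x = ux = x$. Thus $x$ is fixed by a minimal idempotent of $E(X,S)$, which means $x$ is an almost periodic point of $(X,S)$, i.e.\ $(\mathcal{O}_S(x), S)$ is a minimal system. The identical argument with $u_T u = u$ shows that $(\mathcal{O}_T(x), T)$ is minimal. Since $x$ was arbitrary, both $(X,S)$ and $(X,T)$ are pointwise almost periodic.

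There is essentially no obstacle here: the work has already been done in Lemma \ref{LiftingIdempotent}, where partial distality is used to force $u_Sux = u_Tux = ux$ via the partially distal dichotomy $\Q_S(X) \cap P_T(X) = \Q_T(X) \cap P_S(X) = \Delta_X$. The corollary is a one-line consequence once one notices that $ux = x$ propagates to $u_S x = x$ and $u_T x = x$.
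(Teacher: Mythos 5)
Your proof is correct and is essentially identical to the paper's: both fix $x$, take a minimal idempotent $u\in E(X,G)$ with $ux=x$, invoke Lemma \ref{LiftingIdempotent} to get minimal idempotents $u_S\in E(X,S)$, $u_T\in E(X,T)$ with $u_Su=u_Tu=u$, and conclude $u_Sx=u_Tx=x$, which by the enveloping-semigroup characterization gives pointwise almost periodicity. The paper merely states this more tersely; your extra step $u_Sx=(u_Su)x=ux=x$ is exactly the intended reading.
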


\begin{proof}
 By Lemma \ref{LiftingIdempotent}, for any $x\in X$, we can find minimal idempotents $u_S \in E(X,S)$ and $u_T \in E(X,T)$ such that $u_Sx=u_Tx=x$. This is equivalent to being pointwise almost periodic.
\end{proof}

\subsection{Proof of Theorem \ref{R-ProductExtension}}
Before completing the proof of Theorem \ref{R-ProductExtension}, we start with some lemmas:

\begin{lem} \label{temp}
For any minimal system $(X,S,T)$ with commuting transformations $S$ and $T$, $\Q_S(X)\cap P_T(X)\subseteq\mathcal{R}_S(X)$.
\end{lem}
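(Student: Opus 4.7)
The plan is short: use the symmetry item (5) of Proposition \ref{sym} to lift the $\Q_S$-pair to a cube, then exploit the $T$-proximality together with invariance under $\F_{S,T}$ to collapse the last two coordinates.

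More explicitly, let $(x,y) \in \Q_S(X) \cap P_T(X)$. First, by Proposition \ref{sym}(5), the relation $(x,y) \in \Q_S(X)$ immediately gives $(x,y,x,y) \in \Q_{S,T}(X)$. Next, since $(x,y) \in P_T(X)$, there exists a sequence $(m_i)_{i\in\N} \subseteq \Z$ such that $d(T^{m_i}x, T^{m_i}y) \to 0$. The element $\id \times \id \times T^{m_i} \times T^{m_i}$ belongs to $\F_{S,T}$, and $\Q_{S,T}(X)$ is invariant under $\G_{S,T} \supseteq \F_{S,T}$ by Proposition \ref{sym}(2); therefore
\[
(x,\, y,\, T^{m_i}x,\, T^{m_i}y) \in \Q_{S,T}(X) \quad \text{for every } i.
\]
By compactness of $X$, we may pass to a subsequence along which $T^{m_i}x \to a$ for some $a \in X$, and then $T^{m_i}y \to a$ as well by proximality. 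Passing to the limit in the closed set $\Q_{S,T}(X)$ yields $(x,y,a,a) \in \Q_{S,T}(X)$, which is exactly the definition of $(x,y) \in \mathcal{R}_S(X)$.

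There is no real obstacle here: the only ingredients are the symmetry properties of cubes already collected in Proposition \ref{sym}, the invariance of $\Q_{S,T}(X)$ under $\F_{S,T}$, and the definition of $T$-proximality. The key conceptual point is that having $(x,y) \in \Q_S(X)$ lets one start with a cube whose last two coordinates are precisely $(x,y)$, and then $T$-proximality acting diagonally on the last two coordinates via $\id\times\id\times T^{m_i}\times T^{m_i}$ is the correct operation to merge them into a repeated point $a$ without touching the first two coordinates $x,y$.
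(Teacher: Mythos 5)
Your proof is correct and follows essentially the same route as the paper: start from $(x,y,x,y)\in\Q_{S,T}(X)$ via Proposition \ref{sym}(5), apply $\id\times\id\times T^{m_i}\times T^{m_i}\in\F_{S,T}$ along a $T$-proximality sequence, and pass to a convergent subsequence to obtain $(x,y,a,a)\in\Q_{S,T}(X)$. The only difference is that you make explicit the $\F_{S,T}$-invariance step that the paper leaves implicit.
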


\begin{proof}
  Suppose $(x,y)\in\Q_S(X)\cap P_T(X)$. Since $(x,y)\in P_T(X)$, passing to a subsequence if necessary, there exists a sequence $(m_i)_{i\in \N}$ in $\Z$ such that $d(T^{m_i}x,T^{m_i}y)\to 0$. We can assume that $T^{m_i}x$ and $T^{m_i}y$ converge to $a\in X$. Since $(x,y)\in \Q_S(X)$, we have that $(x,y,x,y)\in \Q_{S,T}(X)$ and therefore $(x,y,T^{m_i}x,T^{m_i}y)\to (x,y,a,a)\in \Q_{S,T}(X)$. We conclude that $(x,y)\in \mathcal{R}_S(X)$.
\end{proof}

\begin{lem} \label{MinimalFace}
 Let $(X,S,T)$ be a minimal system with commuting transformations $S$ and $T$ such that $\mathcal{R}_S(X)=\Delta_X$. Then for every $x\in X$, $(\bold{K}^x_{S,T}, \widehat{S},\widehat{T})$ is a minimal system. Particularly, for every $x\in X$ we have that $(\mathcal{O}_S(x),S)$ and $(\mathcal{O}_T(x),T)$ are minimal systems.
\end{lem}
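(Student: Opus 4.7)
The plan is to show that $(x, x, x, x) \in X^4$ is an almost periodic point for the action of $\mathcal{F}_{S,T}$, and then project down. Since the projection $p\colon X^4 \to X^3$, $(a,b,c,d) \mapsto (b,c,d)$ is equivariant from $\mathcal{F}_{S,T}$ to $\mathcal{F}_{S,T}^{x}$, the image $(x,x,x) = p(x,x,x,x)$ will then be almost periodic for $\mathcal{F}_{S,T}^{x}$, and $\bold{K}^x_{S,T}$ (which is by definition the $\mathcal{F}_{S,T}^{x}$-orbit closure of $(x,x,x)$) will be minimal. Compared to the analogous Corollary \ref{Kminimal}, the role of partial distality will be played by the hypothesis $\mathcal{R}_S(X) = \Delta_X$, routed through Lemma \ref{temp}.

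First, pick a minimal idempotent $u \in E(X, G)$ with $u x = x$, which exists by minimality of $(X, G)$. By the appendix lifting result (Proposition \ref{liftingIdempotent}) applied to the equivariant projection $\pi_4\colon (X^4, \mathcal{F}_{S,T}) \to (X, G)$ onto the last coordinate, lift $u$ to a minimal idempotent $\widehat{u} \in E(X^4, \mathcal{F}_{S,T})$. As in Lemma \ref{R_Magic}, every element of $\mathcal{F}_{S,T}$ acts as the identity on the zeroth coordinate, so $\widehat{u}$ has the form $\widehat{u} = (e, u_S, u_T, u)$ with $u_S \in E(X, S)$ and $u_T \in E(X, T)$ minimal idempotents (the projection of a minimal idempotent is a minimal idempotent). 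Since the $\mathcal{F}_{S,T}$-orbit of $(x,x,x,x)$ is $\{(x, S^n x, T^m x, S^n T^m x)\colon n, m \in \Z\}$ whose closure sits inside $\Q_{S,T}(X)$, we have
\begin{equation}\nonumber
(x, u_S x, u_T x, x) \;=\; \widehat{u}(x,x,x,x) \;\in\; \Q_{S,T}(X).
\end{equation}

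Now apply the hypothesis $\mathcal{R}_S(X) = \Delta_X$ twice, in the correct order. Projecting the four-tuple onto the coordinates $(2,3)$, Proposition \ref{sym}(4) gives $(u_T x, x) \in \Q_S(X)$, and Proposition \ref{sym}(6) yields $(x, u_T x) \in \Q_S(X)$. Because $u_T$ is a minimal $T$-idempotent, Theorem \ref{Enveloping2} gives $(x, u_T x) \in P_T(X)$. Lemma \ref{temp} then places $(x, u_T x) \in \Q_S(X) \cap P_T(X) \subseteq \mathcal{R}_S(X) = \Delta_X$, whence $u_T x = x$. Substituting back, the four-tuple becomes $(x, u_S x, x, x)$, which has the form $(a, b, c, c)$. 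By the very definition of $\mathcal{R}_S(X)$, the pair $(x, u_S x)$ lies in $\mathcal{R}_S(X) = \Delta_X$, so $u_S x = x$ as well.

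Hence $\widehat{u}(x,x,x,x) = (x,x,x,x)$, so $(x,x,x,x)$ is almost periodic in $(X^4, \mathcal{F}_{S,T})$ and its $\mathcal{F}_{S,T}$-orbit closure is minimal. Pushing forward by the equivariant map $p$, the orbit closure of $(x,x,x) = p(x,x,x,x)$ is minimal, and this orbit closure is exactly $\bold{K}^x_{S,T}$. The ``particularly'' statement is then immediate: both $(\mathcal{O}_S(x), S)$ and $(\mathcal{O}_T(x), T)$ are factors of the minimal system $(\bold{K}^x_{S,T}, \mathcal{F}_{S,T}^x)$ via the first and second coordinate projections. I expect the subtle point to be the ordering in the third paragraph---one must deduce $u_T x = x$ \emph{before} $u_S x = x$, since it is the former (not the latter) that brings the four-tuple into the shape $(a,b,c,c)$ to which the hypothesis $\mathcal{R}_S(X)=\Delta_X$ directly applies.
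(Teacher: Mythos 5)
Your proof is correct and follows essentially the same route as the paper's: lift a minimal idempotent fixing $x$ to $\widehat{u}=(e,u_S,u_T,u)$ via Lemma \ref{LiftingIdempotent}, use $\Q_S(X)\cap P_T(X)\subseteq\mathcal{R}_S(X)=\Delta_X$ (Lemma \ref{temp}) to get $u_Tx=x$ first, then apply $\mathcal{R}_S(X)=\Delta_X$ to the resulting tuple $(x,u_Sx,x,x)$ to get $u_Sx=x$, and project. The ordering subtlety you flag is indeed the same one the paper's argument relies on.
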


\begin{proof}
Since $\mathcal{R}_S(X)=\Delta_X$, by Lemma \ref{temp}, we deduce that $\Q_S(X)\cap P_T(X)=\Delta_X$. For any $x\in X$, let $u\in E(X,G)$ be a minimal idempotent with $ux=x$ and let $(e,u_S,u_T,u)\in E(X^4,\mathcal{F}_{S,T})$ be a lift given by Lemma \ref{LiftingIdempotent}. Then $(x,u_Sx,u_Tx,ux)=(x,u_Sx,u_Tx,x)\in\Q_{S,T}(X)$. Projecting to the last two coordinates, we get that $(u_Tx,x)\in\Q_{S}(X)$. On the other hand, $(u_Tx,x)\in P_{T}(X)$ as $u_T\in E(X,T)$. Since $\Q_S(X)\cap P_T(X)=\Delta_X$, we deduce that $x=u_Tx$ and thus $(x,u_Sx,u_Tx,ux)=(x,u_Sx,x,x)$. Since $\mathcal{R}_S(X)=\Delta_X$, we have that $(u_Sx,u_Tx,ux)=(x,x,x)$ and this point is minimal.

The second statement follows by projecting $\bold{K}^x_{S,T}$ onto the two first coordinates.
\end{proof}

\begin{lem} \label{Q_ST-R_S}
 Let $(X,S,T)$ be a minimal system with commuting transformations $S$ and $T$. If $\Q_{S}(X)\cap\Q_{T}(X)=\Delta_X$, then $\mathcal{R}_{S}(X)=\Delta_X$.
\end{lem}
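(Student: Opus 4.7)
The plan is to prove that $\mathcal{R}_S(X)\subseteq \Q_T(X)$. Combined with $\mathcal{R}_S(X)\subseteq \Q_S(X)$ (which follows by projecting any defining cube to its first two coordinates, Proposition \ref{sym}(4)) and the hypothesis $\Q_S(X)\cap\Q_T(X)=\Delta_X$, this yields $\mathcal{R}_S(X)\subseteq\Delta_X$, and the reverse inclusion $\Delta_X\subseteq \mathcal{R}_S(X)$ is immediate from Proposition \ref{sym}(1).

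To establish $(x,y)\in\Q_T(X)$ whenever $(x,y)\in\mathcal{R}_S(X)$, pick $a\in X$ with $(x,y,a,a)\in\Q_{S,T}(X)$ and pass to a minimal magic extension $\pi\colon Y\to X$ given by Proposition \ref{MagicExtension}, with $Y\subseteq X^3$. The key point is that the hypothesis upgrades to $\Q_{\widehat{S}}(Y)\cap\Q_{\widehat{T}}(Y)=\Delta_Y$: any pair $(\vec u,\vec v)$ in $\Q_{\widehat{S}}(Y)$ must share its second coordinate (which is $\widehat{S}$-invariant in $Y\subseteq X^3$), any pair in $\Q_{\widehat{T}}(Y)$ must share its first coordinate (which is $\widehat{T}$-invariant), and projecting via $\pi\times\pi$ places the third coordinates of $\vec u,\vec v$ in $\Q_S(X)\cap \Q_T(X)=\Delta_X$. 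The magic property of $Y$ then gives $\mathcal{R}_{\widehat{S}}(Y)\cap\mathcal{R}_{\widehat{T}}(Y)=\Q_{\widehat{S}}(Y)\cap\Q_{\widehat{T}}(Y)=\Delta_Y$.

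Next, I lift $(x,y,a,a)$ to a cube $(\vec x,\vec y,\vec a_1,\vec a_2)\in \Q_{\widehat{S},\widehat{T}}(Y)$. Tracking the invariance of the first and second coordinates of $Y$ under $\widehat{T}$ and $\widehat{S}$ shows that $\vec a_1$ and $\vec a_2$ are forced to agree in two of their three coordinates. Applying the corollary following Proposition \ref{MagicExtension} (pairs in $\Q_{\widehat{S}}(Y)$ sharing the first two coordinates already sit in $\mathcal{R}_{\widehat{S}}(Y)$) and its $\widehat{T}$-symmetric version pins $(\vec x,\vec y)$ into $\mathcal{R}_{\widehat{S}}(Y)\cap\mathcal{R}_{\widehat{T}}(Y)=\Delta_Y$, so $\vec x=\vec y$ and in particular $x=\pi(\vec x)=\pi(\vec y)=y$.

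The hardest step is the lift: the two repeated coordinates $a$ of the cube in $X$ may split into genuinely distinct fibre points $\vec a_1\neq\vec a_2$ in $Y$, obstructing the naive identification of the lift with an element of $\mathcal{R}_{\widehat{S}}(Y)$. Overcoming this requires leveraging the concrete realization of $Y$ as a minimal subsystem of $\bold K^{x_0}_{S,T}$ for the distinguished $x_0$ with $\Q_S[x_0]=\mathcal{O}_S(x_0)$ and $\Q_T[x_0]=\mathcal{O}_T(x_0)$, whose rigidity supplies the identification of $\vec a_1$ and $\vec a_2$ up to the $\mathcal{R}_{\widehat{S}}$-modifications granted by the corollary.
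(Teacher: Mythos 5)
Your route through the magic extension does not close, for two concrete reasons. First, the endgame is unjustified: from the single lifted cube $(\vec x,\vec y,\vec a_1,\vec a_2)\in\Q_{\widehat{S},\widehat{T}}(Y)$ the most you could ever extract is $(\vec x,\vec y)\in\mathcal{R}_{\widehat{S}}(Y)$ (and that only if you manage to force $\vec a_1=\vec a_2$). The hypothesis $(x,y)\in\mathcal{R}_S(X)$ is one-sided, and nothing in your argument produces a cube of the form $(\vec x,\vec b,\vec y,\vec b)$, so you have no access to $\mathcal{R}_{\widehat{T}}(Y)$; knowing $\mathcal{R}_{\widehat{S}}(Y)\cap\mathcal{R}_{\widehat{T}}(Y)=\Delta_Y$ is then useless. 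The statement you would actually need — that $\Q_{\widehat{S}}(Y)\cap\Q_{\widehat{T}}(Y)=\Delta_Y$ forces $\mathcal{R}_{\widehat{S}}(Y)=\Delta_Y$ — is precisely the lemma under proof, applied to $Y$; this is how the paper itself uses the lemma in the step $(5)\Rightarrow(1)$ of Theorem \ref{R-ProductExtension}, so your argument is circular at exactly the point where all the content lies. Second, the cited corollary does not apply to $(\vec a_1,\vec a_2)$: tracking invariances shows these two points agree in their \emph{second} coordinate ($\widehat{S}$-invariance) and their \emph{third} coordinate (both project to $a$), but their first coordinates are the first coordinates of $\vec x$ and $\vec y$ respectively and need not coincide; the corollary and its $\widehat{T}$-analogue require agreement in the \emph{first two} coordinates together with membership in $\Q_{\widehat{S}}(Y)$ (resp.\ $\Q_{\widehat{T}}(Y)$). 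Your closing appeal to the ``rigidity'' of the base point $x_0$ is where the missing argument would have to live, and it is not supplied.

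The intended proof is elementary and stays in $X$. From $(x,y,a,a)\in\Q_{S,T}(X)$, apply elements of $\mathcal{F}_{S,T}$ with $S^{n_i}T^{m_i}a\to x$ (minimality) to obtain a limit cube $(x,y',a',x)\in\Q_{S,T}(X)$; any cube whose outer coordinates are both $x$ has both middle entries in $\Q_S(X)\cap\Q_T(X)=\Delta_X$-relation with $x$, so $y'=a'=x$, and in particular $T^{m_i}a\to x$. Applying $\id\times\id\times T^{m_i}\times T^{m_i}$ to the original cube and passing to the limit gives $(x,y,x,x)\in\Q_{S,T}(X)$, whence $(x,y)\in\Q_S(X)\cap\Q_T(X)=\Delta_X$ and $x=y$.
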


\begin{proof}
 We remark that if $(x,a,b,x)\in \Q_{S,T}(X)$, then $(x,a)$ and $(x,b)$ belong to $\Q_{S}(X)\cap\Q_{T}(X)$. Consequently, if $(x,a,b,x)\in \Q_{S,T}(X)$, then $a=b=x$. Now let $(x,y)\in \mathcal{R}_{S}(X)$ and let $a\in X$ such that $(x,y,a,a)\in \Q_{S,T}(X)$. By minimality we can take two sequences $(n_i)_{i\in \N}$ and $(m_i)_{i\in \N}$ in $\Z$ such that $S^{n_i}T^{m_i}a\to x$. We can assume that $S^{n_i}y\to y'$ and $T^{m_i}a\to a'$, and thus $(x,S^{n_i}y,T^{m_i}a,S^{n_i}T^{m_i}a)\to (x,y',a',x)\in \Q_{S,T}(X)$. We deduce that $y'=a'=x$ and particularly $T^{m_i}a\to x$. Hence $(x,y,T^{m_i}a,T^{m_i}a)\to (x,y,x,x)\in \Q_{S,T}(X)$ and therefore $x=y$.
\end{proof}

We are now ready to prove Theorem \ref{R-ProductExtension}:

\begin{proof}[Proof of Theorem \ref{R-ProductExtension}]
\quad

$(1) \Rightarrow (2)$. Let $\pi\colon Y\times W\to X$ be a factor map between the minimal systems $(Y\times W ,\sigma\times \id, \id \times \tau)$ and $(X,S,T)$. Let $(x_0,x_1,x_2,x_3)$ and $(x_0,x_1,x_2,x_3')\in \Q_{S,T}(X)$. It suffices to show that $x_3=x_3'$. Since $\pi^4(\Q_{\sigma\times \id ,\id \times \tau}(Y\times W))=\Q_{S,T}(X)$, there exist $\left ( (y_0,w_0),(y_1,w_0),(y_0,w_1),(y_1,w_1) \right)$ and $ ((y_0',w_0'),(y_1',w_0'),(y_0',w_1'),(y_1',w_1') )$ in $\Q_{\sigma\times \id ,\id \times \tau}(Y\times W) $ such that $\pi(y_0,w_0)=x_0=\pi(y_0',w_0')$, $\pi(y_1,w_0)=x_1=\pi(y_1',w_0')$,  $\pi(y_0,w_1)=x_2=\pi(y_0',w_1')$, $\pi(y_1,w_1)=x_3$ and $\pi(y_1',w_1')=x_3'$.

 Let $(n_i)_{i\in \N}$ and $(m_i)_{i\in \N}$ be sequences in $\Z$ such that $\sigma^{n_i}y_0\to y_1$ and $\tau^{m_i}w_0\to w_1$. We can assume that $\sigma^{n_i}y_0'\to y_1''$ and $\tau^{m_i}w_0'\to w_1''$ so that $((y_0',w_0'),(y_1'',w_0'),(y_0',w_1''),(y_1'',w_1'') ) \in \Q_{\sigma\times \id ,\id \times \tau}(Y\times W) $. Since $\pi(y_0,w_0)=\pi(y_0',w_0')$, we have that $$\pi^4((y_0',w_0'),(y_1'',w_0'),(y_0',w_1''),(y_1'',w_1''))=(x_0,x_1,x_2,x_3).$$

 Particularly, $\pi(y_1',w_0')=\pi(y_1'',w_0')$ and $\pi(y_0',w_1')=\pi(y_0',w_1'')$. By minimality of $(Y,\sigma)$ and $(W,\tau)$, we deduce that $\pi(y_1',w)=\pi(y_1'',w)$ and $\pi(y,w_1')=\pi(y,w_1'')$ for every $y\in Y$ and for every $w\in W$. Hence $x_3=\pi(y_1'',w_1'')=\pi(y_1'',w_1')=\pi(y_1',w_1')=x_3'$.

$(2)\Rightarrow (3)$. Let $(x,y)\in \mathcal{R}_{S}(X)$ and let $a\in X$ such that $(x,y,a,a)\in \Q_{S,T}(X)$. We remark that this implies that $(x,a)\in \Q_T(X)$ and then $(x,x,a,a)\in \Q_{S,T}(X)$. Since $(x,x,a,a)$ and $(x,y,a,a)$ belong to $\Q_{S,T}(X)$, we have that $x=y$.

$(3)\Rightarrow (1).$ By Lemma \ref{MinimalFace}, for every $x_0 \in X$, we can build a minimal magic system $(\bold{K}^{x_0}_{S,T},\widehat{S},\widehat{T})$ which is an extension of $(X,S,T)$ whose factor map is the projection onto the last coordinate. We remark that if $\vec{x}=(x_1,x_2,x_3)$ and $\vec{y}=(y_1,y_2,y_3)$ are such that $(\vec{x},\vec{y})\in \mathcal{R}_{\widehat{S}}(\bold{K}^{x_0}_{S,T})$, then by Lemma \ref{R_Magic}, $x_1=y_1$, $x_2=y_2$ and $(x_3,y_3)\in \mathcal{R}_S(X)$. Hence, if $\mathcal{R}_S(X)$ coincides with the diagonal, so does $\mathcal{R}_{\widehat{S}}(\bold{K}^{x_0}_{S,T})$.

Let $\phi\colon\bold{K}^{x_0}_{S,T}\to \mathcal{O}_{S}(x_0)\times\mathcal{O}_{T}(x_0)$ be the projection onto the first two coordinates. Then $\phi$ is a factor map between the minimal systems $(\bold{K}^{x_0}_{S,T},\widehat{S},\widehat{T})$ and $(\mathcal{O}_{S}(x_0)\times\mathcal{O}_{T}(x_0), S\times \id, \id \times T)$ with commuting transformations. We remark that the latter is a product system.

 We claim that the triviality of the relation $\mathcal{R}_S(X)$ implies that $\phi$ is actually an isomorphism. It suffices to show that $(a,b,c), (a,b,d)\in \bold{K}^{x_0}_{S,T}$ implies that $c=d$. By minimality, we can find a sequence $(n_i)_{i\in\mathbb{N}}$ in $\Z$ such that $S^{n_i}a\to x_0$. Since $\mathcal{R}_S(X)=\Delta_X$, we have that $\lim S^{n_i}c=b=\lim S^{n_i} d$. So $\lim \widehat{S}^{n_i}(a,b,c)=\lim \widehat{S}^{n_i}(a,b,d)$ and hence $\left ( (a,b,c), (a,b,d) \right ) \in P_{\widehat{S}}(\bold{K}^{x_0}_{S,T})$. Since $\mathcal{R}_{\widehat{S}}(\bold{K}^{x_0}_{S,T})$ is the diagonal, by Lemma \ref{MinimalFace} applied to the system $(\bold{K}^{x_0}_{S,T},\widehat{S},\widehat{T})$ we have that every point in $\bold{K}^{x_0}_{S,T}$ has a minimal $\widehat{S}$-orbit. This implies that $(a,b,c)$ and $(a,b,d)$ are in the same $\widehat{S}$-minimal orbit closure and hence they belong to $\Q_{\widehat{S}}(\bold{K}^{x_0}_{S,T})$. By Proposition \ref{MagicExtension}, since they have the same first two coordinates, we deduce that $( (a,b,c), (a,b,d)) \in \mathcal{R}_{\widehat{S}}(\bold{K}^{x_0}_{S,T})$, which is trivial. We conclude that $(\bold{K}^{x_0}_{S,T},\widehat{S},\widehat{T})$ is a product system and thus $(X,S,T)$ has a product extension.

 $(2)\Rightarrow (4)$ is similar to $(2)\Rightarrow (3)$; $(4)\Rightarrow (1)$ is similar to $(3)\Rightarrow (1)$; $(3)\Rightarrow (5)$ is obvious.

 $(5) \Rightarrow (1)$. By Proposition \ref{MagicExtension}, we have a magic extension $(Y,\widehat{S},\widehat{T})$ of $(X,S,T)$ with $Y\subseteq \bold{K}^{x_0}_{S,T}$ for some $x_0 \in X$. The magic extension satisfies $\Q_{\widehat{S}}(Y)\cap\Q_{\widehat{T}}(Y)=\mathcal{R}_{\widehat{S}}(Y)\cap \mathcal{R}_{\widehat{T}}(Y)$. Since $\mathcal{R}_S(X)\cap\mathcal{R}_T(X)$ is the diagonal, by Lemma \ref{R_Magic}, we have that $\mathcal{R}_{\widehat{S}}(Y)\cap \mathcal{R}_{\widehat{T}}(Y)=\Q_{\widehat{S}}(Y)\cap\Q_{\widehat{T}}(Y)$ is also the diagonal. By Lemma \ref{Q_ST-R_S}, we have that $\mathcal{R}_{\widehat{S}}(Y)$ coincides with the diagonal relation. Therefore, $(Y,\widehat{S},\widehat{T})$ satisfies property (3) and we have proved above that this implies that $(Y,\widehat{S},\widehat{T})$ (and consequently $(X,S,T)$) has a product extension. This finishes the proof.
\end{proof}

We remark that if $(X,S,T)$ has a product extension, then Theorem \ref{R-ProductExtension} gives us an explicit (or algorithmic) way to build such an extension. In fact, we have:

\begin{prop} \label{FunctionCoordinate}
 Let $(X,S,T)$ be a minimal system with commuting transformations $S$ and $T$. The following are equivalent:

   \begin{enumerate}
    \item $(X,S,T)$ has a product extension;

    \item There exists $x\in X$ such that the last coordinate of $\bold{K}^{x}_{S,T}$ is a function of the first two coordinates. In this case, $(\bold{K}^{x}_{S,T},\widehat{S},\widehat{T})$ is a product system;

    \item For any $x\in X$, the last coordinate of $\bold{K}^{x}_{S,T}$ is a function of the first two coordinates.  In this case, $(\bold{K}^{x}_{S,T},\widehat{S},\widehat{T})$ is a product system.

   \end{enumerate}

\end{prop}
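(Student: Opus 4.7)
The plan is to prove a cycle $(1)\Rightarrow(3)\Rightarrow(2)\Rightarrow(1)$, leveraging Theorem~\ref{R-ProductExtension} to dispose of $(1)\Rightarrow(3)$ for free. The implication $(3)\Rightarrow(2)$ is immediate since $X\neq\emptyset$, so the real content lies in the other two implications. The product-system assertion appearing in both (2) and (3) will be proved by a single argument that I fold into $(2)\Rightarrow(1)$.

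For $(1)\Rightarrow(3)$, I would invoke the equivalence $(1)\Leftrightarrow(2)$ of Theorem~\ref{R-ProductExtension}: any two points of $\Q_{S,T}(X)$ sharing three coordinates are equal. Fix $x\in X$ and suppose $(a,b,c),(a,b,d)\in\bold{K}^{x}_{S,T}$. By definition of $\bold{K}^{x}_{S,T}$ and $\Q_{S,T}(X)$, both $(x,a,b,c)$ and $(x,a,b,d)$ lie in $\Q_{S,T}(X)$; since they agree on the first three coordinates, $c=d$. Hence the last coordinate of $\bold{K}^{x}_{S,T}$ is a function of the first two.

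For $(2)\Rightarrow(1)$, the key object is the projection $\phi\colon \bold{K}^{x}_{S,T}\to X^{2}$ onto the first two coordinates. The hypothesis makes $\phi$ injective, and being continuous on a compact space $\phi$ is a homeomorphism onto its image. I would then identify this image with $\mathcal{O}_{S}(x)\times \mathcal{O}_{T}(x)$ by a density/compactness argument: for any $(a,b)$ in the product, choose $n_{i},m_{i}\in\Z$ with $S^{n_{i}}x\to a$ and $T^{m_{i}}x\to b$, and pass to a subsequence along which $S^{n_{i}}T^{m_{i}}x$ converges to some $c$, producing $(a,b,c)\in \bold{K}^{x}_{S,T}$ that projects onto $(a,b)$. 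Because $\phi$ conjugates $\widehat{S}$ to $S\times\id$ and $\widehat{T}$ to $\id\times T$, this yields an isomorphism from $(\bold{K}^{x}_{S,T},\widehat{S},\widehat{T})$ to the product system $(\mathcal{O}_{S}(x)\times\mathcal{O}_{T}(x),S\times\id,\id\times T)$. The projection $\pi_{3}$ of $\bold{K}^{x}_{S,T}$ onto its third coordinate intertwines $(\widehat{S},\widehat{T})$ with $(S,T)$, and its image $\overline{\{S^{n}T^{m}x\}}=X$ by minimality of $(X,S,T)$, so it exhibits $(X,S,T)$ as a factor of a product system.

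The only step requiring any care is the surjectivity identification $\phi(\bold{K}^{x}_{S,T})=\mathcal{O}_{S}(x)\times\mathcal{O}_{T}(x)$, which is a routine density/compactness manipulation. Everything else is either an appeal to Theorem~\ref{R-ProductExtension} or a direct reading of the definitions, and I do not anticipate any genuine obstacle.
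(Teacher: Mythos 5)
Your proposal is correct and follows essentially the same route as the paper: $(1)\Rightarrow(3)$ via the three-coordinates-determine-the-fourth characterization of Theorem~\ref{R-ProductExtension}, and $(2)\Rightarrow(1)$ by showing the projection onto the first two coordinates is an isomorphism of $(\bold{K}^{x}_{S,T},\widehat{S},\widehat{T})$ with the product $(\mathcal{O}_S(x)\times\mathcal{O}_T(x),S\times\id,\id\times T)$. The only cosmetic difference is that the paper passes to a minimal subsystem of $\bold{K}^{x}_{S,T}$ while you work with all of $\bold{K}^{x}_{S,T}$ and verify surjectivity of the projection directly, which is equally valid.
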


\begin{proof}

$(1)\Rightarrow (3)$. By Theorem \ref{R-ProductExtension}, when $(X,S,T)$ has a product extension, then the last coordinate of $\Q_{S,T}(X)$ is a function of the first three ones, which implies (3).

 $(3)\Rightarrow (2)$. Is obvious.

 $(2)\Rightarrow (1)$. Let $Y\subseteq \bold{K}^{x}_{S,T}$ be a minimal subsystem and let $(x_1,x_2,x_3)\in Y$. We remark that $(Y,\widehat{S},\widehat{T})$ is an extension of $(X,S,T)$ and that the last coordinate of $Y$ is a function of the first two coordinates. Hence, the factor map $(x_1',x_2',x_3')\to (x_1',x_2')$ is an isomorphism between $(Y,\widehat{S},\widehat{T})$ and $(\mathcal{O}_S(x_1)\times \mathcal{O}_T(x_2), S\times \id,\id \times T)$, which is a product system.

\end{proof}

We can also give a criterion to determine when a minimal system $(X,S,T)$ with commuting transformations $S$ and $T$ is actually a product system:

\begin{prop}
 Let $(X,S,T)$ be a minimal system with commuting transformations $S$ and $T$. Then $(X,S,T)$ is a product system if and only if $\Q_{S}(X)\cap \Q_{T}(X)=\Delta_X$.
\end{prop}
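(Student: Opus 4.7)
The forward direction is a direct computation. If $(X,S,T)=(Y\times W,\sigma\times\id,\id\times\tau)$, then from the explicit form of $\Q_{S,T}$ recorded in the remark following Theorem \ref{R-ProductExtension} one reads off $\Q_S(X)=\{((y_1,w),(y_2,w)):y_1,y_2\in Y,\ w\in W\}$ and $\Q_T(X)=\{((y,w_1),(y,w_2)):y\in Y,\ w_1,w_2\in W\}$, whose intersection is $\Delta_X$.

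For the converse, assume $\Q_S(X)\cap\Q_T(X)=\Delta_X$. By Lemma \ref{Q_ST-R_S} and its analogue under the swap $S\leftrightarrow T$, both $\mathcal{R}_S(X)$ and $\mathcal{R}_T(X)$ reduce to $\Delta_X$, so Theorem \ref{R-ProductExtension} already provides a product extension. The proof of $(3)\Rightarrow(1)$ shows that this extension can be realized as $\pi\colon\bold{K}^{x_0}_{S,T}\to X$ (projection to the last coordinate), where $\bold{K}^{x_0}_{S,T}$ is identified via the first two-coordinate projection with the product system $(\mathcal{O}_S(x_0)\times\mathcal{O}_T(x_0),S\times\id,\id\times T)$. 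The remaining task is to use the stronger hypothesis $\Q_S\cap\Q_T=\Delta_X$ to promote $\pi$ to an isomorphism by showing it is injective.

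The preliminary observation is that $(S^nx_0,x_0,S^nx_0)$ and $(x_0,T^mx_0,T^mx_0)$ both lie in $\bold{K}^{x_0}_{S,T}$, so under the identification $\pi(y,x_0)=y$ for all $y\in\mathcal{O}_S(x_0)$ and $\pi(x_0,w)=w$ for all $w\in\mathcal{O}_T(x_0)$. Now suppose $\pi(y,w)=\pi(y',w')=x$. Since $\bold{K}^{x_0}_{S,T}$ is a product system, $\Q_{\widehat{S},\widehat{T}}(\bold{K}^{x_0}_{S,T})$ has the explicit product form noted in the remark after Theorem \ref{R-ProductExtension}. The plan is to push three specific cubes forward under $\pi^4$. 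First, the cube $((y,w),(y',w),(y,w'),(y',w'))$ pushes forward to $(x,\pi(y',w),\pi(y,w'),x)\in\Q_{S,T}(X)$; Proposition \ref{sym}(4) and (6) then yield $(x,\pi(y',w)),(x,\pi(y,w'))\in\Q_S(X)\cap\Q_T(X)=\Delta_X$, establishing the rectangle identity $\pi(y',w)=\pi(y,w')=x$. Second, the cube $((y,x_0),(y',x_0),(y,w),(y',w))$ pushes forward to $(y,y',x,x)\in\Q_{S,T}(X)$, from which $(y,y')\in\mathcal{R}_S(X)=\Delta_X$ and hence $y=y'$. Third, the symmetric cube $((x_0,w),(y,w),(x_0,w'),(y,w'))$ pushes forward to $(w,x,w',x)\in\Q_{S,T}(X)$, yielding $(w,w')\in\mathcal{R}_T(X)=\Delta_X$ and $w=w'$.

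The only delicate point is the rectangle identity in the first diagnostic cube: this is precisely where the full hypothesis $\Q_S\cap\Q_T=\Delta_X$ is needed, because the cube a priori only places $x$ and the unknown corners in the $\Q_S$ and $\Q_T$ relations separately, and the weaker assumption $\mathcal{R}_S=\Delta_X$ would not suffice to collapse them. Once the rectangle identity is secured, the second and third cubes have the tailor-made shapes $(\cdot,\cdot,x,x)$ and $(\cdot,x,\cdot,x)$ that invoke the already-trivial relations $\mathcal{R}_S$ and $\mathcal{R}_T$, so injectivity follows at once.
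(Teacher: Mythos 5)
Your proof is correct and takes essentially the same route as the paper: the forward direction is the same direct computation, and for the converse both arguments reduce to showing that the last-coordinate projection $\bold{K}^{x_0}_{S,T}\to X$ is injective, using $\Q_{S}(X)\cap\Q_{T}(X)=\Delta_X$ once to collapse the off-diagonal corners and the triviality of $\mathcal{R}_S(X)$ and $\mathcal{R}_T(X)$ to conclude. The only difference is cosmetic: the paper verifies injectivity by chasing limits along sequences with $S^{n_i}T^{m_i}x\to x_0$, whereas you exploit the already-established identification of $\bold{K}^{x_0}_{S,T}$ with $\mathcal{O}_S(x_0)\times\mathcal{O}_T(x_0)$ to write down three explicit cubes and push them forward under $\pi^4$; the two computations are interchangeable.
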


\begin{proof}
 Suppose that $(X,S,T)=(Y\times W,\sigma\times \id,\id\times \tau)$ is a product system and $(y_{1},w_{1})$, $(y_{2},w_{2})\in\Q_{\sigma\times \id}(Y\times W)\cap \Q_{\id\times \tau}(Y\times W)$. Then
 $((y_{1},w_{1}),(y_{2},w_{2}))\in\Q_{\id\times \tau}(Y\times W)$ implies that $y_{1}=y_{2}$, and $((y_{1},w_{1})$, $(y_{2},w_{2}))\in\Q_{\sigma\times \id}(Y\times W)$ implies that $w_{1}=w_{2}$. Therefore, $\Q_{S}(Y\times W)\cap \Q_{T}(Y\times W)=\Delta_{Y\times W}$.

 Conversely, suppose that $\Q_{S}(X)\cap \Q_{T}(X)=\Delta_X$. By Lemma \ref{Q_ST-R_S}, Theorem \ref{R-ProductExtension} and Proposition \ref{FunctionCoordinate}, we have that for any $x_0\in X$, $(\bold{K}^{x_0}_{S,T},\widehat{S},\widehat{T})$ is a product extension of $(X,S,T)$. We claim that these systems are actually isomorphic. Recall that the factor map $\pi\colon \bold{K}^{x_0}_{S,T}\to X$ is the projection onto the last coordinate. It suffices to show that $(x_1,x_2)=(x_1',x_2')$ for all $(x_1,x_2,x),(x_1',x_2',x)\in \bold{K}^{x_0}_{S,T}$. Let $(n_i)_{i\in \N}$ and $(m_i)_{i\in \N}$ be sequences in $\Z$ such that $S^{n_i}T^{m_i}x\to x_0$. We can assume that $S^{n_i}x_1\to a_1$, $S^{n_i}x_1'\to a_1'$, $T^{m_i}x_2\to b_2$ and $T^{m_i}x_2'\to b_2'$. Therefore, $(x_0,a_1,b_1,x_0)$ and $(x_0,a_1',b_1',x_0)$ belong to $\Q_{S,T}(X)$. Since $\Q_{S}(X)\cap \Q_{T}(X)=\Delta_X$, we have that $a_1=b_1=a_1'=b_1'=x_0$. We can assume that $S^{n_i}x\to x'$ and thus $(x_0,S^{n_i}x_1,x_2,S^{n_i}x)\to (x_0,x_0,x_2,x')$,  $(x_0,S^{n_i}x_1',x_2',S^{n_i}x)\to (x_0,x_0,x_2',x')$. Moreover, these points belong to $\Q_{S,T}(X)$. Since $\mathcal{R}_{S}(X)$ is the diagonal, we conclude that $x_2=x'=x_2'$. Similarly, $x_1=x_1'$ and the proof is finished.

\end{proof}

\subsection{Equicontinuity and product extensions}
Let $(X,S,T)$ be a system with commuting transformations $S$ and $T$. Let suppose that $(X,S,T)$ has a product extension. In this section we show that one can always find a product extension where the factor map satisfies some kind of equicontinuity conditions.

We recall the definition of equicontinuity:
\begin{defn}
    Let $(X,G_{0})$ be a topological dynamical system, where $G_{0}$ is an arbitrary group action. We say that $(X,G_{0})$ is {\it equicontinuous} if for any $\e>0$, there exists $\d>0$ such that if $d(x,y)<\d$ for $x,y\in X$, then $d(gx,gy)<\e$ for all $g\in G_{0}$.  Let $\pi\colon Y\to X$ be a factor map between the topological dynamical systems $(Y,G_{0})$ and $(X,G_{0})$ . We say that $Y$ is an {\it equicontinuous extension} of $X$ if for any $\e>0$, there exists $\d>0$ such that if $d(x,y)<\d$ and $\pi(x)=\pi(y)$ then $d(gx,gy)<\e$ for all $g\in G_{0}$.
\end{defn}

The following proposition provides the connection between equicontinuity and the property of being a factor of a product system:

\begin{prop}
Let $(X,S,T)$ be a minimal system with commuting transformations $S$ and $T$. If either $S$ or $T$ is equicontinuous, then $(X,S,T)$ has a product extension.
\end{prop}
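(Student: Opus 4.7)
The plan is to reduce the conclusion to Theorem \ref{R-ProductExtension}: I would show that if $S$ is equicontinuous then $\mathcal{R}_T(X)=\Delta_X$ (and symmetrically, if $T$ is equicontinuous then $\mathcal{R}_S(X)=\Delta_X$), which by the equivalence $(1)\Leftrightarrow (4)$ (respectively $(1)\Leftrightarrow(3)$) of that theorem gives a product extension of $(X,S,T)$. Without loss of generality, assume $S$ is equicontinuous.

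To check that $\mathcal{R}_T(X)\subseteq \Delta_X$, I would take $(x,y)\in\mathcal{R}_T(X)$ and unravel the definition: there exists $b\in X$ together with sequences $(x_i)$ in $X$ and $(n_i),(m_i)$ in $\Z$ such that
$$x_i\to x,\quad S^{n_i}x_i\to b,\quad T^{m_i}x_i\to y,\quad S^{n_i}T^{m_i}x_i\to b.$$
The second and fourth convergences immediately give $d(S^{n_i}x_i,S^{n_i}T^{m_i}x_i)\to 0$, so the key step is to transport this smallness at ``time $n_i$'' back to smallness of $d(x_i,T^{m_i}x_i)$.

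This is exactly where equicontinuity of $S$ does the work, and this is the whole substance of the argument. Since $(X,S)$ is equicontinuous as an action of $\Z$, for every $\epsilon>0$ there is $\delta>0$ such that $d(a,c)<\delta$ implies $d(S^na,S^nc)<\epsilon$ for \emph{every} $n\in\Z$, in particular for negative $n$. Hence for all $i$ large enough that $d(S^{n_i}x_i,S^{n_i}T^{m_i}x_i)<\delta$, applying $S^{-n_i}$ yields
$$d(x_i,T^{m_i}x_i)=d\bigl(S^{-n_i}(S^{n_i}x_i),S^{-n_i}(S^{n_i}T^{m_i}x_i)\bigr)<\epsilon,$$
so that $d(x_i,T^{m_i}x_i)\to 0$. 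Combined with $x_i\to x$ and $T^{m_i}x_i\to y$, we conclude $x=y$, as desired.

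There is no real obstacle beyond correctly exploiting the uniformity of equicontinuity over the full $\Z$-orbit (including negative powers); once that is observed, Theorem \ref{R-ProductExtension} finishes the proof. The case where $T$ is equicontinuous is identical after swapping the roles of the second and third coordinates in $\bold{Q}_{S,T}(X)$ (using condition (3) of Theorem \ref{R-ProductExtension} in place of condition (4)).
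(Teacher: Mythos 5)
Your proof is correct and uses essentially the same argument as the paper: exploit the uniformity of equicontinuity over all powers (including negative ones) to pull the collapse of two coordinates of a cube back to time zero, conclude that the relevant relation ($\mathcal{R}_T(X)$ in your case, $\mathcal{R}_S(X)$ in the paper's, which treats the mirror case $T$ equicontinuous) is the diagonal, and invoke Theorem \ref{R-ProductExtension}.
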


\begin{proof}
 Suppose that $T$ is equicontinuous. For any $\e>0$, let $0<\delta<\e$ be such that if two points are $\delta$-close to each other, then they stay $\e$-close under the orbit of $T$. Suppose $(x,y)\in \mathcal{R}_S(X)$. Pick $x',a\in X$ and $n,m\in \Z$ such that $d(x,x')<\delta$, $d(S^nx',y)<\delta$, $d(T^mx',a)<\delta$, $d(S^n T^m x',a)<\delta$. By equicontinuity of $T$, we have that $d(T^{-m}S^nT^mx',T^{-m}a)<\e$, $d(T^{-m}T^mx',T^{-m}a)<\e$. Therefore $d(x,y)<4\e$. Hence, $\mathcal{R}_S(X)$ coincides with the diagonal and $(X,S,T)$ has a product extension.
\end{proof}

Specially, when $S=T$ we have:

\begin{cor} \label{EquicontinuousAndProduct}
 Let $(X,T)$ be a minimal system. Then $(X,T)$ is equicontinous if and only if $(X,T,T)$ has a product extension.
\end{cor}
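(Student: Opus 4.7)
The plan is to deduce this corollary at once from the previous proposition combined with the $S=T$ specialization of Theorem~\ref{R-ProductExtension}. For the forward implication, if $(X,T)$ is equicontinuous then, viewing the pair $(S,T)=(T,T)$, the transformation $T$ is trivially equicontinuous, so the previous proposition directly yields a product extension of $(X,T,T)$.

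For the converse, I would appeal to the observation recorded just after the definition of $\mathcal{R}_{S,T}(X)$: when $S=T$ the cube space $\Q_{S,T}(X)$ coincides with the HKM cube space $\Q^{[2]}$ of $(X,T)$, and consequently $\mathcal{R}_{S,T}(X)$ coincides with the regionally proximal relation $\RP^{[1]}$ of $(X,T)$. Applying Theorem~\ref{R-ProductExtension} to $(X,T,T)$, the hypothesis that it has a product extension forces $\mathcal{R}_{S,T}(X)=\Delta_X$, hence $\RP^{[1]}=\Delta_X$. By the classical structure theorem for minimal $\Z$-systems (see \cite{HKM}, ultimately going back to Ellis and Gottschalk), the quotient of $X$ by $\RP^{[1]}$ is its maximal equicontinuous factor, so triviality of $\RP^{[1]}$ forces $X$ to coincide with its own maximal equicontinuous factor, i.e.\ $(X,T)$ is equicontinuous.

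The only ingredient not already proved in this section is the classical identification of $\RP^{[1]}$ with the equicontinuous structure relation for minimal $\Z$-actions; this is well-known, so it is not really an obstacle. The whole argument is essentially a ``set $S=T$'' bookkeeping on top of Theorem~\ref{R-ProductExtension} and the previous proposition.
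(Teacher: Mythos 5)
Your argument is correct and is exactly the reasoning the paper leaves implicit: the forward direction is the $S=T$ specialization of the preceding proposition, and the converse follows from Theorem \ref{R-ProductExtension} together with the remark that $\mathcal{R}_{T,T}(X)=\RP^{[1]}$ and the classical fact that, for minimal systems, $X/\RP^{[1]}$ is the maximal equicontinuous factor. No gaps.
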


Under the assumption that $\Q_{T}(X)$ is an equivalence relation, we have a better criterion:

\begin{prop} \label{Eqextension}
 Let $(X,S,T)$ be a minimal system with commuting transformations $S$ and $T$. Suppose that $\Q_{T}(X)$ is an equivalence relation. Then the system $(X,S)$ is an equicontinuous extension of $(X/\Q_{T}(X),S)$ if and only if $(X,S,T)$ has a product extension.
\end{prop}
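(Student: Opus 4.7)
The plan is to prove each implication separately, using Theorem \ref{R-ProductExtension}(4) for the forward direction and Proposition \ref{FunctionCoordinate} for the converse. Let $\pi\colon X\to X/\Q_T(X)$ denote the quotient map; it is $S$-equivariant, and $T$ acts as the identity on the quotient since $(u,Tu)\in\Q_T(X)$ for every $u\in X$.

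For the direction $(\Rightarrow)$, assume $(X,S)\to(X/\Q_T(X),S)$ is equicontinuous, and I would show $\mathcal{R}_T(X)=\Delta_X$. Take $(x,y)\in\mathcal{R}_T(X)$ with witness $b$, so that $(x,b,y,b)$ is approximated by $(x_k,S^{n_k}x_k,T^{m_k}x_k,S^{n_k}T^{m_k}x_k)$. The key observation is that
\[
\pi(S^{n_k}x_k)=S^{n_k}\pi(x_k)=S^{n_k}\pi(T^{m_k}x_k)=\pi(S^{n_k}T^{m_k}x_k),
\]
so the two points $S^{n_k}x_k$ and $S^{n_k}T^{m_k}x_k$ lie in a common $\pi$-fiber. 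Given $\epsilon>0$, take the $\delta>0$ provided by equicontinuity; for large $k$ the distance between these two points is smaller than $\delta$, and applying equicontinuity to the iterate $S^{-n_k}$ yields $d(x_k,T^{m_k}x_k)<\epsilon$. Letting $k\to\infty$ gives $d(x,y)\le\epsilon$, so $x=y$.

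For the direction $(\Leftarrow)$, assume $(X,S,T)$ has a product extension. By Proposition \ref{FunctionCoordinate}, for every $x_0\in X$ the system $(\bold{K}^{x_0}_{S,T},\widehat{S},\widehat{T})$ is a product system, isomorphic via the first two coordinates to $(\mathcal{O}_S(x_0)\times\mathcal{O}_T(x_0),S\times\id,\id\times T)$, and the projection $\tilde\pi$ to the third coordinate is a factor map onto $X$. The projection $(y,w)\mapsto y$ is tautologically an equicontinuous extension, as $S\times\id$ acts isometrically on each fiber $\{y\}\times\mathcal{O}_T(x_0)$. The pushforward property $\tilde\pi^{\times 2}(\Q_{\widehat{T}}(\bold{K}^{x_0}_{S,T}))=\Q_T(X)$, together with the fact that $\Q_{\widehat{T}}$-classes in the product are exactly the fibers over $\mathcal{O}_S(x_0)$, implies that $\tilde\pi$ descends to a factor map $\bar\pi\colon\mathcal{O}_S(x_0)\to X/\Q_T(X)$ and that every $(u,v)\in\Q_T(X)$ lifts to a pair of the form $((y,w_1),(y,w_2))$. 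A standard factor argument then concludes: if equicontinuity of $\pi$ failed, there would be $(u_k,v_k)\in\Q_T(X)$ with $d(u_k,v_k)\to 0$ and integers $\ell_k$ with $d(S^{\ell_k}u_k,S^{\ell_k}v_k)\ge\epsilon_0$; lifting to $((y_k,w_k^1),(y_k,w_k^2))$ and extracting convergent subsequences to $(y_0,w^1),(y_0,w^2)$, one has $\tilde\pi(y_0,w^1)=\tilde\pi(y_0,w^2)$. By $S$-equivariance, continuity of $\tilde\pi$, and minimality of $(\mathcal{O}_S(x_0),S)$ (Lemma \ref{MinimalFace}, since $\mathcal{R}_S(X)=\Delta_X$), this equality spreads to $\tilde\pi(y',w^1)=\tilde\pi(y',w^2)$ for every $y'\in\mathcal{O}_S(x_0)$. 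Passing to a further subsequence with $S^{\ell_k}y_k\to y'$ forces $S^{\ell_k}u_k$ and $S^{\ell_k}v_k$ to converge to the same value $\tilde\pi(y',w^1)$, contradicting $d(S^{\ell_k}u_k,S^{\ell_k}v_k)\ge\epsilon_0$.

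The main obstacle is the $(\Leftarrow)$ direction, where one must lift $\Q_T(X)$-related pairs to the product extension $\mathcal{O}_S(x_0)\times\mathcal{O}_T(x_0)$ and then exploit minimality of $(\mathcal{O}_S(x_0),S)$ to propagate the $\tilde\pi$-identification across all fibers. The $(\Rightarrow)$ direction, once one notices that $S^{n_k}x_k$ and $S^{n_k}T^{m_k}x_k$ share a $\pi$-fiber, reduces to one clean application of equicontinuity.
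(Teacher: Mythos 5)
Your proof is correct. The forward direction is essentially the paper's argument, just run directly rather than contrapositively: both exploit that the approximating points $S^{n}z$ and $S^{n}T^{m}z$ lie in a common $\Q_T$-fiber and are close, and then apply equicontinuity to $S^{-n}$ to conclude $x=y$. The backward direction is where you genuinely diverge. The paper stays inside $X$: from a failure of equicontinuity it extracts $(x_i,y_i)\in\Q_T(X)$ with $d(x_i,y_i)\to 0$ and $d(S^{n_i}x_i,S^{n_i}y_i)\ge\e$, and unwinds the definition of $\Q_T(X)$ to produce a limit point $(x_0,w,y_0,w)\in\Q_{S,T}(X)$ with $x_0\neq y_0$, i.e.\ a nontrivial pair in $\mathcal{R}_T(X)$, contradicting Theorem \ref{R-ProductExtension}. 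You instead pass to the explicit product extension $(\bold{K}^{x_0}_{S,T},\widehat{S},\widehat{T})\cong(\mathcal{O}_S(x_0)\times\mathcal{O}_T(x_0),S\times\id,\id\times T)$ from Proposition \ref{FunctionCoordinate}, lift $\Q_T(X)$-pairs to pairs sharing their first coordinate (via $\tilde\pi\times\tilde\pi(\Q_{\widehat{T}})=\Q_T(X)$ and the $\widehat{T}$-invariance of the first coordinate), and propagate the identification $\tilde\pi(y_0,w^1)=\tilde\pi(y_0,w^2)$ along the dense $S$-orbit by equivariance and continuity. Both are sound; the paper's version is more elementary, needing only the definition of $\mathcal{R}_T(X)$ and the main theorem, while yours makes the mechanism transparent --- equicontinuity of the extension is inherited because $\widehat{S}$ does not move the $\mathcal{O}_T(x_0)$-coordinate --- at the cost of invoking the minimality of $\bold{K}^{x_0}_{S,T}$ (Lemma \ref{MinimalFace}) and the pushforward lemma for cube projections, both of which you correctly have available under the product-extension hypothesis.
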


\begin{proof}
  Suppose that $(X,S,T)$ has no product extensions. By Theorem \ref{R-ProductExtension}, we can pick $x,y\in X, x\neq y$ such that $(x,y)\in\mathcal{R}_{T}(X)$. Denote $\e=d(x,y)/2$. For any $0<\d<\e/4$, there exist $z\in X, n,m\in \Z$ such that $d(z,x)$, $d(T^{m}z,y)$, $d(S^{n}z,S^{n}T^{m}z)<\d$. Let $x'=S^{n}z,y'=S^{n}T^{m}z$. Then $(x',y')\in\Q_{T}(X)$, $d(x',y')<\delta$ and $d(S^{-n}x',S^{-n}y')=d(z,T^{m}z)>\e-2\delta>\e/2$. So $(X,S)$
  is not an equicontinuous extension of $(X/\Q_T(X),S)$.

  On the other hand, if $(X,S)$ is not an equicontinuous extension of $(X/\Q_T(X),S)$, then there exists $\e>0$ and there exist sequences $(x_{i})_{i\in \N}, (y_{i})_{i\in \N}$ in $X$ and a sequence $(n_{i})_{i\in \N}$ in $\Z$ with $d(x_{i},y_{i})<1/i$, $(x_{i},y_{i})\in\Q_{T}(X)$, and $d(S^{n_{i}}x_{i},S^{n_{i}}y_{i})\geq\e$. By passing to a subsequence, we may assume $(S^{n_{i}}x_{i})_{i\in\N}$, $(S^{n_{i}}y_{i})_{i\in\N}$, $(x_{i})_{i\in\N}$ and $(y_{i})_{i\in\N}$ converges to $x_{0},y_{0}, w$ and $w$ respectively. Then $x_{0}\neq y_{0}$. For any $\d>0$, pick $i\in\N$ such that
  $d(S^{n_{i}}x_{i},x_{0})$, $d(S^{n_{i}}y_{i},y_{0})$, $d(x_{i},w)$, $d(y_{i},w)<\d$. Since $(x_{i},y_{i})\in\Q_{T}(X)$, we can pick $z\in X, m\in\Z$ such that $d(z,x_{i})$, $d(T^{m}z,y_{i})$, $d(S^{n_{i}}z$, $S^{n_{i}}x_{i})$, $d(S^{n_{i}}T^{m}z, S^{n_{i}}y_{i})<\d$. So the distance between the corresponding coordinates of $(S^{n_{i}}z,z,S^{n_{i}}T^{m}z,T^{m}z)$ and $(x_{0},w,y_{0},w)$ are all less than $C\d$ for some uniform constant $C$. So $(x_{0},y_{0})\in\mathcal{R}_{T}(X)$, and $(X,S,T)$ has not a product extension.
\end{proof}

In the following we relativize the notion of being a product system to factor maps.

\begin{defn}
Let $\pi\colon Y\to X$ be a factor map between the systems of commuting transformations $(Y,S,T)$ and $(X,S,T)$. We say that $\pi$ is {\it $S$-equicontinuous with respect to $T$} if for any $\e>0$ there exists $\delta>0$ such that if $y,y'\in Y$ satisfy
$(y,y')\in \Q_T(Y)$, $d(y,y')<\delta$ and $\pi(y)=\pi(y')$, then $d(S^ny,S^ny')<\e$ for all $n\in \Z$.

\end{defn}

\begin{lem}
 Let $(X,S,T)$ be a minimal system with commuting transformations $S$ and $T$, and let $\pi$ be the projection to the trivial system. Then $\pi$ is $S$-equicontinous with respect to $T$  if and only if $(X,S,T)$ has a product extension.
\end{lem}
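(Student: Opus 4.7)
The plan is to reduce both directions to Theorem \ref{R-ProductExtension}, which says that $(X,S,T)$ has a product extension if and only if $\mathcal{R}_T(X)=\Delta_X$. Because $\pi$ maps to a single point, the condition $\pi(y)=\pi(y')$ is vacuous, so $\pi$ being $S$-equicontinuous with respect to $T$ amounts to the following: for every $\epsilon>0$ there exists $\delta>0$ such that whenever $(y,y')\in\Q_T(X)$ satisfies $d(y,y')<\delta$, one has $d(S^ny,S^ny')<\epsilon$ for every $n\in\Z$.

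For the forward implication, assume this equicontinuity property and take $(x,y)\in\mathcal{R}_T(X)$, so there is $b\in X$ with $(x,b,y,b)\in\Q_{S,T}(X)$. Approximate by a sequence $(z_i,S^{n_i}z_i,T^{m_i}z_i,S^{n_i}T^{m_i}z_i)\to(x,b,y,b)$. The pair
\[
(S^{n_i}z_i,\,S^{n_i}T^{m_i}z_i)=(S^{n_i}z_i,\,T^{m_i}S^{n_i}z_i)
\]
belongs to $\Q_T(X)$ and its two entries both converge to $b$. Applying the equicontinuity hypothesis to the iterate $S^{-n_i}$ of this pair then gives $d(z_i,T^{m_i}z_i)<\epsilon$ for all sufficiently large $i$; passing to the limit yields $d(x,y)\leq\epsilon$, and since $\epsilon$ was arbitrary, $x=y$. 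Hence $\mathcal{R}_T(X)=\Delta_X$ and Theorem \ref{R-ProductExtension} produces a product extension.

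For the converse, suppose $(X,S,T)$ has a product extension and, aiming at a contradiction, that the equicontinuity fails. Then there exist $\epsilon>0$, sequences $(y_i),(y_i')$ in $X$ and $(n_i)$ in $\Z$ with $(y_i,y_i')\in\Q_T(X)$, $d(y_i,y_i')\to 0$, and $d(S^{n_i}y_i,S^{n_i}y_i')\geq\epsilon$. After passing to a subsequence we may assume $y_i,y_i'\to w$, $S^{n_i}y_i\to x_0$ and $S^{n_i}y_i'\to y_0$ with $x_0\neq y_0$. Using $(y_i,y_i')\in\Q_T(X)$ together with the continuity of $S^{n_i}$ at each fixed stage, one can choose $z_i\in X$ and $m_i\in\Z$ such that each coordinate of $(z_i,T^{m_i}z_i,S^{n_i}z_i,S^{n_i}T^{m_i}z_i)$ is within $1/i$ of the corresponding coordinate of $(y_i,y_i',S^{n_i}y_i,S^{n_i}y_i')$. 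Swapping the second and third coordinates identifies this tuple with a point of $\Q_{S,T}(X)$, so by Proposition \ref{sym}(3) the original tuple lies in $\Q_{T,S}(X)$; passing to the limit and swapping back gives $(w,x_0,w,y_0)\in\Q_{S,T}(X)$. A further application of Proposition \ref{sym}(3) yields $(x_0,w,y_0,w)\in\Q_{S,T}(X)$, which by the definition of $\mathcal{R}_T(X)$ means $(x_0,y_0)\in\mathcal{R}_T(X)$. This contradicts $\mathcal{R}_T(X)=\Delta_X$ supplied by Theorem \ref{R-ProductExtension}.

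The only non-routine ingredient is the coordinated approximation used in the converse: given a pair in $\Q_T(X)$ and the already-fixed integer $n_i$, one must refine the approximating $(z,T^mz)$ enough that its $S^{n_i}$-translate simultaneously approximates the prescribed $S^{n_i}$-translate of the original pair. This is possible because $S^{n_i}$ is a fixed continuous map at each stage, a device that already appears inside the proof of Proposition \ref{Eqextension}.
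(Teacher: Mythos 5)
Your proof is correct and follows essentially the same route as the paper's: both directions reduce to Theorem \ref{R-ProductExtension} by approximating points of $\Q_{S,T}(X)$ with basic cubes $(z,S^nz,T^mz,S^nT^mz)$ and exploiting the symmetries of Proposition \ref{sym}. The only cosmetic differences are that you phrase the implications directly/by contradiction where the paper argues contrapositively, and you work with $\mathcal{R}_T(X)$ where the paper invokes $\mathcal{R}_S(X)$ --- immaterial, since the theorem makes the triviality of either relation equivalent to having a product extension.
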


\begin{proof}
If $\pi$ is not $S$-equicontinuous with respect to $T$, there exists $\e>0$ such that for any $\delta=\frac{1}{i}>0$ one can find $(x_i,x_i')\in \Q_T(X)$ with $d(x_i,x_i')<\delta$ and $n_i\in \Z$ with $d(S^{n_i}x_i,S^{n_i}x_i')\geq \e$. For a subsequence, $(x_i,S^{n_i}x_i,x_i',S^{n_i}x_i')\in \Q_{S,T}(X)$ converges to a point of the form $(a,x,a,x')\in \Q_{S,T}(X)$ with $x\neq x'$. We remark that this is equivalent to $(x,a,x',a)\in \Q_{S,T}(X)$ and hence $(x,x')\in \mathcal{R}_S(X)$. By Theorem \ref{R-ProductExtension} $(X,S,T)$ has no product extension.

Conversely, if $(X,S,T)$ has no product extension, by Theorem \ref{R-ProductExtension} we can find $x\neq x'$ with $(x,x')\in \mathcal{R}_S(X)$. Let $0<\e<d(x,x')$ and let $0<\delta<\e/4$. We can find $x''\in X$ and $n,m\in \Z$ such that $d(x'',x)<\delta$, $d(S^nx'',x')<\delta$ and $d(T^mx'',S^nT^mx'')<\delta$. Writing $w=T^mx''$, $w'=S^nT^mx''$, we have that $(w,w')\in \Q_S(X)$, $d(w,w')<\delta$ and $d(T^{-m}w,T^{-m}w')>\e/2$. Hence $\pi$ is not $S$-equicontinuous with respect to $T$.
\end{proof}

A connection between a magic system and a system which is $S$-equicontinuous with respect to $T$ is:

\begin{prop} \label{KExtension}
 For every minimal system with commuting transformations $(X,S,T)$, the magic extension constructed in Theorem \ref{MagicExtension} is $S$-equicontinuous with respect to $T$.
\end{prop}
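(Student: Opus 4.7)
The plan is simply to unpack what the two hypotheses $(\vec{y},\vec{y}')\in\Q_{\widehat{T}}(Y)$ and $\pi(\vec{y})=\pi(\vec{y}')$ force on the coordinates of points $\vec{y},\vec{y}'\in Y\subseteq\bold{K}^{x_0}_{S,T}$, and then observe that the generator $\widehat{S}=S\times\id\times S$ acts as the identity on the one coordinate in which $\vec{y}$ and $\vec{y}'$ are permitted to differ. With this in hand, the equicontinuity estimate becomes automatic.

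First I would record the setup from Proposition \ref{MagicExtension}: the magic extension sits as a minimal subsystem $Y\subseteq \bold{K}^{x_{0}}_{S,T}\subseteq X^{3}$, the generators are $\widehat{S}=S\times\id\times S$ and $\widehat{T}=\id\times T\times T$, and the factor map $\pi\colon Y\to X$ is the projection onto the third coordinate. Fix $\vec{y}=(y_{1},y_{2},y_{3})$ and $\vec{y}'=(y_{1}',y_{2}',y_{3}')$ in $Y$ satisfying $(\vec{y},\vec{y}')\in\Q_{\widehat{T}}(Y)$ and $\pi(\vec{y})=\pi(\vec{y}')$.

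Second, I would extract the coordinate constraints. Because the first coordinate of every point in $\bold{K}^{x_0}_{S,T}$ is invariant under $\widehat{T}$, the first-coordinate projection is constant along any $\Q_{\widehat{T}}$-orbit; passing to the closure, $(\vec{y},\vec{y}')\in \Q_{\widehat{T}}(Y)$ forces $y_{1}=y_{1}'$. The assumption $\pi(\vec{y})=\pi(\vec{y}')$ gives $y_{3}=y_{3}'$ directly. Hence $\vec{y}$ and $\vec{y}'$ can differ only in the middle coordinate.

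Finally, since $\widehat{S}^{n}\vec{y}=(S^{n}y_{1},y_{2},S^{n}y_{3})$ and $\widehat{S}^{n}\vec{y}'=(S^{n}y_{1},y_{2}',S^{n}y_{3})$ share the first and third coordinates, in any standard product metric on $X^{3}$ one has
\[
d(\widehat{S}^{n}\vec{y},\widehat{S}^{n}\vec{y}')=d(y_{2},y_{2}')\le C\,d(\vec{y},\vec{y}'),
\]
independently of $n\in\Z$, for some absolute constant $C$ coming from the chosen product metric. Given $\e>0$, setting $\delta=\e/C$ therefore forces $d(\widehat{S}^{n}\vec{y},\widehat{S}^{n}\vec{y}')<\e$ for every $n$, which is precisely $S$-equicontinuity of $\pi$ with respect to $T$.

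There is no real obstacle here: the proposition is an immediate structural consequence of the specific shape of the generators $\widehat{S},\widehat{T}$ on $\bold{K}^{x_{0}}_{S,T}$, together with the fact that $\pi$ reads off the last coordinate. The substantive work was done in Proposition \ref{MagicExtension} when constructing the magic extension with this particular geometry; here we are merely harvesting the equicontinuity that the construction built in for free.
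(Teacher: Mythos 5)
Your proof is correct and follows essentially the same route as the paper: both arguments note that $(\vec{y},\vec{y}')\in\Q_{\widehat{T}}(Y)$ forces equality of first coordinates, that $\pi(\vec{y})=\pi(\vec{y}')$ forces equality of third coordinates, and that $\widehat{S}=S\times\id\times S$ then preserves the distance since the points differ only in the middle coordinate, which $\widehat{S}$ fixes. No gaps.
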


\begin{proof}Let $(X,S,T)$ be a minimal system with commuting transformations $S$ and $T$.
Recall that the magic extension $Y$ of $X$ is the orbit closure of a minimal point $(z_1,z_2,x_0)$ under $\widehat{S}$ and $\widehat{T}$, and the factor map $\pi\colon Y\to X$  is the projection onto the last coordinate. Let $\vec{x}=(x_1,x_2,x_3),\vec{y}=(y_1,y_2,y_3)\in Y$ be such that $\pi(\vec{x})=\pi(\vec{y})$ and $(\vec{x},\vec{y})\in \Q_{\widehat{T}}(Y)$. Then we have that $x_1=y_1$ and $x_3=y_3$. Since $\widehat{S}^n\vec{x}=(S^nx_1,x_2,S^nx_3)$ and $\widehat{S}^n\vec{y}=(S^nx_1,y_2,S^nx_3)$, we conclude that $\widehat{S}$ preserves the distance between $\vec{x}$ and $\vec{y}$.
\end{proof}

A direct corollary of this proposition is:
\begin{cor} \label{KExtension}
 Let $(X,S,T)$ be a minimal system with commuting transformations $S$ and $T$. If $(X,S,T)$ has a product extension, then it has a product extension which is $S$-equicontinuous with respect to $T$.
\end{cor}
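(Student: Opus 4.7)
The plan is to take as the candidate extension the magic extension $\pi\colon Y \to X$ produced by Proposition \ref{MagicExtension}, and verify that under the hypothesis it is simultaneously a product extension and $S$-equicontinuous with respect to $T$. The latter is immediate from the preceding proposition (which asserts that \emph{any} magic extension built as in Proposition \ref{MagicExtension} is $S$-equicontinuous with respect to $T$), so the whole task reduces to checking that under the additional hypothesis that $(X,S,T)$ admits some product extension, the system $(Y,\widehat{S},\widehat{T})$ is itself a product system.

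To achieve this, I would first apply Theorem \ref{R-ProductExtension}: the hypothesis forces $\mathcal{R}_S(X) = \Delta_X$. Feeding this into Lemma \ref{MinimalFace} yields that $(\bold{K}^{x_0}_{S,T},\widehat{S},\widehat{T})$ is minimal for every $x_0 \in X$. Recall from the proof of Proposition \ref{MagicExtension} that the magic extension $Y$ is defined as a minimal $\widehat{S},\widehat{T}$-invariant subset of $\bold{K}^{x_0}_{S,T}$ for a suitably chosen $x_0$; minimality of the ambient system then forces $Y = \bold{K}^{x_0}_{S,T}$.

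It remains to argue that $\bold{K}^{x_0}_{S,T}$ really is a product system. This is exactly what Proposition \ref{FunctionCoordinate} gives: having a product extension is equivalent to the last coordinate of $\bold{K}^{x_0}_{S,T}$ being a function of the first two, and in that case $(\bold{K}^{x_0}_{S,T},\widehat{S},\widehat{T})$ is isomorphic to $(\mathcal{O}_S(x_0)\times \mathcal{O}_T(x_0),S\times\id,\id\times T)$. Combining the three ingredients, $Y=\bold{K}^{x_0}_{S,T}$ is a product extension of $(X,S,T)$ that is $S$-equicontinuous with respect to $T$ by the preceding proposition, completing the proof.

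Since every nontrivial step has already been established in the excerpt, there is no genuine obstacle; the only care needed is to identify the magic extension constructed in Proposition \ref{MagicExtension} with the whole space $\bold{K}^{x_0}_{S,T}$ under the hypothesis, which is exactly the content of Lemma \ref{MinimalFace}.
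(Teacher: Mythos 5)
Your proposal is correct and follows essentially the same route as the paper: take the magic extension, use Theorem \ref{R-ProductExtension} (via $\mathcal{R}_S(X)=\Delta_X$, Lemma \ref{MinimalFace} and Proposition \ref{FunctionCoordinate}) to see it is a product system, and invoke the preceding proposition for the $S$-equicontinuity with respect to $T$. Your write-up merely makes explicit the identification $Y=\bold{K}^{x_0}_{S,T}$ that the paper's two-sentence proof leaves implicit.
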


\begin{proof}
If $(X,S,T)$ has a product extension, by Theorem \ref{R-ProductExtension}, we can build a magic extension which is actually a product system. This magic extension is $S$-equicontinuous with respect to $T$.                                                                                                                                                                                                                                                                                                    \end{proof}

\subsection{Changing the generators}
Let $(X,S,T)$ be a system with commuting transformations $S$ and $T$. We remark that $\Q_{S,T}(X)$ depends strongly on the choice of the generators $S$ and $T$. For instance, let $(X,S)$ be a minimal system and consider the minimal systems $(X,S,S)$ and $(X,S,\id)$ with commuting transformations. We have that $(X,S,\id)$ has a product extension, but $(X,S,S)$ does not (unless $(X,S)$ is equicontinous). However, there are cases where we can deduce some properties by changing the generators. Let $(X,S,T)$ be a minimal system with commuting transformations $S$ and $T$. Denote $S'=T^{-1}S$, $T'=T$. We have that $(X,S',T')$ is a minimal system with commuting transformations $S'$ and $T'$. Suppose now that $(X,S',T')$ has a product extension. By Proposition \ref{FunctionCoordinate}, for any $x\in X$ we have that $(K^{x}_{S',T'},\widehat{S'},\widehat{T'})$ is an extension of $(X,S',T')$ and it is isomorphic to a product system. We remark that $(K^{x}_{S',T'},\widehat{T'}\widehat{S'},\widehat{T'})$ is an extension of $(X,S,T)$ and it is isomorphic to $(Y\times W,S\times T, T \times T)$, where $Y=\mathcal{O}_{S'}(x)$ and $W=\mathcal{O}_{T'}(x)$. It follows that $(X,S,T)$ has an extension which is the Cartesian product of two systems with commuting transformations with different natures: one of the form $(Y,S,\id)$ where one of the transformations is the identity, and the other of the form $(W,T,T)$ where the two transformations are the same.

\bigskip
\subsection{Computing the group of automorphisms by using the $\mathcal{R}_{S,T}(X)$ relation}
The following lemma is used in the next section to study the automorphism group of the Robinson tiling system, but we state it here due to its generality. It reveals that studying cube structures can help to understand the group of automorphisms of a dynamical system. We recall that an {\it automorphism} of a dynamical system $(X,G_0)$ is a homeomorphism $\phi\colon X \to X$ such that $\phi g =g \phi $ for every $g\in G_0$.

\begin{lem} \label{AutoFiber}
 Let $(X,S,T)$ be a minimal system with commuting transformations $S$ and $T$, and let $\phi$ be an automorphism of $(X,S,T)$. Then $\phi\times \phi \times \phi\times \phi (\Q_{S,T}(X))=\Q_{S,T}(X)$. Particularly, if $(x,y)\in \mathcal{R}_S(X)$ (or $\mathcal{R}_T(X)$ or $\mathcal{R}_{S,T}(X)$), then $(\phi(x),\phi(y)) \in \mathcal{R}_S(X)$ (or $\mathcal{R}_T(X)$ or $\mathcal{R}_{S,T}(X)$).
\end{lem}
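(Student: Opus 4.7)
The plan is to reduce everything to the defining set of cube points and use the fact that $\phi$ commutes with both generators. An automorphism $\phi$ satisfies $\phi S=S\phi$ and $\phi T=T\phi$, so for any $x\in X$ and $n,m\in\Z$ I compute
\[
(\phi\times\phi\times\phi\times\phi)\bigl(x,S^{n}x,T^{m}x,S^{n}T^{m}x\bigr)=\bigl(\phi(x),S^{n}\phi(x),T^{m}\phi(x),S^{n}T^{m}\phi(x)\bigr),
\]
which is again a generating point of $\Q_{S,T}(X)$ (with base point $\phi(x)$). This shows that $\phi^{\times 4}$ maps the dense set generating $\Q_{S,T}(X)$ into $\Q_{S,T}(X)$.

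Next I would use that $\phi$ is a homeomorphism, hence $\phi^{\times 4}$ is continuous. Continuity together with the previous computation yields $\phi^{\times 4}(\Q_{S,T}(X))\subseteq \Q_{S,T}(X)$. Applying the same reasoning to the automorphism $\phi^{-1}$ (which is also an automorphism since $\phi$ is invertible and commutes with $S,T$), I get the reverse inclusion, and therefore $\phi^{\times 4}(\Q_{S,T}(X))=\Q_{S,T}(X)$.

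Finally, the conclusion about the relations is immediate from the definition. If $(x,y)\in\mathcal{R}_{S}(X)$, pick $a\in X$ with $(x,y,a,a)\in\Q_{S,T}(X)$; then
\[
\bigl(\phi(x),\phi(y),\phi(a),\phi(a)\bigr)=\phi^{\times 4}(x,y,a,a)\in\Q_{S,T}(X),
\]
so $(\phi(x),\phi(y))\in\mathcal{R}_{S}(X)$. The same argument, using the witness $b$ instead of $a$, handles $\mathcal{R}_{T}(X)$, and intersecting the two gives $\mathcal{R}_{S,T}(X)$. There is no real obstacle here; the only subtlety to be mindful of is that the invariance must be stated for $\phi^{\times 4}$ rather than for a single coordinate, so that it descends to the relations through their explicit definitions in terms of $\Q_{S,T}(X)$.
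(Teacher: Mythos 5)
Your proof is correct and follows essentially the same route as the paper's: push $\phi^{\times4}$ through the dense set of generating points $(x,S^nx,T^mx,S^nT^mx)$ using $\phi S=S\phi$, $\phi T=T\phi$, pass to the closure by continuity, and then read off the statements about $\mathcal{R}_S(X)$, $\mathcal{R}_T(X)$, $\mathcal{R}_{S,T}(X)$ from their definitions. If anything, your version is slightly cleaner: the paper approximates an arbitrary cube by $\mathcal{G}_{S,T}$-translates of a fixed diagonal point (implicitly using minimality) and only explicitly establishes the inclusion $\phi^{\times4}(\Q_{S,T}(X))\subseteq\Q_{S,T}(X)$, whereas you work straight from the defining dense set and obtain the equality by applying the same argument to $\phi^{-1}$.
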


\begin{proof}
 Let ${\bf x}\in \Q_{S,T}(X)$ and let $x\in X$. There exist sequences $(g_i)_{i\in\mathbb{N}}$ in $G$ and $(n_i)_{i\in\mathbb{N}},(m_i)_{i\in\mathbb{N}}$ in $\Z$ such that $(g_ix,g_iS^{n_i}x,g_iT^{m_i}x,g_iS^{n_i}T^{m_i}x)\to {\bf x}$. Since $(\phi(x),\phi(x),\phi(x),\phi(x))\in \Q_{S,T}(X)$ we have that
 \begin{align*}
 & (g_i\phi(x),g_iS^{n_i}\phi(x),g_iT^{m_i}\phi(x),g_iS^{n_i}T^{m_i}\phi(x)) \in \Q_{S,T}(X) \\
 = &(\phi(g_ix),\phi(g_iS^{n_i}x),\phi(g_iT^{m_i}x),\phi(g_iS^{n_i}T^{m_i}x)) \in \Q_{S,T}(X) \\
 \to & (\phi\times \phi \times \phi \times \phi )( {\bf x}) \in \Q_{S,T}(X).
 \end{align*}
Hence $\phi\times \phi\times \phi\times \phi(\Q_{S,T}(X))=\Q_{S,T}(X)$.

If $(x,y)\in \mathcal{R}_{S}(X)$, then there exists $a\in X$ with $(x,y,a,a)\in \Q_{S,T}(X)$ and thus $(\phi(x),\phi(y),\phi(a),\phi(a))\in \Q_{S,T}(X)$. This means that $(\phi(x),\phi(y))\in \mathcal{R}_S(X)$. The proof for the cases $\mathcal{R}_S(X)$ and $\mathcal{R}_{S,T}(X)$ are similar.
\end{proof}

\section{Examples} In this section, we compute the $\mathcal{R}_{S,T}(X)$ relation in some minimal symbolic systems $(X,S,T)$. We have chosen some representative minimal symbolic systems $(X,S,T)$ and we find that computing cube structures results useful to study some associated objects like the group of automorphisms. We start by recalling some general definitions.

Let $\mathcal{A}$ be a finite alphabet. The {\it shift transformation} $\sigma\colon \mathcal{A}^{\Z}\to \mathcal{A}^{\Z}$ is the map $(x_i)_{i\in \Z}\mapsto (x_{i+1})_{i\in \Z}$. A {\it one dimensional subshift} is a closed subset $X\subseteq \mathcal{A}^{\Z}$ invariant under the shift transformation. When there is more than one space involved, we  let $\sigma_X$ denote the shift transformation on the space $X$.

In the two dimensional setting, we define the {\it shift transformation} $\sigma_{(1,0)}\colon \mathcal{A}^{\Z^2}\to \mathcal{A}^{\Z^2}$, $(x_{i,j})_{i,j\in \Z}\mapsto (x_{i+1,j})_{i,j\in \Z}$ and $\sigma_{(0,1)}\colon\mathcal{A}^{\Z^2}\to \mathcal{A}^{\Z^2}$, $(x_{i,j})_{i,j\in \Z}\mapsto (x_{i,j+1})_{i,j\in \Z}$. Hence $\sigma_{(1,0)}$ and $\sigma_{(0,1)}$ are the translations in the canonical directions. A {\it two dimensional subshift} is a closed subset $X\subseteq \mathcal{A}^{\Z^2}$ invariant under the shift transformations. We remark that $\sigma_{(1,0)}$ and $\sigma_{(0,1)}$ are a pair of commuting transformations and therefore if $X\subseteq \mathcal{A}^{\Z^2}$ is a subshift, $(X,\sigma_{(1,0)},\sigma_{(0,1)})$ is a system with commuting transformations $\sigma_{(1,0)}$ and $\sigma_{(0,1)}$.

Let $X\subseteq \mathcal{A}^{\Z^2}$ be a subshift and let $x\in X$. If $B$ is a subset of $\Z^2$, we let $x|_B\in \mathcal{A}^{B}$ denote the restriction of $x$ to $B$ and for $\vec{n}\in \Z^2$, we let $B+\vec{n}$ denote the set $\{\vec{b}+\vec{n}\colon\vec{b}\in B\}$. When $X$ is a subshift (one or two dimensional), we let $\mathcal{A}_X$ denote its alphabet.

In the following we compute the relation $\mathcal{R}_{\sigma_{(1,0)},\sigma_{(0,1)}}(X)$ in two particular two dimensional subshifts: The Morse Tiling and the minimal Robinson tiling. For the Morse tiling and tiling substitutions, see \cite{Rad} for more background, and for the Robinson tiling we refer to \cite{GJS}, \cite{Rad}.

\subsection{The Morse tiling}

Consider the Morse tiling system given by the substitution rule:
\begin{center}
\begin{tikzpicture}

\matrix[column sep=0.0cm,row sep=0.0cm,cells={scale=0.5},ampersand replacement=\&]{
 {}\& \& \& \& \Blanco \& \Negro \& \TT  \& \TT \& \TT \& \& \TT \& \& \Negro \& \Blanco  \\
 \Blanco \& \TT \& \draw [->] (0,0) -- (1,0);  \& \TT \& \Negro \& \Blanco\& \TT \& \TT \& \TT  \Negro \& TT \& \draw [->] (0,0) -- (1,0);  \& \TT \& \Blanco \& \Negro\\
 }
;
\end{tikzpicture}
\end{center}

One can iterate this substitution in a natural way:
  \begin{figure}[H]
\begin{center}

 \begin{tikzpicture}

\matrix[column sep=0.0cm,row sep=0.0cm,cells={scale=0.5},ampersand replacement=\&]{
 {}\& \& \& \& \& \& \& \& \&  \& \&  \&  \& \& \& \& \Blanco \& \Negro \& \Negro \& \Blanco \& \Negro \& \Blanco \& \Blanco \&  \Negro \&  \&  \\
 \& \& \& \&\& \& \&\&  \&  \&  \&  \&  \& \& \& \&  \Negro \&  \Blanco \& \Blanco \&  \Negro \&\Blanco \& \Negro \&  \Negro \& \Blanco \&  \&  \\
 \& \& \& \&\&  \&  \&  \& \TT \&  \&  \&  \&   \& \& \& \&  \Negro \&  \Blanco \& \Blanco \&  \Negro \&\Blanco \& \Negro \&  \Negro \& \Blanco \&  \&  \\
\& \& \& \& \& \&  \& \&  \&    \&  \&  \&  \& \& \& \& \Blanco \& \Negro \& \Negro \& \Blanco \& \Negro \& \Blanco \& \Blanco \&  \Negro \&  \& \\
 {}\& \& \& \& \& \& \& \& \&  \Blanco \& \Negro \& \Negro \& \Blanco \& \& \& \& \Negro \&  \Blanco \& \Blanco \&  \Negro \&\Blanco \& \Negro \&  \Negro \& \Blanco \&  \&  \\
 \& \& \& \&\& \& \& \&  \& \Negro \& \Blanco \& \Blanco \& \Negro \& \& \& \& \Blanco \& \Negro \& \Negro \& \Blanco \& \Negro \& \Blanco \& \Blanco \&  \Negro \&  \& \\
\&  \& \& \& \Blanco \& \Negro \& \&  \& \&  \Negro \& \Blanco \&  \Blanco \& \Negro  \& \TT \&  \& \TT \& \Blanco \& \Negro \& \Negro \& \Blanco \& \Negro \& \Blanco \& \Blanco \&  \Negro \&  \& \\
  \Blanco \& \TT \& \draw [->] (0,0) -- (1,0); \& \TT \& \Negro \& \Blanco \& \TT \& \draw [->] (0,0) -- (1,0); \& \TT  \&   \Blanco \& \Negro \& \Negro \& \Blanco \& \& \draw [->] (0,0) -- (1,0); \& \& \Negro \&  \Blanco \& \Blanco \&  \Negro \&\Blanco \& \Negro \&  \Negro \& \Blanco \&  \&  \\
 }
;
\end{tikzpicture}

\end{center}
\caption{first, second and third iteration of the substitution}
\end{figure}
We identify 0 with the white square and 1 with the black one. Let $B_n=([-2^{n-1},2^{n-1}-1]\cap \Z)\times([-2^{n-1},2^{n-1}-1]\cap \Z)$ be the square of size $2^n$ centered at the origin. Let $(x_n)_{n\in \N}$ be a sequence in $\{0,1\}^{\Z^2}$ such that the restriction of $x_n$ to $B_n$ coincides with the $n$th-iteration of the substitution. Taking a subsequence we have that $(x_n)_{n\in \N}$ converges to a point $x^{\ast}\in \{0,1\}^{\Z^2}$. Let $X_{M}\subseteq \{0,1\}^{\Z^2}$ be the orbit closure of $x$ under the shift actions. We point out that $X_{M}$ does not depend on the particular choice of $x$ (we refer to Chapter 1 of \cite{Rad} for a general reference about substitution tiling systems). Moreover, the {\it Morse system} $(X_{M},\sigma_{(1,0)},\sigma_{(0,1)})$ is a minimal system with commuting transformations $\sigma_{(1,0)}$ and $\sigma_{(0,1)}$.

\begin{prop}
   For the Morse system, $\mathcal{R}_{\sigma_{(1,0)}}(X_{M})=\mathcal{R}_{\sigma_{(0,1)}}(X_{M})=\Delta_{X_{M}}$. Consequently, the Morse system has a product extension.
\end{prop}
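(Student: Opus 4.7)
The plan is to reduce the statement to Theorem \ref{R-ProductExtension}: since conditions (1), (3), and (4) of that theorem are equivalent, it suffices to exhibit a product system of which $(X_M, \sigma_{(1,0)}, \sigma_{(0,1)})$ is a factor. The candidate is the Cartesian product of the one-dimensional Thue--Morse subshift with itself.

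Let $(M,\sigma)$ denote the one-dimensional Thue--Morse subshift, that is, the orbit closure under $\sigma$ of a fixed point $m \in \{0,1\}^{\Z}$ of the substitution $0 \mapsto 01$, $1 \mapsto 10$. The first step I would carry out is to verify that the two-dimensional Morse fixed point $x^{\ast}$ used to define $X_M$ is the XOR of two copies of $m$, namely
$$ x^{\ast}_{i,j} \equiv m_i + m_j \pmod 2 \quad \text{for all } (i,j)\in\Z^2. $$
I would prove this by induction on the substitution level. Writing $\xi_n$ for the $n$-th iterate of the 2D substitution applied to the seed $0$, indexed as a $2^n\times 2^n$ block, a direct reading of the substitution rule gives $\xi_{n+1}(2i+a,\,2j+b) = \xi_n(i,j) + a + b \pmod 2$ for $a,b\in\{0,1\}$, paralleling the 1D recursion $m^{(n+1)}_{2i+a} = m^{(n)}_i + a \pmod 2$. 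Induction then yields $\xi_n(i,j) \equiv m^{(n)}_i + m^{(n)}_j \pmod 2$ at every level, and passing to the limit gives the claimed identity.

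Next, I would define $\Phi\colon M \times M \to \{0,1\}^{\Z^2}$ by $\Phi(u,v)_{i,j} = u_i + v_j \pmod 2$. This map is continuous and intertwines $\sigma\times\id$ with $\sigma_{(1,0)}$ and $\id\times\sigma$ with $\sigma_{(0,1)}$, and the formula above gives $\Phi(m,m) = x^{\ast}$. Hence $\Phi$ sends the orbit of $(m,m)$ onto the orbit of $x^{\ast}$; since $M\times M$ is the closure of the former and $X_M$ the closure of the latter, continuity yields $\Phi(M\times M) \subseteq X_M$, while compactness, shift-invariance of $\Phi(M\times M)$, and minimality of $X_M$ give the reverse inclusion. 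Thus $\Phi$ is a factor map from the product system $(M\times M,\ \sigma\times\id,\ \id\times\sigma)$ onto $(X_M, \sigma_{(1,0)}, \sigma_{(0,1)})$.

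Having produced a product extension, the equivalence of conditions (1), (3), and (4) in Theorem \ref{R-ProductExtension} immediately yields $\mathcal{R}_{\sigma_{(1,0)}}(X_M) = \mathcal{R}_{\sigma_{(0,1)}}(X_M) = \Delta_{X_M}$. The main technical obstacle is the inductive identification of $x^{\ast}$ as the XOR of two 1D Thue--Morse sequences; this is classical, but pinning down the correct indexing of the 2D substitution and tracking the parity in the inductive step is where the bulk of the bookkeeping lies. Once this factorization is in place, the appeal to Theorem \ref{R-ProductExtension} is formal.
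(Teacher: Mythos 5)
Your proof is correct, but it runs in the opposite direction from the paper's. The paper establishes $\mathcal{R}_{\sigma_{(1,0)}}(X_{M})=\Delta_{X_M}$ by a direct computation inside $X_M$: it first observes that the horizontal (resp.\ vertical) difference pattern $x_{i,j}+x_{i+1,j} \bmod 2$ is independent of $j$ (resp.\ of $i$), whence $x_{i,j}=x_{i,0}+x_{0,j}$ whenever $x_{0,0}=0$; it then takes $(y,z,w,w)\in\Q_{\sigma_{(1,0)},\sigma_{(0,1)}}(X_M)$, approximates all four coordinates on a large box $B_N$ by shifts of $x^{\ast}$, and uses the identity to transfer the agreement of the last two coordinates into agreement of the first two, forcing $y=z$; the product extension then comes from the implication $(3)\Rightarrow(1)$ of Theorem \ref{R-ProductExtension}. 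You instead prove $(1)$ directly by exhibiting the explicit extension $\Phi\colon M\times M\to X_M$, $\Phi(u,v)_{i,j}=u_i+v_j \bmod 2$, and deduce the triviality of $\mathcal{R}_{\sigma_{(1,0)}}$ and $\mathcal{R}_{\sigma_{(0,1)}}$ from the (easy) implications $(1)\Rightarrow(3),(4)$. Both arguments rest on the same combinatorial fact --- the XOR structure of the two-dimensional Morse configuration --- and the paper itself records your factor map in the remark immediately following the proposition, so your construction is consistent with what the authors know; they simply chose not to make it the proof. What the paper's route buys is a template for computing $\mathcal{R}_{S,T}$ directly in symbolic examples, which is reused for the Robinson tiling where no product extension exists; what your route buys is the concrete extension itself and a shorter logical path, at the cost of the inductive bookkeeping identifying $x^{\ast}$ with $\Phi(m,m)$ (or at least showing $\Phi(m,m)\in X_M$, which by primitivity of the substitution and minimality of $X_M$ is all you need, since minimality of $M\times M$ under the $\Z^2$-action then gives $\Phi(M\times M)=X_M$). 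One point worth making explicit in your write-up: the implication $(1)\Rightarrow(2)\Rightarrow(3)$ of Theorem \ref{R-ProductExtension} is applied to a factor map from a product of \emph{minimal} systems, so you should note that the Thue--Morse subshift is minimal before invoking it.
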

\begin{proof}
Note that for $x=(x_{i,j})_{i,j\in\Z}\in X_{M}$, we have that $x_{i,j}+x_{i+1,j}=x_{i,j'}+x_{i+1,j'}\mod 2$ and $x_{i,j}+x_{i,j+1}=x_{i',j}+x_{i,j+1}\mod 2$ for every $i,j,i',j'\in \Z$. From this, we deduce that if $x_{0,0}=0$ then $x_{i,j}=x_{i,0}+x_{0,j}$ for every $i,j\in \Z$. From now on, we assume that $x^{\ast}_{0,0}=0$.

For $N\in \N$, let $B_N$ denote the square $([-N,N]\cap \Z)\times([-N,N]\cap \Z)$. Suppose $(y,z)\in\mathcal{R}_{\sigma_{(1,0)}}(X_{M})$ and let $w\in X_{M}$ be such that $(y,z,w,w)\in \Q_{\sigma_{(1,0)},\sigma_{(0,1)}}(X_{M})$. We deduce that there exist $n,m,p,q \in\mathbb{Z}$ such that
  \begin{equation}\nonumber
    \begin{split}
     & \sigma_{(1,0)}^{p}\sigma_{(0,1)}^{q}x^{\ast}|_{B_N}= y|_{B_N};
     \\&  \sigma_{(1,0)}^{p+n}\sigma_{(0,1)}^{q}x^{\ast}|_{B_N}= z|_{B_N};
     \\& \sigma_{(1,0)}^{p}\sigma_{(0,1)}^{q+m}x^{\ast}|_{B_N}=\sigma_{(1,0)}^{p+n}\sigma_{(0,1)}^{q+m}x^{\ast}|_{B_N}=w|_{B_N}.
    \end{split}
  \end{equation}
  Since $\sigma_{(1,0)}^{p}\sigma_{(0,1)}^{q+m}x^{\ast}|_{B_N}=\sigma_{(1,0)}^{p+n}\sigma_{(0,1)}^{q+m}x^{\ast}|_{B_N}$, we deduce that $x^{\ast}_{p+c,0}=x^{\ast}_{p+n+c,0}$ for all $c\leq N$. This in turn implies
  that $y|_{B_N}=\sigma_{(1,0)}^{p}\sigma_{(0,1)}^{q}x^{\ast}|_{B_N}=\sigma_{(1,0)}^{p+n}\sigma_{(0,1)}^{q}x^{\ast}|_{B_N}=z|_{B_N}$. Since $N$ is arbitrary we deduce that $y=z$. Therefore $\mathcal{R}_{\sigma_{(1,0)}}(X_{M})=\Delta_{X_{M}}$ and thus $(X_{M},\sigma_{(1,0)},\sigma_{(0,1)})$ has a product extension.
\end{proof}

\begin{rem}
 In fact, let $(Y,\sigma)$ be the one dimensional Thue-Morse system. This is the subshift generated by the one dimensional substitution $0\mapsto 01$, $1\mapsto 10$ (see \cite{Queff}). Then we can define $\pi\colon Y\times Y\to X_{M}$ by $\pi(x,x')_{n,m}=x_n+x'_m$ and it turns out that this is a product extension of the two dimensional Morse system. Moreover, we have that $(\bold{K}^{x^{\ast}}_{S,T},\widehat{S},\widehat{T})$ is  isomorphic to $(Y\times Y,T\times \id,\id\times T)$, where the isomorphism $\phi\colon\bold{K}^{x^{\ast}}_{S,T}\to Y\times Y$ is given by $\phi(a,b,c)=(a|_A,b|_B)$, where $A=\{(n,0)\colon n\in \Z\}$ and $B=\{(0,n)\colon n\in \Z\}$. We show in the next subsection that this is a general procedure to build symbolic systems with a product extension.
\end{rem}

\subsection{Building factors of product systems}

 Let $(X,\sigma_X)$ and $(Y,\sigma_Y)$ be two minimal one dimensional shifts and let $\mathcal{A}_X$ and $\mathcal{A}_Y$ be the respective alphabets.

Let $x\in X$ and $y\in Y$. Consider the point ${\bf z}\in (\mathcal{A}_X\times \mathcal{A}_Y)^{\Z^2}$ defined as ${\bf z}_{i,j}=(x_i,y_j)$ for $i,j\in \Z$ and let $Z$ denote the orbit closure of ${\bf z}$ under the shift transformations. Then we can verify that $(Z,\sigma_{(1,0)},\sigma_{(0,1)})$ is isomorphic to the product of $(X,\sigma_X)$ and $(Y,\sigma_Y)$ (and particularly $(Z,\sigma_{(1,0)},\sigma_{(0,1)})$ is a minimal system).

Let $\mathcal{A}$ be an alphabet and let $\varphi\colon\mathcal{A}_X\times \mathcal{A}_Y \to \mathcal{A}$ be a function. We can define $\phi\colon Z\to W\coloneqq \phi(Z)\subseteq \mathcal{A}^{\Z^2}$ such that $\phi(z)_{i,j}=\varphi(z_{i,j})$ for $i,j \in \Z$. Then $(W,\sigma_{(1,0)},\sigma_{(0,1)})$ is a minimal symbolic system with a product extension and we write $W=W(X,Y,\varphi)$ to denote this system. We show that this is the unique way to produce minimal symbolic systems with product extensions.

\begin{prop} \label{SymbolicProduct}
 Let $(W,\sigma_{(1,0)},\sigma_{(0,1)})$ be a minimal symbolic system with a product extension. Then, there exist one dimensional minimal subshifts $(X,\sigma_X)$ and $(Y,\sigma_Y)$ and a map $\varphi\colon\mathcal{A}_X\times \mathcal{A}_Y\to \mathcal{A}_W$ such that $W=W(X,Y,\varphi).$
\end{prop}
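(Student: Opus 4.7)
The plan is to use Proposition \ref{FunctionCoordinate} to produce a product-shaped factorization of $W$ coming from one of its cube spaces, and then to recode the two factors as one-dimensional subshifts via a higher-block map. First, I fix a point $w^\ast\in W$ and set $S=\sigma_{(1,0)}$, $T=\sigma_{(0,1)}$. Since $(W,S,T)$ has a product extension, Theorem \ref{R-ProductExtension} together with Lemma \ref{MinimalFace} will guarantee that $(X',S)$ and $(Y',T)$ are minimal, where $X'\coloneqq \mathcal{O}_S(w^\ast)$ and $Y'\coloneqq \mathcal{O}_T(w^\ast)$. By Proposition \ref{FunctionCoordinate}, $(\bold{K}^{w^\ast}_{S,T},\widehat{S},\widehat{T})$ is isomorphic via the first-two-coordinate projection to the product $(X'\times Y',S\times\id,\id\times T)$, and composing the inverse isomorphism with projection onto the third coordinate yields a surjective factor map $\pi\colon X'\times Y'\to W$ with $\pi(S^n w^\ast,T^m w^\ast)=S^n T^m w^\ast$ (surjectivity follows from minimality of $W$ and $\pi(w^\ast,w^\ast)=w^\ast$).

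Next I will convert $X'$ and $Y'$ into genuine one-dimensional subshifts. The map $(a,b)\mapsto \pi(a,b)_{0,0}$ is continuous on the compact space $X'\times Y'$ and takes values in the finite alphabet $\mathcal{A}_W$, so by a standard compactness argument there exists $N\in\N$ such that $\pi(a,b)_{0,0}$ depends only on $a|_{[-N,N]^2}$ and $b|_{[-N,N]^2}$. I will then define the shift-equivariant block recodings $\psi_X\colon X'\to (\mathcal{A}_W^{[-N,N]^2})^\Z$ by $\psi_X(a)_n=(a_{n+i,j})_{(i,j)\in[-N,N]^2}$ and $\psi_Y\colon Y'\to (\mathcal{A}_W^{[-N,N]^2})^\Z$ by $\psi_Y(b)_m=(b_{i,m+j})_{(i,j)\in[-N,N]^2}$, and put $X\coloneqq \psi_X(X')$, $Y\coloneqq \psi_Y(Y')$. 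These are one-dimensional subshifts, and they are minimal as continuous equivariant images of minimal systems.

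To conclude, by the choice of $N$ the value $\pi(a,b)_{0,0}$ depends only on the single symbols $\psi_X(a)_0$ and $\psi_Y(b)_0$, which defines a well-posed map $\varphi\colon\mathcal{A}_X\times\mathcal{A}_Y\to \mathcal{A}_W$ satisfying $\pi(a,b)_{0,0}=\varphi(\psi_X(a)_0,\psi_Y(b)_0)$. Using shift-equivariance of $\pi$ I obtain $\pi(a,b)_{i,j}=\pi(S^i a,T^j b)_{0,0}=\varphi(\psi_X(a)_i,\psi_Y(b)_j)$ for every $(i,j)\in\Z^2$. Since $\pi$ is onto and factors as $(\psi_X\times\psi_Y)$ followed by $(x,y)\mapsto(\varphi(x_i,y_j))_{i,j}$, this identifies $W$ with the system $W(X,Y,\varphi)$ described before the proposition.

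The main obstacle I anticipate is the second step, namely moving the \emph{a priori} finite-window dependence of $\pi(a,b)_{0,0}$ onto a single alphabet symbol for each of the two one-dimensional factors, so that $\pi$ genuinely factorizes ``per coordinate.'' The higher-block recoding is what makes this possible, and once it is in place, only shift-equivariance of $\pi$ and the minimality supplied by Lemma \ref{MinimalFace} remain to be invoked.
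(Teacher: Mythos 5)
Your proposal is correct and follows essentially the same route as the paper: both invoke Proposition \ref{FunctionCoordinate} to see that the third coordinate of $\bold{K}^{w^\ast}_{S,T}$ is a continuous function of the first two, use compactness to localize that dependence to a finite window, and then pass to horizontal and vertical higher-block recodings of the two orbit closures to obtain the one-dimensional subshifts $X$, $Y$ and the letter-to-letter map $\varphi$. The only difference is presentational (you phrase the finite-window step for arbitrary $(a,b)\in X'\times Y'$ while the paper phrases it for the shifts of the fixed point ${\bf w}$), which changes nothing of substance.
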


\begin{proof}
We recall that $\mathcal{A}_W$ denotes the alphabet of $W$. For $n\in \N$ we let $B_n$ denote $([-n,n]\cap \Z)\times ([-n,n]\cap \Z)$.
Let ${\bf w}=(w_{i,j})_{i,j\in \Z} \in W$. By Proposition \ref{FunctionCoordinate}, the last coordinate in $K^{{\bf w}}_{\sigma_{(1,0)},\sigma_{(0,1)}}(W)$ is a function of the two first coordinates. Since $K^{{\bf w}}_{\sigma_{(1,0)},\sigma_{(0,1)}}(W)$ is a  closed subset of $X^3$ we have that this function is continuous. Hence, there exists $n\in \N$ such that for every $i,j\in \Z$, $w_{i,j}$ is determined by ${\bf w}|_{B_{n}}$, ${\bf w}|_{B_{n}+(i,0)}$ and ${\bf w}|_{B_n+(0,j)}$. Let $\mathcal{A}_X=\{{\bf w}|_{B_n+(i,0)}\colon i \in \Z\}$ and $\mathcal{A}_Y=\{{\bf w}|_{B_n+(0,j)}\colon j \in \Z\}$. Then $\mathcal{A}_X$ and $\mathcal{A}_Y$ are finite alphabets and we can define $\varphi \colon \mathcal{A}_X\times \mathcal{A}_Y\to \mathcal{A}_W$ such that $\varphi({\bf w}|_{B_n+(i,0)},{\bf w}|_{B_n+(0,j)})=w_{i,j}$.

 We recall that since $(W,\sigma_{(1,0)},\sigma_{(0,1)})$ has a product extension, $(K^{w_0}_{\sigma_{(1,0)},\sigma_{(0,1)}}(W),\widehat{\sigma_{(1,0)}},\widehat{\sigma_{(0,1)}})$ is a minimal system.  Let $\phi_1\colon K^{w_0}_{\sigma_{(1,0)},\sigma_{(0,1)}}(W)\to \mathcal{A}_X^{\Z}$ and $\phi_2\colon K^{w_0}_{\sigma_{(1,0)},\sigma_{(0,1)}}(W)\to \mathcal{A}_Y^{\Z}$ defined as $\phi_1(w_1,w_2,w_3)=(w_1|_{B_n+(i,0)})_{i\in \Z}$ and $\phi_2(w_1,w_2,w_3) =(w_2|_{B_n+(0,j)})_{j\in \Z}$. Let $X=\phi_1(W)$ and $Y=\phi_2(W)$. Then $(X,\sigma_X)$ and $(Y,\sigma_Y)$ are two minimal symbolic systems and $W=W(X,Y,\varphi)$.
\end{proof}

The previous proposition says that for a minimal symbolic system $(W,\sigma_{(1,0)},\sigma_{(0,1)})$, having a product extension means that the dynamics can be deduced by looking at the shifts generated by finite blocks in the canonical directions.

\begin{rem}
It was proved in \cite{Mozes} that two dimensional rectangular substitutions are sofic. It was also proved that the product of two one dimensional substitution is a two dimensional substitution and therefore is sofic. Moreover, this product is measurably isomorphic to a shift of finite type. Given Proposition \ref{SymbolicProduct}, the natural question that one can formulate is what properties can be deduced for the subshifts $(X,\sigma_X)$ and $(Y,\sigma_Y)$? For example, what happens with these subshifts when $(W,\sigma_{(1,0)},\sigma_{(0,1)})$ is a two dimensional substitution with a product extension? We do not know the answer to this question.
\end{rem}

\subsection{The Robinson Tiling}

Consider the following set of tiles and their rotations and reflections:

\begin{figure}[H]
\begin{tikzpicture}
\matrix[column sep=0.0cm,row sep=0.0cm,ampersand replacement=\&]{
   \robA \& \TT \&\robB \& \TT \& \robC \& \TT \& \robD \& \TT \& \robE \\
   }
   ;
 \end{tikzpicture}
\caption{ The Robinson Tiles (up to rotation and reflection). The first tile and its rotations are called crosses.} 
\end{figure}
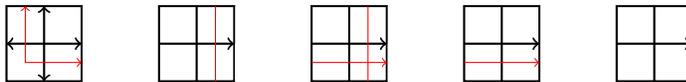

Let $\mathcal{A}$ be the set of the 28 Robinson tiles. Let $Y\subseteq \mathcal{A}^{\Z^2}$ be the subshift defined by the following rules:

  \begin{enumerate}
   \item  The outgoing arrows match with the ingoing arrows;
   \item  There exists $\vec{n}\in \Z^2$ such that there is a cross in every position of the form $\{\vec{n}+(2i,2j) \}$ for $i,j\in \Z$ ( this means that there is a 2-lattice of crosses).
  \end{enumerate}

This system is not minimal but it has a unique minimal subsystem \cite{GJS}. We let $X_{R}$ denote this unique minimal subsystem. Then $(X_{R},\sigma_{(1,0)},\sigma_{(0,1)})$ is a minimal system with commuting transformations $\sigma_{(1,0)}$ and $\sigma_{(0,1)}$ and we call it the {\it minimal Robinson system}. For $n\in \N$ we define {\it supertiles of order $n$} inductively. Supertiles of order 1 correspond to crosses and if we have defined supertiles of order $n$, supertiles of order $n+1$ are constructed putting together 4 supertiles of order $n$ in a consistent way and adding a cross in the middle of them (see Figure \ref{Supertile3}). We remark that supertiles of order $n$ have size $2^n-1$ and they are completely determined by the cross in the middle. Particularly, for every $n\in \N$ there are four supertiles of order $n$. It can be proved \cite{GJS}, \cite{Rad} that for every $x\in X_{R}$, given $n\in \N$, supertiles of order $n$ appear periodically (figure \ref{proofTilefig} illustrates this phenomenon). 

\begin{figure}[H]

\includegraphics[scale=0.37]{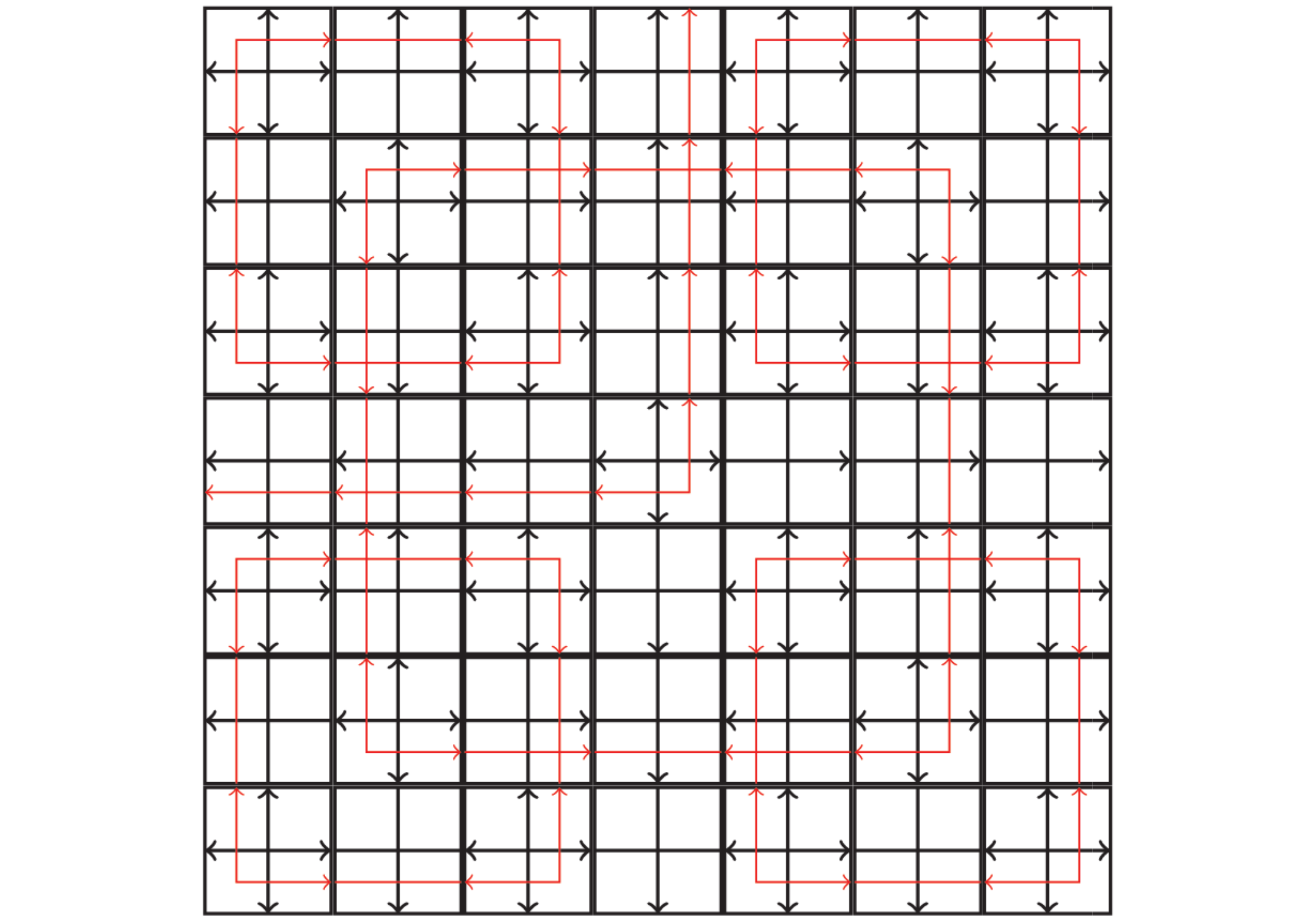}

  \caption{ A supertile of order 3. The four 3x3 squares of the corners are supertiles of order 2.}

 \label{Supertile3}
 \end{figure}

Let $x\in X_{R}$. A horizontal line in $x$ is the restriction of $x$ to a set of the form $\{(i,j_0)\colon i \in \Z\}$ where $j_0\in \Z$. Similarly, a vertical line in $x$ is the restriction of $x$ to a set of the form $\{(i_0,j)\colon j\in \Z\}$ where $i_0\in \Z$. We remark that a line passing through the center of a supertile of order $n$ has only one cross restricted to the supertile. The presence of supertiles of any order, forces the the existence of lines (vertical or horizontal) with at most one cross that are called {\it fault lines}. A point $x\in X_{R}$ can have 0,1 or 2 fault lines. When $x$ is a point with two fault lines, then these lines divide the plane in four quarter planes (one line is horizontal and the other is vertical). On each one of these quarter planes the point is completely determined. The tile in the intersection of two fault lines determines completely the fault lines and therefore this tile determines $x$. See \cite{Rad}, Chapter 1, Section 4 for more details.

Given a point $x\in X_{R}$ and $n\in \N$, supertiles of order $n$ appear periodically, leaving lines between them (which are not periodic). We remark that the center of one of the supertiles of order $n$ determines the distribution of all the supertiles of order $n$. We say that we {\it decompose $x$ into supertiles of order $n$} if we consider the distribution of its supertiles of order $n$, ignoring the lines between them.

Let $B_n\coloneqq ([-2^{n-1},2^{n-1}]\cap \Z) \times ([-2^{n-1},2^{n-1}]\cap \Z)$ be the square of side of size $2^n+1$. Recall that $x|_{B_n}\in \mathcal{A}^{B_n}$ is the restriction of $x$ to $B_n$. Then, looking at $x|_{B_n}$, we can find the center of at least one supertile of order $n$, and therefore we can determine the distribution of supertiles of order $n$ in $x$. We remark that if $x$ and $y$ are points in $X$ such that $x|_{B_n}=y|_{B_n}$, then we can find the same supertile of order $n$ in the same position in $x$ and $y$, and therefore $x$ and $y$ have the same decomposition into tiles of order $n$.

We study the $\mathcal{R}_{\sigma_{(1,0)},\sigma_{(0,1)}}(X_{R})$ relation in the minimal Robinson system. We have:

\begin{prop} \label{R_Robinson}
Let $(X_{R},\sigma_{(1,0)},\sigma_{(0,1)})$ be the minimal Robinson system. Then $(x,y)\in \mathcal{R}_{\sigma_{(1,0)},\sigma_{(0,1)}}$ if and only if they coincide in the complement of its fault lines. Particularly, points which have no fault lines are not related to any point by $\mathcal{R}_{\sigma_{(1,0)},\sigma_{(0,1)}}(X_{R})$.
\end{prop}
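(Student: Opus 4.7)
The plan is to establish both directions of the equivalence, using the hierarchical supertile decomposition of $X_R$: every point decomposes canonically into supertiles of every order $n \in \N$, and two points of $X_R$ with the same supertile hierarchy can differ only along the (at most two) fault lines that arise in the limit from the separating rows and columns between supertiles. The second assertion of the proposition (points with no fault lines are $\mathcal{R}_{\sigma_{(1,0)},\sigma_{(0,1)}}$-related only to themselves) is immediate from the main claim: if $x$ has no fault lines, then the ``complement of its fault lines'' is all of $\Z^2$, so any $y$ with $(x,y) \in \mathcal{R}_{\sigma_{(1,0)},\sigma_{(0,1)}}(X_R)$ must equal $x$ pointwise.

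For the sufficiency direction, suppose $x, y \in X_R$ share a common fault line configuration and agree on the complement. I would construct, for each $N \in \N$, a single point $z_N \in X_R$ and integers $n_N, m_N$ such that the four windows $z_N|_{B_N}$, $z_N|_{B_N + (n_N, 0)}$, $z_N|_{B_N + (0, m_N)}$, $z_N|_{B_N + (n_N, m_N)}$ realize $x|_{B_N}$, $y|_{B_N}$, $a_N|_{B_N}$, $a_N|_{B_N}$ for some $a_N \in X_R$. The choice would be $n_N = m_N = 2^K$ with $K$ large enough that $B_N$ fits inside a supertile of order $K$; this places the four windows at corresponding positions of four adjacent order-$K$ supertiles inside a single supertile of order $K+1$ of $z_N$. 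The differences between $x|_{B_N}$ and $y|_{B_N}$, living along fault lines by assumption, are realized precisely as differences across the separating row and column of $z_N$'s supertile hierarchy - exactly the finite-scale data that determines the fault lines in the limit. The existence of such $z_N$ follows from the pattern-completion properties of $X_R$. Compactness and a diagonal extraction then yield $(x, y, a, a) \in \Q_{\sigma_{(1,0)}, \sigma_{(0,1)}}(X_R)$, and the symmetric construction yields $(x, b, y, b) \in \Q_{\sigma_{(1,0)}, \sigma_{(0,1)}}(X_R)$.

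For the necessity direction, suppose $(x, y) \in \mathcal{R}_{\sigma_{(1,0)}, \sigma_{(0,1)}}(X_R)$ with witness $(x, y, a, a) \in \Q_{\sigma_{(1,0)}, \sigma_{(0,1)}}(X_R)$ via sequences $z_k \to x$, $\sigma_{(1,0)}^{n_k} z_k \to y$, $\sigma_{(0,1)}^{m_k} z_k \to a$, $\sigma_{(1,0)}^{n_k} \sigma_{(0,1)}^{m_k} z_k \to a$. The crucial observation is that for every $N$, eventually $z_k|_{B_N + (0, m_k)} = z_k|_{B_N + (n_k, m_k)}$, a local horizontal near-coincidence at shift $n_k$. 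Since no point of $X_R$ is periodic, this forces either $n_k = 0$ (yielding $x = y$) or $n_k$ to align with the supertile hierarchy: the two windows must lie in corresponding positions of adjacent supertiles of some order $K_k \to \infty$ within $z_k$. In the limit, the only positions at which $x|_{B_N}$ and $y|_{B_N}$ may differ are those that become the vertical fault line of $x$. The analogous analysis of $(x, b, y, b) \in \Q_{\sigma_{(1,0)}, \sigma_{(0,1)}}(X_R)$ handles the horizontal fault line, and combining the two yields that $x$ and $y$ coincide off their fault lines. The main obstacle is this last combinatorial step: showing that near-horizontal-periodicity on $B_N$ inside a Robinson tiling forces the shift to align with the supertile hierarchy. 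This relies on the rigidity of the Robinson rotational structure and the uniqueness of the hierarchical decomposition, and is the place where the specific features of the Robinson construction (as opposed to a generic aperiodic tiling) enter.
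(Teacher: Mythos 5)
Your overall strategy (hierarchical supertile decomposition, differences confined to the inter-supertile lines, limits giving fault lines) is the paper's, but both directions have concrete problems. In the necessity direction you reduce everything to the claim that near-horizontal-periodicity of $z_k$ on a window forces the shift $n_k$ to ``align with the supertile hierarchy,'' and you flag this as the main unproved obstacle. That is indeed a gap, and the dichotomy you offer ($n_k=0$ or alignment, via aperiodicity) is not the right tool. The paper bypasses this entirely using the fact recorded just before the proposition: a window of side $2^N+1$ contains the center of at least one order-$N$ supertile and therefore determines the entire order-$N$ supertile decomposition of the point. Hence the exact equality $z_k|_{B_N+(0,m_k)}=z_k|_{B_N+(n_k,m_k)}$ (for large $k$) forces $\sigma_{(0,1)}^{m_k}z_k$ and $\sigma_{(1,0)}^{n_k}\sigma_{(0,1)}^{m_k}z_k$ to have the same order-$N$ decomposition; by equivariance the same holds for $z_k$ and $\sigma_{(1,0)}^{n_k}z_k$, so $x|_{B_N}$ and $y|_{B_N}$ share the same order-$N$ supertile distribution and can differ only on the lines left uncovered by order-$N$ supertiles. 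Letting $N\to\infty$ confines the differences to fault lines. No congruence condition on $n_k$ ever needs to be extracted. (Also note that either witness alone already gives the conclusion: the paper computes $\mathcal{R}_{\sigma_{(1,0)}}(X_R)$ and does not need to intersect with $\mathcal{R}_{\sigma_{(0,1)}}(X_R)$ to locate the differences on fault lines.)

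In the sufficiency direction, your specific construction fails: the four order-$K$ supertiles sitting in the corners of an order-$(K+1)$ supertile are the four distinct orientations (rotations/reflections of one another), so ``corresponding positions'' in two horizontally adjacent corner supertiles carry different patterns. Consequently your third and fourth windows $z_N|_{B_N+(0,m_N)}$ and $z_N|_{B_N+(n_N,m_N)}$ would not coincide, and the first two windows could not simultaneously realize the nearly identical patterns $x|_{B_N}$ and $y|_{B_N}$. The paper's construction is different: since $x$ and $y$ agree off the fault lines and the tiles on a fault line are not determined by the bulk, one first finds a single fault-line-free point $z$ and a horizontal shift $p$ with $z|_{B_N}=x|_{B_N}$ and $\sigma_{(1,0)}^{p}z|_{B_N}=y|_{B_N}$; one then uses the periodic recurrence of order-$N$ supertiles inside a common supertile of large order to find a vertical shift $q$ with $\sigma_{(0,1)}^{q}z|_{B_N}=\sigma_{(0,1)}^{q}\sigma_{(1,0)}^{p}z|_{B_N}$. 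You need this two-step structure (realize $x,y$ as horizontal translates of one configuration first, then kill the difference by a vertical shift), not a simultaneous placement at the four corners of one supertile.
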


\begin{proof}
We start computing the $\mathcal{R}_{\sigma_{(1,0)}}(X_{R})$ relation.  Let $x,y \in \mathcal{R}_{\sigma_{(1,0)}}(X_{R})$ with $x\neq y$ (the case $\mathcal{R}_{\sigma_{(0,1)}}(X_{R})$ is similar). Let $p\in \N$ be such that $x|_{B_p}\neq y|_{B_p}$ and let $x'\in X$, $n,m\in \Z$ and $z\in X_{R}$ with $x'|_{B_p}=x|_{B_p}$, $\sigma_{(1,0)}^nx'|_{B_p}=y|_{B_p}$, $\sigma_{(0,1)}^mx'|_{B_p}=z|_{B_p}$ and $\sigma_{(1,0)}^n\sigma_{(0,1)}^mx'|_{B_p}=z|_{B_p}$. Then $\sigma_{(1,0)}^n\sigma_{(0,1)}^mx'|_{B_p}=\sigma_{(0,1)}^mx'|_{B_p}$ and thus $\sigma_{(1,0)}^n\sigma_{(0,1)}^mx'$ and $\sigma_{(0,1)}^mx'$ have the same decomposition into supertiles of order $p$, which implies that $x$ and $y$ have also the same decomposition. Particularly, the difference between $x$ and $y$ must occur in the lines which are not covered by the supertiles of order $p$ (we remark that these lines have at most one cross). Let $L_p$ be such a line on $x$. For $q$  larger than $p$, we decompose into tiles of order $q$ and we conclude that $L_p$ lies inside $L_q$. Taking the limit in $q$, we deduce that $x$ and $y$ coincide everywhere except in one or two fault lines.

Now suppose that $x$ and $y$ coincide everywhere except in fault lines. For instance, suppose that $x$ and $y$ have two fault lines and let $n\in \N$. We can find $z \in X_{R}$ with no fault lines and $p\in \Z$ such that $z|_{B_n}=x|_{B_n}$ and $\sigma_{(1,0)}^pz|_{B_n}=y_{B_n}$. Then, we can find a supertile of large order containing $z|_{B_n}$ and $\sigma_{(0,1)}^pz|_{B_n}$. Hence, along the horizontal we can find $q\in \Z$ such that $\sigma_{(0,1)}^qz|_{B_n}=\sigma_{(0,1)}^q \sigma_{(1,0)}^p z|_{B_n}$. Since $n$ is arbitrary, we have that $(x,y)\in \mathcal{R}_{\sigma_{(1,0)}}(X_{R})$.
\end{proof}

\begin{figure}[H]
\begin{center}
 \includegraphics[scale=0.37]{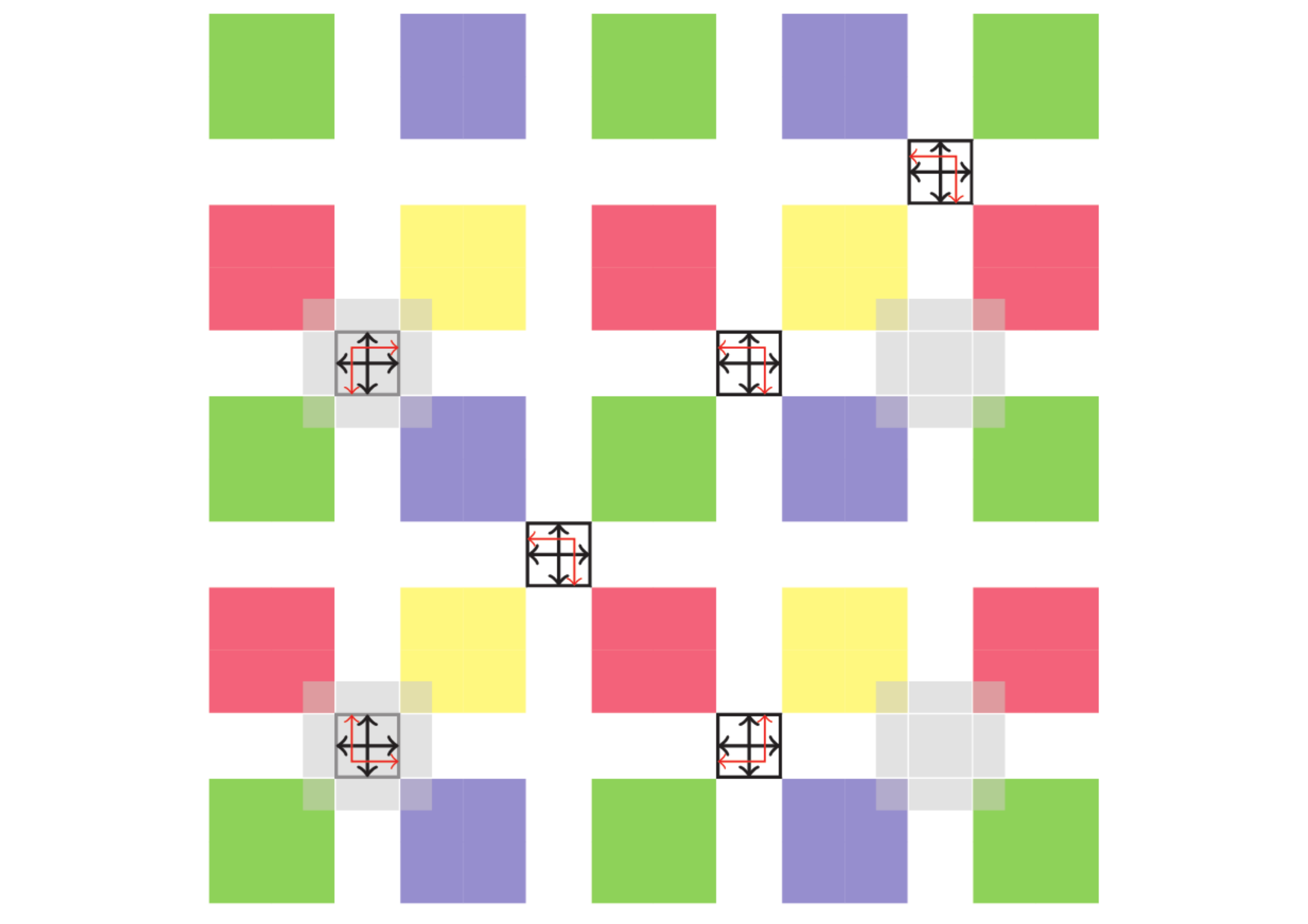}
\end{center}
 \caption{For an arbitrary $n\in\N$, the colored squares represent tiles of order $n$. In this picture we illustrate how points with two fault lines, with different crosses in the middle are related.}

\label{proofTilefig}
 \end{figure}

Let $\pi\colon X\to X/\mathcal{R}_{\sigma_{(1,0)},\sigma_{(0,1)}}(X_{R})$ be the quotient map. By Proposition \ref{R_Robinson} we have that in the minimal Robinson system, we can distinguish three types of fibers for $\pi$: fibers with cardinality  1 (tilings with no fault lines), fibers with cardinality 6 (tilings with one fault line), and fibers with cardinality 28 (tilings with 2 fault lines).

Using Lemma \ref{AutoFiber} with this discussion, we compute the group of automorphisms of $X_{R}$:

\begin{cor}
 The group of automorphisms of the minimal Robinson system is spanned by $\sigma_{(1,0)}$ and $\sigma_{(0,1)}$.
\end{cor}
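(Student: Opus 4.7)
The plan is to leverage Lemma \ref{AutoFiber} together with Proposition \ref{R_Robinson} to show that any automorphism $\phi$ of $(X_R, \sigma_{(1,0)}, \sigma_{(0,1)})$ coincides with some $\sigma_{(1,0)}^a \sigma_{(0,1)}^b$. First I would fix a point $x_0 \in X_R$ whose two fault lines pass through the origin; by Proposition \ref{R_Robinson}, the $\mathcal{R}_{\sigma_{(1,0)}, \sigma_{(0,1)}}$-equivalence class of $x_0$ has cardinality $28$. Given $\phi$, Lemma \ref{AutoFiber} guarantees that this class is sent to another $\mathcal{R}$-class of the same cardinality $28$, so by Proposition \ref{R_Robinson} the image $\phi(x_0)$ again has two fault lines, intersecting at some point $(a, b) \in \Z^2$.

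I would then replace $\phi$ by $\psi := \sigma_{(1,0)}^{-a} \sigma_{(0,1)}^{-b} \circ \phi$, which is still an automorphism and now sends $x_0$ into the same $\mathcal{R}$-class as $x_0$ (since $\psi(x_0)$ has its two fault lines through the origin). Once we prove $\psi(x_0) = x_0$, minimality of $X_R$ together with continuity and shift-equivariance of $\psi$ force $\psi = \id$ on the dense $\Z^2$-orbit of $x_0$, and hence on all of $X_R$, yielding $\phi = \sigma_{(1,0)}^a \sigma_{(0,1)}^b$ as desired.

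The main obstacle is therefore the equality $\psi(x_0) = x_0$. Applying the same reduction to an arbitrary $y \in X_R$ whose two fault lines intersect at $(a', b')$, and using that $\psi$ commutes with the shifts, one checks that $\psi$ preserves the fault-line intersection position and induces a permutation $\tau$ of the Robinson alphabet $\mathcal{A}$ (identifying each $28$-fiber with the choice of tile at the intersection) which is the same on every such fiber. To show $\tau = \id$, one writes $\psi$ as a sliding block code of some finite window size $N$ (Curtis--Hedlund--Lyndon) and exploits that the $28$ tilings in the fiber of $x_0$ coincide away from the origin, while the tiles along the two fault lines and in the adjacent supertiles are rigidly forced by the arrows of the intersection tile via the Robinson matching rules. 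A case analysis then shows that a nontrivial $\tau$ would produce a $\psi(x_0)$ whose two fault lines fail to carry the arrow pattern required by its own central tile, so $\tau = \id$ and $\psi(x_0) = x_0$, completing the argument.
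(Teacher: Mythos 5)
Your reduction is sound up to the point where everything hinges on showing $\psi(x_0)=x_0$: using Lemma \ref{AutoFiber} and Proposition \ref{R_Robinson} to see that $\phi$ permutes the $28$-element fibers, that such a fiber is determined by the intersection point of its two fault lines, and hence that after composing with a suitable shift you may assume $\psi$ maps the fiber $F$ of $x_0$ to itself — all of this matches the paper. The gap is in the last step. You assert that the induced permutation $\tau$ of the $28$ tilings in $F$ must be trivial, and you justify this only by an announced ``case analysis'' combining Curtis--Hedlund--Lyndon with the rigidity of the arrow matchings along the fault lines. That analysis is the entire content of the remaining claim and it is not carried out; a priori nothing you have written rules out a sliding block code that commutes with the shifts and nontrivially permutes the $28$ choices of central tile while fixing the complement of the fault lines. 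So as written the argument is incomplete precisely where it needs to do work.

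The paper avoids this combinatorial issue altogether by exploiting the singleton fibers. Since $\psi(F)=F$, the automorphism induced by $\psi$ on the quotient $X_R/\mathcal{R}_{\sigma_{(1,0)},\sigma_{(0,1)}}(X_R)$ fixes the point $\pi(F)$, and two automorphisms of a minimal system that agree at one point agree everywhere; hence $\psi$ preserves \emph{every} fiber setwise. Because $\pi$ is almost one-to-one (tilings with no fault lines are related to nothing else), there are fibers consisting of a single point $x$, and for these ``$\psi$ preserves the fiber'' already means $\psi(x)=x$. Minimality of $X_R$ then gives $\psi=\id$. I would recommend you replace your case analysis by this two-step use of minimality — first on the quotient, then on $X_R$ via a one-point fiber — which closes the gap without any inspection of the Robinson tiles.
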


\begin{proof}
 Clearly, the shift transformations are automorphisms of the system. We claim that there are no other automorphisms. The factor map $\pi\colon X_{R}\to X_{R}/\mathcal{R}_{\sigma_{(1,0)},\sigma_{(0,1)}}(X_{R})$ is almost one to one since points with no fault lines are not related with any point. Consider two fibers $F$ and $F'$ with 28 elements. We remark that in such a fiber, all points have two fault lines and they coincide outside them. It follows that such a fiber is determined only by the position where those two fault lines intersect. Therefore, if $F$ and $F'$ are two fibers with 28 elements, we have that $F'=\sigma_{(1,0)}^n\sigma_{(0,1)}^mF$ for some $n,m\in \Z$. Let $\phi$ be a automorphism of the minimal Robinson system and let $F$ be a fiber with 28 elements. By Lemma \ref{AutoFiber}, $\phi(F)$ is also a fiber with 28 elements. Hence, there exist $n,m\in\Z$ with $\phi(F)=\sigma_{(1,0)}^n\sigma_{(0,1)}^m(F)$. We conclude that the automorphisms $\phi$ and $\sigma_{(1,0)}^n\sigma_{(0,1)}^m$ coincide in one fiber, and by minimality they coincide in all fibers. Particularly, they coincide in fibers with one element, meaning that there exists $x\in X_{R}$ with $\phi(x)=\sigma_{(1,0)}^n\sigma_{(0,1)}^m(x)$. The minimality of system implies that $\phi$ and $\sigma_{(1,0)}^n\sigma_{(0,1)}^m$ are equal.
\end{proof}


\section{$\mathcal{R}_{S,T}(X)$ relation in the distal case}
\subsection{Basic properties}
This section is devoted to the study of the $\mathcal{R}_{S,T}(X)$ relation in the distal case. We do not know if $\mathcal{R}_{S,T}(X)$,  $\mathcal{R}_{S}(X)$ and $\mathcal{R}_{T}(X)$ are equivalence relations in the general setting. However,
we have a complete description of these relations in the distal case.

Recall that a topological dynamical system $(X,G_{0})$ is {\it distal} if $x\neq y$ implies that $$\inf_{g \in G_{0}} d(gx,gy)>0.$$

Distal systems have many interesting properties (see \cite{Aus}, chapters 5 and 7). We recall some of them:
\begin{thm} \label{distal}\quad
 \begin{enumerate}
  \item The Cartesian product of distal systems is distal;
  \item Distality is preserved by taking factors and subsystems;
  \item A distal system is minimal if and only if it is transitive;
   \item If $(X,G_0)$ is distal and $G_0'$ is a subgroup of $G_0$, then $(X,G_0')$ is distal.
 \end{enumerate}
\end{thm}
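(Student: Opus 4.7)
The plan is to derive each of the four properties from the definition of distality together with the standard enveloping semigroup characterization: $(X,G_0)$ is distal if and only if $E(X,G_0)$ is a group (see \cite{Aus}, Chapter 5). Items (1) and (4) are immediate from the definition by monotonicity of the infimum; (2) for subsystems is equally immediate, while (2) for factors requires passing through enveloping semigroups; (3) uses the fact that in a distal system every point is almost periodic.

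For (1), if $(X_1,G_0)$ and $(X_2,G_0)$ are distal and $(x_1,x_2) \neq (y_1,y_2)$, then at least one coordinate differs, say $x_1 \neq y_1$. Equipping $X_1 \times X_2$ with the max metric gives
\[
\inf_{g \in G_0} d\bigl((gx_1,gx_2),(gy_1,gy_2)\bigr) \geq \inf_{g \in G_0} d_{X_1}(gx_1,gy_1) > 0.
\]
For (4), $G_0' \subseteq G_0$ implies $\inf_{g \in G_0'} d(gx,gy) \geq \inf_{g \in G_0} d(gx,gy) > 0$ for $x \neq y$. Similarly, the subsystem half of (2) is clear because a closed $G_0$-invariant subset $X' \subseteq X$ inherits the same lower bound.

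The main (and only real) obstacle is the factor case of (2). A naive attempt to lift a proximal pair $(x,x') \in X \times X$ to arbitrary preimages $(y,y') \in Y \times Y$ fails: even if $g_\alpha x \to z$ and $g_\alpha x' \to z$, the limits of $g_\alpha y$ and $g_\alpha y'$ need not coincide in $Y$, so one cannot conclude that $(y,y')$ is proximal. To get around this I would invoke the enveloping semigroup: a factor map $\pi \colon Y \to X$ induces a continuous surjective semigroup homomorphism $\pi^{\ast} \colon E(Y,G_0) \to E(X,G_0)$. The homomorphic image of a group under a semigroup homomorphism is a group, so if $E(Y,G_0)$ is a group then so is $E(X,G_0)$, and distality of $(X,G_0)$ follows.

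For (3), any minimal system is transitive, so only the converse requires proof. Given $(X,G_0)$ distal with $\overline{\mathcal{O}_{G_0}(x_0)} = X$, let $M \subseteq X$ be a minimal subset (which exists by Zorn's lemma) and pick $y \in M$. Since $x_0$ has dense orbit, there is $p \in E(X,G_0)$ with $px_0 = y$. Because $E(X,G_0)$ is a group, $p$ is invertible; and since $p^{-1} \in E(X,G_0)$ is a pointwise limit of group elements, $x_0 = p^{-1}y$ lies in the orbit closure of $y$, namely $M$. Therefore $\overline{\mathcal{O}_{G_0}(x_0)} \subseteq M$, forcing $X = M$ to be minimal.
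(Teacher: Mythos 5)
Your proofs are correct; note that the paper itself offers no argument for this theorem, simply recalling these as standard facts from Auslander's book (Chapters 5 and 7), so there is nothing to compare against beyond the classical proofs, which are exactly the ones you give. Items (1), (4), and the subsystem half of (2) are indeed immediate from the metric definition; your use of Ellis's theorem (distal $\Leftrightarrow$ the enveloping semigroup is a group) for the factor half of (2) and for (3) is the standard route, and you correctly identify why the naive lifting of proximal pairs fails. Two small remarks. First, in (3) you implicitly use that the group identity of $E(X,G_0)$ is $\id_X$ (so that $p^{-1}px_0 = x_0$); this is automatic since $\id_X$ is a left identity of $E(X,G_0)$ and a left identity in a group is the identity, but it is worth saying. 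Second, your metric argument for (1) covers only finite products; the paper later applies distality of products to $Z^{Z}$ in Lemma \ref{SuperLifting}, where the product is not metrizable and one must instead use the proximal-relation formulation (a pair is proximal iff some $p$ in the enveloping semigroup identifies it, and the enveloping semigroup of a product embeds in the product of the enveloping semigroups acting coordinatewise). For the statement as used in Theorem \ref{distal} itself, your argument suffices.
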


The main property about distality is that it implies that cubes have the following transitivity property:

\begin{lem}\label{glue} Let $(X,S,T)$ be a distal minimal system with commuting transformations $S$ and $T$. Suppose that $R$ is either $S$ or is $T$. Then

\begin{enumerate}
 \item If $(x,y),(y,z)\in \Q_R(X)$, then $(x,z)\in \Q_R(X)$;
 \item If $(a_{1},b_{1},a_{2},b_{2}),(a_{2},b_{2},a_{3},b_{3})\in \Q_{S,T}(X)$, then $(a_{1},b_{1},a_{3},b_{3})\in \Q_{S,T}(X)$.
\end{enumerate}

\end{lem}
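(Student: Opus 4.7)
Both items are ``gluing'' transitivity statements; I sketch (2), since (1) is the parallel argument in a smaller product. In $X^6$, consider the ``double-cube'' closure
$$\bold{Q}^{(2)}_{S,T}(X) \coloneqq \overline{\{(x,S^nx,T^mx,S^nT^mx,T^{m+k}x,S^nT^{m+k}x): x\in X,\ n,m,k\in\Z\}},$$
which is the orbit closure of the six-fold diagonal under the abelian subgroup $H\le G^6$ whose elements have the form $(g,gS^n,gT^m,gS^nT^m,gT^{m+k},gS^nT^{m+k})$ for $g\in G$ and $n,m,k\in\Z$. The projection $\pi:X^6\to X^4$ onto the coordinates $(0,1,4,5)$ sends $\bold{Q}^{(2)}_{S,T}(X)$ into $\bold{Q}_{S,T}(X)$, because a generic generating $6$-tuple projects to the $(S,T)$-cube with parameters $(n,m+k)$. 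Hence (2) reduces to the membership statement: if $(a_1,b_1,a_2,b_2),(a_2,b_2,a_3,b_3)\in\bold{Q}_{S,T}(X)$, then $(a_1,b_1,a_2,b_2,a_3,b_3)\in\bold{Q}^{(2)}_{S,T}(X)$.

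First I would verify that $(X^6,H)$ is distal: a proximal pair in $X^6$ under $H$ would produce, coordinate-by-coordinate, a proximal non-diagonal pair in $X$ under some $G$-sequence, contradicting distality of $(X,G)$. Thus every $H$-orbit closure in $X^6$ is minimal, and in particular $\bold{Q}^{(2)}_{S,T}(X)$ is a minimal distal $H$-system. Next I would place two ``partial configurations'' inside $\bold{Q}^{(2)}_{S,T}(X)$ by collapsing one parameter in the approximating sequences: from $(a_1,b_1,a_2,b_2)\in\bold{Q}_{S,T}(X)$ with $k=0$ one obtains $(a_1,b_1,a_2,b_2,a_2,b_2)\in\bold{Q}^{(2)}_{S,T}(X)$, and from $(a_2,b_2,a_3,b_3)\in\bold{Q}_{S,T}(X)$ with $m=0$ one obtains $(a_2,b_2,a_2,b_2,a_3,b_3)\in\bold{Q}^{(2)}_{S,T}(X)$.

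The crucial remaining step is to merge these two partial configurations into the target $(a_1,b_1,a_2,b_2,a_3,b_3)$. I would exploit the enveloping semigroup $E$ of the distal minimal system $\bold{Q}^{(2)}_{S,T}(X)$: by Ellis' theorem, $E$ is a group acting transitively. Pick $\tau\in E$ with $\tau(a_2,b_2,a_2,b_2,a_2,b_2)=(a_2,b_2,a_2,b_2,a_3,b_3)$; applying $\tau$ to $(a_1,b_1,a_2,b_2,a_2,b_2)\in\bold{Q}^{(2)}_{S,T}(X)$ yields a point in $\bold{Q}^{(2)}_{S,T}(X)$ whose last two coordinates equal $(a_3,b_3)$. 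The delicate point is to argue that this point also has first four coordinates $(a_1,b_1,a_2,b_2)$, which amounts to a rigidity statement for $\tau$ along the fiber of $\pi$ above $(a_2,b_2,a_2,b_2)$. This rigidity is extracted from the compatibility of $\tau$ with $\pi$ and from the structure of the projected element $\tau'\in E(\bold{Q}_{S,T}(X),\mathcal{G}_{S,T})$, which by construction fixes $(a_2,b_2,a_2,b_2)$. Part (1) follows the same template in $X^3$, with $\bold{Q}^{(3)}_R(X)\coloneqq\overline{\{(x,R^nx,R^mx):x\in X,n,m\in\Z\}}$ and projection onto the coordinates $(0,2)$.

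The main obstacle is precisely this fiberwise rigidity: one cannot conclude globally from $\tau'(a_2,b_2,a_2,b_2)=(a_2,b_2,a_2,b_2)$ that $\tau'$ is the identity (even in equicontinuous systems, the coset description $E=G$ acting on $G/H$ with nontrivial $H$ provides counterexamples), so the argument has to exploit both the specific form of the $4$-tuples being glued and the minimality of the cube closure. A cleaner, more structural alternative is to induct along an isometric tower for $(X,G)$ furnished by Furstenberg's structure theorem: at each stage the cube sets admit an explicit description via Mackey groups of the successive extensions, making the gluing property transparent and allowing it to be lifted to the inverse limit.
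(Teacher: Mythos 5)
Your set-up is fine as far as it goes: the six-fold cube $\Q^{(2)}_{S,T}(X)$, its minimality and distality under $H$, the projection onto coordinates $(0,1,4,5)$, and the two partial configurations $(a_1,b_1,a_2,b_2,a_2,b_2)$ and $(a_2,b_2,a_2,b_2,a_3,b_3)$ are all correct. But the proof is not complete, and the step you leave open is precisely where the content of the lemma lies. Any $\tau$ in the enveloping semigroup of $(\Q^{(2)}_{S,T}(X),H)$ acts coordinatewise, say $\tau=(\tau_0,\dots,\tau_5)$ with $\tau_i\in E(X,G)$; choosing $\tau$ so that $\tau(a_2,b_2,a_2,b_2,a_2,b_2)=(a_2,b_2,a_2,b_2,a_3,b_3)$ only gives $\tau_0a_2=a_2$ and $\tau_1b_2=b_2$, and nothing forces $\tau_0a_1=a_1$ or $\tau_1b_1=b_1$. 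You acknowledge this ``fiberwise rigidity'' obstruction yourself and do not resolve it, and the fallback you mention (Furstenberg towers and Mackey-group descriptions of the cube sets) is only named, not carried out. As written, the argument does not establish either item.

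For comparison, the paper's proof avoids the merging problem entirely and needs no auxiliary cube. For (1): by Proposition \ref{Q_STMinimal}, $(\Q_R(X),\mathcal{G}_R)$ is minimal, so there are $g_n=(g_n',g_n'')\in\mathcal{G}_R$ with $g_n(x,y)=(g_n'x,g_n''y)\to(a,a)$. Since $g_n''\in G$ and $\Q_R(X)$ is invariant under the diagonal $G$-action, $(g_n''y,g_n''z)\in\Q_R(X)$, and after passing to a subsequence $(g_n''y,g_n''z)\to(a,u)\in\Q_R(X)$. Then $g_n(x,z)=(g_n'x,g_n''z)\to(a,u)$, so $(a,u)$ lies in the $\mathcal{G}_R$-orbit closure of $(x,z)$; by distality that orbit closure is minimal, hence it coincides with the orbit closure of $(a,u)$, which is contained in the closed invariant set $\Q_R(X)$. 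Thus $(x,z)\in\Q_R(X)$, and (2) is the same argument run in $X^4$ with $\mathcal{G}_{S,T}$. Note that distality enters only in the final ``return trip'' from the limit point back to $(x,z)$ --- exactly the step your single rigid $\tau$ was supposed to perform. If you want to salvage your approach, replace the search for a rigid $\tau$ by this two-step manoeuvre (first collapse the shared middle coordinates to a diagonal point using minimality, then invoke minimality of the orbit closure of the target tuple); but at that point the double cube is superfluous.
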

\begin{proof}
We only prove (1) since the proof of (2) is similar.
 Let $(x,y) ,(y,z) \in \Q_R(X)$. Pick any $a\in X$. Then $(a,a)\in\Q_{R}(X)$. By Proposition \ref{Q_STMinimal}, there exists a sequence $(g_n)_{n\in \N}=((g_n',g_n''))_{n\in\N}$ in $\G_R$ such that $g_n(x,y)=(g_n' x, g_n'' y)\to (a,a)$, where $\G_R$ is the group generated by $\id\times R$ and $g\times g, g\in G$. We can assume (by taking a subsequence) that $g_n''z\to u$ and thus $(g_n''y,g_n''z)\to (a,u) \in \Q_R(X)$. Since $(g_n',g_n'')(x,z)\to(a,u)$, by distality we have that $(x,z)$ is in the closed orbit of $(a,u)$ and thus $(x,z)\in \Q_R(X)$.
\end{proof}

\begin{rem}
  It is worth noting that the gluing lemma fails in the non-distal case, even if $S=T$ (see \cite{TuYe} for an example).
\end{rem}

The following proposition gives equivalent definitions of $\mathcal{R}_{S,T}(X)$ in the distal case:

\begin{prop}\label{equ} Let $(X,S,T)$ be a distal system with commuting transformations $S$ and $T$. Suppose $x,y\in X$. The following are equivalent:

\begin{enumerate} \label{RPST}
\item $(x,y,y,y)\in \Q_{S,T}(X)$;

\item There exists $a,b,c\in X$ such that $(x,a,b,c),(y,a,b,c)\in\Q_{S,T}(X)$;

\item For every $a,b,c\in X$, if $(x,a,b,c)\in \Q_{S,T}(X)$, then $(y,a,b,c)\in \Q_{S,T}(X)$;

\item $(x,y)\in\mathcal{R}_{S,T}(X)$;

\item $(x,y)\in\mathcal{R}_{S}(X)$;

\item $(x,y)\in\mathcal{R}_{T}(X)$.
\end{enumerate}
Particularly, $\mathcal{R}_{S}(X)=\mathcal{R}_{T}(X)=\mathcal{R}_{S,T}(X)$.
\end{prop}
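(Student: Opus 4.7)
The plan is to establish the equivalences using the gluing Lemma \ref{glue} together with the symmetries of Proposition \ref{sym}, exploiting the fact that distality promotes $\Q_S(X)$ and $\Q_T(X)$ to equivalence relations (Lemma \ref{glue}(1) providing transitivity). A useful derived tool I will invoke repeatedly is a \emph{horizontal} S-gluing principle: combining the third symmetry of Proposition \ref{sym}(3) with Lemma \ref{glue}(2) shows that if $(x_0,x_1,x_2,x_3)$ and $(x_1,y_1,x_3,y_3)$ both lie in $\Q_{S,T}(X)$, then so does $(x_0,y_1,x_2,y_3)$. This, together with the \emph{vertical} T-gluing given by Lemma \ref{glue}(2) directly, will be enough.

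First I would dispatch the easy directions by specialising parameters. The implication (1) $\Rightarrow$ (2) holds with $a=b=c=y$; (1) $\Rightarrow$ (5), (6), (4) follow by setting $a=y$ or $b=y$; and (3) $\Rightarrow$ (2) is obtained by evaluating (3) at $(a,b,c)=(x,x,x)$. For (5) $\Rightarrow$ (1), I would start from $(x,y,a,a)\in\Q_{S,T}(X)$, read off $(y,a)\in\Q_T(X)$ via Proposition \ref{sym}(4), lift to $(y,y,a,a)\in\Q_{S,T}(X)$ by Proposition \ref{sym}(5), symmetrise to $(a,a,y,y)$, and vertically glue with $(x,y,a,a)$ to produce $(x,y,y,y)$. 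The mirror direction (6) $\Rightarrow$ (1) is analogous but uses the horizontal gluing: extract $(y,b)\in\Q_S(X)$ from $(x,b,y,b)$, lift to $(b,y,b,y)\in\Q_{S,T}(X)$ via Proposition \ref{sym}(5) and a symmetry, and horizontally glue with $(x,b,y,b)$ to obtain $(x,y,y,y)$. Finally, (2) $\Rightarrow$ (6) is one more vertical gluing: symmetrise $(y,a_0,b_0,c_0)$ to $(b_0,c_0,y,a_0)$ and T-glue onto $(x,a_0,b_0,c_0)$ to get $(x,a_0,y,a_0)\in\Q_{S,T}(X)$, exhibiting $(x,y)\in\mathcal{R}_T(X)$.

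The main obstacle will be (1) $\Rightarrow$ (3): given $(x,y,y,y)$ and $(x,a,b,c)$ in $\Q_{S,T}(X)$, I must produce $(y,a,b,c)\in\Q_{S,T}(X)$. The idea is a four-step chain. Projecting the two given cubes onto their first two coordinates yields $(x,y),(x,a)\in\Q_S(X)$, and then the transitivity of $\Q_S(X)$ in the distal setting produces $(y,a)\in\Q_S(X)$, whence $(y,a,y,a)\in\Q_{S,T}(X)$ by Proposition \ref{sym}(5). Horizontally gluing $(x,y,y,y)$ with $(y,a,y,a)$ (both share $y$ in the second and fourth positions) delivers $(x,a,y,a)\in\Q_{S,T}(X)$; the first symmetry of Proposition \ref{sym}(3) rewrites it as $(y,a,x,a)\in\Q_{S,T}(X)$; and one last vertical gluing of $(y,a,x,a)$ on top of $(x,a,b,c)$ finally produces $(y,a,b,c)$. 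The hardest part is identifying this specific sequence of four cube operations; once located, each individual step is immediate. The concluding assertion $\mathcal{R}_S(X)=\mathcal{R}_T(X)=\mathcal{R}_{S,T}(X)$ then follows directly from the equivalences (4) $\Leftrightarrow$ (5) $\Leftrightarrow$ (6).
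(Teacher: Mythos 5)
Your proof is correct and follows essentially the same route as the paper: everything is driven by the symmetries of Proposition \ref{sym} and the gluing Lemma \ref{glue}, with the hard implication $(1)\Rightarrow(3)$ handled by a four-step chain of gluings just as in the paper (your detour $(2)\Rightarrow(6)\Rightarrow(1)$ in place of the paper's direct $(2)\Rightarrow(1)$, and your explicit ``horizontal gluing'' corollary, are immaterial variations). The only step left unsaid is the definitional implication $(4)\Rightarrow(5)$ needed to close the loop for item (4), which is immediate since $\mathcal{R}_{S,T}(X)=\mathcal{R}_S(X)\cap\mathcal{R}_T(X)$.
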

\begin{rem}
  This proposition shows that in the distal case, the relation $\mathcal{R}_{S,T}(X)$ coincides with the $\RP^{[2]}_{T}$ relation defined in
  ~\cite{HKM},~\cite{HSY},~\cite{HSY2} and ~\cite{SY} when $S=T$.
\end{rem}

\begin{proof}

  (1)$\Rightarrow$(3). Suppose that $(x,a,b,c)\in \Q_{S,T}(X)$ for some $a,b,c\in X$. By (3),(4) and (5) of Proposition \ref{sym}, $(x,a,b,c)\in \Q_{S,T}(X)$ implies that $(a,x,a,x)\in \Q_{S,T}(X)$, and $(x,y,y,y)\in \Q_{S,T}(X)$ implies that $(x,x,y,x)\in \Q_{T,S}(X)$. By Lemma \ref{glue}, $(a,x,a,x),(x,x,y,x)\in \Q_{S,T}(X)$ implies that $(x,a,y,a)\in \Q_{S,T}(X)$. Again by Lemma \ref{glue}, $(x,a,b,c),(x,a,y,a)\in \Q_{S,T}(X)$ implies that $(b,c,y,a)\in \Q_{S,T}(X)$ and thus $(y,a,b,c)\in \Q_{S,T}(X)$.

  (3)$\Rightarrow$(2). Obvious.

  (2)$\Rightarrow$(1). Suppose that $(x,a,b,c),(y,a,b,c)\in\Q_{S,T}(X)$ for some $a,b,c\in X$. Then $(b,c,y,a)\in\Q_{S,T}(X)$. By Lemma \ref{glue}, $(x,a,y,a)\in\Q_{S,T}(X)$. By (4) and (5) of Proposition \ref{sym}, $(y,a,y,a)\in\Q_{S,T}(X)$. Hence $(x,y,a,a),(y,y,a,a)\in\Q_{S,T}(X)$ and $(a,a,y,y)\in\Q_{S,T}(X)$. By Lemma \ref{glue}, $(x,y,y,y)\in\Q_{S,T}(X)$.

  (1)$\Rightarrow$(4). Take $a=y$ and $b=y$.

  (4)$\Rightarrow$(5) and (4)$\Rightarrow$(6) are obvious from the definition.

  (5)$\Rightarrow$(1). Suppose $(x,y,a,a)\in\Q_{S,T}(X)$ for some $a\in X$. By (4) and (5) of Proposition \ref{sym}, $(y,y,a,a)\in\Q_{S,T}(X)$. By Lemma \ref{glue}, $(x,y,y,y)\in\Q_{T,S}(X)$ and thus $(x,y,y,y)\in\Q_{S,T}(X)$.

  (6)$\Rightarrow$(1). Similar to (4)$\Rightarrow$(2).
\end{proof}

We can now prove that $\mathcal{R}_{S,T}(X)$ is an equivalence relation in the distal setting:

\begin{thm} \label{Equivalence}
  Let $(X,S,T)$ be a distal system with commuting transformations $S$ and $T$. Then
$\Q_S(X)$, $\Q_T(X)$ and $\mathcal{R}_{S,T}(X)$ are closed equivalence relations on $X$. 
\end{thm}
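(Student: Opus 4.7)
The plan is to break the statement into its three ingredients (closedness, reflexivity/symmetry, transitivity) and to observe that in the distal setting, almost everything needed is already packaged into Lemma \ref{glue} (the gluing property) and Proposition \ref{equ} (the equivalent formulations of $\mathcal{R}_{S,T}$). So the proof will be largely assembly.

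First I would deal with closedness. $\Q_S(X)$ and $\Q_T(X)$ are closed by definition (they are orbit closures). For $\mathcal{R}_{S,T}(X)$, the slick argument is to invoke Proposition \ref{equ}: in the distal case $\mathcal{R}_{S,T}(X) = \{(x,y)\in X\times X : (x,y,y,y)\in \Q_{S,T}(X)\}$, which is the preimage of the closed set $\Q_{S,T}(X)$ under the continuous map $(x,y)\mapsto(x,y,y,y)$, hence closed.

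Next, reflexivity and symmetry are essentially immediate. Reflexivity: $(x,x)\in \Q_S(X)$ by taking $n=0$ in the defining set, and similarly for $\Q_T$ and $\mathcal{R}_{S,T}$ (use $(x,x,x,x)\in \Q_{S,T}(X)$ from Proposition \ref{sym}(1)). Symmetry of $\Q_S(X)$ and $\Q_T(X)$ is Proposition \ref{sym}(6); symmetry of $\mathcal{R}_{S,T}(X)$ was recorded right after its definition (it is a consequence of Proposition \ref{sym}(3)).

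The main point is transitivity, which is where distality is really used. For $\Q_S(X)$ and $\Q_T(X)$, transitivity is exactly part (1) of the gluing Lemma \ref{glue}. For $\mathcal{R}_{S,T}(X)$, suppose $(x,y),(y,z)\in \mathcal{R}_{S,T}(X)$. By Proposition \ref{equ}, this is equivalent to $(x,y,y,y)\in \Q_{S,T}(X)$ and $(y,z,z,z)\in \Q_{S,T}(X)$. Now apply Proposition \ref{equ}(3) with the pair $(x,y)$: since $(x,y)\in \mathcal{R}_{S,T}(X)$ and by symmetry also $(y,x)\in \mathcal{R}_{S,T}(X)$, any cube with first coordinate $y$ remains a cube after replacing $y$ by $x$. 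Applying this to $(y,z,z,z)\in \Q_{S,T}(X)$ yields $(x,z,z,z)\in \Q_{S,T}(X)$, and invoking Proposition \ref{equ} once more gives $(x,z)\in \mathcal{R}_{S,T}(X)$.

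The only genuine difficulty in this theorem was already absorbed into Lemma \ref{glue} and Proposition \ref{equ}; the gluing property for $\Q_R$-pairs is where distality is crucial (the proof used an enveloping-semigroup / distal argument to upgrade a limit into membership in the orbit closure), and the characterizations in Proposition \ref{equ} are themselves gluing-based. So at the level of this theorem there is no new obstacle beyond carefully chaining these earlier results.
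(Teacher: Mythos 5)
Your proof is correct and follows essentially the same route as the paper: the paper also reduces everything to transitivity of $\mathcal{R}_{S,T}(X)$ and derives it by combining the characterization $(x,y)\in\mathcal{R}_{S,T}(X)\Leftrightarrow(x,y,y,y)\in\Q_{S,T}(X)$ with the replacement property (3) of Proposition \ref{equ}, exactly as you do. Your explicit treatment of closedness, reflexivity, and symmetry (which the paper leaves implicit) is accurate.
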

\begin{proof}
It suffices to prove the transitivity of $\mathcal{R}_{S,T}(X)$. Let $(x,y), (y,z)\in \mathcal{R}_{S,T}(X)$. Since $(y,z,z,z)$ and $(x,y)\in \mathcal{R}_{S,T}(X)$, by (4) of Proposition \ref{RPST}, we have that $(x,z,z,z)\in \Q_{S,T}(X)$ and thus $(x,z)\in \mathcal{R}_{S,T}(X).$
\end{proof}

We also have the following property in the distal setting, which allows us to lift an $(S,T)$-regionally proximal pair in a system to a pair
in an extension system:

\begin{prop} \label{LiftingProperty}
 Let $\pi\colon Y\to X$ be a factor map between systems $(Y,S,T)$ and $(X,S,T)$ with commuting transformations $S$ and $T$. If $(X,S,T)$ is distal, then $\pi\times \pi (\mathcal{R}_{S,T}(Y))=\mathcal{R}_{S,T}(X)$.
\end{prop}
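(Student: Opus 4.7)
The containment $\pi\times\pi(\mathcal{R}_{S,T}(Y))\subseteq\mathcal{R}_{S,T}(X)$ is immediate and requires no distality: if $(y_1,y_2)\in\mathcal{R}_{S,T}(Y)$ with witnesses $(y_1,y_2,a,a),(y_1,b,y_2,b)\in\Q_{S,T}(Y)$, applying $\pi^{4}$ (which sends $\Q_{S,T}(Y)$ onto $\Q_{S,T}(X)$) produces the corresponding cubes in $\Q_{S,T}(X)$, exhibiting $(\pi(y_1),\pi(y_2))\in\mathcal{R}_{S,T}(X)$. The substantive work is the reverse inclusion.

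For the reverse inclusion, I fix $(x_1,x_2)\in\mathcal{R}_{S,T}(X)$ and plan to produce a preimage in $\mathcal{R}_{S,T}(Y)$. Since $X$ is distal, Proposition \ref{equ} rephrases the hypothesis as $(x_1,x_2,x_2,x_2)\in\Q_{S,T}(X)$. Surjectivity of $\pi^{4}$ on cube spaces then lifts this to $(y_1,y_2,y_3,y_4)\in\Q_{S,T}(Y)$ with $\pi(y_1)=x_1$ and $\pi(y_2)=\pi(y_3)=\pi(y_4)=x_2$. The strategy is to modify this lift so that its last three coordinates coincide at a single point $y_2^\ast\in\pi^{-1}(x_2)$; the resulting cube $(y_1,y_2^\ast,y_2^\ast,y_2^\ast)\in\Q_{S,T}(Y)$ will witness $(y_1,y_2^\ast)\in\mathcal{R}_S(Y)\cap\mathcal{R}_T(Y)=\mathcal{R}_{S,T}(Y)$ and project to $(x_1,x_2)$ as required.

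The distality of $X$ enters through enveloping semigroups: since $X$ is distal under $G$, under $S$ and under $T$ (Theorem \ref{distal}(4)), the semigroups $E(X,G)$, $E(X,S)$ and $E(X,T)$ are groups, each containing only the identity as an idempotent. I then pick a minimal idempotent $u\in E(Y,G)$ with $uy_1=y_1$ and apply Lemma \ref{LiftingIdempotent} to obtain a compatible minimal idempotent $\widehat{u}=(e,u_S,u_T,u)\in E(Y^4,\mathcal{F}_{S,T})$. Because the projections $\pi_\ast u_S$, $\pi_\ast u_T$, $\pi_\ast u$ are all the identity, replacing the lifted cube by $\widehat{u}(y_1,y_2,y_3,y_4)=(y_1,u_Sy_2,u_Ty_3,uy_4)\in\Q_{S,T}(Y)$ preserves all $X$-projections and places the last three coordinates in the $u$-fixed sets $u_SY$, $u_TY$ and $uY$, on each of which the conjugated enveloping semigroup acts as a group.

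The step I expect to be the main obstacle is the final identification $u_Sy_2=u_Ty_3=uy_4$. To overcome it I plan to combine Proposition \ref{sym}(4), which furnishes $(u_Sy_2,uy_4)\in\Q_T(Y)$ and $(u_Ty_3,uy_4)\in\Q_S(Y)$, with the fact that both members of each such pair share the common $X$-image $x_2$ and live in sets on which a conjugate of $E(Y,G)$ acts as a group. Together with the minimality of $(\Q_{S,T}(Y),\mathcal{G}_{S,T})$ from Proposition \ref{Q_STMinimal} --- which allows cubes to be moved through their orbits --- this rigidity should collapse the three candidates to a common $y_2^\ast$, after which the cube $(y_1,y_2^\ast,y_2^\ast,y_2^\ast)$ closes the argument.
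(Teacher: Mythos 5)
Your easy inclusion is correct, and your reduction of the hard direction is reasonable: lifting a degenerate cube over $(x_1,x_2,x_2,x_2)$ and collapsing its last three coordinates to a single point $y_2^{\ast}\in\pi^{-1}(x_2)$ would indeed produce a witness for $(y_1,y_2^{\ast})\in\mathcal{R}_{S,T}(Y)$. The gap is the collapsing step itself, and the mechanism you propose for it cannot work. The idempotent $\widehat{u}=(e,u_S,u_T,u)$ from Lemma \ref{LiftingIdempotent} acts coordinatewise on the fixed lift $(y_1,y_2,y_3,y_4)$, and nothing forces it to identify distinct points of the fiber $\pi^{-1}(x_2)$. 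To see that it cannot, take $Y$ itself distal (say a nontrivial product system) and $X$ a proper distal factor, or even the trivial system: then every idempotent in $E(Y,S)$, $E(Y,T)$ and $E(Y,G)$ is the identity, so $\widehat{u}$ does nothing, while a generic lift of $(x_1,x_2,x_2,x_2)$ has three distinct last coordinates. Your closing appeal to conjugated enveloping semigroups and to the minimality of $(\Q_{S,T}(Y),\mathcal{G}_{S,T})$ is exactly the part that would have to carry the whole proof, and it is not carried out; worse, once you move the cube through its $\mathcal{G}_{S,T}$-orbit you lose control of the projections $\pi(y_i)$, which is the real difficulty.

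The paper's proof addresses precisely these two points. Starting from a lift of $(x_1,x_1,x_1,x_2)$, it uses the minimality of $\Q_S(Y)$ under the group generated by $g\times g$ and $\id\times S$ (Proposition \ref{Q_STMinimal}) to move the pair formed by the first two coordinates onto the diagonal, then does the same with $\Q_T(Y)$ for the third coordinate. These moves are by elements acting differently on different coordinates, so they genuinely merge fiber points, but they also change the projection of the fourth coordinate from $x_2$ to some $x_2''$. Distality of $X$ then enters at the end --- not through idempotents, but through the fact that $X\times X$ is distal under $G^{\Delta}$, so the orbit closure of $(x_1,x_2)$ is minimal and the whole cube can be pushed back diagonally until its fourth coordinate again projects to $x_2$. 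If you prefer your symmetric formulation with $(x_1,x_2,x_2,x_2)$, the same three-stage argument applies to coordinates $2$, $3$, $4$ after using the symmetries of Proposition \ref{sym}; but some version of ``move with non-diagonal elements, then restore the projection by minimality of the diagonal orbit closure'' is unavoidable, and the idempotent step should be dropped.
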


\begin{proof}
 The proof is similar to Theorem 6.4 of \cite{SY}. Let $(x_1,x_2)\in \mathcal{R}_{S,T}(X)$. Then there exist a sequence $(x_i)_{i\in \N} \in X$ and two sequences $(n_i)_{i\in \N},(m_i)_{i\in \N}$ in $\Z$ such that  $$(x_i,S^{n_i}x_i,T^{m_i}x_i,S^{n_i}T^{m_i}x_i) \to (x_1,x_1,x_1,x_2).$$

 Let $(y_i)_{i\in \N}$ in $Y$ be such that $\pi(y_i)=x_i$. By compactness we can assume that $y_i\to y_1$, $S^{n_i}y_i\to a$, $T^{m_i}y_i\to b$ and $S^{n_i}T^{m_i}y_i\to c$. Then $(y_1,a,b,c)\in \Q_{S,T}(Y)$ and $\pi^{4}(y_1,a,b,c)=(x_1,x_1,x_1,x_2)$. Particularly, $(y_1,a)\in \Q_S(Y)$. By minimality we can find $g_i \in G$ and $p_i$ such that $(g_i y_1,g_iS^{p_i}a)\to (y_1,y_1)$. We can assume that $g_ib\to b'$ and $g_iS^{p_i}c\to c'$, so that $(y_1,y_1,b',c')\in \Q_{S,T}(Y)$
and $\pi^{4}(y_1,y_1,b',c')=(x_1,x_1,x_1,x_2')$, where $x_2'=\lim g_iS^{p_i}x_2$. Recall that $(x_1,x_2')\in \mathcal{O}_{G^{\Delta}}(x_1,x_2)$, where $G^{\Delta}=\{g\times g\colon g\in G\}$. Since $(y_1,b')\in \Q_T(Y)$, we can find $(g_i')_{i\in \N}$ in $G$ and $(q_i)_{i\in \N}$ in $\Z$ such that $(g_i'y_1,g_i'T^{q_i}b')\to (y_1,y_1)$. We can assume without loss of generality that $g_i'T^{q_i}c'\to c''$ so that $(y_1,y_1,y_1,c'') \in \Q_{S,T}(Y)$ and $\pi^{4}(y_1,y_1,y_1,c'')=(x_1,x_1,x_1,x_2'')$, where $x_2''=\lim g_i'T^{q_i}x_2'$. Recall that $(x_1,x_2'')\in \mathcal{O}_{G^{\Delta}}(x_1,x_2')$. So $(x_1,x_2'')\in \mathcal{O}_{G^{\Delta}}(x_1,x_2)$. By distality, this orbit is minimal and thus we can find $(g_i'')_{i\in \N}$ in $G$ such that $(g_i''x_1,g_i''x_2'')\to (x_1,x_2)$. We assume without loss of generality that $g_i''y_1\to y_1'$ and $g_i''c''\to y_2'$. Then $(y_1',y_1',y_1',y_2')\in \Q_{S,T}(Y)$ and $\pi^{4}(y_1',y_1',y_1',y_2')=(x_1,x_1,x_1,x_2)$. Particularly $(y_1',y_2')\in \mathcal{R}_{S,T}(Y)$ and $\pi\times\pi(y_1',y_2')=(x_1,x_2)$.
\end{proof}

These results allow us to conclude that cubes structures characterize factors with product extensions:

\begin{thm} \label{ThmDistalCas}
 Let $(X,S,T)$ be a minimal distal system with commuting transformations $S$ and $T$. Then
 \begin{enumerate}
\item $(X/\mathcal{R}_{S,T}(X),S,T)$ has a product extension, where $X/\mathcal{R}_{S,T}(X)$ is the quotient of $X$ under the equivalence relation $\mathcal{R}_{S,T}(X)$. Moreover, it is the maximal factor with this property, meaning that any other factor of $X$ with a product extension factorizes through it;
\item For any magic extension $(\bold{K}^{x_0}_{S,T},\widehat{S},\widehat{T})$, $(\bold{K}^{x_0}_{S,T}/\mathcal{R}_{\widehat{S},\widehat{T}}(\bold{K}^{x_0}_{S,T}),\widehat{S},\widehat{T})$ is a product system. Moreover, both $(\bold{K}^{x_0}_{S,T},\widehat{S},\widehat{T})$ and $(\bold{K}^{x_0}_{S,T}/\mathcal{R}_{\widehat{S},\widehat{T}}(\bold{K}^{x_0}_{S,T}))$ are distal systems.
\end{enumerate}

\end{thm}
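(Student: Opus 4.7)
For part (1), the first step is to form the quotient $\overline{X} := X/\mathcal{R}_{S,T}(X)$, which is a well-defined topological dynamical system because Theorem \ref{Equivalence} ensures that $\mathcal{R}_{S,T}(X)$ is a closed equivalence relation, while its $G$-invariance follows from Proposition \ref{sym}(2). As a factor of a distal system, $\overline{X}$ is itself distal by Theorem \ref{distal}(2). I then apply Proposition \ref{LiftingProperty} to the quotient map $\pi\colon X \to \overline{X}$: since $\overline{X}$ is distal, $\pi \times \pi(\mathcal{R}_{S,T}(X)) = \mathcal{R}_{S,T}(\overline{X})$, and by construction the left-hand side is $\Delta_{\overline{X}}$. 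Theorem \ref{R-ProductExtension} then produces a product extension of $\overline{X}$. For the maximality clause, let $\psi\colon X \to Z$ be any factor such that $(Z,S,T)$ admits a product extension. Since factor maps transport $\Q_{S,T}$ to $\Q_{S,T}$ (the functoriality lemma in Section~3.1), we obtain $(\psi \times \psi)(\mathcal{R}_{S,T}(X)) \subseteq \mathcal{R}_{S,T}(Z) = \Delta_Z$, where the last equality is Theorem \ref{R-ProductExtension}. Hence $\psi$ is constant on $\mathcal{R}_{S,T}$-classes and factors through $\overline{X}$.

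For part (2), I first observe that in the distal setting $K := \bold{K}^{x_0}_{S,T}$ is minimal (Corollary \ref{Kminimal}), distal (as a subsystem of $X^3$, invoking Theorem \ref{distal}(1),(2)), and magic. Applying part (1) to $K$ therefore gives that $\overline{K} := K/\mathcal{R}_{\widehat{S},\widehat{T}}(K)$ is distal and has a product extension. To upgrade ``has a product extension'' to ``is a product system'', I will verify the criterion $\Q_{\widehat{S}}(\overline{K}) \cap \Q_{\widehat{T}}(\overline{K}) = \Delta_{\overline{K}}$ supplied by the unnumbered proposition stated immediately after Proposition \ref{FunctionCoordinate}.

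Verifying this equality is the main obstacle, and my strategy is the following. Pick $(\overline{x},\overline{y}) \in \Q_{\widehat{S}}(\overline{K}) \cap \Q_{\widehat{T}}(\overline{K})$ and fix arbitrary lifts $x,y \in K$. Using that $(\pi\times\pi)(\Q_{\widehat{S}}(K)) = \Q_{\widehat{S}}(\overline{K})$, I produce some $(x',y') \in \Q_{\widehat{S}}(K)$ with $\pi(x')=\overline{x}$ and $\pi(y')=\overline{y}$. Since the fibers of $\pi$ are precisely $\mathcal{R}_{\widehat{S},\widehat{T}}(K)$-classes, we have $(x,x'),(y',y) \in \mathcal{R}_{\widehat{S},\widehat{T}}(K) \subseteq \Q_{\widehat{S}}(K)$ (the inclusion is immediate by projecting the defining cube). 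Now the gluing Lemma \ref{glue}(1), available because $K$ is distal, lets me concatenate $(x,x')$ with $(x',y')$ and then with $(y',y)$ inside $\Q_{\widehat{S}}(K)$, yielding $(x,y) \in \Q_{\widehat{S}}(K)$. Repeating the argument with $\widehat{T}$ in place of $\widehat{S}$ gives $(x,y) \in \Q_{\widehat{T}}(K)$. The magic property of $K$ then forces $(x,y) \in \Q_{\widehat{S}}(K)\cap\Q_{\widehat{T}}(K) = \mathcal{R}_{\widehat{S},\widehat{T}}(K)$, so $\overline{x}=\overline{y}$, as required. The distality of $K$ and $\overline{K}$ claimed in (2) is just a restatement of what was observed at the start of this paragraph.
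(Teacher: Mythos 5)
Your proposal is correct. Part (1) is essentially the paper's own argument: quotient by the closed invariant equivalence relation, use Proposition \ref{LiftingProperty} to see that $\mathcal{R}_{S,T}(X/\mathcal{R}_{S,T}(X))=\pi\times\pi(\mathcal{R}_{S,T}(X))=\Delta$, invoke Theorem \ref{R-ProductExtension}, and get maximality from functoriality of $\Q_{S,T}$. Part (2), however, takes a genuinely different route. The paper argues directly at the level of $K=\bold{K}^{x_0}_{S,T}$: since $\Q_{\widehat{S}}(K)$ and $\Q_{\widehat{T}}(K)$ are closed invariant equivalence relations (Theorem \ref{Equivalence}) whose intersection is $\mathcal{R}_{\widehat{S},\widehat{T}}(K)$ by the magic property, it identifies $K/\mathcal{R}_{\widehat{S},\widehat{T}}(K)$ with the product $K/\Q_{\widehat{T}}(K)\times K/\Q_{\widehat{S}}(K)$. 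You instead work downstairs in $\overline{K}$ and verify the criterion $\Q_{\widehat{S}}(\overline{K})\cap\Q_{\widehat{T}}(\overline{K})=\Delta_{\overline{K}}$ by lifting a pair via the functoriality of $\Q_{\widehat{S}}$, correcting the lifts using the fact that fibers are $\mathcal{R}_{\widehat{S},\widehat{T}}$-classes together with $\mathcal{R}_{\widehat{S},\widehat{T}}\subseteq\Q_{\widehat{S}}\cap\Q_{\widehat{T}}$, gluing with Lemma \ref{glue}, and then applying the magic property upstairs; the unnumbered proposition after Proposition \ref{FunctionCoordinate} converts this into the statement that $\overline{K}$ is a product system. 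Your route has the advantage of not requiring the surjectivity of the natural map $K\to K/\Q_{\widehat{T}}(K)\times K/\Q_{\widehat{S}}(K)$, a point the paper's one-line ``consequently'' leaves implicit; the cost is that you must invoke minimality and distality of $(K,\widehat{S},\widehat{T})$ (for Corollary \ref{Kminimal} and Lemma \ref{glue}), which you do correctly, though strictly speaking distality of $K$ uses part (4) of Theorem \ref{distal} (passing to the subgroup generated by $\widehat{S},\widehat{T}$) in addition to parts (1) and (2). Both arguments are sound.
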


We have the following commutative diagram:
\begin{center}

 \begin{tikzpicture}
 \matrix (m) [matrix of math nodes,row sep=3em,column sep=4em,minimum width=2em]
  {
     {}(\bold{K}^{x_0}_{S,T},\widehat{S},\widehat{T}) & (X,S,T)\\
     (\bold{K}^{x_0}_{S,T}/\mathcal{R}_{\widehat{S},\widehat{T}}(\bold{K}^{x_0}_{S,T}),\widehat{S},\widehat{T}) & (X/\mathcal{R}_{S,T}(X),S,T) \\} ;

      \path[-stealth]
    (m-1-1) edge node [left] {}  (m-2-1)
            edge node [below] {} (m-1-2)
    (m-2-1) edge node [below] {} (m-2-2)
    (m-1-2) edge node [right] {} (m-2-2) ;
     \end{tikzpicture}

\end{center}

\begin{proof}
  We remark that if $(Z,S,T)$ is a factor of $(X,S,T)$ with a product extension, then $\pi\times \pi(\mathcal{R}_{S,T}(X))=\mathcal{R}_{S,T}(Z)=\Delta_X$, meaning that there exists a factor map from $(X/\mathcal{R}_{S,T}(X),S,T)$ to $(Y,S,T)$. It remains to prove that $X/\mathcal{R}_{S,T}(X)$ has a product extension. To see this, let $\pi$ be the quotient map $X\to X/\mathcal{R}_{S,T}(X)$ and let $(y_1,y_2)\in \mathcal{R}_{S,T}(X/\mathcal{R}_{S,T}(X))$. By Proposition \ref{LiftingProperty}, there exists $(x_1,x_2)\in \mathcal{R}_{S,T}(X)$ with $\pi(x_1)=y_1$ and $\pi(x_2)=y_2$. Since $(x_1,x_2)\in \mathcal{R}_{S,T}(X)$, $y_{1}=\pi(x_{1})=\pi(x_{2})=y_{2}$. So $\mathcal{R}_{S,T}(X/\mathcal{R}_{S,T}(X))$ coincides with the diagonal. By Theorem \ref{R-ProductExtension}, $(X/\mathcal{R}_{S,T}(X),S,T)$ has a product extension. This proves (1).

  We now prove that the factor of the magic extension is actually a product system. By Theorem \ref{Equivalence}, we have that $\Q_{\widehat{S}}(\bold{K}^{x_0}_{S,T}), \Q_{\widehat{T}}(\bold{K}^{x_0}_{S,T})$ are equivalence relations and by Theorem \ref{MagicExtension} and Proposition \ref{RPST}, we have that $\Q_{\widehat{S}}(\bold{K}^{x_0}_{S,T})\cap \Q_{\widehat{T}}(\bold{K}^{x_0}_{S,T})=\mathcal{R}_{S,T}(\bold{K}^{x_0}_{S,T})$.  Consequently $(\bold{K}^{x_0}_{S,T}/\mathcal{R}_{\widehat{S},\widehat{T}}(\bold{K}^{x_0}_{S,T}),\widehat{S},\widehat{T})$ is isomorphic to $(\bold{K}^{x_0}_{S,T}/\Q_{\widehat{T}}(\bold{K}^{x_0}_{S,T})\times \bold{K}^{x_0}_{S,T}/ \Q_{\widehat{S}}(\bold{K}^{x_0}_{S,T}), \widehat{S}\times \id, \id \times \widehat{T})$, which is a product system.

  Since $(X,S,T)$ is distal, the distality of $(\bold{K}^{x_0}_{S,T},\widehat{S},\widehat{T})$ and $(\bold{K}^{x_0}_{S,T}/\mathcal{R}_{S,T}(\bold{K}^{x_0}_{S,T}),\widehat{S},\widehat{T})$ follows easily from Theorem \ref{distal}.
\end{proof}

\subsection{Further remarks: The $\mathcal{R}_{S,T}(X)$ strong relation}
Let $(X,S,T)$ be a system with commuting transformations $S$ and $T$. We say that $x$ and $y$ are {\it strongly $\mathcal{R}_{S,T}(X)$-related} if there exist $a\in X$ and two sequences $(n_i)_{i\in \N}$ and $(m_i)_{i\in \N}$ in $\Z$ such that $(x,y,a,a)=\lim\limits_{i\to \infty} (x,S^{n_i}x,T^{m_i}x,S^{n_i}T^{m_i}x)$, and there exist $b\in X$ and two sequences $(n'_i)_{i\in \N}$ and $(m'_i)_{i\in \N}$ in $\Z$ such that $(x,b,y,b)=\lim\limits_{i\to \infty} (x,S^{n'_i}x,T^{m'_i}x,S^{n'_i}T^{m'_i}x)$.

It is a classical result that when $S=T$, the $\mathcal{R}_{T,T}(X)$ relation coincides with the strong one (see \cite{Aus}, Chap 9). We show that this is not true in the commuting case even in the distal case, and give a counter example of commuting rotations in the Heisenberg group. We refer to \cite{AGH} and \cite{L} for general references about nilrotations.

Let $H=\mathbb{R}^{3}$ be the group with the multiplication given by $(a,b,c)\cdot(a',b',c')=(a+a',b+b',c+c'+ab')$ for all $(a,b,c),(a',b',c')\in H$. Let $H_2$ be the subgroup spanned by $\{ghg^{-1}h^{-1}\colon g,h\in H\}$. By a direct computation we have that $H_2=\{(0,0,c)\colon c\in \mathbb{R}\}$ and thus $H_2$ is central in $H$. Therefore $H$ is a 2-step nilpotent Lie group and $\Gamma=\mathbb{T}^3$ is a cocompact subgroup, meaning that $X_{H}\coloneqq H/\Gamma$ is a compact space. $X_{H}$ is called the {\it Heisenberg manifold}. Note that $\mathbb{T}^3$ is a fundamental domain of $X_{H}$.

\begin{lem}\label{funda}
The map $\Phi\colon X_{H}\to \mathbb{T}^3$ given by
$$\Phi((a,b,c)\Gamma)=(\{a\},\{b\},\{c-a\lfloor{b}\rfloor \})$$
is a well-defined homomorphism between $X_{H}$ and $\mathbb{T}^3$. Here $\lfloor{x}\rfloor$ is the largest integer which does not exceed $x$, $\{x\}=x-\lfloor x\rfloor$, and $\mathbb{T}^3$ is viewed as $[0,1)^{3}$ in this map. Moreover, $(a,b,c)\Gamma=(\{a\},\{b\},\{c-a\lfloor{b}\rfloor \})\Gamma$ for all $a,b,c\in\mathbb{R}$.
\end{lem}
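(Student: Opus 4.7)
The plan is to prove the second assertion first, namely $(a,b,c)\Gamma=(\{a\},\{b\},\{c-a\lfloor b\rfloor\})\Gamma$, by explicitly exhibiting the element of $\Gamma=\mathbb{Z}^3$ that realizes this equality. The well-definedness of $\Phi$ and its bijectivity onto $[0,1)^3$ will then follow from a uniqueness argument.

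First I would observe, using the group law $(a,b,c)\cdot(a',b',c')=(a+a',b+b',c+c'+ab')$, that for $(p,q,r)\in\Gamma$ we have $(a,b,c)\cdot(p,q,r)=(a+p,b+q,c+r+aq)$. Taking $(p,q,r)=(-\lfloor a\rfloor,-\lfloor b\rfloor,0)\in\Gamma$ gives
\begin{equation*}
(a,b,c)\Gamma=(\{a\},\{b\},c-a\lfloor b\rfloor)\Gamma.
\end{equation*}
Then, right-multiplying by $(0,0,-\lfloor c-a\lfloor b\rfloor\rfloor)\in\Gamma$ does not affect the first two coordinates (since the twist $ab'$ vanishes when $b'=0$) and reduces the third coordinate modulo $1$, giving the desired identity $(a,b,c)\Gamma=(\{a\},\{b\},\{c-a\lfloor b\rfloor\})\Gamma$.

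Next I would check that $[0,1)^3$ is a strict fundamental domain for the right action of $\Gamma$ on $H$, which gives well-definedness of $\Phi$ automatically. Suppose $(a_1,b_1,c_1),(a_2,b_2,c_2)\in[0,1)^3$ satisfy $(a_1,b_1,c_1)\Gamma=(a_2,b_2,c_2)\Gamma$. Then there is $(p,q,r)\in\mathbb{Z}^3$ with $(a_1,b_1,c_1)=(a_2+p,b_2+q,c_2+r+a_2q)$. The first two equations force $p=q=0$ since $a_i,b_i\in[0,1)$, and then the third reduces to $c_1=c_2+r$, forcing $r=0$ and $c_1=c_2$. Combining this uniqueness with the first step, if $(a,b,c)\Gamma=(a',b',c')\Gamma$ then both triples reduce to the same element of $[0,1)^3$, so $\Phi$ is well-defined; the same computation shows $\Phi$ is a bijection (its inverse is the inclusion $[0,1)^3\hookrightarrow H\to X_H$).

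The only delicate point is the non-commutativity: one must perform the reductions in the correct order. Reducing $a$ and $b$ first (with $r=0$) leaves a residual correction $-a\lfloor b\rfloor$ in the third coordinate coming from the cocycle term $aq$, and this is exactly what produces the expression $c-a\lfloor b\rfloor$ appearing in $\Phi$. Reducing the third coordinate afterwards is harmless because multiplying on the right by $(0,0,r)$ commutes with everything relevant. No other obstacle arises; in particular the continuity and bijectivity of $\Phi$ as a map of sets from $X_H$ to $\mathbb{T}^3\cong[0,1)^3$ follow directly from this canonical-form description.
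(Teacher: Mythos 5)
Your proof is correct and rests on the same elementary computation with the group law as the paper's, just organized in the opposite order: you establish the canonical-form identity (the ``Moreover'' clause) first by explicit right multiplication by $(-\lfloor a\rfloor,-\lfloor b\rfloor,0)$ and then $(0,0,-\lfloor c-a\lfloor b\rfloor\rfloor)$, and deduce well-definedness from the uniqueness of representatives in $[0,1)^3$, whereas the paper directly verifies that $(a,b,c)\Gamma=(a',b',c')\Gamma$ holds if and only if the two $\Phi$-values coincide, checking that $(c'-a'\lfloor b'\rfloor)-(c-a\lfloor b\rfloor)=z-x\lfloor b\rfloor\in\Z$. The key point in both arguments --- tracking the cocycle correction $aq$ that produces the term $-a\lfloor b\rfloor$ --- is handled identically, so no gap remains.
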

\begin{proof}
  It suffices to show that $(a,b,c)\Gamma=(a',b',c')\Gamma$ if and only if $(\{a\},\{b\},\{c-a\lfloor{b}\rfloor \})=(\{a'\},\{b'\},\{c'-a'\lfloor{b'}\rfloor \})$. If $(a,b,c)\Gamma=(a',b',c')\Gamma$, there exists $(x,y,z)\in\Gamma$ such that $(a',b',c')=(a,b,c)\cdot(x,y,z)=(x+a,y+b,z+c+ay)$. therefore,
$$x=a'-a, y=b'-b, z=c'-c-a(b'-b).$$
Since $x,y\in\Z$, we have that $\{a\}=\{a'\},\{b\}=\{b'\}$. So $b-b'=\lfloor{b}\rfloor-\lfloor{b'}\rfloor$. Then
$$(c'-a'\lfloor{b'}\rfloor)-(c-a\lfloor{b}\rfloor)=(c'-c-a(b'-b))-(a'-a)\lfloor{b}\rfloor=z-x\lfloor{b}\rfloor\in\Z.$$
So $(\{a\},\{b\},\{c-a\lfloor{b}\rfloor \})=(\{a'\},\{b'\},\{c'-a'\lfloor{b'}\rfloor \})$.

Conversely, if $(\{a\},\{b\},\{c-a\lfloor{b}\rfloor \})=(\{a'\},\{b'\},\{c'-a'\lfloor{b'}\rfloor \})$, suppose that
$$x=a'-a, y=b'-b, z=c'-c-a(b'-b).$$
Then $(a',b',c')=(a,b,c)\cdot(x,y,z)$. It remains to show that $(x,y,z)\in\Gamma$. Since $\{a\}=\{a'\},\{b\}=\{b'\}$, we have that $x,y\in\Z$ and $b-b'=\lfloor{b}\rfloor-\lfloor{b'}\rfloor$. Then
$$(c'-a'\lfloor{b'}\rfloor)-(c-a\lfloor{b}\rfloor)=(c'-c-a(b'-b))-(a'-a)\lfloor{b}\rfloor=z-x\lfloor{b}\rfloor\in\Z$$
implies that $z\in\Z$.

The claim that $(a,b,c)\Gamma=(\{a\},\{b\},\{c-a\lfloor{b}\rfloor \})\Gamma$ for all $a,b,c\in\mathbb{R}$ is straightforward.
\end{proof}

Let $\alpha\in \mathbb{R}$ be such that $1,\alpha,\alpha^{-1}$ are linearly independent over $\mathbb{Q}$. Let $s=(\alpha,0,0)$ and $t=(0,\alpha^{-1},\alpha)$. These two elements induce two transformations $S,T\colon X_{H}\to X_{H}$ given by
 $$S(h\Gamma)=sh\Gamma, T(h\Gamma)=th\Gamma, \forall h\in H.$$

 \begin{lem}
 Let $X_{H},S,T$ be defined as above. Then $(X_{H},S,T)$ is a minimal distal system with commuting transformations $S$ and $T$.
 \end{lem}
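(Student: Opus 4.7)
The statement has three components---commutativity of $S$ and $T$ on $X_H$, distality, and minimality---and I would verify them in that order.

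For commutativity, I would compute directly in $H$ using the given multiplication rule that
\[ st = (\alpha, \alpha^{-1}, \alpha+1) \quad \text{and} \quad ts = (\alpha, \alpha^{-1}, \alpha), \]
so $[s,t] = st(ts)^{-1} = (0, 0, 1)$. Since $(0, 0, 1)$ lies both in the center $H_2$ of $H$ and in $\Gamma$, left multiplication by $st$ and by $ts$ induce the same map on $X_H$, and hence $ST = TS$.

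For the remaining two parts I would use the natural factor map onto the maximal torus $\pi \colon X_H \to \T^2$ defined by $\pi((a,b,c)\Gamma) = (\{a\}, \{b\})$. This is well defined by Lemma \ref{funda}, and $S, T$ descend to the commuting rotations $\bar S(x, y) = (x + \alpha, y)$ and $\bar T(x, y) = (x, y + \alpha^{-1})$ on $\T^2$. The fibers of $\pi$ are cosets of the circle $H_2/(H_2 \cap \Gamma) \cong \T$, and $S, T$ act on each fiber by translation; hence $\pi$ exhibits $(X_H, S, T)$ as an isometric extension of the equicontinuous system $(\T^2, \bar S, \bar T)$. Since an isometric extension of a distal (in particular equicontinuous) system is itself distal, distality of $(X_H, S, T)$ follows.

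For minimality, I would first observe that $\alpha$ and $\alpha^{-1}$ are both irrational (this is a consequence of the hypothesis that $1, \alpha, \alpha^{-1}$ are $\Q$-linearly independent), so $\overline{\Z(\alpha, 0)} = \T \times \{0\}$ and $\overline{\Z(0, \alpha^{-1})} = \{0\} \times \T$ together generate a dense subgroup of $\T^2$. Thus the base system $(\T^2, \bar S, \bar T)$ is minimal. I would then invoke the classical minimality criterion for commuting translations on a connected nilmanifold (Green's theorem, in its $\Z^2$ version; see \cite{AGH}, \cite{L}): minimality of the maximal torus factor lifts to minimality of the whole system, so $(X_H, S, T)$ is minimal. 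The main technical input here is the lifting theorem in the $\Z^2$ setting, since the commutativity and distality parts reduce to routine computations.
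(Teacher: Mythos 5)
Your proposal is correct, but it takes a genuinely different route from the paper on both distality and minimality. For commutativity the two arguments are identical. For distality, the paper simply cites the classical fact that any nilrotation is distal (\cite{AGH}), whereas you re-derive it by exhibiting $X_H$ as a group extension by the central circle $H_2/(H_2\cap\Gamma)$ over the equicontinuous base $(\T^2,\bar S,\bar T)$; this is more self-contained and is a perfectly standard argument. For minimality, the paper's trick is to reduce to the \emph{single} transformation $ST$: since $ST$ acts on $X_H$ by translation by $(\alpha,\alpha^{-1},\alpha)$, the classical one-transformation criterion (a nilrotation is minimal iff its projection to the maximal torus is minimal) applies, and Kronecker's theorem together with the $\Q$-linear independence of $1,\alpha,\alpha^{-1}$ gives minimality of the rotation by $(\alpha,\alpha^{-1})$ on $\T^2$; minimality of the sub-action $(X_H,ST)$ then trivially implies minimality of the full $\Z^2$-action. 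You instead keep the $\Z^2$-action throughout and invoke the $\Z^2$ version of the lifting criterion for translations on a connected nilmanifold. That version does exist (it is in Leibman's work on polynomial actions of $\Z^d$ on nilmanifolds rather than in \cite{AGH} or in the cited single-sequence paper \cite{L}, so your references are slightly off), and it is a strictly heavier input than the paper needs; note that you cannot bypass it with the isometric-extension picture alone, since a group extension of a minimal system need not be minimal. In compensation, your argument for minimality of the base uses only the irrationality of $\alpha$ and $\alpha^{-1}$, while the paper's reduction to $ST$ genuinely uses the full $\Q$-linear independence hypothesis at that step. Both proofs are valid; the paper's is shorter and stays within classical single-transformation nilsystem theory.
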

 \begin{proof}
   We have that $st=(\alpha,\alpha^{-1},\alpha+1)$ and $ts=(\alpha,\alpha^{-1},\alpha)$ and by a direct computation we have that they induce the same action on $X_{H}$. Therefore $ST=TS$.

   It is classical that a rotation on a nilmanifold is distal \cite{AGH} and it is minimal if and only if the rotation induced on its maximal equicontinuous factor is minimal. Moreover, the maximal equicontinuous factor is given by the projection on $H/H_2\Gamma$ which in our case is nothing but the projection in $\mathbb{T}^2$ (the first two coordinates). See \cite{L} for a general reference on nilrotations.

 Since $ST(h\Gamma)=(\alpha,\alpha^{-1},\alpha)\cdot h\Gamma$ for all $h\in H$, we have that the induced rotation on $\mathbb{T}^2$ is given by the element $(\alpha,\alpha^{-1})$. Since $1$, $\alpha$ and $\alpha^{-1}$ are linearly independent over $\mathbb{Q}$, by the Kronecker Theorem we have that this is a minimal rotation. We conclude that $(X_{H},ST)$ is minimal which clearly implies that $(X_{H},S,T)$ is minimal.
  
 \end{proof}

 In this example, we show that the relation $\mathcal{R}_{S,T}(X)$ is different from the strong one:

\begin{prop}
 On the Heisenberg system $(X_{H},S,T)$, we have that
 $$\mathcal{R}_{S,T}(X_{H})=\Bigl\{\bigl((a,b,c)\Gamma,(a,b,c')\Gamma\bigr)\in X_{H}\times X_{H}\colon a,b,c,c'\in\mathbb{R}\Bigr\}.$$
However, for any $c\in\mathbb{R}\backslash\mathbb{Z}$, $\Gamma$ and $(0,0,c)\Gamma$ are not strongly $\mathcal{R}_{S,T}(X_{H})$-related.
 
\end{prop}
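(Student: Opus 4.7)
The plan is to verify separately the set-theoretic description of $\mathcal{R}_{S,T}(X_H)$ and the failure of the strong relation, in this order. For the inclusion $\mathcal{R}_{S,T}(X_H)\subseteq\{((a,b,c)\Gamma,(a,b,c')\Gamma)\}$, I would use the projection $\pi\colon X_H\to\mathbb{T}^2$, $\pi((a,b,c)\Gamma)=(a,b)\bmod\mathbb{Z}^2$, which is a factor map onto the product system $(\mathbb{T}^2,S_0,T_0)$ with $S_0(a,b)=(a+\alpha,b)$ and $T_0(a,b)=(a,b+\alpha^{-1})$. Since a product system has trivial $\mathcal{R}_{S,T}$ relation by Theorem~\ref{R-ProductExtension}, and factor maps push $\mathcal{R}_{S,T}$ forward, any pair in $\mathcal{R}_{S,T}(X_H)$ must agree in the first two coordinates.

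For the reverse inclusion, the key case is $(\Gamma,(0,0,c')\Gamma)\in\mathcal{R}_{S,T}(X_H)$ for every $c'\in\mathbb{R}$, which I would establish by an explicit construction. Take $x_i=(0,c'/k_i,0)\Gamma$ with $k_i\to\infty$ and choose integers $n_i,m_i$ so that, writing $n_i\alpha=k_i+\epsilon_i$ and $m_i\alpha^{-1}=q_i+\delta_i$, we have $\epsilon_i,\delta_i\to 0$ and also $m_i\alpha\to c'\pmod{\mathbb{Z}}$. A direct computation in the Heisenberg group shows that the first three required convergences reduce to these conditions (the twist $n_i\alpha\cdot c'/k_i\to c'$ providing the third coordinate of $S^{n_i}x_i$), while the fourth corner $S^{n_i}T^{m_i}x_i$ picks up the commutator-type term $n_im_i$ that, after reduction modulo $\Gamma$, imposes the additional Diophantine condition $\epsilon_iq_i\to c'\pmod{\mathbb{Z}}$. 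I would produce $m_i$ satisfying the two constraints on $m_i\alpha^{-1}$ and $m_i\alpha$ via Weyl equidistribution of $m\mapsto(m\alpha^{-1},m\alpha)$, applicable because $1,\alpha,\alpha^{-1}$ are $\mathbb{Q}$-independent, and then use density of $\{n\alpha\}\bmod 1$ to pick $n_i$ with $\{n_i\alpha\}$ within $o(1/q_i)$ of $c'/q_i$, yielding $\epsilon_iq_i\to c'$.

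To extend from $\Gamma$ to arbitrary $x$, I would observe that the central translation $\psi_{c_0}(h\Gamma)=h(0,0,c_0)\Gamma$ is a continuous automorphism of $(X_H,S,T)$ for every $c_0\in\mathbb{R}$, and that left multiplication by any $g\in G$ is also an automorphism commuting with $S$ and $T$. By Lemma~\ref{AutoFiber} and the previous step, $(g\Gamma,\psi_{c_0}(g\Gamma))\in\mathcal{R}_{S,T}(X_H)$ for every $g\in G$; since the $G$-orbit of $\Gamma$ is dense in $X_H$ by minimality, $\psi_{c_0}$ is continuous, and $\mathcal{R}_{S,T}(X_H)$ is closed by Theorem~\ref{Equivalence}, this extends to $(x,\psi_{c_0}(x))\in\mathcal{R}_{S,T}(X_H)$ for every $x\in X_H$, which covers every pair with identical first two coordinates. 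Finally, to show that $\Gamma$ and $(0,0,c)\Gamma$ are not strongly related when $c\notin\mathbb{Z}$, I would check that $S^{n_i}\Gamma=(n_i\alpha,0,0)\Gamma$ cannot converge to $(0,0,c)\Gamma$: any representative $(n_i\alpha+k,l,j+n_i\alpha\cdot l)$ with $(k,l,j)\in\mathbb{Z}^3$ approaching a representative $(k',l',c+j')$ of $(0,0,c)\Gamma$ forces $l=l'\in\mathbb{Z}$ and $\|n_i\alpha\|\to 0$, and then matching third coordinates gives $c\equiv j+(k'-k)l-j'\in\mathbb{Z}$, contradicting $c\notin\mathbb{Z}$; thus the first of the two limits required by strong relatedness cannot exist.

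The main obstacle is the construction in the second paragraph: the non-abelian twist in $H$ forces the Diophantine relation $\epsilon_iq_i\to c'\pmod{\mathbb{Z}}$, and simultaneously realizing this together with the three other equidistribution constraints on $n_i$ and $m_i$ is where the hypothesis of $\mathbb{Q}$-independence of $1,\alpha,\alpha^{-1}$ is used essentially.
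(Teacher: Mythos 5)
Your proposal is correct and follows essentially the same route as the paper: the same projection-to-the-torus argument for the forward inclusion, the same perturbed point $x_i=(0,c'/k_i,0)\Gamma$ whose second coordinate gets twisted up to $c'$ in the third coordinate by $S^{n_i}$, and the same observation that $S^{n}\Gamma=(\{n\alpha\},0,0)\Gamma$ can never approach $(0,0,c)\Gamma$ when $c\notin\mathbb{Z}$. The only differences are cosmetic: the paper reduces a general pair to the central fibre over $\Gamma$ via minimality and invariance of the closed relation under $g\times g$ rather than via your central automorphisms $\psi_{c_0}$, and it absorbs your explicit Diophantine condition $\epsilon_i q_i\to c'$ by fixing $n_i$ first and then choosing $m_i$ within a tolerance $\delta(n_i)$ supplied by continuity of $S^{n_i}$, rather than choosing $m_i$ first and then $n_i$ with $\{n_i\alpha\}$ within $o(1/q_i)$ of $c'/q_i$.
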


\begin{proof}
Suppose that $((a,b,c)\Gamma,(a',b',c')\Gamma)\in\mathcal{R}_{S,T}(X_{H})$. Then $((a,b,c)\Gamma,(a',b',c')\Gamma)\in\mathcal{R}_{T}(X_{H})$. Projecting to the first coordinate, we have that
$(\{a\},v,\{a'\},v)\in\Q_{\overline{S},id}(\mathbb{T})$ for some $v\in\mathbb{T}$, where in the system $(\mathbb{T},\overline{S},\id)$, $\overline{S}x=x+\alpha$ for all $x\in\mathbb{T}$ (we regard $\mathbb{T}$ as [0,1)). Since the second transformation is identity, we have that $\{a\}=\{a'\}$. Similarly, $\{b\}=\{b'\}$.
So in order to prove the first statement, it suffices to show that $((a,b,c)\Gamma,(a,b,c')\Gamma)\in\mathcal{R}_{S,T}(X_{H})$ for all $a,b,c,c'\in\mathbb{R}$. Since $(X_{H},S,T)$ is minimal, exist a sequence $(g_{i})_{i\in\N}$ in $G$ and a sequence $(c_{i})_{i\in\N}$ in $\mathbb{R}$ such that

$$\lim_{i\to\infty}g_{i}((0,0,0)\Gamma)=(a,b,c),\quad \lim_{i\to\infty}g_{i}((0,0,c_{i})\Gamma)=(a,b,c').$$

Since $\mathcal{R}_{S,T}(X_{H})$ is closed and invariant under $g\times g, g\in G$, it
then suffices to show that $\Gamma$ and $(0,0,c)\Gamma$ are $\mathcal{R}_{S,T}(X_{H})$-related for all $c\in\mathbb{R}$. Fix $\epsilon>0$. Let $n_i\to +\infty$ be such that $|\{n_i\alpha\}|<\e$ and $\frac{c}{n_i\alpha}<\e$. Let $x_i=(0,\frac{c}{n_i\alpha},0)\Gamma$. Then $d(x_i,\Gamma)<\e$ and by Lemma \ref{funda}, we have that

$$S^{n_i}x_i=(n_i\alpha,\frac{c}{n_i\alpha},c)\Gamma=(\{n_i\alpha\},\frac{c}{n_i\alpha},\{c-n_i\alpha\lfloor \frac{c}{n_i\alpha} \rfloor \})\Gamma=(\{n_i\alpha\},\frac{c}{n_i\alpha},c)\Gamma.$$

So $d(S^{n_i}x_i,(0,0,c)\Gamma)<2\e$. We also have that $d(S^{n_i}(0,0,c)\Gamma, (0,0,c)\Gamma)<\e$. Let $\delta>0$ be such that if $d(h\Gamma,h'\Gamma)<\delta$, then $d(S^{n_i}h\Gamma,S^{n_i}h'\Gamma)<\e$. Since the rotation on $(\alpha,\alpha^{-1})$ is minimal in $\mathbb{T}^2$, we can find $m_i$ large enough such that $0<\{m_i\alpha\}+\frac{c}{n_i}<\delta$ and $|\{m_i\alpha^{-1}\}-c|<\delta$. Hence, $d(T^{m_i}x_i,(0,0,c)\Gamma)<\delta$ and thus $d(S^{n_i}T^{m_i}x_i,(0,0,c)\Gamma)<2\e$. It follows that for large enough $i$, the distance between $(\Gamma,(0,0,c)\Gamma,(0,0,c)\Gamma,(0,0,c)\Gamma)$ and $(x_i,S^{n_i}x_i,T^{m_i}x_i,S^{n_i}T^{m_i}x_i)$ is less than $6\epsilon$. Since $\epsilon$ is arbitrary, we get that $$(\Gamma,(0,0,c)\Gamma,(0,0,c)\Gamma,(0,0,c)\Gamma) \in \Q_{S,T}(X_{H})$$ and thus $\Gamma$ and $(0,0,c)\Gamma$ are $\mathcal{R}_{S,T}(X_{H})$-related. This finishes the proof of the first statement.

For the second statement,
let $h=(h_1,h_2,h_3)\in H$ with $h_i\in [0,1)$ for $i=1,2,3$. We remark that $S^n\Gamma=(n\alpha,0,0)\Gamma=(\{n\alpha\},0,0)\Gamma$. So if $(\Gamma,h\Gamma)$ are $\mathcal{R}_{S,T}(X_{H})$-strongly related, then $h_2=h_3=0$. Hence for $c\in (0,1)$, $\Gamma$ and $(0,0,c)\Gamma$ are not $\mathcal{R}_{S,T}(X_{H})$-strongly related.
\end{proof}

\subsection{A Strong form of the $\mathcal{R}_{S,T}(X)$ relation}

We say that $(x_1,x_2)\in X\times X$ are \emph{$\mathcal{R}_{S,T}^{\ast}(X)$-related} if there exist $(n_i)_{i\in \N}$ and $(m_i)_{i\in \N}$ sequences in $\Z$ such that $$(x_1,S^{n_i}x_1,T^{m_i}x_1,S^{n_i}T^{m_i}x_1)\to (x_1,x_1,x_1,x_2).$$ Obviously, $\mathcal{R}_{S,T}^{\ast}(X)\subseteq \mathcal{R}_{S,T}(X)$.

In this subsection, we prove that the relation generated by $\mathcal{R}_{S,T}^{\ast}(X)$ coincides with the $\mathcal{R}_{S,T}(X)$ relation. We start with some lemmas:

\begin{rem}
 It is shown in \cite{TuYe} that, even in the case $S=T$,  the relation generated by $\mathcal{R}_{S,T}^{\ast}(X)$ may not coincide with the $\mathcal{R}_{S,T}(X)$ relation in the non-distal setting. In fact, there exists a system with $\mathcal{R}_{T,T}^{\ast}=\Delta_X\neq \mathcal{R}_{T,T}$.
\end{rem}

\begin{lem} \label{RPTrivial}
 Let $(X,S,T)$ be a minimal distal system with commuting transformations $S$ and $T$. Then $\mathcal{R}_{S,T}(X)=\Delta_X$ if and only if $\mathcal{R}_{S,T}^{\ast}(X)=\Delta_X$.
\end{lem}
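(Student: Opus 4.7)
The direction $\mathcal{R}_{S,T}(X) = \Delta_X \Rightarrow \mathcal{R}^{\ast}_{S,T}(X) = \Delta_X$ is immediate from the inclusion $\mathcal{R}^{\ast}_{S,T}(X) \subseteq \mathcal{R}_{S,T}(X)$, so my plan concentrates on the nontrivial converse. I will assume $\mathcal{R}^{\ast}_{S,T}(X) = \Delta_X$, pick an arbitrary pair $(x,y) \in \mathcal{R}_{S,T}(X)$, and aim to conclude that $x = y$.

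The central idea is to pass to the magic extension $Y := \bold{K}^{x}_{S,T}$, which by Corollary \ref{Kminimal} is a minimal distal magic extension of $(X,S,T)$ with factor map $\pi\colon Y \to X$ given by projection onto the third coordinate. First I would invoke Proposition \ref{LiftingProperty} to lift the pair $(x,y)\in\mathcal{R}_{S,T}(X)$ to a pair $(\vec{u}_0,\vec{u}_3)\in\mathcal{R}_{\widehat{S},\widehat{T}}(Y)$ with $\pi(\vec{u}_0)=x$ and $\pi(\vec{u}_3)=y$. Since $Y$ is magic, Lemma \ref{R_Magic} forces the first two coordinates of $\vec{u}_0$ and $\vec{u}_3$ to agree, so I can write $\vec{u}_0=(\alpha,\beta,x)$ and $\vec{u}_3=(\alpha,\beta,y)$ for suitable $\alpha,\beta\in X$.

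Using the minimality of $(Y,\widehat{S},\widehat{T})$, I would next select a sequence $(a_i,b_i)$ in $\Z^2$ such that $\widehat{S}^{a_i}\widehat{T}^{b_i}\vec{u}_0 = (S^{a_i}\alpha,T^{b_i}\beta,S^{a_i}T^{b_i}x) \to (x,x,x)$, and (passing to a subsequence by compactness) $\widehat{S}^{a_i}\widehat{T}^{b_i}\vec{u}_3 \to (x,x,y^{\ast})$, where $y^{\ast}=\lim_{i\to\infty} S^{a_i}T^{b_i}y$. Since $(x,x,y^{\ast})\in\bold{K}^{x}_{S,T}$, the very definition of $\mathcal{R}^{\ast}_{S,T}$ gives $(x,y^{\ast})\in\mathcal{R}^{\ast}_{S,T}(X)$, hence $y^{\ast}=x$ by hypothesis. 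To bring $y$ itself back into the picture, I would exploit that distality of $(X,S,T)$ makes $E(X,G)$ a group: taking a subsequential limit $p\in E(X,G)$ of $S^{a_i}T^{b_i}$ in the product topology, we have $px = x$ and $py = y^{\ast} = x$, and injectivity of $p$ (it is a bijection, being invertible in $E(X,G)$) yields $x = y$.

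The main obstacle I anticipate is the proper use of distality at these two distinct moments: once to guarantee that $(\bold{K}^{x}_{S,T},\widehat{S},\widehat{T})$ is minimal, so that one can translate $\vec{u}_0$ all the way to $(x,x,x)$; and once to guarantee that $E(X,G)$ is a group, so that the enveloping semigroup element $p$ is invertible and one can pass from the conclusion $y^{\ast}=x$ back to $y=x$. Both ingredients are already encapsulated in earlier results of this section, so the remaining task reduces to assembling them in the correct order; the remark following the statement (citing \cite{TuYe}) confirms that distality is not a convenience but a necessity here.
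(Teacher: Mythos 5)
Your argument is correct. It shares the paper's central mechanism — pass to the magic extension $\bold{K}^{x}_{S,T}$ (minimal by Corollary \ref{Kminimal}) and observe that any point of the form $(x,x,z)$ in $\bold{K}^{x}_{S,T}$ witnesses $(x,z)\in\mathcal{R}^{\ast}_{S,T}(X)$ — but you globalize differently. The paper fixes $x_0$, shows the $\mathcal{R}_{\widehat{S},\widehat{T}}$-class of the single point $(x_0,x_0,x_0)$ is a singleton, then asserts that by distality and minimality every class is a singleton, and finally routes through Theorem \ref{R-ProductExtension}: triviality of $\mathcal{R}_{\widehat{S},\widehat{T}}(\bold{K}^{x_0}_{S,T})$ gives a product extension of $\bold{K}^{x_0}_{S,T}$, hence of $X$, hence $\mathcal{R}_{S,T}(X)=\Delta_X$. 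You instead start from an arbitrary pair $(x,y)\in\mathcal{R}_{S,T}(X)$, lift it via Proposition \ref{LiftingProperty}, use Lemma \ref{R_Magic} to align the first two coordinates, translate diagonally onto the distinguished fiber over $(x,x,\cdot)$, apply the hypothesis there, and close with distality of $X$ itself (your enveloping-semigroup step could even be replaced by the bare observation that $d(S^{a_i}T^{b_i}x,S^{a_i}T^{b_i}y)\to d(x,y^{\ast})=0$ exhibits $(x,y)$ as a proximal pair). The trade-off: your route makes fully explicit the homogeneity step that the paper compresses into one sentence, and it avoids any appeal to the product-extension theorem; the paper's route is shorter because it delegates the hard work to Theorem \ref{R-ProductExtension} and to the standard fact that fibers of closed invariant equivalence relations on minimal distal systems are homogeneous. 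Both are valid proofs of the lemma.
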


\begin{proof}
 We only prove the non-trivial direction. Suppose that $\mathcal{R}_{S,T}^{\ast}(X)$ coincides with the diagonal. Fix $x_0 \in X$ and consider the system $(\bold{K}^{x_0}_{S,T},\widehat{S},\widehat{T})$. Let $\mathcal{R}_{\widehat{S},\widehat{T}}[(x_0,x_0,x_0)]$ be the set of points that are $\mathcal{R}_{\widehat{S},\widehat{T}}$ related with $(x_0,x_0,x_0)$. Pick $(x_1,x_2,x_3) \in \mathcal{R}_{\widehat{S},\widehat{T}}[(x_0,x_0,x_0)]$. By definition, we have that $x_1=x_2=x_0$. Hence $(x_0,x_0,x_3)\in \bold{K}^{x_0}_{S,T} $ and thus $(x_0,x_3)$ belongs to $\mathcal{R}_{S,T}^{\ast}(X)$. We conclude that $\# \mathcal{R}_{\widehat{S},\widehat{T}}[(x_0,x_0,x_0)]=1$. By distality and minimality, the same property holds for every point in $\bold{K}^{x_0}_{S,T}$ and thus $\mathcal{R}_{\widehat{S},\widehat{T}}(\bold{K}^{x_0}_{S,T})$ coincides with the diagonal relation. Particularly, $(\bold{K}^{x_0}_{S,T},\widehat{S},\widehat{T})$ has a product extension and consequently so has $(X,S,T)$. This is equivalent to saying that $\mathcal{R}_{S,T}(X)=\Delta_X$.
\end{proof}

Let $\mathcal{R}(X)$ be the relation generated by $\mathcal{R}_{S,T}^{\ast}(X)$. We have:
\begin{lem} \label{Relation}
 Let $\pi\colon Y\to X$ be the factor map between two minimal distal systems $(Y,S,T)$ and $(X,S,T)$ with commuting transformations $S$ and $T$. Then $\pi\times \pi (\mathcal{R}(Y)) \supseteq \mathcal{R}_{S,T}^{\ast}(X)$.

\end{lem}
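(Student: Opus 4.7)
The plan is to mimic the lifting argument of Proposition \ref{LiftingProperty}, but with a crucial refinement: I would anchor the witnessing orbit sequence at a chosen preimage $y_1$ of $x_1$ throughout, so that the construction produces a pair in $\mathcal{R}^{\ast}_{S,T}(Y)$ rather than merely in $\mathcal{R}_{S,T}(Y)$, with a final diagonal $G$-shift absorbed via a closure argument.

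First, given $(x_1, x_2) \in \mathcal{R}^{\ast}_{S,T}(X)$ with witnesses $(n_i, m_i)$, I would pick $y_1 \in \pi^{-1}(x_1)$ and extract a convergent subsequence so that $(y_1, S^{n_i}y_1, T^{m_i}y_1, S^{n_i}T^{m_i}y_1) \to (y_1, a, b, c)$ with $\pi(a) = \pi(b) = x_1$ and $\pi(c) = x_2$. This places $(y_1, a, b, c) \in \Q_{S,T}(Y)$ with the witnessing orbit pinned at $y_1$, and puts $(a, b, c)$ in $\bold{K}^{y_1}_{S,T}$, a minimal distal $(\widehat{S}, \widehat{T})$-system by Corollary \ref{Kminimal}. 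Next, since $(Y, S)$ and $(Y, T)$ are distal by Theorem \ref{distal}(4), every orbit closure under each individual generator is minimal, so there exist $(p_k), (q_\ell) \in \Z$ with $S^{p_k} a \to y_1$ and $T^{q_\ell} b \to y_1$. Applying the $\mathcal{F}_{S,T}$-elements $\id\times S^{p_k}\times \id \times S^{p_k}$ and $\id\times \id \times T^{q_\ell}\times T^{q_\ell}$ to the witnessing sequence keeps the first coordinate at $y_1$, and a diagonal subsequence then produces a new orbit sequence starting at $y_1$ whose limit is $(y_1, y_1, y_1, c') \in \Q_{S,T}(Y)$. This yields $(y_1, c') \in \mathcal{R}^{\ast}_{S,T}(Y)$, with $\pi(c') =: x_2''$ belonging to the $G$-orbit closure of $x_2$ but generally different from $x_2$.

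To correct the projection, I would observe that $(x_1, x_2'')$ lies in $\overline{\mathcal{O}_{G^{\Delta}}(x_1, x_2)}$, which is minimal by distality (Theorem \ref{distal}(1),(3)); hence there exist $g_r \in G$ with $g_r(x_1, x_2'') \to (x_1, x_2)$. Extracting a subsequence so that $g_r y_1 \to y_1^{\sharp}$ and $g_r c' \to c^{\sharp}$, one has $\pi(y_1^{\sharp}) = x_1$ and $\pi(c^{\sharp}) = x_2$; since $G$ is abelian, the $\mathcal{R}^{\ast}_{S,T}(Y)$-witness for $(y_1, c')$ transports under the diagonal action, giving $(g_r y_1, g_r c') \in \mathcal{R}^{\ast}_{S,T}(Y)$ for each $r$. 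The main obstacle will be concluding $(y_1^{\sharp}, c^{\sharp}) \in \mathcal{R}(Y)$ from this limiting behavior, which I expect to handle via closure of $\mathcal{R}^{\ast}_{S,T}(Y)$ in the distal case --- equivalently, upper semicontinuity of $y \mapsto \bold{K}^{y}_{S,T}$ in the Hausdorff topology. This is a standard consequence of the group structure of the Ellis enveloping semigroup $E(Y, G)$ characteristic of distal systems, and once secured gives $(y_1^{\sharp}, c^{\sharp}) \in \mathcal{R}^{\ast}_{S,T}(Y) \subseteq \mathcal{R}(Y)$, completing the lemma.
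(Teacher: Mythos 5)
Your argument is essentially the right way to flesh out the paper's one-line proof (``similar to Proposition \ref{LiftingProperty}''): the key refinement of anchoring at a fixed preimage $y_1$ and replacing the mixed translations $g_i\times g_iS^{p_i}\times g_i\times g_iS^{p_i}$ of that proof by the face translations $\id\times S^{p_k}\times\id\times S^{p_k}$ and $\id\times\id\times T^{q_\ell}\times T^{q_\ell}$, which preserve the first coordinate, is exactly what is needed to land in $\mathcal{R}^{\ast}_{S,T}(Y)$ rather than merely in $\mathcal{R}_{S,T}(Y)$; and your use of distality to produce $S^{p_k}a\to y_1$ and $T^{q_\ell}b\to y_1$ (each one-generator orbit closure being minimal) is sound. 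The one place you go astray is the final step: the ``main obstacle'' you identify is not an obstacle, and the tool you reach for --- closedness of $\mathcal{R}^{\ast}_{S,T}(Y)$, i.e.\ upper semicontinuity of $y\mapsto\bold{K}^{y}_{S,T}$ --- is neither proved in the paper nor a routine consequence of the Ellis group structure of distal systems (if it were available, together with invariance it would trivialize much of this subsection). What you actually need is only that the limit $(y_1^{\sharp},c^{\sharp})$ of the pairs $(g_ry_1,g_rc')\in\mathcal{R}^{\ast}_{S,T}(Y)$ belongs to $\mathcal{R}(Y)$, and this is immediate from the definition of $\mathcal{R}(Y)$: for the quotient $Y/\mathcal{R}(Y)$ used in the sequel to make sense, the relation generated by $\mathcal{R}^{\ast}_{S,T}(Y)$ must be taken to be the smallest \emph{closed} invariant equivalence relation containing it, so $\overline{\mathcal{R}^{\ast}_{S,T}(Y)}\subseteq\mathcal{R}(Y)$ by construction. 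With that substitution (and a slightly careful diagonal choice of $(p_{k},q_{\ell})$ so that $S^{p_{k}}T^{q_{\ell}}x_1\to x_1$, ensuring $(x_1,x_2'')\in\overline{\mathcal{O}_{G^{\Delta}}(x_1,x_2)}$), your proof is complete.
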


\begin{proof}
 Similar to the proof of Proposition \ref{LiftingProperty}.
\end{proof}

We can now prove the main property of this subsection:

\begin{prop}
 Let $(X,S,T)$ be a distal minimal system with commuting transformations $S$ and $T$. Then $\mathcal{R}(X)=\mathcal{R}_{S,T}(X)$.
\end{prop}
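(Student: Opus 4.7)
The plan is to exploit the quotient by $\mathcal{R}(X)$ and push the problem through the two technical inputs already available, namely Lemma \ref{Relation} and Lemma \ref{RPTrivial}. One inclusion is essentially free: since $\mathcal{R}_{S,T}^{\ast}(X)\subseteq\mathcal{R}_{S,T}(X)$ and, by Theorem \ref{Equivalence}, $\mathcal{R}_{S,T}(X)$ is a closed equivalence relation, the equivalence relation $\mathcal{R}(X)$ generated by $\mathcal{R}_{S,T}^{\ast}(X)$ is automatically contained in $\mathcal{R}_{S,T}(X)$. So the whole content is in the reverse inclusion $\mathcal{R}_{S,T}(X)\subseteq\mathcal{R}(X)$.

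For the reverse inclusion, I would first observe that $\mathcal{R}_{S,T}^{\ast}(X)$ is $G$-invariant (translating the defining sequence $(x,S^{n_i}x,T^{m_i}x,S^{n_i}T^{m_i}x)$ by any $g\in G$ produces the same type of defining sequence for $(gx,gy)$), hence so is $\mathcal{R}(X)$. In the distal setting $\mathcal{R}(X)$ is therefore a closed $G$-invariant equivalence relation (working with its closure if needed, noting that in a minimal distal system closures of invariant equivalence relations remain equivalence relations). Let $\pi\colon X\to Z:=X/\mathcal{R}(X)$ be the resulting factor map between minimal distal systems with commuting transformations; by construction $\pi\times\pi(\mathcal{R}(X))=\Delta_Z$.

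Now apply Lemma \ref{Relation} to the factor map $\pi$, taking $Y=X$ and the base to be $Z$: this yields $\pi\times\pi(\mathcal{R}(X))\supseteq \mathcal{R}_{S,T}^{\ast}(Z)$, so $\mathcal{R}_{S,T}^{\ast}(Z)\subseteq\Delta_Z$, i.e., $\mathcal{R}_{S,T}^{\ast}(Z)=\Delta_Z$. By Lemma \ref{RPTrivial} this upgrades to $\mathcal{R}_{S,T}(Z)=\Delta_Z$. Finally, factor maps always push $\mathcal{R}_{S,T}$ forward into $\mathcal{R}_{S,T}$ (this is immediate from $\pi^4(\Q_{S,T}(X))=\Q_{S,T}(Z)$), so $\pi\times\pi(\mathcal{R}_{S,T}(X))\subseteq\mathcal{R}_{S,T}(Z)=\Delta_Z$. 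This forces $\mathcal{R}_{S,T}(X)\subseteq\{(x,y)\colon\pi(x)=\pi(y)\}=\mathcal{R}(X)$, as desired.

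The main obstacle is the first step in the second paragraph: justifying that $\mathcal{R}(X)$ (or a well-behaved surrogate for it) is a closed $G$-invariant equivalence relation, so that the quotient $Z=X/\mathcal{R}(X)$ really exists as a minimal distal system with commuting transformations on which Lemmas \ref{Relation} and \ref{RPTrivial} may be applied. Once the quotient is available, the chain ``Lemma \ref{Relation} $\Rightarrow$ $\mathcal{R}_{S,T}^{\ast}(Z)$ trivial $\Rightarrow$ (by Lemma \ref{RPTrivial}) $\mathcal{R}_{S,T}(Z)$ trivial $\Rightarrow$ $\mathcal{R}_{S,T}(X)$ collapses under $\pi$'' is a routine two-line argument.
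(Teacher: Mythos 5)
Your proposal is correct and follows essentially the same route as the paper: quotient by $\mathcal{R}(X)$, apply Lemma \ref{Relation} to get $\mathcal{R}_{S,T}^{\ast}$ trivial on the quotient, upgrade via Lemma \ref{RPTrivial}, and conclude. The only (harmless) difference is in the last step, where the paper invokes the maximality of $X/\mathcal{R}_{S,T}(X)$ from Theorem \ref{ThmDistalCas} while you use the direct pushforward $\pi\times\pi(\mathcal{R}_{S,T}(X))\subseteq\mathcal{R}_{S,T}(X/\mathcal{R}(X))$, which is if anything slightly more elementary.
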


\begin{proof}
 We only need to prove that $\mathcal{R}_{S,T}(X)\subseteq \mathcal{R}(X)$. Let $\pi\colon X\to X/\mathcal{R}(X)$ be the projection map. By Lemma \ref{Relation}, $\Delta_X=\pi \times \pi (\mathcal{R}(X))\supseteq \mathcal{R}_{S,T}^{\ast}(X/\mathcal{R}(X))$. By Lemma \ref{RPTrivial}, $\mathcal{R}_{S,T}(X/\mathcal{R}(X))=\Delta_X$ and then $(X/\mathcal{R}(X),S,T)$ has a product extension. By Theorem \ref{ThmDistalCas} $(X/\mathcal{R}_{S,T}(X),S,T)$ is the maximal factor with this property and therefore $\mathcal{R}_{S,T}(X)\subseteq \mathcal{R}(X)$.
\end{proof}

.

\section{Properties of systems with product extensions}
In this section, we study the properties of systems which have a product extension. We characterize them in terms of their enveloping semigroup and we study the class of systems which are disjoint from them. Also, in the distal case we study properties of recurrence and topological complexity.

\subsection{The enveloping semigroup of systems with a product extension}

Let $(X,S,T)$ be a system with commuting transformations $S$ and $T$, and let $E(X,S)$ and $E(X,T)$ be the enveloping semigroups associated to the systems $(X,S)$ and $(X,T)$ respectively. Hence $E(X,S)$ and $E(X,T)$ are subsemigroups of $E(X,G)$.
We say that $(X,S,T)$ is {\it automorphic} (or $S$ and $T$ are {\it automorphic}) if for any nets $u_{S,i}\in E(X,S)$ and $u_{T,i}\in E(X,T)$ with $\lim u_{S,i}=u_S$ and $\lim u_{T,i}=u_T$, we have that $\lim u_{S,i}u_{T,i}=u_Su_T$. Equivalently, $S$ and $T$ are automorphic if the map $E(X,S)\times E(X,T)\to E(X,G)$, $(u_S,u_T)\mapsto u_Su_T$ is continuous.

The following theorem characterizes the enveloping semigroup for systems with production extensions:

\begin{thm} \label{EnvEquic}
Let $(X,S,T)$ be a system with commuting transformations $S$ and $T$. Then $(X,S,T)$ has a product extension if and only if $S$ and $T$ are automorphic.
Particularly, $E(X,G)=E(X,S)E(X,T)\coloneqq\{u_{S}u_{T}\colon u_{S}\in E(X,S), u_{T}\in E(X,T)\}$, and $E(X,S)$ commutes with $E(X,T)$.
\end{thm}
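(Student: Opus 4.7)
The strategy is to prove the two directions separately—the reverse implication being the main content—and then to read off the decomposition $E(X,G)=E(X,S)E(X,T)$ and the commutativity as easy corollaries of automorphicity.

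For the forward direction (product extension $\Rightarrow$ automorphic), I would first verify automorphicity directly for a product system $(Y\times W,\sigma\times\id,\id\times\tau)$: every element of $E(Y\times W,\sigma\times\id)$ acts as $(f,\id)$ for some $f\in E(Y,\sigma)$, every element of $E(Y\times W,\id\times\tau)$ as $(\id,g)$ for some $g\in E(W,\tau)$, their composition is simply $(f,g)$, and joint pointwise continuity of this "packaging" is immediate. For an arbitrary system with a product extension via $\pi\colon Y\times W\to X$, I would pass through the induced continuous semigroup homomorphism $\pi^{\ast}$ on enveloping semigroups: given nets $u_{S,i}\to u_S$ and $u_{T,i}\to u_T$ downstairs, lift them to nets upstairs, extract convergent subnets by compactness, apply upstairs automorphicity, and project. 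A standard compactness argument then promotes subnet convergence to full-net convergence.

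For the reverse direction, I would invoke Proposition \ref{FunctionCoordinate}(2), which reduces the task to showing that the last coordinate of $\bold{K}^{x_0}_{S,T}$ is a function of the first two. Given $(a,b,c),(a,b,c')\in\bold{K}^{x_0}_{S,T}$, extracting convergent subnets from the defining nets yields $\alpha,\alpha'\in E(X,S)$ and $\beta,\beta'\in E(X,T)$ with $\alpha(x_0)=\alpha'(x_0)=a$ and $\beta(x_0)=\beta'(x_0)=b$; automorphicity then gives $c=\alpha\beta(x_0)$ and $c'=\alpha'\beta'(x_0)$. The crux is the telescoping chain
\begin{equation*}
c=\alpha\beta(x_0)=\alpha(b)=\alpha\beta'(x_0)=\beta'\alpha(x_0)=\beta'(a)=\beta'\alpha'(x_0)=\alpha'\beta'(x_0)=c',
\end{equation*}
whose only substantive ingredient is the commutation $\gamma\delta=\delta\gamma$ for $\gamma\in E(X,S)$ and $\delta\in E(X,T)$. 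I would derive this commutation from automorphicity by working with nets $S^{n_i}\to\gamma$ and $T^{m_j}\to\delta$ over $I\times J$: automorphicity gives the joint limit $S^{n_i}T^{m_j}\to\gamma\delta$; on the other hand, $S^{n_i}T^{m_j}=T^{m_j}S^{n_i}$ since $G$ is abelian, and the continuity of each individual $T^{m_j}$ lets one compute the iterated limit $\lim_j\lim_i T^{m_j}S^{n_i}(x)=\lim_j T^{m_j}(\gamma(x))=\delta\gamma(x)$; since the joint limit equals the iterated limit whenever both exist, $\gamma\delta=\delta\gamma$.

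The last statement of the theorem is then a corollary: by automorphicity, $E(X,S)E(X,T)$ is the continuous image of the compact product $E(X,S)\times E(X,T)$, hence closed in $E(X,G)$, and it contains the dense subgroup $G=\{S^nT^m\}$, so it equals $E(X,G)$; the commutativity of $E(X,S)$ and $E(X,T)$ is precisely the relation established in the previous paragraph. The main obstacle I anticipate is finding the telescoping chain itself: the naive attempt to prove $\alpha(b)=\alpha'(b)$ directly invites invoking continuity of $\alpha$ at $b$, which fails because enveloping semigroup elements need not be continuous. Pivoting through $\alpha\beta'(x_0)$ sidesteps this entirely and reduces everything to algebraic manipulations evaluated at the single point $x_0$, which is the key insight the argument depends on.
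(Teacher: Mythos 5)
Your proposal is correct and follows essentially the same route as the paper: product systems are automorphic and automorphy passes to factors; conversely, automorphy yields the commutation of $E(X,S)$ with $E(X,T)$ (your double-net iterated-limit argument is a careful version of the paper's Claim 1, which uses centrality of $G$ in $E(X,G)$ together with continuity of right multiplication), and the identical telescoping chain shows the last coordinate of $\bold{K}^{x_0}_{S,T}$ is a function of the first two, so Proposition \ref{FunctionCoordinate} applies. Your compactness-plus-density argument for $E(X,G)=E(X,S)E(X,T)$ is the natural way to justify the final clause, which the paper leaves implicit.
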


\begin{proof}

First, we prove that the property of being automorphic is preserved under factor maps. Let $\pi\colon Y\to X$ be a factor map between the systems $(Y,S,T)$ and $(X,S,T)$ and suppose that $(Y,S,T)$ is automorphic. Suppose that $(X,S,T)$ is not automorphic. Then there exist nets $u_{S,i}\in E(X,S)$ and $u_{T,i}\in E(X,T)$ such that $u_{S,i}u_{T,i}$ does not converge to $u_Su_T$. Taking a subnet, we can assume that $u_{S,i}u_{T,i}$ converges to $u\in E(X,G)$. Let $\pi^{\ast}\colon E(Y,G)\to E(X,G)$ be the map induced by $\pi$ and let $v_{S,i}\in E(Y,S)$ and $v_{T,i}\in E(Y,T)$ be nets with $\pi^{\ast}(v_{S,i})=u_{S,i}$ and $\pi^{\ast}(v_{T,i})=u_{T,i}$. Assume without loss of generality that $v_{S,i}\to v_{S}$ and $v_{T,i}\to v_{T}$. Then $v_{S,i}v_{T,i}\to v_{S}v_{T}$. So $u_{S,i}u_{T,i}\to u_Su_T=u$, a contradiction.  On the other hand, since a product system is clearly automorphic, we get the first implication.

Now suppose that $S$ and $T$ are automorphic.

{\it Claim 1: $E(X,S)$ commutes with $E(X,T)$.}

Indeed, let $u_S \in E(X,S)$ and $u_T\in E(X,T)$ . Let $(n_i)$ be a net such that $S^{n_i} \to u_S$. Then $S^{n_i}u_T \to  u_S u_T$. On the other hand, since $S$ commutes  with $E(X,T)$ we have that $S^{n_i} u_T = u_T S^{n_i}$ for every $i$ and this converges to $u_Tu_S$ by the hypothesis of automorphy.

{\it Claim 2 : For any $x\in X$, $\bold{K}^{x}_{S,T}=\{(u_Sx,u_Tx,u_Su_Tx)\colon u_S \in E(X,S), ~ u_T\in E(X,T) \}$. }

We recall that $\bold{K}^{x}_{S,T}$ in invariant under $S\times\id\times S$ and $\id \times T \times T$. Since $\bold{K}^{x}_{S,T}$ is closed we have that is invariant under $u_S\times \id \times u_S$ and $\id \times u_T\times u_T$ for any $u_S\in E(X,S)$ and $u_T\in E(X,T)$. Hence $(u_S\times \id \times u_S)(\id \times u_T\times u_T)(x,x,x)=(u_Sx,u_Tx,u_Su_Tx)\in K^x_{S,T}$.

Conversely, let $(a,b,c)\in\bold{K}^{x}_{S,T}$. Let $(m_i)_{i\in \N}$ and $(n_i)_{\in \N}$ be sequences in $\Z$ such that
$S^{m_i} x \to a$, $T^{n_i} x \to b$ and $S^{m_i} T^{n_i} x \to c$. Replacing these sequences with finer filters, we can assume that $S^{m_i}\to u_S \in E(X,S)$ and $T^{n_i} \to u_T \in E(X,T)$. By the hypothesis of automorphy, $S^{m_i} T^{n_i} \to u_Su_T$ and thus $u_Su_Tx = c$ and
$(a, b, c) = (u_Sx, u_Tx, u_Su_Tx)$. The claim is proved.

Let $(a,b,c)$ and $(a,b,d) \in \bold{K}^{x}_{S,T}$. We can take $u_S,u_S'\in E(X,S)$ and $u_T,u_T'\in E(X,T)$ such that $(a,b,c)=(u_Sx,u_Tx,u_Su_Tx)$ and $(a,b,d)=(u_S'x,u_T'x,u_S'u_T'x)$. Since $E(X,S)$ and $E(X,T)$ commute we deduce that $c=u_Su_Tx=u_Sb=u_Su_T'x=u_T'u_Sx=u_T'a=u_T'u_S'x=d$.

Consequently, the last coordinate of $\bold{K}^{x}_{S,T}$ is a function of the first two ones. By Proposition \ref{FunctionCoordinate}, $(X,S,T)$ has a product extension.

\end{proof}

\subsection{Disjointness of systems with a product extension}
We recall the definition of disjointness:

\begin{defn}
Let $(X,G_0)$ and $(Y,G_0)$ be two dynamical systems. A \emph{joining} between $(X,G_0)$ and $(Y,G_0)$ is a closed subset $Z$ of $X\times Y$ which is invariant under the action $g\times g$ for all $g\in G_0$ and projects onto both factors. We say that $(X,G_0)$ and $(Y,G_0)$ are \emph{disjoint} if the only joining between them is their Cartesian product.
\end{defn}

\begin{defn}
  Let $(X,S,T)$ be a minimal system with commuting transformations $S$ and $T$. We say that a point $x\in X$ is {\it $S$-$T$ almost periodic} if $x$ is an almost periodic point of the systems $(X,S)$ and $(X,T)$. Equivalently, $x$ is $S$-$T$ almost periodic if  $(\mathcal{O}_S(x),S)$ and $(\mathcal{O}_T(x),T)$ are minimal systems. The system $(X,S,T)$ is {\it $S$-$T$ almost periodic} if every point $x\in X$ is $S$-$T$ almost periodic.
\end{defn}

\begin{rem}
 We remark that if $(K^{x}_{S,T},\widehat{S},\widehat{T})$ is minimal, then $x$ is $S$-$T$ is almost periodic. Consequently, if $(X,S,T)$ has a product extension we have that $(K^{x}_{S,T},\widehat{S},\widehat{T})$ is minimal for every $x\in X$ and then $(X,S,T)$ is $S$-$T$ almost periodic.
\end{rem}

The main theorem of this subsection is:

\begin{thm}\label{disjointn}
  Let $(X,S,T)$ be an $S$-$T$ almost periodic system. Then $(X,S)$ and $(X,T)$ are minimal and weak mixing if and only if $(X,S,T)$ is disjoint from all systems with product extension.
\end{thm}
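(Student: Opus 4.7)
The plan is to prove the two directions separately; the forward direction carries the main analytic content.

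For $(\Rightarrow)$, assume $(X,S)$ and $(X,T)$ are minimal and weakly mixing, and consider a joining $J \subseteq X \times W$ with some $(W,S_W,T_W)$ having a product extension. I would first reduce to $W = Y_1 \times Y_2$ a genuine product system with action $(\sigma \times \id, \id \times \tau)$: if $\rho \colon Y_1 \times Y_2 \to W$ is the factor map, then $(\id_X \times \rho)^{-1}(J)$ is a joining of $(X,S,T)$ with $Y_1 \times Y_2$, and triviality of the latter forces $J = X \times W$. Using minimality of $(X,S)$ and $(X,T)$, all pairwise projections of $J$ are full: fixing $y_2 \in Y_2$, the slice $\{x : (x,y_2) \in \pi_{X Y_2}(J)\}$ is closed, $S$-invariant and nonempty, hence equals $X$ by minimality, so $\pi_{X Y_2}(J) = X \times Y_2$; the other projections follow similarly.

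The core of the argument studies the fiber map $F(x,y_2) := \{y_1 \in Y_1 : (x,y_1,y_2) \in J\}$, a closed subset of $Y_1$. Since $J$ is closed, $F$ is upper semi-continuous in the Hausdorff sense, and the invariances of $J$ translate into $F(Tx,\tau y_2) = F(x,y_2)$ and $F(Sx,y_2) = \sigma F(x,y_2)$. Weak mixing of $(X,T)$ together with minimality (hence transitivity) of $(Y_2,\tau)$ implies $(X \times Y_2, T \times \tau)$ is transitive, so some $(x_0, y_{2,0})$ has dense $(T \times \tau)$-orbit; $F$ is identically the constant $K_0 := F(x_0, y_{2,0})$ along this orbit. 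Upper semi-continuity then yields $K_0 \subseteq F(x,y_2)$ for every $(x,y_2) \in X \times Y_2$. Evaluating at $(Sx_0, y_{2,0})$ and at $(S^{-1} x_0, y_{2,0})$ yields $K_0 \subseteq \sigma K_0$ and $K_0 \subseteq \sigma^{-1} K_0$, whence $K_0 = \sigma K_0$; $K_0$ is nonempty because $\pi_{X Y_2}(J) = X \times Y_2$, so by minimality of $(Y_1,\sigma)$ we get $K_0 = Y_1$. Therefore $F \equiv Y_1$ everywhere and $J = X \times Y_1 \times Y_2$.

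For $(\Leftarrow)$, I argue by contrapositive. If $(X,S)$ is not minimal, the $S$-$T$ almost periodicity decomposes $X$ into $S$-minimal subsystems that are permuted by $T$; when there are finitely many, say $X_1,\dots,X_k$, the discrete system $W = \{1,\dots,k\}$ with trivial $S$-action and the induced permutation as $T$-action is itself a product system, and $J = \{(x,i) : x \in X_i\}$ is a proper joining of $(X,S,T)$ with it. If $(X,S)$ is minimal but not weakly mixing, the maximal equicontinuous factor $(Z, \sigma_Z)$ of $(X,S)$ is a nontrivial minimal rotation on a compact abelian group; since $T$ commutes with $S$ and so preserves the regionally proximal relation of $(X,S)$, it descends to a homeomorphism $\tau_Z$ of $Z$, and a minimality argument on $Z$ shows that any homeomorphism commuting with $\sigma_Z$ must itself be a group translation. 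Consequently $(Z, \sigma_Z, \tau_Z)$ is a factor of the product system $(Z \times Z, \sigma_Z \times \id, \id \times \tau_Z)$ via group multiplication, and the graph of the factor map $X \to Z$ is a proper joining of $(X,S,T)$ with a system that has a product extension. The symmetric arguments handle $(X,T)$.

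The main obstacle is the non-minimal case in the converse direction when there are infinitely many $S$-minimal subsystems, since the corresponding partition need not induce a Hausdorff quotient of $X$; this requires a more careful construction, for instance by passing to the natural dynamical system on the compact hyperspace of $S$-minimal subsystems with the induced $T$-action, or by using an appropriate intermediate factor on which the orbit structure becomes well-behaved.
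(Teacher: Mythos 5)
Your forward implication is correct, and it takes a genuinely different route from the paper's. After reducing to a joining $J\subseteq X\times Y_1\times Y_2$ with a product system whose marginals $(Y_1,\sigma)$ and $(Y_2,\tau)$ are minimal (this reduction should be recorded: it follows from Proposition \ref{FunctionCoordinate}, which exhibits a product extension of the form $\mathcal{O}_S(x_0)\times\mathcal{O}_T(x_0)$), your fiber map $F(x,y_2)$, its constancy along a $(T\times\tau)$-transitive orbit, and the closedness of $J$ give $K_0\subseteq F$ everywhere; then $\sigma K_0=K_0$ and minimality of $(Y_1,\sigma)$ finish. Two caveats. First, transitivity of $(X\times Y_2,T\times\tau)$ uses minimality of $(Y_2,\tau)$ essentially (thick return-time sets from weak mixing of $(X,T)$ must meet syndetic ones from minimality of $(Y_2,\tau)$); mere transitivity of $(Y_2,\tau)$ would not suffice, so the parenthetical ``hence transitivity'' points at the wrong property. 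Second, your argument never invokes weak mixing of $(X,S)$; this is not an error (for commuting $S,T$ with both $(X,S)$ and $(X,T)$ minimal, the maximal equicontinuous factor of $(X,S)$ is an equicontinuous factor of $(X,T)$, so weak mixing of one forces the other), but it deserves a sentence. By contrast, the paper passes to a minimal joining via Zorn's lemma, proves the fibers $F_x$ are $G$-invariant using $\mathcal{R}_{S,T}(X)=X\times X$ and the magic-extension machinery (Lemma \ref{LiftingProperty2}), and needs automorphy of the enveloping semigroup (Theorem \ref{EnvEquic}) to produce almost periodic points in every fiber. Your argument is more elementary and closer to the classical Furstenberg-style disjointness proofs.

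The converse, however, has a genuine gap, which you yourself flag: when $(X,S)$ is not minimal and has infinitely many $S$-minimal components, you produce no nontrivial joining with a system having a product extension; the partition of $X$ into $S$-minimal sets need not vary continuously, so the quotient need not be a compact metric factor, and the hyperspace fix is only gestured at. The paper sidesteps the entire case analysis by not arguing contrapositively: for $x,y\in X$, $S$-$T$ almost periodicity makes $(\mathcal{O}_S(x)\times\mathcal{O}_T(x),S\times\id,\id\times T)$ a minimal product system, and applying the assumed disjointness to the orbit closure of $(x,(x,x))$ in $X\times(\mathcal{O}_S(x)\times\mathcal{O}_T(x))$ yields sequences with $(x,S^{n_i}x,T^{m_i}x,S^{n_i}T^{m_i}x)\to(x,x,x,y)$. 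Hence $\Q_S(X)=\Q_T(X)=\mathcal{R}_{S,T}(X)=X\times X$; by Lemma \ref{QTransitive} both $(X,S)$ and $(X,T)$ are transitive, which combined with pointwise almost periodicity gives their minimality as a conclusion rather than a hypothesis to split on, and $\mathcal{R}_{S,T}(X)=X\times X$ then gives weak mixing of both by Lemma \ref{R_STWeakMixing}. I recommend replacing your contrapositive argument by this direct one; your equicontinuous-factor construction for the ``minimal but not weakly mixing'' case is correct but becomes unnecessary.
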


We begin with a general lemma characterizing the relation of transitivity with the cube structure:

\begin{lem}\label{QTransitive}
Let $(X,T)$ be a topological dynamical system. Then $(X,T)$ is transitive if and only if $\Q_T(X)=X\times X$.
\end{lem}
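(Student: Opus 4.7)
The plan is to prove this elementary equivalence by a straightforward double inclusion, working from the open-set characterization of transitivity rather than the dense-orbit one (the paper establishes these as equivalent). Recall $\Q_T(X) = \overline{\{(x, T^n x) \colon x \in X, n \in \Z\}}$, so $\Q_T(X) = X\times X$ means exactly that points of this dense form approximate every pair.

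For the forward direction, assume $(X,T)$ is transitive and fix $(a,b) \in X \times X$ together with open neighborhoods $U \ni a$ and $V \ni b$. I want to exhibit a point of the defining set of $\Q_T(X)$ inside $U \times V$. By the open-set formulation of transitivity, there exists $n \in \Z$ with $U \cap T^{-n}V \neq \emptyset$, and any $x$ in this intersection yields $(x, T^n x) \in U \times V$. Since $U, V$ were arbitrary, $(a,b)$ lies in the closure, giving $\Q_T(X) = X \times X$.

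For the converse, assume $\Q_T(X) = X \times X$ and let $U, V \subseteq X$ be non-empty open sets. Choose any $a \in U$ and $b \in V$; since $(a,b) \in \Q_T(X)$, the definition yields $x \in X$ and $n \in \Z$ with $(x, T^n x)$ arbitrarily close to $(a,b)$, so in particular $x \in U$ and $T^n x \in V$. Hence $U \cap T^{-n}V \neq \emptyset$, and $(X,T)$ is transitive by the open-set criterion. There is no real obstacle: the argument is purely a translation between the definition of $\Q_T(X)$ as a closure of graphs of powers of $T$ and the open-set formulation of transitivity, so each direction is at most a few lines.
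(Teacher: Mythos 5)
Your proof is correct. The converse direction is word-for-word the paper's argument. The forward direction takes a slightly different (and equally valid) route: the paper picks a transitive point $x$, observes that $X\times X$ is the orbit closure of $(x,x)$ under $T\times T$ and $\id\times T$, and uses the invariance of the closed set $\Q_T(X)$ under these maps; you instead apply the open-set formulation of transitivity directly to an arbitrary basic neighborhood $U\times V$ of an arbitrary pair $(a,b)$, producing a point $(x,T^nx)$ of the generating set inside it. Both arguments reduce to the same density statement and both rely on the equivalence of the two characterizations of transitivity that the paper records in its Notation section (valid here since $T$ is a homeomorphism of a compact metric space), so neither buys anything substantive over the other; yours is marginally more self-contained in that it avoids invoking invariance of $\Q_T(X)$ under the diagonal and face transformations.
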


\begin{proof}
 Let $x\in X$ be a transitive point. We have that $X\times X$ is the orbit closure of $(x,x)$ under $T\times T$ and $\id\times T$. Since $\Q_T(X)$ is invariant under these transformations we conclude that $\Q_T(X)=X\times X$.

 Conversely let $U$ and $V$ be two non-empty open subsets and let $x\in U$ and $y\in V$. Since $(x,y)\in \Q_T(X)$, there exist $x'\in X$ and $n\in \Z$ such that $(x',T^nx')\in U\times V$. This implies that $U\cap T^{-n}V\neq \emptyset$.
\end{proof}

We recall the following lemma (\cite{P}, page 1):

\begin{lem} \label{LemmaWeakMixing}
 Let $(X,T)$ be a topological dynamical system. Then $(X,T)$  weakly mixing if and only if for every two non-empty open sets $U$ and $V$ there exists $n\in \Z$ with  $U\cap T^{-n}U\neq \emptyset$ and $ U\cap T^{-n}V\neq \emptyset$
\end{lem}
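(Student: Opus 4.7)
The forward direction ($\Rightarrow$) is an immediate specialization of the defining property of weak mixing: given non-empty open $U,V\subseteq X$, apply the weak mixing condition to the quadruple $A=C=U$, $B=U$, $D=V$ to produce a single $n\in\Z$ satisfying both $U\cap T^{-n}U\neq\emptyset$ and $U\cap T^{-n}V\neq\emptyset$.

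For the converse, the plan is to verify the weak mixing condition directly for four arbitrary non-empty open sets $A_1,B_1,A_2,B_2\subseteq X$. I first observe that the hypothesis implies transitivity of $(X,T)$, since applying it to any pair $(U,V)$ and discarding the simultaneous-return clause already yields $U\cap T^{-n}V\neq\emptyset$. Using transitivity, I would pick integers $i,j$ and form the non-empty open sets $U\coloneqq A_1\cap T^{-i}A_2$ and $V\coloneqq B_1\cap T^{-j}B_2$, so that $T^iU\subseteq A_2$ and $T^jV\subseteq B_2$. Applying the hypothesis to the pair $(U,V)$ then produces some $n$ with $U\cap T^{-n}U\neq\emptyset$ and $U\cap T^{-n}V\neq\emptyset$; unpacking the latter yields a point $y\in U$ with $T^ny\in V$, and tracking the memberships of $y$, $T^iy$, $T^ny$, $T^{n+j}y$ gives $n\in N(A_1,B_1)$ together with $n+j-i\in N(A_2,B_2)$, where $N(P,Q)\coloneqq\{k\in\Z\colon P\cap T^{-k}Q\neq\emptyset\}$.

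The principal obstacle is the discrepancy $j-i$ between these two indices: if $i=j$ we are done, but in general this alignment is not free. To overcome this, I would exploit the simultaneous-return condition $U\cap T^{-n}U\neq\emptyset$, which at first glance looks redundant. Passing to the refinement $U_1\coloneqq U\cap T^{-n}U$, still non-empty open, and reapplying the hypothesis to $(U_1,V)$ (and iteratively to appropriate refinements of both $U$ and $V$ using preimages under $T$), one progressively absorbs the lag $j-i$ into the recurrence structure of $N(U,U)$; after finitely many such steps the two indices can be made to coincide, yielding the required element of $N(A_1,B_1)\cap N(A_2,B_2)$. The key technical point is that the simultaneous-return clause forces the set $N(U,U)$ to be rich enough (in particular large, shift-stable modulo further refinement) to accommodate arbitrary prescribed shifts—this is the heart of the classical argument, due to Furstenberg, carried out in full in \cite{P}.
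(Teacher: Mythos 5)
First, a point of reference: the paper does not prove this lemma at all --- it is quoted verbatim from Petersen's paper \cite{P} --- so there is no in-paper argument to compare against, and your proposal has to stand on its own. Your forward direction is correct and is exactly the expected specialization of the paper's definition of weak mixing. In the converse direction your setup is also the standard one, and you correctly isolate the real difficulty: after forming $U=A_1\cap T^{-i}A_2$ and $V=B_1\cap T^{-j}B_2$ and applying the hypothesis to $(U,V)$, one gets $n\in N(A_1,B_1)$ together with $n+j-i\in N(A_2,B_2)$, and the entire content of the lemma is in removing the discrepancy $j-i$.

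The step you propose for removing it is, however, a genuine gap rather than an omitted routine verification. Passing to $U_1=U\cap T^{-n}U$ and reapplying the hypothesis to $(U_1,V)$ only produces a new, uncontrolled integer; the lag $j-i$ is fixed once $i$ and $j$ are chosen and is not changed by shrinking $U$ and $V$, so no finite number of such refinements "makes the two indices coincide," and nothing in your description supplies a cancellation mechanism. The mechanism that actually works is to apply the hypothesis to a set together with a \emph{translate of itself}. Concretely, with $N(P,Q)=\{k\in\Z\colon P\cap T^{-k}Q\neq\emptyset\}$: use transitivity to pick $n_1\in N(A_1,B_1)$ and $n_2\in N(A_2,B_2)$, set $W_1=A_1\cap T^{-n_1}B_1$, $W_2=A_2\cap T^{-n_2}B_2$, and $W=W_1\cap T^{-d}W_2$ for some $d\in N(W_1,W_2)$; a direct check shows that $m\in N(W,W)$ implies both $m+n_1\in N(A_1,B_1)$ and $m+n_2\in N(A_2,B_2)$. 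Now apply the hypothesis to the pair $(W,\;T^{-c}W)$ with $c=n_2-n_1$: it yields a single $m'$ with $m'\in N(W,W)$ and $m'+c\in N(W,W)$, whence $n=m'+n_2=(m'+c)+n_1$ lies in $N(A_1,B_1)\cap N(A_2,B_2)$. This use of the pair $(W,T^{-c}W)$ is the one idea your sketch is missing; without it the argument does not close, and deferring it to \cite{P} leaves the proof incomplete.
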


The following lemma characterizes the weakly mixing property in terms of the cube structure:

\begin{lem}
 Let $(X,T)$ be a topological dynamical system. The following are equivalent:
   \begin{enumerate}
    \item $(X,T)$ is weakly mixing;
    \item $\Q_{T,T}(X)=X\times X\times X \times X$;
    \item $(x,x,x,y)\in \Q_{T,T}(X)$ for every $x,y\in X$.
   \end{enumerate}
\end{lem}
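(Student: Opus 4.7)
The plan is to prove the implications $(1)\Rightarrow (2)\Rightarrow (3)\Rightarrow (1)$, with $(2)\Rightarrow (3)$ being trivial.

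For $(1)\Rightarrow (2)$, I would use the standard fact that weak mixing is equivalent to the following strengthening: for any finite collection of non-empty open sets $U_1,\dots,U_k,V_1,\dots,V_k$ in $X$ there exists $n\in\Z$ with $U_i\cap T^{-n}V_i\neq\emptyset$ for every $i$. Given four non-empty open sets $U_0,U_1,U_2,U_3$, the goal is to find $x\in U_0$ and $n,m\in\Z$ with $T^nx\in U_1$, $T^mx\in U_2$ and $T^{n+m}x\in U_3$. First apply the above property with the pairs $(U_0,U_2)$ and $(U_1,U_3)$ to produce $m\in\Z$ such that $U_0':=U_0\cap T^{-m}U_2$ and $U_1':=U_1\cap T^{-m}U_3$ are non-empty open sets. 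Then apply transitivity of $(X,T)$ (which follows from weak mixing together with Lemma \ref{QTransitive}) to obtain $n\in\Z$ with $U_0'\cap T^{-n}U_1'\neq\emptyset$. Any $x$ in this intersection satisfies all four conditions.

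For $(3)\Rightarrow (1)$, I would use the criterion of Lemma \ref{LemmaWeakMixing}: it suffices to show that for every two non-empty open sets $U,V\subseteq X$ there exists $n\in\Z$ with $U\cap T^{-n}U\neq\emptyset$ and $U\cap T^{-n}V\neq\emptyset$. Picking $x\in U$ and $y\in V$, the hypothesis $(x,x,x,y)\in\Q_{T,T}(X)$ yields $x'\in X$ and integers $n,m$ such that $x',T^nx',T^mx'$ lie in $U$ and $T^{n+m}x'$ lies in $V$. The inclusion $x',T^nx'\in U$ gives $U\cap T^{-n}U\neq\emptyset$, while $T^mx'\in U$ and $T^{n+m}x'\in V$ give $T^mx'\in U\cap T^{-n}V$, completing the verification.

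I do not expect any genuine obstacle; the main point is simply to decouple the two displacements $n$ and $m$ in the definition of $\Q_{T,T}(X)$ by viewing them as independent applications of the weak mixing hypothesis. The slickest direction is $(3)\Rightarrow (1)$, where the single cube point $(x,x,x,y)$ simultaneously witnesses recurrence of $U$ and transitivity from $U$ to $V$ along the \emph{same} time $n$, which is exactly what Petersen's criterion requires.
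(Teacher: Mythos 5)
Your proof is correct and follows the same overall scheme as the paper: the cycle $(1)\Rightarrow(2)\Rightarrow(3)\Rightarrow(1)$, with $(3)\Rightarrow(1)$ handled via Petersen's criterion (Lemma \ref{LemmaWeakMixing}) in exactly the way the paper does it. The only real divergence is in $(1)\Rightarrow(2)$. The paper first applies weak mixing to the pairs $(U_0,U_1)$ and $(U_2,U_3)$ to get a common time $n$, and then invokes the existence of a \emph{transitive point} inside $U_0\cap T^{-n}U_1$ to produce the second displacement $m$; this quietly uses the Birkhoff transitivity theorem (density of transitive points). You instead apply weak mixing to the pairs $(U_0,U_2)$ and $(U_1,U_3)$ to get $m$, and then a single application of topological transitivity (in the open-set formulation, which is immediate from the paper's definition of weak mixing) to get $n$. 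Your version is slightly more economical since it stays entirely at the level of open sets and never needs a point with dense orbit; the paper's version is equivalent for compact metric systems. Two small cosmetic points: you do not need the full "finite collection of pairs" strengthening of weak mixing, only the case of two pairs, which is literally the paper's definition; and the appeal to Lemma \ref{QTransitive} to deduce transitivity is unnecessary, as transitivity is the special case $C=A$, $D=B$ of the weak mixing definition.
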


\begin{proof}

 $(1)\Rightarrow (2).$ Let suppose that $(X,T)$ is weakly mixing and let $x_0,x_1,x_2,x_3\in X$. Let $\e>0$ and for $i=0,1,2,3$ let $U_i$ be the open balls of radius $\e$ centered at $x_i$. Since $(X,T)$ is weak mixing there exists $n\in \Z$ such that $U_0\cap T^{-n}U_1\neq \emptyset$ and $U_2\cap T^{-n}U_3\neq \emptyset$. Since $(X,T)$ is transitive we can find a transitive point in $x'\in U_0\cap T^{-n} U_1$. Let $m\in \Z$ such that $T^mx'\in U_2\cap T^{-n}U_3$. Then $(x',T^nx',T^mx',T^{n+m}x')\in U_0\times U_1\times U_2\times U_3$ and this point belongs to $\Q_{T,T}(X)$. Since $\e$ is arbitrary we conclude that $(x_0,x_1,x_2,x_3)\in \Q_{T,T}(X)$.

 $(2)\Rightarrow (3).$ Clear.

 $(3)\Rightarrow (1).$ Let $U$ and $V$ be non-empty open sets and let $x\in U$ and $y\in V$. Since $(x,x,x,y)\in \Q_{T,T}(X)$, there exist $x'\in X$ and $n,m\in \Z$ such that $(x',T^nx',T^mx',T^{n+m}x')\in U\times U\times U\times V$. Then $x'\in U\cap T^{-n}U$ and $T^mx'\in U\cap T^{-n}V$ and therefore $U\cap T^{-n}U\neq \emptyset$ and $U\cap T^{-n}V\neq \emptyset$. By Lemma \ref{LemmaWeakMixing} we have that $(X,T)$ is weak mixing.
\end{proof}

\begin{rem}
When $(X,T)$ is minimal, a stronger results hold \cite{SY}, Subsection 3.5.
\end{rem}

The following is a well known result rephrased in our language:

\begin{prop}\label{mw}
  Let $(X,T)$ be a minimal system. Then $\mathcal{R}_{T,T}(X)=X\times X$ if and only if $(X,T)$ is weakly mixing.
\end{prop}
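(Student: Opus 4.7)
My plan is to prove the two directions separately, using the previous lemma that characterizes weak mixing in terms of the cube structure $\Q_{T,T}(X)$ together with classical results about the regionally proximal relation.

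For the forward direction ($(X,T)$ weakly mixing $\Rightarrow \mathcal{R}_{T,T}(X)=X\times X$), I would appeal directly to the previous lemma: if $(X,T)$ is weakly mixing then $\Q_{T,T}(X)=X^{4}$, and in particular $(x,y,a,a)\in \Q_{T,T}(X)$ for every triple $x,y,a\in X$. This immediately forces every pair $(x,y)\in X\times X$ to lie in $\mathcal{R}_{T,T}(X)$, so $\mathcal{R}_{T,T}(X)=X\times X$. This step is routine.

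For the converse direction, I would first note that in the case $S=T$, the definitions give $\mathcal{R}_{T,T}(X)=\RP^{[1]}(X)$, the regionally proximal relation of Host--Kra--Maass \cite{HKM} (this identification was already recorded in the remark following Proposition~\ref{RPST}). By the main structural theorem of \cite{HKM} (distal case) together with the generalization of Shao--Ye \cite{SY} (arbitrary minimal case), for a minimal $\Z$-system the quotient $X/\RP^{[1]}(X)$ is isomorphic to the maximal equicontinuous factor of $(X,T)$. Therefore the hypothesis $\mathcal{R}_{T,T}(X)=X\times X$ is equivalent to this maximal equicontinuous factor being trivial. Invoking the classical theorem (see, e.g., \cite{Aus}) that a minimal $\Z$-system has trivial maximal equicontinuous factor if and only if it is weakly mixing completes the proof.

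The main obstacle lies in the converse direction: the cube-theoretic hypothesis $\mathcal{R}_{T,T}(X)=X\times X$ only asserts the existence of some auxiliary point $a$ making $(x,y,a,a)$ a cube, with no a priori control on $a$, whereas weak mixing (via the previous lemma) requires the much stronger statement that $(x,x,x,y)\in \Q_{T,T}(X)$ for every $x,y$. A direct attempt to upgrade one to the other inside the cube framework is obstructed because transporting $a$ to be close to $x$ by a diagonal translation $T^{j}$ simultaneously perturbs the other coordinates in an uncontrolled way. Bridging this gap without external input would essentially require rebuilding the maximal equicontinuous factor purely from cubes; since this structural theorem is already available in \cite{HKM} and \cite{SY}, it is efficient to invoke those results rather than reprove them here.
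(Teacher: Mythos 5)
Your proof is correct, but the converse direction takes a different route from the paper's. The paper's entire proof is a one-line appeal to the fact, established in \cite{HKM} and \cite{SY}, that for a \emph{minimal} system $(x,y)\in\RP^{[1]}(X)$ if and only if $(x,x,x,y)\in\Q_{T,T}(X)$; combined with the preceding lemma (weak mixing $\Leftrightarrow$ $(x,x,x,y)\in\Q_{T,T}(X)$ for all $x,y$), the equivalence is immediate. In other words, the "obstacle" you correctly identify --- upgrading the existence of an uncontrolled witness $a$ in $(x,y,a,a)$ to the rigid configuration $(x,x,x,y)$ --- is resolved in the paper by citing precisely that upgrade for minimal systems, rather than by passing through any structure theorem. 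Your route instead identifies $\mathcal{R}_{T,T}$ with $\RP^{[1]}$, quotients to the maximal equicontinuous factor via the structure theorems of \cite{HKM} and \cite{SY}, and then invokes the classical McMahon-type theorem that a minimal $\Z$-system is weakly mixing iff its maximal equicontinuous factor is trivial. This is valid, but it uses strictly heavier machinery: you need both the identification of $X/\RP^{[1]}$ with the maximal equicontinuous factor and the classical weak-mixing characterization, whereas the paper only needs the pointwise statement about cubes (which is a lemma on the way to, not a consequence of, the structure theorem). What your approach buys is conceptual transparency --- it places the proposition inside the standard dictionary between triviality of $\RP^{[1]}$-type relations and equicontinuous factors --- at the cost of invoking two external theorems where one suffices.
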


\begin{proof}
 If $(X,T)$ is minimal we have that $(x,y)\in \mathcal{R}_{T,T}(X)$ if and only if $(x,x,x,y)\in \Q_{T,T}(X)$ \cite{HKM}, \cite{SY}.
\end{proof}

\begin{rem}
 If $(X,T)$ is not minimal, it is not true that $\mathcal{R}_{T,T}(X)=X\times X$ implies that $(X,T)$ is weakly mixing. For instance, let consider the set $X\coloneqq \{1/n\colon n>1\} \cup \{1-1/n\colon n>2\}\cup \{0\}$ and let $T$ be the transformation defined by $T(0)=0$ and for $x\neq 0$, $T(x)$ is the number that follows $x$ to the right. If $x$ and $y$ are different from $0$, then $(x,x,x,y)\in \Q_{T,T}$ implies $x=y$ and thus $(X,T)$ is not weakly mixing. On the other hand, if $x$ and $y$ are different from 0, then there exists $n\in \Z$ with $y=T^nx$. Then $\lim\limits_{i\to \infty}(x,T^nx,T^ix,T^{n+i}x)=(x,y,0,0)$ meaning that $(x,y)\in \mathcal{R}_{T,T}(X)$. Since $\mathcal{R}_{T,T}(X)$ is closed we have that $\mathcal{R}_{T,T}(X)=X\times X$.

\end{rem}

\begin{lem}\label{trrp}
  Let $(X,S,T)$ be a minimal system with commuting transformations $S$ and $T$. If $S$ is transitive, then $\mathcal{R}_{T,T}(X)\subseteq\mathcal{R}_{S,T}(X)\subseteq\mathcal{R}_{S,S}(X)$.
\end{lem}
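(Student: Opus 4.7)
The plan is to establish the stronger cube inclusions $\Q_{T,T}(X) \subseteq \Q_{S,T}(X) \subseteq \Q_{S,S}(X)$, from which both relation inclusions follow at once by unwinding the definitions of the $\mathcal{R}$'s.

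First I would invoke Lemma \ref{QTransitive}: since $S$ is transitive on $X$, we have $\Q_S(X) = X \times X$. Equivalently, for any pair $(u,v) \in X \times X$ there exist sequences $(z_j)_{j\in\N}$ in $X$ and $(p_j)_{j\in\N}$ in $\Z$ with $z_j \to u$ and $S^{p_j} z_j \to v$. This is the only place where the transitivity hypothesis is used, but it is the engine of the whole argument.

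Next, to prove $\Q_{T,T}(X) \subseteq \Q_{S,T}(X)$, I would pick a generating point $(x, T^n x, T^m x, T^{n+m} x)$ of $\Q_{T,T}(X)$, apply the previous step to the pair $(x, T^n x)$ to obtain $z_j \to x$ and $p_j \in \Z$ with $S^{p_j} z_j \to T^n x$, and then consider the cubes
\[
(z_j,\, S^{p_j} z_j,\, T^m z_j,\, S^{p_j} T^m z_j) \in \Q_{S,T}(X).
\]
Using continuity of $T^m$ together with the commutation $S^{p_j} T^m = T^m S^{p_j}$, these cubes converge as $j \to \infty$ to $(x, T^n x, T^m x, T^{n+m} x)$. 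Since $\Q_{S,T}(X)$ is closed, the limit lies in $\Q_{S,T}(X)$, and taking the closure over all generating points of $\Q_{T,T}(X)$ yields the inclusion. The second inclusion $\Q_{S,T}(X) \subseteq \Q_{S,S}(X)$ is completely symmetric: for a generating point $(x, S^n x, T^m x, S^n T^m x)$ of $\Q_{S,T}(X)$, use $\Q_S(X) = X \times X$ to approximate $(x, T^m x)$ by $(z_j, S^{q_j} z_j)$ with $z_j \to x$, and observe that the cubes $(z_j, S^n z_j, S^{q_j} z_j, S^{n + q_j} z_j) \in \Q_{S,S}(X)$ converge to the given point.

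Finally, the chain $\mathcal{R}_{T,T}(X) \subseteq \mathcal{R}_{S,T}(X) \subseteq \mathcal{R}_{S,S}(X)$ is immediate from the definitions: each such relation asks for witnesses of the form $(x,y,a,a)$ and $(x,b,y,b)$ in the corresponding cube space, and these memberships are preserved by the inclusions of cubes established above. The main content of the argument is the simple but crucial idea that $S$-transitivity, via Lemma \ref{QTransitive}, lets us replace a $T$-step in a cube by an $S$-step while freezing the remaining $T$-coordinate, with the $S$-$T$ commutation ensuring the fourth vertex converges correctly. I do not foresee any substantial obstacle beyond making this exchange precise.
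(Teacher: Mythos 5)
Your proof is correct, and the engine is the same as in the paper's argument: transitivity of $S$ (via Lemma \ref{QTransitive}, i.e.\ $\Q_S(X)=X\times X$) lets one trade a $T$-step of a cube for an $S$-step of a nearby base point, with the commutation $S^{p}T^{m}=T^{m}S^{p}$ ensuring the fourth vertex converges to the right place. The packaging, however, is genuinely different. The paper argues directly at the level of the relations: given $(x,y)\in\mathcal{R}_{S,T}(X)$ it takes an $\epsilon$-approximating generating point $(z,S^nz,T^mz,S^nT^mz)$, perturbs $z$ using transitivity, and controls the resulting errors with a uniform-continuity $\delta$ for the fixed powers $S^{n}$ or $T^{n}$. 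You instead prove the stronger cube-level inclusions $\Q_{T,T}(X)\subseteq\Q_{S,T}(X)\subseteq\Q_{S,S}(X)$ by verifying them on generating points only and then invoking closedness of the cube spaces; this eliminates the $\delta$-bookkeeping entirely, since continuity of the single maps $T^{m}$ and $S^{n}$ applied to a convergent sequence suffices, and both relation inclusions then drop out simultaneously by reading off the definitions of $\mathcal{R}_S$ and $\mathcal{R}_T$. What your route buys is a strictly stronger intermediate statement (the $\Q$-inclusions, which the paper does not record) and shorter estimates; what the paper's route buys is nothing extra here, so your version is arguably the cleaner one. Neither argument uses minimality.
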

\begin{proof}
  Suppose $(x,y)\in\mathcal{R}_{S,T}(X)$. For $\e>0$, there exist $z\in X$, $n,m\in\mathbb{Z}$ such that $d(x,z)<\e$, $d(y,S^{n}z)<\e$ and $d(T^{m}z,S^{n}T^{m}z)<\epsilon$. Pick $0<\delta<\epsilon$ such that $d(x',y')<\delta$ implies
  $d(S^{n}x',S^{n}y')<\epsilon$ for all $x',y'\in X$. Since $S$ is transitive, there exist
  $z'\in X$, $r\in\mathbb{Z}$ such that $d(z,z')<\delta$ and $d(T^{m}z,S^{r}z')<\delta$. So $d(S^{n}z,S^{n}z')<\e$ and $d(S^{n}T^{m}z,S^{n+r}z')<\epsilon$. Thus $d(x,z')<2\epsilon$, ~$d(y,S^{n}z')<2\e$ and $d(S^{r}z',S^{r+n}z')<3\epsilon$.
 Since $\epsilon$ is arbitrary, $(x,y)\in\mathcal{R}_{S,S}(X)$.

  Suppose $(x,y)\in\mathcal{R}_{T,T}(X)$. Then there exists $a\in X$ such that for any $\epsilon>0$, there exists
  $z\in X, m,n\in\mathbb{Z}$ such that $d(x,z)$, ~$d(y,T^{m}z)$, ~$d(a,T^{n}z)$ and $d(a,T^{n+m}z)<\epsilon$. Pick $0<\delta<\epsilon$ such that $d(x',y')<\delta$ implies
  $d(T^{n}x',T^{n}y')<\epsilon$ for all $x',y'\in X$. Since $S$ is transitive, there exists
  $z'\in X$, $r\in\mathbb{Z}$ such that $d(z,z')<\delta$ and $d(T^{m}z,S^{r}z')<\delta$. So $d(T^{n}z,T^{n}z')<\e$ and $d(T^{n+m}z,T^{n}S^{r}z')<\epsilon$. Thus $d(x,z')<\e$,~ $d(y,S^{r}z')<\e$,~ $d(a,T^{n}z')<\e$, ~$d(a,T^{n}S^{r}z')<2\epsilon$.
Since $\epsilon$ is arbitrary, $(x,y,a,a)\in\Q_{S,T}(X)$. Similarly, $(x,b,y,b)\in\Q_{S,T}(X)$ for some $b\in X$. So $(x,y)\in\mathcal{R}_{S,T}(X)$.
\end{proof}

\begin{lem} \label{R_STWeakMixing}
  Let $(X,S,T)$ be a system with commuting transformations $S$ and $T$ such that both $S$ and $T$ are minimal. Then $\mathcal{R}_{S,T}(X)=X\times X$ if and only if both $(X,S)$ and $(X,T)$ are weakly mixing.
\end{lem}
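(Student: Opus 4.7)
The plan is to reduce both directions to Proposition \ref{mw} (which equates weak mixing of a single minimal transformation $R$ with $\mathcal{R}_{R,R}(X) = X\times X$) via the sandwich inclusions supplied by Lemma \ref{trrp}. The key observation is that, under the minimality of both $S$ and $T$, Lemma \ref{trrp} can be applied with either generator playing the role of the transitive one, which will yield the two chains
\begin{equation}\nonumber
\mathcal{R}_{T,T}(X) \subseteq \mathcal{R}_{S,T}(X) \subseteq \mathcal{R}_{S,S}(X) \quad \text{and} \quad \mathcal{R}_{S,S}(X) \subseteq \mathcal{R}_{S,T}(X) \subseteq \mathcal{R}_{T,T}(X).
\end{equation}
To obtain the second chain, I will use the identity $\mathcal{R}_{T,S}(X) = \mathcal{R}_{S,T}(X)$. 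This follows from Proposition \ref{sym}(3), which shows that a cube of shape $(x,y,a,a)$ in $\Q_{S,T}(X)$ corresponds to a cube of shape $(x,a,y,a)$ in $\Q_{T,S}(X)$; hence the relations $\mathcal{R}_S(X)$ and $\mathcal{R}_T(X)$, and therefore their intersection, are independent of the order in which the generators are listed.

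For the forward direction, assuming $\mathcal{R}_{S,T}(X) = X\times X$, the first chain forces $\mathcal{R}_{S,S}(X) = X\times X$, so Proposition \ref{mw} applied to the minimal system $(X,S)$ gives that $(X,S)$ is weakly mixing. The symmetric argument using the second chain yields weak mixing of $(X,T)$ as well.

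For the backward direction, assuming both $(X,S)$ and $(X,T)$ are weakly mixing, Proposition \ref{mw} gives $\mathcal{R}_{T,T}(X) = X\times X$ (in fact one of the two suffices), and then the inclusion $\mathcal{R}_{T,T}(X) \subseteq \mathcal{R}_{S,T}(X)$ from Lemma \ref{trrp} immediately forces $\mathcal{R}_{S,T}(X) = X\times X$. Since all the heavy lifting is already done by Lemma \ref{trrp} and Proposition \ref{mw}, there is no real obstacle; the only item requiring care is the verification of $\mathcal{R}_{T,S}(X) = \mathcal{R}_{S,T}(X)$ used above.
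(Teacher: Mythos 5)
Your proof is correct and follows essentially the same route as the paper: both directions reduce to Proposition \ref{mw} via the sandwich inclusions of Lemma \ref{trrp}, and your identification $\mathcal{R}_{T,S}(X)=\mathcal{R}_{S,T}(X)$ via Proposition \ref{sym}(3) is valid. The only cosmetic difference is that for the forward direction the paper re-runs the $\epsilon$--$\delta$ computation of Lemma \ref{trrp} by hand (using $\mathcal{R}_{S,T}(X)=X\times X$ to supply the approximation that transitivity would otherwise give), whereas you invoke the lemma directly with the generators swapped, which the minimality hypotheses permit.
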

\begin{proof}
  If both $(X,S)$ and $(X,T)$ are weakly mixing, then $\mathcal{R}_{S,S}(X)=X\times X$ and $T$ is transitive. By Lemma \ref{trrp}, $\mathcal{R}_{S,T}(X)=X\times X$.

  Now suppose that $\mathcal{R}_{S,T}(X)=X\times X$. For any $x,y\in X$, since $(x,y)\in\mathcal{R}_{S,T}(X)$, we may assume that $(x,a,y,a)\in \Q_{S,T}(X)$ for some $a\in X$. For any $\epsilon>0$, there exists
  $z\in X$, $n,m\in\mathbb{Z}$ such that $d(x,z)<\e$,~ $d(a,S^{n}z)<\e$,~ $d(y,T^{m}z)<\e$,~ $d(a,S^{n}T^{m}z)<\epsilon$. Pick $0<\delta<\epsilon$ such that $d(x',y')<\delta$ implies
  $d(S^{n}x',S^{n}y')<\epsilon$ for all $x',y'\in X$. Since $(z,T^{m}z)\in\mathcal{R}_{S,T}(X)$, there exist
  $z'\in X$, $r\in\mathbb{Z}$ such that $d(z,z')<\delta$,~ $d(T^{m}z,S^{r}z')<\delta$. So $d(S^{n}z,S^{n}z')<\e$,~ $d(S^{n}T^{m}z,S^{n+r}z')<\epsilon$. Thus $d(x,z')<2\e$,~ $d(a,S^{n}z')<2\e$,~ $d(y,S^{r}z')<2\e$ and $d(a,S^{n+r}z')<2\epsilon$. Since $\epsilon$ is arbitrary, $(x,y)\in\mathcal{R}_{S,S}(X)$. So $\mathcal{R}_{S,S}(X)=X\times X$ and since $S$ is minimal we have that $(X,S)$ is weakly mixing. Similarly, $(X,T)$ is weakly mixing.
\end{proof}

Shao and Ye proved ~\cite{SY} the following lemma in the case when $S=T$, but the same method works for the general case. So we omit the proof:
\begin{lem}\label{equvi}
Let $(X,S,T)$ be a system with commuting transformations $S$ and $T$ such that both $S$ and $T$ are minimal. Then the following are equivalent:
\begin{enumerate}
\item $(x,y)\in\mathcal{R}_{S,T}(X)$;

\item $(x,y,y,y)\in \bold{K}^{x}_{S,T}$;

\item $(x,x,y,x)\in \bold{K}^{x}_{S,T}$.
\end{enumerate}
\end{lem}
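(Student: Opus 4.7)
The two implications $(2)\Rightarrow(1)$ and $(3)\Rightarrow(1)$ are immediate from the inclusion $\{x\}\times\bold{K}^x_{S,T}\subseteq\Q_{S,T}(X)$. For $(2)\Rightarrow(1)$, the hypothesis gives $(x,y,y,y)\in\Q_{S,T}(X)$, and the choices $a=y$ and $b=y$ in the definitions of $\mathcal{R}_S$ and $\mathcal{R}_T$ yield $(x,y)\in\mathcal{R}_{S,T}(X)$. For $(3)\Rightarrow(1)$, the choice $b=x$ directly gives $(x,y)\in\mathcal{R}_T(X)$, while the symmetry $(x_0,x_1,x_2,x_3)\mapsto(x_2,x_3,x_0,x_1)$ of Proposition \ref{sym} converts $(x,x,y,x)$ into $(y,x,x,x)\in\Q_{S,T}(X)$, from which $(y,x)\in\mathcal{R}_S(X)$ follows with $a=x$; by symmetry of $\mathcal{R}_S$ we conclude $(x,y)\in\mathcal{R}_S(X)$.

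The substantive direction is $(1)\Rightarrow(2)$ (the implication $(1)\Rightarrow(3)$ being analogous). I plan to proceed in two steps. The first step is to establish $(x,y,y,y)\in\Q_{S,T}(X)$. Starting from $(x,y,a,a)\in\Q_{S,T}(X)$ (given by $(x,y)\in\mathcal{R}_S(X)$), choose approximating sequences $z_i\in X$ and $n_i,m_i\in\Z$ with
\[(z_i,S^{n_i}z_i,T^{m_i}z_i,S^{n_i}T^{m_i}z_i)\to(x,y,a,a).\]
By minimality of $(X,T)$, pick $p_j\in\Z$ with $T^{p_j}a\to y$. Since $\id\times\id\times T^{p_j}\times T^{p_j}\in\F_{S,T}$ preserves $\Q_{S,T}(X)$ and commutes with the $S^{n_i}$, a diagonal selection $i=i(j)$ produces
\[(z_{i(j)},S^{n_{i(j)}}z_{i(j)},T^{p_j+m_{i(j)}}z_{i(j)},S^{n_{i(j)}}T^{p_j+m_{i(j)}}z_{i(j)})\to(x,y,y,y),\]
showing $(x,y,y,y)\in\Q_{S,T}(X)$.

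The remaining step is to upgrade from $\Q_{S,T}(X)$ to $\bold{K}^x_{S,T}$, i.e., to replace the moving basepoints $z_{i(j)}\to x$ by $x$ itself. This is the technical heart of the proof and, as in the $S=T$ case treated in \cite{SY}, is carried out via an enveloping-semigroup argument on $(\Q_{S,T}(X),\mathcal{G}_{S,T})$. Using the minimality of each of $(X,S)$ and $(X,T)$, one selects a minimal idempotent $u\in E(X^4,\F_{S,T})$ that stabilizes $(x,x,x,x)$, in analogy with Lemma \ref{LiftingIdempotent} but without invoking distality. Applying $u$ to the sequences from Step 1 forces the limit to lie in the $\F_{S,T}$-orbit closure of $(x,x,x,x)$, which is precisely $\{x\}\times\bold{K}^x_{S,T}$, and a proximality argument ensures that the resulting image equals $(x,y,y,y)$. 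The main obstacle is producing the stabilizing idempotent outside of the distal regime: in the absence of distality, the $\F_{S,T}$-orbit of $(x,x,x,x)$ need not be minimal in $X^4$, and extracting the correct minimal idempotent requires the full use of minimality of both $(X,S)$ and $(X,T)$, exactly as in the Shao-Ye technique.
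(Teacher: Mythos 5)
A preliminary remark: the paper does not actually prove Lemma \ref{equvi}. It states that Shao and Ye proved the case $S=T$ in \cite{SY}, asserts that ``the same method works for the general case'', and omits the argument. So there is no written proof to compare yours against; what can be judged is whether your proposal amounts to a complete argument on its own. Your reductions $(2)\Rightarrow(1)$ and $(3)\Rightarrow(1)$ are correct, and so is Step 1 of $(1)\Rightarrow(2)$: since $\id\times\id\times T^{p}\times T^{p}\in\mathcal{F}_{S,T}$ preserves the closed set $\Q_{S,T}(X)$ and $(X,T)$ is minimal, $(x,y,a,a)\in\Q_{S,T}(X)$ immediately yields $(x,y,y,y)\in\Q_{S,T}(X)$; your diagonal extraction is a more laborious version of this one-line observation.

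Step 2, however, is where the entire content of the lemma lives, and as written it is a plan rather than a proof, with two concrete gaps. First, you ``select a minimal idempotent $u\in E(X^4,\mathcal{F}_{S,T})$ that stabilizes $(x,x,x,x)$''. By Theorem \ref{Enveloping1}, such a $u$ exists if and only if $(x,x,x,x)$ is an almost periodic point of $(X^4,\mathcal{F}_{S,T})$, i.e.\ if and only if $\bold{K}^{x}_{S,T}$ is $\mathcal{F}_{S,T}$-minimal --- exactly the property the paper warns can fail and establishes only under partial distality (Lemma \ref{LiftingIdempotent} and Corollary \ref{Kminimal}). A minimal idempotent of $E(X^4,\mathcal{F}_{S,T})$ has the form $(\id,u_S,u_T,u_4)$ with correlated components; minimality of $(X,S)$ and of $(X,T)$ guarantees that \emph{some} minimal idempotent of $E(X,S)$ and \emph{some} minimal idempotent of $E(X,T)$ fix $x$, but it does not let you arrange that the three components of a single lifted idempotent all fix $x$. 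Arranging that compatibility (or circumventing it) is the heart of the Shao--Ye argument, not a routine selection.

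Second, even granting such a $u$, the mechanism by which the base point gets pinned to $x$ is missing. Applying $u$ to the limit point of Step 1 gives $u(x,y,y,y)=(x,u_Sy,u_Ty,u_4y)\in\overline{\mathcal{F}_{S,T}(x,y,y,y)}$, not an element of $\overline{\mathcal{F}_{S,T}(x,x,x,x)}=\bold{K}^{x}_{S,T}$; and in the approximating sequence the face transformations $F_i$ are applied to the moving points $z_i$ rather than to $x$, which is precisely the obstruction one must remove. There is no equicontinuity available to transfer $F_i z_i^{[4]}$ to $F_i x^{[4]}$, and the assertion that a ``proximality argument ensures that the resulting image equals $(x,y,y,y)$'' is not substantiated. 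The passage from $(x,y,y,y)\in\Q_{S,T}(X)$ to $(x,y,y,y)\in\bold{K}^{x}_{S,T}$ is a genuinely involved computation with minimal left ideals in \cite{SY}; your proposal correctly names the obstacle but does not overcome it, so the implication $(1)\Rightarrow(2)$ (and likewise $(1)\Rightarrow(3)$) remains unproved.
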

\begin{rem}
 We remark that a transformation is minimal if and only if it is both almost periodic and transitive.
\end{rem}

\begin{lem} \label{LiftingProperty2}
 Let $(X,S,T)$ be a system with commuting transformations $S$ and $T$ such that $(X,S)$ and $(X,T)$ are minimal and weak mixing. Let $(Y,S,T)$ be a minimal system with commuting transformations $S$ and $T$ such that $(Y,S,T)$ has a product extension. Let $Z\subset X\times Y$ be a closed subset of $X\times Y$ which is invariant under $\overline{S}=S\times S$ and $\overline{T}=T\times T$. Let $\pi\colon Z\to X$ be the natural factor map. For $x_{1},x_{2}\in X$, if there exists $y_{1}\in Y$ such that $z_{1}=(x_{1},y_{1})\in Z$ is a $S$-$T$ almost periodic point, then there exists $y\in Y$ such that $(x_{1},y),(x_{2},y)\in Z$.
\end{lem}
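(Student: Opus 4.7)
The plan is to lift a cube configuration from $X$ to $Z$ using the weakly mixing hypothesis on $X$, and then rearrange the lift using the enveloping semigroup structure of $Y$ furnished by the product-extension hypothesis.

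First I will exploit the hypotheses on $X$. Since $(X,S)$ and $(X,T)$ are minimal and weakly mixing, Lemma \ref{R_STWeakMixing} yields $\mathcal{R}_{S,T}(X)=X\times X$, so in particular $(x_1,x_2)\in\mathcal{R}_{S,T}(X)$. Applying Lemma \ref{equvi} to the pair $(x_1,x_2)$ then supplies sequences $(n_i)_{i\in\N}$ and $(m_i)_{i\in\N}$ in $\Z$ with
$$(S^{n_i}x_1,\,T^{m_i}x_1,\,S^{n_i}T^{m_i}x_1)\longrightarrow(x_1,x_2,x_1).$$

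Next I will lift this data using enveloping semigroups of $Y$. Passing to a subnet, I may assume $S^{n_i}\to u_S$ in $E(Y,S)$ and $T^{m_i}\to u_T$ in $E(Y,T)$, and I set $a\coloneqq u_Sy_1$, $b\coloneqq u_Ty_1$, $c\coloneqq u_Su_Ty_1$. Because $(Y,S,T)$ has a product extension, Theorem \ref{EnvEquic} gives that $S$ and $T$ are automorphic on $Y$: the multiplication $E(Y,S)\times E(Y,T)\to E(Y,G)$ is jointly continuous and has commutative image. Consequently $S^{n_i}T^{m_i}\to u_Su_T=u_Tu_S$ in $E(Y,G)$, so evaluating at $y_1$ yields $S^{n_i}T^{m_i}y_1\to c$ together with the identity $c=u_Tu_Sy_1=u_Ta$. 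Since $Z$ is closed and invariant under $\overline{S}$ and $\overline{T}$, applying $\overline{S}^{n_i}$, $\overline{T}^{m_i}$, and $\overline{S}^{n_i}\overline{T}^{m_i}$ to $z_1=(x_1,y_1)\in Z$ and passing to the limit shows that $(x_1,a)$, $(x_2,b)$ and $(x_1,c)$ all lie in $Z$. For the closing step, pointwise convergence $T^{m_i}\to u_T$ on $Y$ yields $T^{m_i}a\to u_Ta=c$, and combined with $T^{m_i}x_1\to x_2$ this gives $\overline{T}^{m_i}(x_1,a)\to(x_2,c)$. Since $(x_1,a)\in Z$ and $Z$ is closed and $\overline{T}$-invariant, $(x_2,c)\in Z$; combined with $(x_1,c)\in Z$, the choice $y=c$ proves the lemma.

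The main obstacle will be the joint continuity of multiplication in enveloping semigroups, which generically fails and would break the identity $c=u_Ta$ that powers the translation step from $(x_1,a)$ to $(x_2,c)$. It is precisely at this point that the product-extension hypothesis on $(Y,S,T)$ is essential, via the automorphy characterization of Theorem \ref{EnvEquic}; the commutation of $u_S$ and $u_T$ inside $E(Y,G)$ provided by that theorem is what allows the ``third vertex'' $c$ of the cube to be reached both from $a$ by applying $u_T$ and from $b$ by applying $u_S$, producing the desired common point in the fibres over $x_1$ and $x_2$.
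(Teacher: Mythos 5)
Your proof is correct, but it takes a genuinely different route from the paper's. Both arguments start identically: weak mixing of $(X,S)$ and $(X,T)$ gives $(x_1,x_2)\in\mathcal{R}_{S,T}(X)$, and Lemma \ref{equvi} supplies a cube configuration over $x_1$ (you use the form $(x_1,x_1,x_2,x_1)$, the paper uses $(x_1,x_2,x_2,x_2)$). From there the mechanisms diverge. The paper lifts the whole configuration into $\Q_{\overline{S},\overline{T}}(Z)$, uses the $S$-$T$ almost periodicity of $z_1$ to pull the second vertex back to $z_1$ along $\overline{S}$, and then invokes the cube rigidity of $Y$ coming from Theorem \ref{R-ProductExtension} (namely $\mathcal{R}_{S}(Y)=\Delta_Y$) to identify the last two $Y$-coordinates. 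You instead work in the enveloping semigroup of $Y$ and use Theorem \ref{EnvEquic}: automorphy gives $S^{n_i}T^{m_i}\to u_Su_T$ pointwise, and the commutation of $E(Y,S)$ with $E(Y,T)$ yields the identity $c=u_Su_Ty_1=u_Tu_Sy_1=u_Ta$, which is exactly what lets you transport $(x_1,a)\in Z$ to $(x_2,c)\in Z$ by applying $\overline{T}^{m_i}$. A notable byproduct of your route is that the $S$-$T$ almost periodicity of $z_1$ is never used --- your argument proves the conclusion for an arbitrary point $z_1=(x_1,y_1)\in Z$ --- whereas the paper's proof uses that hypothesis essentially, to return $z_2$ to $z_1$ within its minimal $\overline{S}$-orbit closure. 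Both proofs ultimately rest on the product-extension hypothesis for $Y$, just through two different characterizations of it (cube rigidity versus automorphy of the enveloping semigroup).
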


\begin{proof}
  By Lemma \ref{equvi}, $(x_{1},x_{2},x_{2},x_{2})\in \bold{K}^{x_{1}}_{S,T}$. So there exists a sequence $(F_{i})_{i\in \N}\in\F_{S,T}$
   such that
   \begin{equation}\nonumber
     \begin{split}
       \lim_{i\to\infty}F_{i}(x_{1},x_{1},x_{1},x_{1})=(x_{1},x_{2},x_{2},x_{2}).
     \end{split}
   \end{equation}
   Recall that $z_{1}=(x_{1},y_{1})\in\pi^{-1}(x_{1})$ . Without loss of generality, we assume that
      \begin{equation}\label{e1}
     \begin{split}
       &\lim_{i\to\infty}F_{i}(y_{1},y_{1},y_{1},y_{1})=(y_{1},y_{2},y_{3},y_{4});
       \\&\lim_{i\to\infty}\overline{F}_{i}(z_{1},z_{1},z_{1},z_{1})=(z_{1},z_{2},z_{3},z_{4}),
     \end{split}
   \end{equation}
   where $\overline{F}_{i}=F_{i}\times F_{i}$ and $z_{2}=(x_{2},y_{2}),z_{3}=(x_{2},y_{3}),z_{4}=(x_{2},y_{4})$ are points in $Z$. Since $(x_1,y_1)$ is $S$-$T$ almost periodic, there exists a sequence of integers $(n_{i})_{i\in \N}$
   such that $\lim_{i\to\infty}\overline{S}^{n_{i}}z_{2}=z_{1}$. We can assume that $\lim_{i\to\infty}\overline{S}^{n_{i}}z_{4}=z'_{4}=(x_1,y')\in Z$. Then
   \begin{equation}\label{e2}
     \begin{split}
       \lim_{i\to\infty}(\id\times\overline{S}\times \id\times\overline{S})^{n_{i}}(z_{1},z_{2},z_{3},z_{4})=(z_{1},z_{1},z_{3},z'_{4}).
     \end{split}
   \end{equation}
  This implies that $(y_1,y_1,y_3,y')\in \Q_{S,T}(Y)$ by Theorem \ref{R-ProductExtension} since $\mathcal{R}_{S}(Y)=\Delta_X$ we have that $y'=y_3$. Therefore $z_4'=(x_1,y_3)$ and $z_{3}=(x_2,y_3)$ belong to $Z$.
\end{proof}

We are now finally able to prove the main theorem of this subsection:

\begin{proof}[Proof of Theorem \ref{disjointn}]
   Let $(X,S,T)$ be a system such that $(X,S)$ and $(X,T)$ are minimal weak mixing and let $(Y,S,T)$ be a system with a product extension.
  Suppose $Z\subseteq X\times Y$ is closed and invariant under $\overline{S}=S\times S,\overline{T}=T\times T$. We have to show that $Z=X\times Y$. Let
  $\mathcal {W}=\{Z\subseteq X\times Y\colon Z$ is closed invariant under $\overline{S}=S\times S,\overline{T}=T\times T \}$ with order $Z\leq Z'$ if and only if $Z'\subset Z$. Let $\{Z_{i}\}_{i\in I}$ be a totally ordered subset of $\mathcal {W}$ and denote $Z_{0}=\cap_{i\in I} Z_{i}$. It is easy to see that $Z_{0}\in\mathcal {W}$. By Zorn's Lemma, we can assume $Z$ contains no proper closed invariant subset.

  For any $x\in X$, denote $F_{x}=\{y\in Y\colon(x,y)\in Z\}$. Then $F_{x}\subseteq Y$ is a closed set of $Y$.

  For any $g\in G$, let $Z_{g}=\{(x,y)\in X\times Y\colon y\in(F_{x}\cap gF_{x})\}$. Then $Z_{g}\subseteq Z$ is closed invariant. Since $Z$ contains no proper invariant subset, either $Z_{g}=\emptyset$ or $Z_{g}=Z$.
   Denote $U=\{x\in X\colon \exists y\in Y, (x,y)$ is an almost periodic point of $Z\}$. For any $x_{0}\in U$, suppose $z_{0}=(x_{0},y_{0})\in Z$ is an $S$-$T$ almost periodic point.
   For any $g\in G, (x_{0},gx_{0})\in\mathcal{R}_{S,T}(X)$. By Proposition \ref{LiftingProperty2}, there exists $y\in Y$ such that $(x_{0},y),(gx_{0},y)\in Z$. So $F_{x_{0}}\cap F_{gx_{0}}=F_{x_{0}}\cap gF_{x_{0}}\neq\emptyset$. Therefore $Z_{g}\neq\emptyset$. So $Z_{g}=Z$ for all $g\in G$.
  Thus $F_{x}=gF_{x}$ for every $x\in U$. Since $g$ is arbitrary, $F_{x}$ is closed invariant under $G$ for every $x\in U$. Since $(Y,G)$ is minimal, and $F_{x}\neq \emptyset$ we get that $F_{x}=Y$ for all $x\in U$.

  It suffices to show that $U=X$. Fix $x\in X$. Since $x$ is $S$-$T$-almost periodic, there exist minimal idempotents $u_S\in E(X,S)$ and $u_T\in E(X,T)$ such that $u_Sx=x=u_Tx$. These idempotents can be lifted to minimal idempotents in $E(Z,S)$ and $E(Z,T)$ which can be projected onto minimal idempotents in $E(Y,S)$ and $E(Y,T)$. We also denote these idempotents by $u_S$ and $u_T$. By Theorem \ref{EnvEquic}, these idempotents commute in $E(Y,G)$. So for $y\in Y$ such that $(x,y)\in Z$, we have that $u_Su_T(x,y)=(x,u_Su_Ty)\in Z$, and $u_S(x,u_Su_Ty)=(x,u_Su_Ty)$, $u_T(x,u_Su_Tx)=(x,u_Su_Tx)$. This means that the point $(x,u_Su_Tx)\in Z$ is $S$-$T$-almost periodic. Hence $U=X$ and therefore $Z=X\times Y$.

  Conversely, let $(X,S,T)$ be a system disjoint from systems with product extension.  Let $U$ and $V$ be non-empty open subsets of $X$ and let $x\in U$ and $y\in V$. Since $X$ is $S$-$T$ almost periodic, we have that $(\mathcal{O}_S(x),S)$ and  $(\mathcal{O}_T(x),T)$ are minimal systems. By hypothesis, $(X,S,T)$ is disjoint from $(\mathcal{O}_S(x)\times \mathcal{O}_T(x),S\times \id , \id \times T)$. Since $(x,(x,x))$ and $(y,(x,x))$ belong to $X\times (\mathcal{O}_S(x)\times \mathcal{O}_T(x))$, we have that there exist sequences $(n_i)_{i\in \N}$ and $(m_i)_{i\in \N}$ in $\Z$ such that $(S^{n_i}T^{m_i}x,(S^{n_i}x,T^{m_i}x))\to (y,(x,x))$. Particularly $(x,S^{m_i}x,T^{m_i}x,S^{n_i}T^{m_i}x)\in \Q_{S,T}(X)$ and this point converges to $(x,x,x,y)\in \Q_{S,T}(X)$. This implies that $(x,y)\in \Q_{S}(X)$, $(x,y)\in \Q_T(X)$ and $(x,y)\in \mathcal{R}_{S,T}(X)$ and since $x$ and $y$ are arbitrary we deduce that $\Q_S(X)=\Q_T(X)=\mathcal{R}_{S,T}(X)=X\times X$. By Lemma \ref{QTransitive} we deduce that $S$ and $T$ are transitive and since $(X,S,T)$ is $S$-$T$ almost periodic we deduce that $S$ and $T$ are minimal. By Lemma \ref{R_STWeakMixing} we deduce that $(X,S)$ and $(X,T)$ are minimal and weak mixing.

\end{proof}

\subsection{Recurrence in systems with a product extension}
We define of sets of return times in our setting:

\begin{defn}

Let $(X,S,T)$ be a minimal distal system with commuting transformations $S$ and $T$, and let $x\in X$. Let $x\in X$ and $U$ be an open neighborhood of $x$. We define the {\it set of return times} $N_{S,T}(x,U)=\{(n,m)\in \Z^2\colon S^nT^mx\in U\}$, $N_S(x,U)=\{n\in \Z\colon S^nx\in U\}$ and $N_T(x,U)=\{m\in \Z\colon T^mx\in U\}$.

A subset $A$ of $\Z$ is a {\it set of return times for a distal system} if there exists a distal system $(X,S)$, an open subset $U$ of $X$ and $x\in U$ such that $N_S(x,U)\subseteq A$.

A subset $A$ of $\Z$ is a $Bohr_0$ set is here exists an equicontinuous system $(X,S)$, an open subset $U$ of $X$ and $x\in U$ such that $N_S(x,U)\subseteq A$.

\end{defn}

\begin{rem}
We remark that we can characterize $\Z^2$ sets of return times of distal systems with a product extension: they contain the Cartesian product of sets of return times for distal systems.
Let $(X,S,T)$ be a minimal distal system with a product extension $(Y\times W,\sigma\times \id,\id\times \tau)$, and let $U$ be an open subset of $X$ and $x\in U$. By Theorem \ref{ThmDistalCas} we can assume that the product extension is also distal.  Let $\pi$ denote a factor map from $Y\times W \to X$. Let $(y,w)\in Y\times W$ such that $\pi(y,w)=x$ and let $U_Y$ and $U_W$ be neighborhoods of $y$ and $w$ such that $\pi(U_Y\times U_W)\subseteq U$. Then we have that that $N_{\sigma}(y,U_Y)\times N_{\tau}(w,U_W)\subseteq N_{S,T}(x,U)$.

Conversely, let $(Y,\sigma)$ and $(W,\tau)$ be minimal distal systems. Let $U_Y$ and $U_W$ be non-empty open sets in $Y$ and $W$ and let $y\in U_Y$ and $w\in U_W$. Then $N_{\sigma}(y,U_Y)\times N_{\tau}(w,U_W)$ coincides with $N_{\sigma\times\id,\id\times \tau}((y,w),U_Y\times U_W)$.

\end{rem}

Denote by $\mathcal{B}_{S,T}$ the family generated by Cartesian products of sets of return times for a distal system. Equivalently $\mathcal{B}_{S,T}$ is the family generated by sets of return times arising from minimal distal systems with a product extension.

Denote by $\mathcal{B}^{*}_{S,T}$ the family of sets which have non-empty intersection with every set in $\mathcal{B}_{S,T}$.

\begin{lem} \label{SuperLifting}
 Let $(X,S,T)$ be a minimal distal system with commuting transformations $S$ and $T$, and suppose $(x,y)\in \mathcal{R}_{S,T}(X)$. Let $(Z,S,T)$ be a minimal distal system with $\mathcal{R}_{S,T}(Z)=\Delta_Z$
 and let $J$ be a closed subset of $X\times Z$, invariant under $T\times T$ and $S\times S$. Then for $z_0\in Z$ we have $(x,z_0)\in J$ if and only if $(y,z_0)\in J$.
\end{lem}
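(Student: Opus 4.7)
The plan is to work inside the minimal subsystem $Y_0:=\overline{\mathcal{O}_G(x,z_0)}\subseteq X\times Z$. Since $J$ is closed and $G$-invariant with $(x,z_0)\in J$, we have $Y_0\subseteq J$; and since $X\times Z$ is distal by Theorem \ref{distal}(1), so is $Y_0$. The statement thus reduces to showing $(y,z_0)\in Y_0$. The coordinate projections $\pi_X\colon Y_0\to X$ and $\pi_Z\colon Y_0\to Z$ are factor maps of minimal distal systems, surjectivity coming from minimality of $X$ and $Z$ (the images are closed $G$-invariant subsets).

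I would first apply Proposition \ref{LiftingProperty} to $\pi_Z$: since $\mathcal{R}_{S,T}(Z)=\Delta_Z$, the equality $\pi_Z\times\pi_Z(\mathcal{R}_{\overline{S},\overline{T}}(Y_0))=\Delta_Z$ forces every pair in $\mathcal{R}_{\overline{S},\overline{T}}(Y_0)$ to have equal $Z$-coordinates. Then applying Proposition \ref{LiftingProperty} to $\pi_X$ lifts $(x,y)\in\mathcal{R}_{S,T}(X)$ to some $((x,a),(y,a))\in\mathcal{R}_{\overline{S},\overline{T}}(Y_0)$ with $a\in Z$, giving the preliminary conclusion that $(y,a)\in Y_0\subseteq J$ for some $a$.

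The crux is to upgrade $a$ to $z_0$. To do this I would revisit the proof of Proposition \ref{LiftingProperty}: the first coordinate of the lift is produced as $y_1=\lim y_i$ where $y_i\in\pi_X^{-1}(x_i)\cap Y_0$ and $x_i\to x$ realises the cube $(x,y,y,y)\in\Q_{S,T}(X)$ (via Proposition \ref{RPST}); the successive modifications then replace $y_1$ by $y_1'=\lim g_i''y_1$ for appropriate $g_i''\in G$. Two refinements are needed. First, using openness of the factor map $\pi_X$ between minimal distal metric systems, one can arrange $y_i\to(x,z_0)$. Second, using almost periodicity of $(x,y,(x,z_0))$ in the distal product $X\times X\times Y_0$ under the diagonal $G$-action (so that the return times to any neighbourhood of this point are syndetic in $G$), one extracts a sub-net of the modification elements along which simultaneously $g_i''(x,z_0)\to(x,z_0)$ and $(g_i'')^{\Delta}(x,y)\to(x,y)$. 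Combined, these place the lift's first coordinate exactly at $(x,z_0)$, producing $((x,z_0),(y,z_0))\in\mathcal{R}_{\overline{S},\overline{T}}(Y_0)$, and hence $(y,z_0)\in Y_0\subseteq J$. The reverse implication follows from the symmetry of $\mathcal{R}_{S,T}(X)$ recorded after Proposition \ref{sym}.

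The main obstacle is precisely this last refinement: simultaneously controlling the $X$-factor cube condition together with the first-coordinate return to $(x,z_0)$ requires the joint use of openness of $\pi_X$, syndeticity of return times in the distal product $X\times X\times Y_0$, and the minimality of $\Q_{\overline{S},\overline{T}}(Y_0)$ under $\mathcal{G}_{\overline{S},\overline{T}}$ (Proposition \ref{Q_STMinimal}). Each ingredient is available in the distal setting, but weaving them together is the technical heart of the argument.
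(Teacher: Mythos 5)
Your global strategy is sound and parallels the paper's: reduce to showing $(y,z_0)$ lies in the minimal set $\overline{\mathcal{O}_G(x,z_0)}\subseteq J$, lift $(x,y)\in\mathcal{R}_{S,T}(X)$ through the projection to $X$, and use $\mathcal{R}_{S,T}(Z)=\Delta_Z$ via the projection to $Z$ to force the two lifted points to share their $Z$-coordinate. But the step you yourself flag as ``the technical heart'' --- upgrading the lift so that its first coordinate is the \emph{prescribed} fiber point $(x,z_0)$ rather than some $(x,a)$ --- is asserted, not proven, and the mechanism you offer for it does not work as stated. In the proof of Proposition \ref{LiftingProperty}, the final modification elements $g_i''$ must satisfy $g_i''(x_1,x_2'')\to(x_1,x_2)$, where $x_2''$ is a point produced by the earlier modifications and is in general different from $x_2$. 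This is a return of a \emph{different} point to $(x_1,x_2)$, not a self-return of $(x,y,(x,z_0))$, so syndeticity of the return times of $(x,y,(x,z_0))$ to its own neighborhoods gives you no control here. Asking whether one can choose such $g_i''$ with $g_i''y_1\to y_1$ is essentially the same fiber-selection problem you started with, so the argument is circular at exactly the point where it matters. What you would need is a strong form of the lifting property (lift $(x,y)$ to a regionally proximal pair whose first coordinate is an arbitrary prescribed point of $\pi^{-1}(x)$), and that is precisely what is not available off the shelf.

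The paper sidesteps this difficulty with a different device: it embeds the problem into $Z_\infty\subseteq Z^Z$ (the orbit closure of the identity map under the pointwise actions $S^Z,T^Z$), takes a minimal subsystem $A\subseteq X\times Z_\infty$, and applies Proposition \ref{LiftingProperty} only to the projection $A\to X$, obtaining a pair $((x,\omega^1),(y,\omega^2))\in\mathcal{R}_{S',T'}(A)$ with no control on $\omega^1$. The key point is that, since $(Z,S,T)$ is minimal distal, $E(Z,G)$ is a group, so every $\omega\in Z_\infty$ is a \emph{surjection} of $Z$; one may therefore choose the evaluation point $z_1$ with $\omega^1(z_1)=z_0$ \emph{after} the lift has been performed, and push forward via $(u,\omega)\mapsto(u,\omega(z_1))$ to land exactly at $(x,z_0)$. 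You should either adopt this construction or supply a genuine proof of the strong lifting property; as written, your proof has a gap.
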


\begin{proof}
We adapt the proof of Theorem 3.5 \cite{HSY} to our context. Let $W=Z^{Z}$ and $S^{Z},T^{Z}\colon W\rightarrow W$ be such that for any $\omega\in W$, $(S^{Z}\omega)(z)=S(\omega(z))$, $(T^{Z}\omega)(z)=T(\omega(z))$, $z\in Z$. Let $\omega^{*}\in W$ be the point satisfying $\omega(z)=z$ for all $z\in Z$ and let $Z_{\infty}=\mathcal{O}_{G^{Z}}(\omega^{*})$, where $G^{Z}$ is the group generated by $S^{Z}$ and $T^{Z}$. It is easy to verify that $Z_{\infty}$ is minimal distal. So for any $\omega\in Z_{\infty}$, there exists $p\in E(Z,G)$ such that $\omega(z)=p\omega^{*}(z)=p(z)$ for any $z\in Z$. Since $(Z,S,T)$ is minimal and distal, $E(Z,G)$ is a group (see \cite{Aus}, Chapter 5). So $p\colon Z\rightarrow Z$ is surjective. Thus there exists $z_{\omega}\in Z$ such that $\omega(z_{\omega})=z_{0}$.

  Take a minimal subsystem $(A,S\times S^{Z},T\times T^{Z})$ of the product system $(X\times Z_{\infty},S\times S^{Z},T\times T^{Z})$. Let $\pi_{X}\colon (A,S\times S^{Z},T\times T^{Z})\rightarrow (X,S,T)$ be the natural coordinate projection. Then $\pi_{X}$ is a factor map between two distal minimal systems. By Proposition \ref{LiftingProperty}, there exists $\omega^{1},\omega^{2}\in W$ such that $((x,\omega^{1}),(y,\omega^{2}))\in \mathcal{R}_{S',T'}(A)$, where $S'=S\times S^{Z}, T'=T\times T^{Z}$.

  Let $z_{1}\in Z$ be such that $\omega^{1}(z_{1})=z_{0}$. Denote $\pi\colon A\rightarrow X\times Z$, $\pi(u,\omega)=(u,\omega(z_{1}))$ for $(u,\omega)\in A$, $u\in X$ and $\omega\in W$. Consider the projection $B=\pi(A)$. Then $(B,S\times S,T\times T)$ is a minimal distal subsystem of $(X\times Z, S\times S,T\times T)$ and since $\pi(x_0,\omega^{1})=(x,z_{0})\in B$ we have that $J$ contains $B$. Suppose that $\pi(x,\omega^{2})=(x,z_{2})$. Then $((x,z_{0}),(y,z_{2}))\in \mathcal{R}_{S\times S,T\times T}(B)$ and thus $(z_{0},z_{2})\in \mathcal{R}_{S,T}(Z)$. Since $\mathcal{R}_{S,T}(Z)=\Delta_{Z\times Z}$ we have that $z_{0}=z_{2}$ and thus $(y,z_0)\in B\subseteq J$
\end{proof}

\begin{thm} \label{RecRPST}
  Let $(X,S,T)$ be a minimal distal system with commuting transformations $S$ and $T$. Then for $x,y\in X$, $(x,y)\in \mathcal{R}_{S,T}(X)$ if and only if $N_{S,T}(x,U)\in\mathcal{B}^{*}_{S,T}$ for any open neighborhood $U$ of $y$.
\end{thm}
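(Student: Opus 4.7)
The plan is to prove the two implications separately: the forward direction will use Lemma \ref{SuperLifting} applied to a product system, while the reverse direction will exploit the maximal product-extension factor $X/\mathcal{R}_{S,T}(X)$ from Theorem \ref{ThmDistalCas} to construct a separating set.

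For the forward direction, assume $(x,y)\in\mathcal{R}_{S,T}(X)$ and let $U$ be an open neighborhood of $y$. A typical element of $\mathcal{B}_{S,T}$ has the form $A=N_\sigma(y_0,V_Y)\times N_\tau(w_0,V_W)$, where $(Y,\sigma)$ and $(W,\tau)$ are minimal distal systems and $y_0\in V_Y$, $w_0\in V_W$ with $V_Y,V_W$ open. Set $Z=Y\times W$ with commuting transformations $\sigma\times\id$ and $\id\times\tau$; this is a product system, so $\mathcal{R}_{S,T}(Z)=\Delta_Z$ by Theorem \ref{R-ProductExtension}. Let $J\subseteq X\times Z$ be the orbit closure of $(x,(y_0,w_0))$ under the diagonal action $S\times(\sigma\times\id)$, $T\times(\id\times\tau)$. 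Then $J$ is closed and diagonally invariant, and $(x,(y_0,w_0))\in J$, so Lemma \ref{SuperLifting} gives $(y,(y_0,w_0))\in J$. Thus there exist sequences $(n_i),(m_i)$ in $\Z$ with $(S^{n_i}T^{m_i}x,\sigma^{n_i}y_0,\tau^{m_i}w_0)\to(y,y_0,w_0)$, and for all sufficiently large $i$ we have $(n_i,m_i)\in N_{S,T}(x,U)\cap A$. The case of finite intersections of such basic sets reduces to this one by taking products of the auxiliary systems.

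For the reverse direction, I argue by contrapositive: if $(x,y)\notin\mathcal{R}_{S,T}(X)$, I exhibit a set $A\in\mathcal{B}_{S,T}$ disjoint from $N_{S,T}(x,U)$ for a suitable $U$. Let $\pi\colon X\to Z_0\coloneqq X/\mathcal{R}_{S,T}(X)$ be the quotient map. By Theorem \ref{ThmDistalCas}, $(Z_0,S,T)$ admits a product extension coming from a minimal distal product system, so there is a factor map $\phi\colon(Y\times W,\sigma\times\id,\id\times\tau)\to(Z_0,S,T)$ with $(Y,\sigma)$ and $(W,\tau)$ minimal distal. Write $z_1=\pi(x)$ and $z_2=\pi(y)$; these are distinct. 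Choose any $(y_0,w_0)\in\phi^{-1}(z_1)$. Since $z_2\neq z_1=\phi(y_0,w_0)$, continuity of $\phi$ allows us to pick open neighborhoods $V_Y\ni y_0$, $V_W\ni w_0$, and $V\ni z_2$ with $\phi(V_Y\times V_W)\cap V=\emptyset$. Set $U=\pi^{-1}(V)$, an open neighborhood of $y$ in $X$. For any $(n,m)\in N_\sigma(y_0,V_Y)\times N_\tau(w_0,V_W)$, we have $S^nT^mz_1=\phi(\sigma^ny_0,\tau^mw_0)\in\phi(V_Y\times V_W)$, which is disjoint from $V$; hence $(n,m)\notin N_{S,T}(z_1,V)=N_{S,T}(x,U)$. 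This shows $N_{S,T}(x,U)\notin\mathcal{B}^{*}_{S,T}$.

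The main conceptual point (rather than a computational obstacle) is recognizing that Lemma \ref{SuperLifting} is exactly the right tool for the forward direction, since product systems have trivial $\mathcal{R}_{S,T}$ relation; and that in the reverse direction one needs the \emph{maximality} of $X/\mathcal{R}_{S,T}(X)$ among factors with product extensions to ensure that the separating set $A$ produced lies in the correct family $\mathcal{B}_{S,T}$.
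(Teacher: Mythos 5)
Your proposal is correct and follows essentially the same route as the paper: the forward implication rests on Lemma \ref{SuperLifting} applied to the orbit closure of $(x,z_0)$ in $X\times Z$ for a system $Z$ with trivial $\mathcal{R}_{S,T}$, and the reverse implication rests on the maximality of $X/\mathcal{R}_{S,T}(X)$ from Theorem \ref{ThmDistalCas}. The only cosmetic differences are that you work with explicit Cartesian-product generators of $\mathcal{B}_{S,T}$ (handling finite intersections by taking products of the auxiliary systems) where the paper uses the abstract description of a $\mathcal{B}_{S,T}$ set, and you phrase the reverse direction contrapositively by exhibiting a separating set rather than arguing directly that $\pi(x)\in\overline{\pi(U)}$ for every neighborhood $U$ of $y$.
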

\begin{proof}
  Suppose $N(x,U)\in\mathcal{B}^{*}_{S,T}$ for any open neighborhood $U$ of $y$. Since $X$ is distal, $\mathcal{R}_{S,T}(X)$ is an equivalence relation. Let $\pi$ be the projection map $\pi\colon X\rightarrow Y\coloneqq X/\mathcal{R}_{S,T}(X)$. By Theorem \ref{ThmDistalCas} we have that $\mathcal{R}_{S,T}(Y)=\Delta_{Y}$. Since $(X,S,T)$ is distal, the factor map $\pi$ is open and $\pi(U)$ is an open neighborhood of $\pi(x)$. Particularly $N_{S,T}(x,U)\subseteq N_{S,T}(\pi(x),\pi(U))$. Let $V$ be an open neighborhood of $\pi(x)$. By hypothesis we have that $N_{S,T}(x,U)\cap N_{S,T}(\pi(x),\pi(U))\neq\emptyset$ which implies that $N_{S,T}(\pi(x),\pi(U))\cap N_{S,T}(\pi(x),V)\neq \emptyset$. Particularly $\pi(U)\cap V\neq \emptyset$. Since this holds for every $V$ we have that $\pi(x)\in \overline{\pi(U)}=\pi(\overline{U})$. Since this holds for every $U$ we conclude that $\pi(x)=\pi(y)$. This shows that $(x,y)\in \mathcal{R}_{S,T}(X)$.

  Conversely, suppose that $(x,y)\in \mathcal{R}_{S,T}(X)$, let $U$ be an open neighborhood of $y$ and let $A$ be a $\mathcal{B}^{*}_{S,T}$ set. Then, there exists a minimal distal system $(Z,S,T)$ with $\mathcal{R}_{S,T}(Z)=\Delta_Z$, an open set $V\subseteq Z$ and $z_0\in V$ such that $N_{S,T}(z_0,V)\subseteq A$. Let $J$ be orbit closure of $(x,z_0)$ under $S\times S$ and $T\times T$. By distality we have that $(J,S\times S,T\times T)$ is a minimal system and $(x,z_0)\in J$. By Lemma \ref{SuperLifting} we have that $(y,z_0)\in J$ and particularly, there exist sequences $(n_i)_{i\in \N}$ and $(m_i)_{i\in \N}$ in $\Z$ such that $(S^{n_i}T^{m_i}x,S^{n_i}T^{m_i}z_0)\to (y,z_0)$. This implies that $N_{S,T}(x,U)\cap N_{S,T}(z_0,V)\neq \emptyset$ and the proof is finished.
\end{proof}

\begin{cor} \label{ReturnTProduct}
 Let $(X,S,T)$ be a minimal distal system with commuting transformations $S$ and $T$. Then $(X,S,T)$ has a product extension if and only if for every $x\in X$ and every open neighborhood $U$ of $x$, $N_{S,T}(x,U)$ contains the product of two set of return times for a distal system.
\end{cor}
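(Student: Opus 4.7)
The plan is to deduce the corollary from Theorem \ref{R-ProductExtension} and Theorem \ref{RecRPST}, together with the observation that a distal system with a product extension admits a \emph{distal} product extension. No new technical machinery is needed; the argument is essentially a direct packaging of these prior results.

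For the forward implication, suppose $(X,S,T)$ is distal and has a product extension. By Theorem \ref{R-ProductExtension} we have $\mathcal{R}_{S,T}(X)=\Delta_X$, and by the proof of $(3)\Rightarrow(1)$ in that theorem, for every $x_0\in X$ the magic extension $\bold{K}^{x_0}_{S,T}$ is isomorphic to the product system $(\mathcal{O}_S(x_0)\times \mathcal{O}_T(x_0),\,S\times\id,\,\id\times T)$. Since subsystems of distal systems are distal (Theorem \ref{distal}(2),(4)), both factors of this product are distal minimal systems, so one obtains a distal product extension $\pi\colon Y\times W\to X$ with $Y=\mathcal{O}_S(x_0)$ and $W=\mathcal{O}_T(x_0)$. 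Given $x\in X$ and an open neighborhood $U\ni x$, I would lift $x$ to some $(y,w)\in\pi^{-1}(x)$ and choose open sets $U_Y\ni y$, $U_W\ni w$ with $\pi(U_Y\times U_W)\subseteq U$. A direct check from the definitions then yields the inclusion $N_\sigma(y,U_Y)\times N_\tau(w,U_W)\subseteq N_{S,T}(x,U)$, exhibiting $N_{S,T}(x,U)$ as containing the product of two sets of return times for distal systems.

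For the converse, assume the return-times hypothesis. By Theorem \ref{R-ProductExtension} it suffices to show $\mathcal{R}_{S,T}(X)=\Delta_X$. Suppose for contradiction that $(x,y)\in\mathcal{R}_{S,T}(X)$ with $x\neq y$, and pick disjoint open neighborhoods $U_x\ni x$ and $U_y\ni y$. By hypothesis there exists a product $A\times B\in\mathcal{B}_{S,T}$ contained in $N_{S,T}(x,U_x)$, while Theorem \ref{RecRPST} applied to the pair $(x,y)$ guarantees $N_{S,T}(x,U_y)\in\mathcal{B}^{*}_{S,T}$, so $N_{S,T}(x,U_y)\cap(A\times B)\neq\emptyset$. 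Any $(n,m)$ in this intersection satisfies $S^nT^mx\in U_x\cap U_y=\emptyset$, a contradiction.

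The only subtle point is verifying that in the forward direction the product extension can be taken distal, which I would handle using the isomorphism of the magic extension with $\mathcal{O}_S(x_0)\times\mathcal{O}_T(x_0)$ described above; beyond this, both directions are short and I anticipate no serious obstacle.
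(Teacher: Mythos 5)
Your proposal is correct and follows essentially the same route as the paper: the converse is exactly the paper's argument (disjoint neighborhoods of an $\mathcal{R}_{S,T}$-pair, then Theorem \ref{RecRPST} forces the two return-time sets to intersect), and your forward direction reproduces the remark preceding the definition of $\mathcal{B}_{S,T}$, with the distality of the product extension obtained from the magic extension just as in Theorem \ref{ThmDistalCas}. No gaps.
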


\begin{proof}
We prove the non-trivial implication. Let suppose that there exists $(x,y)\in \mathcal{R}_{S,T}(X)\setminus \Delta_X$ and let $U,V$ be open neighborhoods of $x$ and $y$ respectively such that $U\cap V=\emptyset$.  By assumption $N_{S,T}(x,U)$ is a $\mathcal{B}_{S,T}$ set, and by Theorem \ref{RecRPST} $N_{S,T}(x,V)$ has nonempty intersection with $N_{S,T}(x,U)$. This implies that $U\cap V\neq \emptyset$, a contradiction. We conclude that $\mathcal{R}_{S,T}(X)=\Delta_X$ and therefore $(X,S,T)$ has a product extension.
\end{proof}

Specially, when $S=T$ we get
\begin{cor} \label{EquicontinuousSum}
 Let $(X,T)$ be a minimal distal system. Then $(X,T)$ is equicontinuous if and only if for every $x\in X$ and every open neighborhood $U$ of $x$, $N_{T}(x,U)$ contains the sum of two sets of return times for distal systems.
\end{cor}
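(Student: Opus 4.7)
The plan is to reduce this to Corollary \ref{ReturnTProduct} and Corollary \ref{EquicontinuousAndProduct} by specializing the earlier work to the diagonal system $(X,T,T)$. Observe that when the two commuting transformations coincide, $S^n T^m = T^{n+m}$, so the two-dimensional set of return times and the one-dimensional one are related by
\[
N_{T,T}(x,U) = \{(n,m)\in\Z^2 \colon T^{n+m}x\in U\} = \{(n,m)\in\Z^2 \colon n+m \in N_T(x,U)\},
\]
i.e.\ $N_{T,T}(x,U)$ is the preimage of $N_T(x,U)$ under the addition map $\Z^2\to\Z$. Consequently, $N_{T,T}(x,U)$ contains a product $A\times B$ of sets of return times for distal systems if and only if $N_T(x,U)$ contains the sum $A+B$.

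With this translation in hand, I would argue as follows. By Corollary \ref{EquicontinuousAndProduct}, $(X,T)$ is equicontinuous if and only if $(X,T,T)$ has a product extension. Since $(X,T)$ is distal, so is $(X,T,T)$ (the transformations generate the same group action), and $(X,T,T)$ is minimal. Applying Corollary \ref{ReturnTProduct} to the minimal distal system $(X,T,T)$, the existence of a product extension is equivalent to the statement that for every $x\in X$ and every open neighborhood $U$ of $x$, the set $N_{T,T}(x,U)$ contains a product $A\times B$, where $A$ and $B$ are each sets of return times for some distal system.

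Combining the two equivalences and using the translation between products and sums noted above, $(X,T)$ is equicontinuous if and only if for every $x\in X$ and every open neighborhood $U$ of $x$, the set $N_T(x,U)$ contains a sum $A+B$ of two sets of return times for distal systems. This is exactly the statement of the corollary.

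There is no real obstacle here; the only subtlety is being careful about the direction of the translation. In the forward direction one starts from a product $A\times B\subseteq N_{T,T}(x,U)$ and obtains $A+B\subseteq N_T(x,U)$ directly by applying the addition map. In the reverse direction, given $A+B\subseteq N_T(x,U)$ one has $A\times B\subseteq N_{T,T}(x,U)$ by the same identity, which is precisely the hypothesis needed to invoke Corollary \ref{ReturnTProduct}. Everything else is a direct citation of the already-proven results.
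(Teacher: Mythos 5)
Your proof is correct and follows essentially the same route as the paper: pass to the diagonal system $(X,T,T)$, invoke Corollary \ref{EquicontinuousAndProduct} and Corollary \ref{ReturnTProduct}, and translate between $A\times B\subseteq N_{T,T}(x,U)$ and $A+B\subseteq N_{T}(x,U)$ via the identity $N_{T,T}(x,U)=\{(n,m)\colon n+m\in N_T(x,U)\}$. Your explicit statement of that identity makes the translation slightly more precise than the paper's, but the argument is the same.
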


\begin{proof}
Suppose $(X,T)$ is equicontinuous, then the system $(X,T,T)$ with commuting transformations $T$ and $T$ has a product extension. So for every $x\in X$ and every open neighborhood $U$ of $x$, we have that $N_{T,T}(x,U)$ contains a product of two sets $A$ and $B$. In terms of the one dimensional dynamics, this means that $N_{T}(x,U)$ contains $A+B$.

Conversely, if $N_{T}(x,U)$ contains the sum of two sets of return times for distal systems $A$ and $B$, we have that $N_{T,T}(x,U)$ contains the set $A\times B$. By Corollary \ref{ReturnTProduct}, $(X,T,T)$ has a product extension and by Corollary \ref{EquicontinuousAndProduct} $(X,T)$ is an equicontinuous system.
\end{proof}

\begin{ques}
  A natural question arising from Corollary \ref{EquicontinuousSum} is the following: is the sum of two set of return times for a distal system a $Bohr_0$ set?
  \end{ques}

\subsection{Complexity for systems with a product extension} In this subsection, we study the complexity of a distal system with a product extension. We start recalling some classical definitions.

Let $(X,G_0)$ be a topological dynamical system. A finite cover $\mathcal{C}=(C_1,\ldots,C_d)$ is a finite collection of subsets of $X$ whose union is all $X$. We say that $\mathcal{C}$ is an {\it open cover} if every $C_i\in \mathcal{C}$ is an open set. Given two open covers $\mathcal{C}=(C_1,\ldots,C_d)$ and $\mathcal{D}=(D_1,\ldots, D_k)$ their {\it refinement} is the cover $\mathcal{C}\vee \mathcal{D}=(C_i\cap D_j\colon i=1,\ldots,d \quad j=1,\ldots, k)$. A cover $\mathcal{C}$ is {\it finer} than $\mathcal{D}$ if every element of $\mathcal{C}$ is contained in an element of $\mathcal{D}$. We let $\mathcal{D}\preceq \mathcal{C}$ denote this property.

We recall that if $(X,S,T)$ is a minimal distal system with commuting transformations $S$ and $T$ then $\Q_S(X)$, $\Q_T(X)$ and $\mathcal{R}_{S,T}(X)$ are equivalence relations.

Let $(X,S,T)$ be a minimal distal system with commuting transformations $S$ and $T$, and let $\pi_{S}$ be the factor map $\pi_{S}\colon X\rightarrow X/\Q_{S}(X)$. Denote $I_{S}=\{\pi_{S}^{-1}y\colon y\in X/\Q_{S}(X)\}$ the {\it set of fibers of $\pi_S$}.

Given a system $(X,S,T)$ with commuting transformations $S$ and $T$, and given a finite cover $\mathcal{C}$, denote $\mathcal{C}_{0}^{T,n}=\bigvee_{i=0}^{n}T^{-i}\mathcal{C}$. For any cover $\mathcal{C}$ and any closed $Y\subset X$, let $r(\mathcal{C},Y)$ be the minimal number of elements in $\mathcal{C}$ needed to cover the set $Y$. We remark that $\mathcal{D}\preceq \mathcal{C}$ implies that $r(\mathcal{D},Y)\leq r(\mathcal{C},Y)$.

\begin{defn}
Let $\mathcal{C}$ be a finite cover of $X$. We define the $S$-$T$ complexity of $\mathcal{C}$ to be the non-decreasing function
$$c_{S,T}(\mathcal{C},n)=\max_{Y\in I_{S}}r(\mathcal{C}_{0}^{T,n},Y).$$
\end{defn}

\begin{prop} \label{ComplexityRPST}
  Let $(X,S,T)$ be a distal system with commuting transformations $S$ and $T$. Then $(X,S,T)$ has a product extension if and only if $c_{S,T}(\mathcal{C},n)$ is bounded for any open cover $\mathcal{C}$.
 
\end{prop}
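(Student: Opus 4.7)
The plan is to deduce the proposition from Proposition~\ref{Eqextension} via a characterization of equicontinuous extensions through fiber-wise complexity. Since $(X,S,T)$ is distal, Theorem~\ref{Equivalence} ensures that $\Q_S(X)$ is an equivalence relation, so Proposition~\ref{Eqextension} applies with the roles of $S$ and $T$ exchanged, yielding that $(X,S,T)$ has a product extension if and only if the factor map $\pi_S \colon (X,T)\to(X/\Q_S(X),T)$ is an equicontinuous extension. It therefore suffices to prove that $\pi_S$ is equicontinuous if and only if $c_{S,T}(\mathcal{C},n)$ is bounded in $n$ for every open cover $\mathcal{C}$ of $X$.

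For the forward direction, suppose $\pi_S$ is equicontinuous and let $\mathcal{C}$ be an open cover with Lebesgue number $\epsilon>0$. Equicontinuity yields $\delta>0$ such that $\pi_S(x)=\pi_S(y)$ and $d(x,y)<\delta$ imply $d(T^ix,T^iy)<\epsilon$ for all $i\in\Z$. Fix a finite open cover $\mathcal{D}$ of $X$ by sets of diameter less than $\delta$. For any fiber $Y_0\in I_S$ and any $D\in\mathcal{D}$ meeting $Y_0$, pick $x_D\in D\cap Y_0$ and use the Lebesgue number to choose $C_{D,i}\in\mathcal{C}$ with $B(T^i x_D,\epsilon)\subseteq C_{D,i}$ for $0\le i\le n$. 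The atom $A_D=\bigcap_{i=0}^n T^{-i}C_{D,i}$ of $\mathcal{C}_0^{T,n}$ then contains $D\cap Y_0$, so $|\mathcal{D}|$ atoms suffice to cover $Y_0$ and $c_{S,T}(\mathcal{C},n)\le|\mathcal{D}|$ for every $n$.

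The converse is the main obstacle, and I argue by contrapositive. If $\pi_S$ is not equicontinuous, then there exist $\epsilon>0$ and pairs $(x_k,y_k)$ in common $\Q_S$-fibers with $d(x_k,y_k)\to 0$ and integers $n_k$ satisfying $d(T^{n_k}x_k,T^{n_k}y_k)\ge\epsilon$; passing to limits produces $(u,v)$ in a common $\Q_S$-fiber with $d(u,v)\ge\epsilon$, in the manner of the proof of Proposition~\ref{Eqextension}. Fix an open cover $\mathcal{C}$ by sets of diameter less than $\epsilon/3$. I will show that for every $N\in\N$, some fiber admits $N$ points whose $T^K$-iterates are pairwise $2\epsilon/3$-separated for some $K$, which forces $c_{S,T}(\mathcal{C},K)\ge N$ and contradicts the complexity bound. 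The argument is inductive: starting from the pair $(u,v)$ and given a separated configuration $z_1,\ldots,z_N$ in a common fiber $F$ with witness $K$, one applies the failure of equicontinuity near one of the $z_i$'s to obtain a new point $z_{N+1}$ in a nearby fiber that is $T^{k'}$-separated from $z_1$, and then uses minimality of $(X,T)$ together with distality, which preserves pairwise distances of the $z_i$'s along limit sequences of $T$-iterates, to transport the whole configuration into a single fiber. The delicate point is ensuring that one iterate $K'$ simultaneously separates all $N+1$ pairs; this relies on the minimality of $\Q_S(X)$ under the natural $\mathcal{G}_S$-action (Proposition~\ref{Q_STMinimal}) and on the uniform positivity of $\inf_g d(gu,gv)$ furnished by distality.
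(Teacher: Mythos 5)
Your reduction via Proposition \ref{Eqextension} and your forward direction are sound and essentially match the paper's strategy; in fact your device of fixing one global cover $\mathcal{D}$ by $\delta$-small sets gives the uniformity of the bound over fibers more cleanly than the paper's Hausdorff-distance argument.

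The converse, however, has a genuine gap. Your plan is to build, inductively in $N$, a fiber containing $N$ points whose images under a single power $T^{K}$ are pairwise $2\epsilon/3$-separated, and the transport step rests on the assertion that distality ``preserves pairwise distances of the $z_i$'s along limit sequences of $T$-iterates.'' This is false: distality only guarantees $\inf_{g}d(gz_i,gz_j)>0$ when $z_i\neq z_j$; it does not prevent the separation from shrinking far below $2\epsilon/3$ (indeed below the mesh $\epsilon/3$ of your cover) when you push the configuration into a new fiber. Since the lower bound $r(\mathcal{C}_0^{T,K},F)\geq N$ requires the separation to beat the mesh of the \emph{fixed} cover $\mathcal{C}$, and the separation can degrade at every inductive step with no uniform lower bound, the induction does not close. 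You also invoke minimality of $(X,T)$, which is not among the hypotheses (only $(X,S,T)$ is assumed minimal), and you explicitly flag but do not resolve the problem of finding one time $K'$ separating all pairs of the enlarged configuration simultaneously. The paper avoids all of this by arguing in the opposite direction: assuming the complexity is bounded (first upgrading to the two-sided refinement via $r(\bigvee_{i=-n}^{n}T^{-i}\mathcal{C},Y)=r(\bigvee_{i=0}^{2n}T^{-i}\mathcal{C},T^{-n}Y)$ and $T^{-n}Y\in I_S$), it decomposes a fiber, following Blanchard--Host--Maass, into finitely many \emph{closed} sets $X_i=\bigcap_{j\in\Z}T^{-j}U_{i,j}$ on which any two points stay $\epsilon/2$-close under all powers of $T$; a sequence $y_n\to x$ in the fiber with $d(T^{k_n}x,T^{k_n}y_n)>\epsilon$ must have infinitely many terms in one such $X_i$, which is closed and hence contains $x$, a contradiction. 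To salvage your line of argument you would need this compactness/closedness mechanism (or an equivalent substitute) in place of the distance-preservation claim.
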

\begin{proof}
Suppose first that $\mathcal{R}_{S,T}(X)=\Delta_{X}$. Since $\Q_{S}(X)$ is an equivalence relation, by Proposition \ref{Eqextension}, we have that $\pi_{S}\colon (X,T)\rightarrow (X/\Q_{S}(X),T)$ is an equicontinuous extension.
Let $\epsilon>0$ be the Lebesgue number of the finite open cover $\mathcal{C}$, i.e. any open ball $B$ with radius $\epsilon$ is contained in at least one element of $\mathcal{C}$. Then there exists $0<\delta<\epsilon$ such that $d(x,y)<\delta$, $\pi_{S}(x)=\pi_{S}(y)$ implies that $d(T^{n}x,T^{n}y)<\epsilon$ for all $n\in\mathbb{Z}$. For any $Y\in I_{S}$, by compactness, let $x_{1},\dots,x_{k}\in Y$ be such that $Y\subset\bigcup_{i=1}^{k}B(x_{i},\delta)$. Then
$T^{j}(B(x_{i},\delta)\cap Y)\subset B(T^{j}x_{i},\epsilon)\cap Y\subset B(T^{j}x_{i},\epsilon)$ for any $j\in\mathbb{N}$ (since $\Q_{S}(X)$ is invariant under $T\times T$). Let $U_{i,j}$ be an element of $\mathcal{C}$ containing $B(T^{j}x_{i},\epsilon)$. Then $T^{j}(B(x_{i},\delta)\cap Y)\subset U_{i,j}$. So
$B(x_{i},\delta)\cap Y\subset\bigcap_{j=0}^{n}T^{-j}U_{i,j}$. Thus $\{\bigcap_{j=0}^{n}T^{-j}U_{i,j}\colon 1\leq i\leq k\}$ is a subset of $\mathcal{C}_{0}^{T,n}$ covering $Y$ with cardinality $k$. Therefore $r(\mathcal{C}_{0}^{T,n},Y)$ is bounded by the quantity of balls of radius $\delta$ needed to cover $Y$.

Suppose that $c_{S,T}(\mathcal{C},n)$ is not bounded. For $Y, Y' \in I_{S}$, let $d_H(Y,Y)$ be the Hausdorff distance between $Y$ and $Y'$. Since the factor map $X\to X/\Q_S$ is open, for any $\epsilon'>0$, there exists $\delta'>0$ such that if $y,y'\in X/\Q_S$ and $d(y,y')<\delta'$, then $d_H(\pi^{-1}y,\pi^{-1}y')<\epsilon'$.

Let $y\in Y$ and let $\mathcal{C}'\subseteq \mathcal{C}$ be a subcover of $Y=\pi^{-1}(y)$. Let $\e'>0$ be such that if $d(x,Y)<\e'$, then $x$ is covered by $\mathcal{C}'$ . We can find $\delta'>0$ such that if $d(y,y')<\delta'$, then $d_H(\pi^{-1}y,\pi^{-1}y')<\epsilon'$. Thus $\mathcal{C}'$ is also an open covering of $Y'=\pi^{-1}(y')$.

If $\pi^{-1}y\subset\bigcup_{i=1}^{k}B(x_{i},\delta)$, then there exists $\delta'>0$ such that $d(y,y')<\d'$ implies that $\pi^{-1}y' \subset \bigcup_{i=1}^{k}B(x_{i},\delta)$. If $c_{S,T}(\mathcal{C},n)$ is not bounded, there exists $y_i \in Y$ such that $\pi^{-1}(y_i)$ can not be covered by $i$ balls of radius $\delta>0$. We assume with out loss of generality that $y_i\to y$ (by taking a subsequence). Since $\pi^{-1}y$ can be covered by a finite number $K$ of balls of radius $\delta$, we get that for large enough $i$, $\pi^{-1}y_i$ can also be covered by $K$ balls of radius $\delta$, a contradiction. Therefore $c_{S,T}(\mathcal{C},n)$ is bounded.

Conversely, let suppose that $c_{S,T}(\mathcal{C},n)$ is bounded for every open cover $\mathcal{C}$ and suppose that $\mathcal{R}_{S,T}(X)\neq\Delta_{X}$. We remark that if $\mathcal{C}$ is an open cover and $Y\in I_{S}$ then $$r(\mathcal{C}_{-n}^{T,n},Y)\coloneqq r(\bigvee_{i=-n}^{n}T^{-i}\mathcal{C},Y)=r(T^n\bigvee_{i=0}^{2n}T^{-i}\mathcal{C},T^nT^{-n}Y)=r(\bigvee_{i=0}^{2n}T^{-i}\mathcal{C},T^{-n}Y).$$ Since $T$ commutes with $S$ we have that $T^{-n}Y\in I_{S}$ and thus the condition that $c_{S,T}(\mathcal{C},n)$ is bounded implies that $r(\bigvee_{i=-n}^{n}T^{-i}\mathcal{C},Y)$ is bounded for any $Y\in I_{S}$.

Since $\mathcal{R}_{S,T}(X)\neq\Delta_{X}$ by Proposition \ref{Eqextension}, there exist $\epsilon>0$ and $x\in X$ such that for any $\delta>0$, one can find $y\in X$ and $k\in\mathbb{Z}$ such that $d(x,y)<\delta$, $\pi_{S}(x)=\pi_{S}(y)$ and $d(T^{k}x,T^{k}y)>\epsilon$.  Pick any $Y\in I_{S}$ and let $\mathcal{C'}$ be a finite cover of open balls with radius $\epsilon/4$. Let $\mathcal{C}$ be the finite covering made up of the closures of the elements of $\mathcal{C'}$. Since $\mathcal{C}\prec \mathcal{C'}$ we have that $r(\mathcal{C}_{-n}^{T,n},Y)$ is also bounded.

By a similar argument of Lemma 2.1 of \cite{BHM}, there exist closed sets $X_{1},\dots,X_{c}\subset X$ such that $Y\subset\bigcup_{i=1}^{c}X_{i}$, where each $X_{i}$ can be written as $X_{i}=\bigcap_{j=-\infty}^{\infty}T^{-j}U_{i,j}$, with $U_{i,j}\in\mathcal{C}$. Then $y,z\in X_{i}$ implies that $d(T^{j}y,T^{j}z)<\epsilon/2$ for any $j\in\mathbb{Z}$.

Let $(\delta_{n})_{n\in \N}$ be a sequence of positive numbers such that $\lim_{n\rightarrow\infty}\delta_{n}=0$. For any $n\in \N$ we can find $y_{n}\in X$ and $k_{n}\in\mathbb{Z}$ with $d(x,y_{n})<\delta_n$, $\pi_{S}(x)=\pi_{S}(y_{n})$ and $d(T^{k_{n}}x,T^{k_{n}}y_{n})>\epsilon$. By taking a subsequence, we may assume that all $y_{n}$ belong to the same set $X_{i}$. Since $X_{i}$ is closed, $x\in X_{i}$. Thus $d(T^{j}x,T^{j}y_{n})<\epsilon/2$ for any $j,n\in\mathbb{N}$, a contradiction.

\end{proof}

\appendix

\section{General facts about the enveloping semigroup}

Let $(X,G_0)$ be a topological dynamical system. The {\it enveloping semigroup} (or {\it Ellis semigroup}) $E(X,G_0)$ of $(X,G_0)$ is
the closure in $X^X$ of the set $\{g\colon g\in G_0\}$ endowed with the product topology. For an enveloping semigroup $E(X,G_0)$, the applications $E(X,G_0)\to E(X,G_0)$, $p\mapsto pq$ and $p\mapsto gp$ are continuous for all $q\in E(X,G_0)$ and $g\in G_0$. We have that $(E(X,G_0),G_0)$ is dynamical system and if $G_0$ is abelian, $G_0$ is included in the center of $E(X,G_0)$. It is worth noting that $E(X,G_0)$ is usually not metrizable.

If $\pi\colon Y \to X$ is a factor map between the topological dynamical systems $(Y,G_0)$ and $(X,G_0)$, then $\pi$ induces a unique factor map $\pi^{\ast}\colon E(Y,G_0)\to E(X,G_0)$ that satisfies $\pi^{\ast}(u)y=\pi(uy)$ for every $u \in E(Y,G_0)$ and $y\in Y$.

This notion was introduced by Ellis \cite{Ellis} and allows the translation of algebraic properties into dynamical ones and vice versa.

We say that $u\in E(X,G_0)$ is an \emph{idempotent} if $u^2=u$. By the Ellis-Nakamura Theorem, any closed subsemigroup $H\subseteq E(X,G_0)$ admits an idempotent. A \emph{left ideal} $I\subseteq E(X,G_0)$ is a non-empty subset such that $E(X,G_0)I\subseteq I$. An ideal is \emph{minimal} if it contains no proper ideals. An idempotent $u$ is \emph{minimal} if $u$ belongs to some minimal ideal $I\subseteq E(X,G_0)$.

We summarize some results that connect algebraic and dynamical properties:

\begin{thm} \label{Enveloping1}
 Let $(X,G_0)$ be a topological dynamical system and let $E(X,G_0)$ be its enveloping semigroup. Then

 \begin{enumerate}
  \item An ideal $I\subseteq E(X,G_0)$ is minimal if and only if $(I,G_0)$ is a minimal system. Particularly, minimal ideals always exist;
  \item An idempotent $u\in E(X,G_0)$ is minimal if and only if $(\mathcal{O}_{G_0}(u),G_0)$ is a minimal system;
  \item An idempotent $u\in E(X,G_0)$ is minimal if $vu=v$ for some $v\in E(X,G_0)$ implies that $uv=u$;
  \item Let $x\in X$. Then $(\mathcal{O}_{G_0}(x),G_0)$ is a minimal system if and only if there exists a minimal idempotent $u\in E(X,G_0)$ with $ux=x$.
 \end{enumerate}

\end{thm}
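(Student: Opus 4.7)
The plan is to first pin down a pair of continuity observations that turn every algebraic manipulation in $E(X,G_0)$ into a dynamical statement, deduce (1) essentially for free, obtain (2) as a corollary, and finally handle (3) and (4) by combining Zorn's lemma with the Ellis-Numakura lemma on the existence of idempotents in compact sub-semigroups.

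The central preliminary is that for every fixed $v \in E(X,G_0)$, right multiplication $p \mapsto pv$ is continuous for the product topology on $X^X$: if $p_\alpha \to p$ pointwise, then $p_\alpha(v(x)) \to p(v(x))$ for each $x$. Coupled with the density of $G_0$ in $E(X,G_0)$, this identifies $E(X,G_0) v$ with $\overline{G_0 v} = \mathcal{O}_{G_0}(v)$, so closed left ideals and orbit closures become the same thing; moreover, since each $g \in G_0$ acts as a homeomorphism of $X$, left multiplication by $g$ is also continuous, so $E(X,G_0)$ itself is a $G_0$-dynamical system.

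With this dictionary, (1) becomes almost tautological: if $L$ is a minimal left ideal and $v \in L$, then the closed left ideal $\mathcal{O}_{G_0}(v) = E(X,G_0) v$ is non-empty and contained in $L$, hence equal to $L$, so $v$ has dense orbit; conversely, if a left ideal $I$ is $G_0$-minimal, any sub-ideal contains some $v$ whose orbit closure is all of $I$, forcing the sub-ideal to be $I$. Existence comes from Zorn on the family of closed left ideals under reverse inclusion, compactness making every chain's intersection non-empty. Statement (2) is then immediate from (1): for an idempotent $u$, $E(X,G_0) u$ is a left ideal with $u = u^2 \in E(X,G_0) u$, and it is minimal as a system if and only if it is a minimal left ideal.

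For (3), I assume $vu = v$ implies $uv = u$ for every $v$, and apply Zorn inside the closed left ideal $E(X,G_0) u$ to pick a minimal left ideal $L \subseteq E(X,G_0) u$. By Ellis-Numakura, $L$ contains an idempotent $v$; writing $v = wu$ gives $vu = wu^2 = wu = v$, so the hypothesis yields $uv = u$ and hence $u = uv \in E(X,G_0) v \subseteq L$, placing $u$ in a minimal left ideal. For (4), the backward direction uses (2): when $u$ is a minimal idempotent with $ux = x$, the continuous equivariant map $p \mapsto px$ sends the minimal system $E(X,G_0) u = \mathcal{O}_{G_0}(u)$ onto the closed $G_0$-invariant set $\mathcal{O}_{G_0}(x)$, which is therefore minimal. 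For the converse, let $L$ be any minimal left ideal; then $Lx$ is closed, $G_0$-invariant (since $gL = L$ for every $g \in G_0$), and contained in $E(X,G_0) x = \mathcal{O}_{G_0}(x)$, so by minimality $Lx = \mathcal{O}_{G_0}(x)$, making $M = \{p \in L : px = x\}$ a non-empty closed sub-semigroup of $L$; Ellis-Numakura then produces an idempotent $u \in M \subseteq L$, which is minimal. The subtlest moment is (3), where one must notice that the hypothesis is exactly what is needed to push the idempotent $v$ supplied by a minimal sub-ideal of $E(X,G_0) u$ back into a minimal left ideal containing $u$ itself.
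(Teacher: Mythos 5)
Your proof is correct. Note, however, that the paper itself offers no proof of this theorem: it appears in Appendix A as a summary of standard facts about the Ellis semigroup (implicitly deferred to the references of Ellis and Auslander), so there is no argument in the paper to compare yours against. What you have written is an accurate, self-contained rendition of the classical arguments: the key dictionary $E(X,G_0)v=\overline{G_0v}=\mathcal{O}_{G_0}(v)$ (valid because right multiplication and evaluation are continuous and $E(X,G_0)$ is compact) correctly reduces (1) and (2) to the correspondence between closed left ideals and orbit closures; your treatment of (3) is sound, and in fact the idempotent from Ellis--Numakura is not even needed there, since \emph{every} $v=wu\in L\subseteq E(X,G_0)u$ already satisfies $vu=wu^2=v$, after which the hypothesis gives $u=uv\in E(X,G_0)v\subseteq L$; and in (4) both directions are handled correctly, the forward one via the nonempty closed subsemigroup $\{p\in L\colon px=x\}$ and the backward one via the factor map $p\mapsto px$ from the minimal system $E(X,G_0)u$ onto $\mathcal{O}_{G_0}(x)$. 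One small remark: the paper later uses (in the proof of Proposition \ref{Q_STMinimal}) the converse of item (3) as well, namely that a minimal idempotent $u$ satisfies $uv=u$ whenever $vu=v$; that direction is not part of the statement you were asked to prove, so its absence from your argument is not a gap.
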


\begin{thm} \label{Enveloping2}

Let $(X,G_0)$ be a topological dynamical system. Then
\begin{enumerate}
 \item $(x,y) \in P(X)$ if and only if there exists $u\in E(X,G_0)$ with $ux=uy$;
 \item Let $x\in X$ and let $u\in E(X,G_0)$ be an idempotent. Then $(x,ux)\in P(X)$;
 \item Let $x\in X$. Then there exists $y\in X$ such that $(x,y)\in P(X)$ and $(\mathcal{O}_{G_0}(y),G)$ is minimal.
\item If $(X,G_0)$ is minimal, $(x,y)\in P(X)$ if and only if there exists $u\in E(X,G_0)$ a minimal idempotent such that $y=ux$.

\end{enumerate}

\end{thm}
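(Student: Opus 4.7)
The four items are classical results about enveloping semigroups due to Ellis. I would address them in order, with the main difficulty concentrated in item (4).

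For item (1), compactness and continuity suffice. If $(x,y)\in P(X)$, then a net $(g_\alpha)$ in $G_0$ witnessing $d(g_\alpha x, g_\alpha y)\to 0$ has, by compactness of $E(X,G_0)\subseteq X^X$, a subnet converging to some $u\in E(X,G_0)$; continuity of the evaluation maps $p\mapsto px$ and $p\mapsto py$ then forces $ux=uy$. Conversely, if $ux=uy$, approximate $u$ by a net in $G_0$ and invoke continuity of evaluation in the reverse direction. Item (2) is a one-line computation: since $u^2=u$, both $u\cdot x$ and $u\cdot(ux)$ equal $ux$, so $u$ identifies $x$ with $ux$ and (1) yields $(x,ux)\in P(X)$.

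For item (3), I would apply the Ellis-Nakamura theorem to produce a minimal idempotent $u\in E(X,G_0)$ and set $y\coloneqq ux$. Item (2) gives $(x,y)\in P(X)$, and $uy=u^2x=ux=y$ shows that the minimal idempotent $u$ fixes $y$; by the converse direction of Theorem \ref{Enveloping1}(4), $(\mathcal{O}_{G_0}(y),G_0)$ is minimal.

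For item (4), the backward direction follows from (2). The forward direction is the main obstacle. Assuming $X$ is minimal and $(x,y)\in P(X)$, apply (1) to obtain $p\in E(X,G_0)$ with $px=py$, and fix a minimal left ideal $I\subseteq E(X,G_0)$ (which exists by Ellis-Nakamura applied to any left ideal, using that minimal left ideals are closed). Minimality of $X$ implies $Iz=X$ for every $z\in X$, because $Iz$ is a nonempty, closed, $G_0$-invariant subset. Ellis-Nakamura applied to the closed nonempty sub-semigroup $\{r\in I: ry=y\}$ furnishes an idempotent $e\in I$ fixing $y$; since $I$ is a minimal ideal, every idempotent in $I$ is a minimal idempotent of $E(X,G_0)$. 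The delicate concluding step is to arrange simultaneously that $ex=y$: one picks $q\in I$ with $qx=y$ (possible since $Ix=X$), and then exploits the proximality witness $p$ together with the ideal structure to run an idempotent-extraction argument inside $I\cap\{r: ry=y\}$, obtaining a minimal idempotent $u\in I$ satisfying both $uy=y$ and $ux=y$. The key subtlety, and the reason a direct Ellis-Nakamura application does not work, is that the set $\{r\in I : rx=y\}$ is not itself a subsemigroup; the identity $px=py$ is what allows one to transfer between $x$ and $y$ inside $I$ so that the semigroup $\{r\in I: ry=y\}$ acts stably on the target condition $rx=y$.
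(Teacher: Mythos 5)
The paper does not supply a proof of Theorem \ref{Enveloping2}; it is cited as a collection of classical facts (Ellis, Auslander). So I am evaluating your proof on its own merits.

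Items (1), (2), (3), and the backward direction of (4) are correct, and your arguments are the standard ones. The forward direction of (4) is where you stop short of a complete argument, and the gap is real. As you yourself note, $\{r\in I : rx=y\}$ is not a subsemigroup, and after selecting an arbitrary minimal left ideal $I$ and an idempotent $e\in I$ with $ey=y$, nothing you have said forces $ex=y$; the appeal to ``exploit the proximality witness $p$ together with the ideal structure'' is a gesture rather than an argument. The clean fix is to constrain the ideal \emph{before} applying Ellis--Nakamura. Observe that $L\coloneqq\{q\in E(X,G_0)\colon qx=qy\}$ is nonempty (it contains $p$), closed (evaluation at $x$ and $y$ is continuous), and a left ideal (if $qx=qy$ then $(rq)x=r(qx)=r(qy)=(rq)y$ for every $r$). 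Choose a minimal left ideal $I\subseteq L$; such $I$ exists because $Ep\subseteq L$ is a closed left ideal and one can apply Zorn's lemma among the closed left ideals contained in $L$. Now every $r\in I$ already satisfies $rx=ry$. Minimality of $X$ gives $Iy=X$, so $\{r\in I\colon ry=y\}$ is a nonempty closed subsemigroup of $I$; Ellis--Nakamura yields an idempotent $u$ in it, which is a minimal idempotent because $u$ lies in the minimal left ideal $I$. Then $ux=uy=y$, which is exactly what is required. With this modification the proof of (4) closes; without it, the final step is unjustified.
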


\begin{prop} \label{liftingIdempotent}
Let $(Y,G_0)$ and $(X,G_0)$ be topological dynamical systems and let $\pi\colon Y\to X$ be a factor map. If $u\in E(X,G_0)$ is a minimal idempotent, then there exists a minimal idempotent $v\in E(Y,G_0)$ such that $\pi^{\ast}(v)=u$.
\end{prop}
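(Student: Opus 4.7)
The plan is to combine two standard tools from Ellis semigroup theory: the Ellis-Nakamura existence of idempotents in closed subsemigroups, and the characterization of minimal idempotents in Theorem \ref{Enveloping1}(3).

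First, I would consider the preimage $H \coloneqq (\pi^{\ast})^{-1}(u) \subseteq E(Y,G_0)$. This is closed (by continuity of $\pi^{\ast}$) and a subsemigroup, because for $a,b \in H$, $\pi^{\ast}(ab) = \pi^{\ast}(a)\pi^{\ast}(b) = u^2 = u$. By the Ellis-Nakamura theorem applied to $H$, there is an idempotent $v_0 \in H$, that is, $v_0^2 = v_0$ and $\pi^{\ast}(v_0) = u$. There is no reason for $v_0$ to be minimal in $E(Y,G_0)$, so the next step is to extract a minimal idempotent from data associated to $v_0$.

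Next, I would look at the principal left ideal $E(Y,G_0)v_0$, which is closed in $E(Y,G_0)$ (since right multiplication by $v_0$ is continuous and $E(Y,G_0)$ is compact). By a standard Zorn's lemma argument, it contains a minimal left ideal $I$; let $v \in I$ be an idempotent, provided again by Ellis-Nakamura. Then $v$ is a minimal idempotent in $E(Y,G_0)$ by Theorem \ref{Enveloping1}(1). Writing $v = w v_0$ for some $w$ and using $v_0^2 = v_0$, we obtain $v v_0 = v$. Applying $\pi^{\ast}$ gives $\pi^{\ast}(v)\, u = \pi^{\ast}(v)$, so by the minimality of $u$ and Theorem \ref{Enveloping1}(3), we deduce $u\, \pi^{\ast}(v) = u$.

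Finally, I would set $v' \coloneqq v_0 v$. A direct computation shows $v'$ is idempotent: $v'^2 = v_0 v v_0 v = v_0 v v = v_0 v = v'$ using $v v_0 = v$ and $v^2 = v$. Since $v \in I$ and $I$ is a left ideal, $v' = v_0 v \in I$, so $v'$ is again a minimal idempotent of $E(Y,G_0)$. And $\pi^{\ast}(v') = \pi^{\ast}(v_0)\pi^{\ast}(v) = u\, \pi^{\ast}(v) = u$, which completes the proof.

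The only delicate point is the ``left versus right'' bookkeeping: $v_0$ gives us $\pi^{\ast}$ hitting $u$ but $v_0$ is not minimal, while $v$ is minimal but $\pi^{\ast}(v)$ might not equal $u$. The trick of forming $v' = v_0 v$ (rather than $v v_0$) is what simultaneously preserves minimality (via the left ideal containing $v$) and forces the image to be $u$ (via $u\, \pi^{\ast}(v)=u$). I expect this juggling, together with the use of both directions of Theorem \ref{Enveloping1}(3), to be the only real subtlety; everything else is routine application of Ellis-Nakamura.
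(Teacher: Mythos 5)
Your proof is correct, but it takes a genuinely different route from the paper's. The paper starts from an \emph{arbitrary} preimage $v'$ of $u$ (not necessarily idempotent), passes to a minimal left ideal $J\subseteq \mathcal{O}_{G_0}(v')=E(Y,G_0)v'$, and uses minimality of $\mathcal{O}_{G_0}(u)$ to conclude $\pi^{\ast}(J)=\mathcal{O}_{G_0}(u)\ni u$, so that $J\cap(\pi^{\ast})^{-1}(u)$ is a nonempty closed subsemigroup; a single application of Ellis--Nakamura there produces an idempotent that is simultaneously minimal (it lies in $J$) and lies over $u$. You instead apply Ellis--Nakamura twice --- first in the full fiber $(\pi^{\ast})^{-1}(u)$ to get $v_0$, then in a minimal left ideal $I\subseteq E(Y,G_0)v_0$ to get $v$ --- and then repair the image by forming $v'=v_0v$, which requires the swap property $u\,\pi^{\ast}(v)=u$ from Theorem \ref{Enveloping1}(3). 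The paper's approach buys economy: it never needs (3), since the dynamical fact $\pi^{\ast}(\mathcal{O}_{G_0}(v'))=\mathcal{O}_{G_0}(u)$ already forces the fiber to meet the minimal left ideal. Your approach buys a purely algebraic argument inside the semigroups, at the cost of the extra bookkeeping you describe. One caution: the direction of Theorem \ref{Enveloping1}(3) you invoke (``$u$ minimal and $wu=w$ imply $uw=u$'') is false for general $w$ (take $E$ a group, $u$ the identity); it is valid precisely because your $w=\pi^{\ast}(v)$ is itself an idempotent, $\pi^{\ast}$ being a semigroup homomorphism and $v$ an idempotent, so that $w$ and $u$ are two idempotents in the same minimal left ideal $E(X,G_0)u$. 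Your step is therefore sound, but that hypothesis on $w$ is doing real work and deserves to be stated.
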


\begin{proof}
 If $u\in E(X,G_0)$ is a minimal idempotent, let $v'\in E(X,G_0)$ with $\pi^{\ast}(v')=u$. Then $\pi^{\ast}(\mathcal{O}_{G_0}(v'))=\mathcal{O}_{G_0}(u)$. Let $J\subseteq \mathcal{O}_{G_0}(v')$ be a minimal subsystem. Since $(\mathcal{O}_{G_0}(u),G)$ is minimal, we have that $\pi^{\ast}(J)=\mathcal{O}_{G_0}(u)$. Let $\phi$ be the restriction of $\pi^{\ast}$ to $J$. Since $u$ is idempotent, we have that $\phi^{-1}(u)$ is a closed subsemigroup of $E(Y,G_0)$. By the Ellis-Nakamura Theorem, we can find an idempotent $v\in \phi^{-1}(u)$. Since $v$ belongs to $J$ we have that $v$ is a minimal idempotent.

\end{proof}

\section*{Acknowledgments} We thank our advisors Bernard Host, Bryna Kra and Alejandro Maass for introducing us to the subject and for their help and useful discussions.

\end{document}